\newtheorem{problem}{Problem}
\newtheorem{gproblem}{General Problem}
\newtheorem{theorem}{Theorem}[section]
\newtheorem{axiom}[theorem]{Axiom}
\newtheorem{conjecture}[theorem]{Conjecture}
\newtheorem{corollary}[theorem]{Corollary}
\newtheorem{definition}[theorem]{Definition}
\newtheorem{example}[theorem]{Example}
\newtheorem{exercise}[theorem]{Exercise}
\newtheorem{lemma}[theorem]{Lemma}
\newtheorem{proposition}[theorem]{Proposition}
\newtheorem{remark}[theorem]{Remark}
\newcommand{\spn}{{\mathrm span}}
\newcommand{\supp}{\mathrm supp}
\newcommand{\bR}{{\mathbb R}}
\newcommand{\bC}{{\mathbb C}}
\def\k{\dot{k}}
\newcommand*\todo[1]{{\textbf{*** #1 ***}}}
\chardef\@x10\chardef\@xv60
\def\tcitime{
\def\@time{%
  \@minute\time\@hour\@minute\divide\@hour\@xv
  \ifnum\@hour<\@x 0\fi\the\@hour:%
  \multiply\@hour\@xv\advance\@minute-\@hour
  \ifnum\@minute<\@x 0\fi\the\@minute
  }}%
\def\x@hyperref#1#2#3{%
   \catcode`\~ = 12
   \catcode`\$ = 12
   \catcode`\_ = 12
   \catcode`\# = 12
   \catcode`\& = 12
   \y@hyperref{#1}{#2}{#3}%
}
\def\y@hyperref#1#2#3#4{%
   #2\ref{#4}#3
   \catcode`\~ = 13
   \catcode`\$ = 3
   \catcode`\_ = 8
   \catcode`\# = 6
   \catcode`\& = 4
}
\def\QCTOpt[#1]#2{%
  \def\QCTOptB{#1}
  \def\QCTOptA{#2}
}
\def\QCTNOpt#1{%
  \def\QCTOptA{#1}
  \let\QCTOptB\empty
}
\def\Qct{%
  \@ifnextchar[{%
    \QCTOpt}{\QCTNOpt}
}
\def\QCBOpt[#1]#2{%
  \def\QCBOptB{#1}%
  \def\QCBOptA{#2}%
}
\def\QCBNOpt#1{%
  \def\QCBOptA{#1}%
  \let\QCBOptB\empty
}
\def\Qcb{%
  \@ifnextchar[{%
    \QCBOpt}{\QCBNOpt}%
}
\def\PrepCapArgs{%
  \ifx\QCBOptA\empty
    \ifx\QCTOptA\empty
      {}%
    \else
      \ifx\QCTOptB\empty
        {\QCTOptA}%
      \else
        [\QCTOptB]{\QCTOptA}%
      \fi
    \fi
  \else
    \ifx\QCBOptA\empty
      {}%
    \else
      \ifx\QCBOptB\empty
        {\QCBOptA}%
      \else
        [\QCBOptB]{\QCBOptA}%
      \fi
    \fi
  \fi
}
\def\GRAPHICSPS#1{%
 \ifcase\GRAPHICSTYPE
   \special{ps: #1}%
 \or
   \special{language "PS", include "#1"}%
 \fi
}%
\def\graffile#1#2#3#4{%
    \bgroup
	   \@inlabelfalse
       \leavevmode
       \@ifundefined{bbl@deactivate}{\def~{\string~}}{\activesoff}%
        \raise -#4 \BOXTHEFRAME{%
           \hbox to #2{\raise #3\hbox to #2{\null #1\hfil}}}%
    \egroup
}%
\def\draftbox#1#2#3#4{%
 \leavevmode\raise -#4 \hbox{%
  \frame{\rlap{\protect\tiny #1}\hbox to #2%
   {\vrule height#3 width\z@ depth\z@\hfil}%
  }%
 }%
}%
\let\nographics=\@msidraft
\newif\ifwasdraft
\def\GRAPHIC#1#2#3#4#5{%
   \ifnum\@msidraft=\@ne\draftbox{#2}{#3}{#4}{#5}%
   \else\graffile{#1}{#3}{#4}{#5}%
   \fi
}
\def\addtoLaTeXparams#1{%
    \edef\LaTeXparams{\LaTeXparams #1}}%
\newif\ifBoxFrame \BoxFramefalse
\newif\ifOverFrame \OverFramefalse
\newif\ifUnderFrame \UnderFramefalse
\def\BOXTHEFRAME#1{%
   \hbox{%
      \ifBoxFrame
         \frame{#1}%
      \else
         {#1}%
      \fi
   }%
}
\def\doFRAMEparams#1{\BoxFramefalse\OverFramefalse\UnderFramefalse\readFRAMEparams#1\end}%
\def\readFRAMEparams#1{%
 \ifx#1\end%
  \let\next=\relax
  \else
  \ifx#1i\dispkind=\z@\fi
  \ifx#1d\dispkind=\@ne\fi
  \ifx#1f\dispkind=\tw@\fi
  \ifx#1t\addtoLaTeXparams{t}\fi
  \ifx#1b\addtoLaTeXparams{b}\fi
  \ifx#1p\addtoLaTeXparams{p}\fi
  \ifx#1h\addtoLaTeXparams{h}\fi
  \ifx#1X\BoxFrametrue\fi
  \ifx#1O\OverFrametrue\fi
  \ifx#1U\UnderFrametrue\fi
  \ifx#1w
    \ifnum\@msidraft=1\wasdrafttrue\else\wasdraftfalse\fi
    \@msidraft=\@ne
  \fi
  \let\next=\readFRAMEparams
  \fi
 \next
 }%
\def\IFRAME#1#2#3#4#5#6{%
      \bgroup
      \let\QCTOptA\empty
      \let\QCTOptB\empty
      \let\QCBOptA\empty
      \let\QCBOptB\empty
      #6%
      \parindent=0pt
      \leftskip=0pt
      \rightskip=0pt
      \setbox0=\hbox{\QCBOptA}%
      \@tempdima=#1\relax
      \ifOverFrame
          \typeout{This is not implemented yet}%
          \show\HELP
      \else
         \ifdim\wd0>\@tempdima
            \advance\@tempdima by \@tempdima
            \ifdim\wd0 >\@tempdima
               \setbox1 =\vbox{%
                  \unskip\hbox to \@tempdima{\hfill\GRAPHIC{#5}{#4}{#1}{#2}{#3}\hfill}%
                  \unskip\hbox to \@tempdima{\parbox[b]{\@tempdima}{\QCBOptA}}%
               }%
               \wd1=\@tempdima
            \else
               \textwidth=\wd0
               \setbox1 =\vbox{%
                 \noindent\hbox to \wd0{\hfill\GRAPHIC{#5}{#4}{#1}{#2}{#3}\hfill}\\%
                 \noindent\hbox{\QCBOptA}%
               }%
               \wd1=\wd0
            \fi
         \else
            \ifdim\wd0>0pt
              \hsize=\@tempdima
              \setbox1=\vbox{%
                \unskip\GRAPHIC{#5}{#4}{#1}{#2}{0pt}%
                \break
                \unskip\hbox to \@tempdima{\hfill \QCBOptA\hfill}%
              }%
              \wd1=\@tempdima
           \else
              \hsize=\@tempdima
              \setbox1=\vbox{%
                \unskip\GRAPHIC{#5}{#4}{#1}{#2}{0pt}%
              }%
              \wd1=\@tempdima
           \fi
         \fi
         \@tempdimb=\ht1
         \advance\@tempdimb by -#2
         \advance\@tempdimb by #3
         \leavevmode
         \raise -\@tempdimb \hbox{\box1}%
      \fi
      \egroup%
}%
\def\DFRAME#1#2#3#4#5{%
  \hfil\break
  \bgroup
     \leftskip\@flushglue
	 \rightskip\@flushglue
	 \parindent\z@
	 \parfillskip\z@skip
     \let\QCTOptA\empty
     \let\QCTOptB\empty
     \let\QCBOptA\empty
     \let\QCBOptB\empty
	 \vbox\bgroup
        \ifOverFrame 
           #5\QCTOptA\par
        \fi
        \GRAPHIC{#4}{#3}{#1}{#2}{\z@}%
        \ifUnderFrame 
           \break#5\QCBOptA
        \fi
	 \egroup
   \egroup
   \break
}%
\def\FFRAME#1#2#3#4#5#6#7{%
  \@ifundefined{floatstyle}
    {
     \begin{figure}[#1]%
    }
    {
	 \ifx#1h
      \begin{figure}[H]%
	 \else
      \begin{figure}[#1]%
	 \fi
	}
  \let\QCTOptA\empty
  \let\QCTOptB\empty
  \let\QCBOptA\empty
  \let\QCBOptB\empty
  \ifOverFrame
    #4
    \ifx\QCTOptA\empty
    \else
      \ifx\QCTOptB\empty
        \caption{\QCTOptA}%
      \else
        \caption[\QCTOptB]{\QCTOptA}%
      \fi
    \fi
    \ifUnderFrame\else
      \label{#5}%
    \fi
  \else
    \UnderFrametrue%
  \fi
  \begin{center}\GRAPHIC{#7}{#6}{#2}{#3}{\z@}\end{center}%
  \ifUnderFrame
    #4
    \ifx\QCBOptA\empty
      \caption{}%
    \else
      \ifx\QCBOptB\empty
        \caption{\QCBOptA}%
      \else
        \caption[\QCBOptB]{\QCBOptA}%
      \fi
    \fi
    \label{#5}%
  \fi
  \end{figure}%
 }%
\def\makeactives{
  \catcode`\"=\active
  \catcode`\;=\active
  \catcode`\:=\active
  \catcode`\'=\active
  \catcode`\~=\active
}
   \gdef\activesoff{%
      \def"{\string"}%
      \def;{\string;}%
      \def:{\string:}%
      \def'{\string'}%
      \def~{\string~}%
    }
\def\FRAME#1#2#3#4#5#6#7#8{%
 \bgroup
 \ifnum\@msidraft=\@ne
   \wasdrafttrue
 \else
   \wasdraftfalse%
 \fi
 \def\LaTeXparams{}%
 \dispkind=\z@
 \def\LaTeXparams{}%
 \doFRAMEparams{#1}%
 \ifnum\dispkind=\z@\IFRAME{#2}{#3}{#4}{#7}{#8}{#5}\else
  \ifnum\dispkind=\@ne\DFRAME{#2}{#3}{#7}{#8}{#5}\else
   \ifnum\dispkind=\tw@
    \edef\@tempa{\noexpand\FFRAME{\LaTeXparams}}%
    \@tempa{#2}{#3}{#5}{#6}{#7}{#8}%
    \fi
   \fi
  \fi
  \ifwasdraft\@msidraft=1\else\@msidraft=0\fi{}%
  \egroup
 }%
\def\TEXUX#1{"texux"}
\long\def\QQQ#1#2{%
     \long\expandafter\def\csname#1\endcsname{#2}}%
\long\def\QQA#1#2{}%
\def\QTR#1#2{{\csname#1\endcsname {#2}}}%
\def\EXPAND#1[#2]#3{}%
\def\NOEXPAND#1[#2]#3{}%
\def\LaTeXparent#1{}%
\def\ChildStyles#1{}%
\def\ChildDefaults#1{}%
\def\QTagDef#1#2#3{}%
  \providecommand{\UNICODE}[2][]{\protect\rule{.1in}{.1in}}
  \providecommand{\U}[1]{\protect\rule{.1in}{.1in}}
\def\QQfnmark#1{\footnotemark}
 \def\abstract{%
  \if@twocolumn
   \section*{Abstract (Not appropriate in this style!)}%
   \else \small 
   \begin{center}{\bf Abstract\vspace{-.5em}\vspace{\z@}}\end{center}%
   \quotation 
   \fi
  }%
   \def\registered{\relax\ifmmode{}\r@gistered
                    \else$\m@th\r@gistered$\fi}%
 \def\r@gistered{^{\ooalign
  {\hfil\raise.07ex\hbox{$\scriptstyle\rm\text{R}$}\hfil\crcr
  \mathhexbox20D}}}}{}%
\newdimen\theight
\def\newfmtname{LaTeX2e}
  \DeclareOldFontCommand{\rm}{\normalfont\rmfamily}{\mathrm}
  \DeclareOldFontCommand{\sf}{\normalfont\sffamily}{\mathsf}
  \DeclareOldFontCommand{\tt}{\normalfont\ttfamily}{\mathtt}
  \DeclareOldFontCommand{\bf}{\normalfont\bfseries}{\mathbf}
  \DeclareOldFontCommand{\it}{\normalfont\itshape}{\mathit}
  \DeclareOldFontCommand{\sl}{\normalfont\slshape}{\@nomath\sl}
  \DeclareOldFontCommand{\sc}{\normalfont\scshape}{\@nomath\sc}
\def\alpha{{\Greekmath 010B}}%
\def\beta{{\Greekmath 010C}}%
\def\gamma{{\Greekmath 010D}}%
\def\delta{{\Greekmath 010E}}%
\def\epsilon{{\Greekmath 010F}}%
\def\zeta{{\Greekmath 0110}}%
\def\eta{{\Greekmath 0111}}%
\def\theta{{\Greekmath 0112}}%
\def\iota{{\Greekmath 0113}}%
\def\kappa{{\Greekmath 0114}}%
\def\lambda{{\Greekmath 0115}}%
\def\mu{{\Greekmath 0116}}%
\def\nu{{\Greekmath 0117}}%
\def\xi{{\Greekmath 0118}}%
\def\pi{{\Greekmath 0119}}%
\def\rho{{\Greekmath 011A}}%
\def\sigma{{\Greekmath 011B}}%
\def\tau{{\Greekmath 011C}}%
\def\upsilon{{\Greekmath 011D}}%
\def\phi{{\Greekmath 011E}}%
\def\chi{{\Greekmath 011F}}%
\def\psi{{\Greekmath 0120}}%
\def\omega{{\Greekmath 0121}}%
\def\varepsilon{{\Greekmath 0122}}%
\def\vartheta{{\Greekmath 0123}}%
\def\varpi{{\Greekmath 0124}}%
\def\varrho{{\Greekmath 0125}}%
\def\varsigma{{\Greekmath 0126}}%
\def\varphi{{\Greekmath 0127}}%
\def\nabla{{\Greekmath 0272}}
\def\FindBoldGroup{%
   {\setbox0=\hbox{$\mathbf{x\global\edef\theboldgroup{\the\mathgroup}}$}}%
}
\def\Greekmath#1#2#3#4{%
    \if@compatibility
        \ifnum\mathgroup=\symbold
           \mathchoice{\mbox{\boldmath$\displaystyle\mathchar"#1#2#3#4$}}%
                      {\mbox{\boldmath$\textstyle\mathchar"#1#2#3#4$}}%
                      {\mbox{\boldmath$\scriptstyle\mathchar"#1#2#3#4$}}%
                      {\mbox{\boldmath$\scriptscriptstyle\mathchar"#1#2#3#4$}}%
        \else
           \mathchar"#1#2#3#4%
        \fi 
    \else 
        \FindBoldGroup
        \ifnum\mathgroup=\theboldgroup 
           \mathchoice{\mbox{\boldmath$\displaystyle\mathchar"#1#2#3#4$}}%
                      {\mbox{\boldmath$\textstyle\mathchar"#1#2#3#4$}}%
                      {\mbox{\boldmath$\scriptstyle\mathchar"#1#2#3#4$}}%
                      {\mbox{\boldmath$\scriptscriptstyle\mathchar"#1#2#3#4$}}%
        \else
           \mathchar"#1#2#3#4%
        \fi     	    
	  \fi}
\newif\ifGreekBold  \GreekBoldfalse
\let\SAVEPBF=\pbf
\def\pbf{\GreekBoldtrue\SAVEPBF}%
  \newcounter{equationnumber}  
  \def\mathletters{%
     \addtocounter{equation}{1}
     \edef\@currentlabel{\theequation}%
     \setcounter{equationnumber}{\c@equation}
     \setcounter{equation}{0}%
     \edef\theequation{\@currentlabel\noexpand\alph{equation}}%
  }
    \def\BibTeX{{\rm B\kern-.05em{\sc i\kern-.025em b}\kern-.08em
                 T\kern-.1667em\lower.7ex\hbox{E}\kern-.125emX}}}{}%
\def\AmS{{\protect\usefont{OMS}{cmsy}{m}{n}%
                A\kern-.1667em\lower.5ex\hbox{M}\kern-.125emS}}}{}%
\def\@@eqncr{\let\@tempa\relax
    \ifcase\@eqcnt \def\@tempa{& & &}\or \def\@tempa{& &}%
      \else \def\@tempa{&}\fi
     \@tempa
     \if@eqnsw
        \iftag@
           \@taggnum
        \else
           \@eqnnum\stepcounter{equation}%
        \fi
     \fi
     \global\tag@false
     \global\@eqnswtrue
     \global\@eqcnt\z@\cr}
\def\TCItag{\@ifnextchar*{\@TCItagstar}{\@TCItag}}
\def\@TCItag#1{%
    \global\tag@true
    \global\def\@taggnum{(#1)}}
\def\@TCItagstar*#1{%
    \global\tag@true
    \global\def\@taggnum{#1}}
\def\ExitTCILatex{\makeatother }
\let\DOTSI\relax
\def\RIfM@{\relax\ifmmode}%
\def\FN@{\futurelet\next}%
\def\iint{\DOTSI\intno@\tw@\FN@\ints@}%
\def\iiint{\DOTSI\intno@\thr@@\FN@\ints@}%
\def\iiiint{\DOTSI\intno@4 \FN@\ints@}%
\def\idotsint{\DOTSI\intno@\z@\FN@\ints@}%
\def\ints@{\findlimits@\ints@@}%
\newif\iflimtoken@
\newif\iflimits@
\def\findlimits@{\limtoken@true\ifx\next\limits\limits@true
 \else\ifx\next\nolimits\limits@false\else
 \limtoken@false\ifx\ilimits@\nolimits\limits@false\else
 \ifinner\limits@false\else\limits@true\fi\fi\fi\fi}%
\def\multint@{\int\ifnum\intno@=\z@\intdots@                          
 \else\intkern@\fi                                                    
 \ifnum\intno@>\tw@\int\intkern@\fi                                   
 \ifnum\intno@>\thr@@\int\intkern@\fi                                 
 \int}
\def\multintlimits@{\intop\ifnum\intno@=\z@\intdots@\else\intkern@\fi
 \ifnum\intno@>\tw@\intop\intkern@\fi
 \ifnum\intno@>\thr@@\intop\intkern@\fi\intop}%
\def\intic@{%
    \mathchoice{\hskip.5em}{\hskip.4em}{\hskip.4em}{\hskip.4em}}%
\def\negintic@{\mathchoice
 {\hskip-.5em}{\hskip-.4em}{\hskip-.4em}{\hskip-.4em}}%
\def\ints@@{\iflimtoken@                                              
 \def\ints@@@{\iflimits@\negintic@
   \mathop{\intic@\multintlimits@}\limits                             
  \else\multint@\nolimits\fi                                          
  \eat@}
 \else                                                                
 \def\ints@@@{\iflimits@\negintic@
  \mathop{\intic@\multintlimits@}\limits\else
  \multint@\nolimits\fi}\fi\ints@@@}%
\def\intkern@{\mathchoice{\!\!\!}{\!\!}{\!\!}{\!\!}}%
\def\plaincdots@{\mathinner{\cdotp\cdotp\cdotp}}%
\def\intdots@{\mathchoice{\plaincdots@}%
 {{\cdotp}\mkern1.5mu{\cdotp}\mkern1.5mu{\cdotp}}%
 {{\cdotp}\mkern1mu{\cdotp}\mkern1mu{\cdotp}}%
 {{\cdotp}\mkern1mu{\cdotp}\mkern1mu{\cdotp}}}%
\def\RIfM@{\relax\protect\ifmmode}
\def\text{\RIfM@\expandafter\text@\else\expandafter\mbox\fi}
\let\nfss@text\text
\def\text@#1{\mathchoice
   {\textdef@\displaystyle\f@size{#1}}%
   {\textdef@\textstyle\tf@size{\firstchoice@false #1}}%
   {\textdef@\textstyle\sf@size{\firstchoice@false #1}}%
   {\textdef@\textstyle \ssf@size{\firstchoice@false #1}}%
   \glb@settings}
\def\textdef@#1#2#3{\hbox{{%
                    \everymath{#1}%
                    \let\f@size#2\selectfont
                    #3}}}
\newif\iffirstchoice@
\def\Let@{\relax\iffalse{\fi\let\\=\cr\iffalse}\fi}%
\def\vspace@{\def\vspace##1{\crcr\noalign{\vskip##1\relax}}}%
\def\multilimits@{\bgroup\vspace@\Let@
 \baselineskip\fontdimen10 \scriptfont\tw@
 \advance\baselineskip\fontdimen12 \scriptfont\tw@
 \lineskip\thr@@\fontdimen8 \scriptfont\thr@@
 \lineskiplimit\lineskip
 \vbox\bgroup\ialign\bgroup\hfil$\m@th\scriptstyle{##}$\hfil\crcr}%
\def\Sb{_\multilimits@}%
\def\endSb{\crcr\egroup\egroup\egroup}%
\def\Sp{^\multilimits@}%
\newdimen\ex@
\def\rightarrowfill@#1{$#1\m@th\mathord-\mkern-6mu\cleaders
 \hbox{$#1\mkern-2mu\mathord-\mkern-2mu$}\hfill
 \mkern-6mu\mathord\rightarrow$}%
\def\leftarrowfill@#1{$#1\m@th\mathord\leftarrow\mkern-6mu\cleaders
 \hbox{$#1\mkern-2mu\mathord-\mkern-2mu$}\hfill\mkern-6mu\mathord-$}%
\def\leftrightarrowfill@#1{$#1\m@th\mathord\leftarrow
\mkern-6mu\cleaders
 \hbox{$#1\mkern-2mu\mathord-\mkern-2mu$}\hfill
 \mkern-6mu\mathord\rightarrow$}%
\def\overrightarrow{\mathpalette\overrightarrow@}%
\def\overrightarrow@#1#2{\vbox{\ialign{##\crcr\rightarrowfill@#1\crcr
 \noalign{\kern-\ex@\nointerlineskip}$\m@th\hfil#1#2\hfil$\crcr}}}%
\def\overleftarrow{\mathpalette\overleftarrow@}%
\def\overleftarrow@#1#2{\vbox{\ialign{##\crcr\leftarrowfill@#1\crcr
 \noalign{\kern-\ex@\nointerlineskip}$\m@th\hfil#1#2\hfil$\crcr}}}%
\def\overleftrightarrow{\mathpalette\overleftrightarrow@}%
\def\overleftrightarrow@#1#2{\vbox{\ialign{##\crcr
   \leftrightarrowfill@#1\crcr
 \noalign{\kern-\ex@\nointerlineskip}$\m@th\hfil#1#2\hfil$\crcr}}}%
\def\underrightarrow{\mathpalette\underrightarrow@}%
\def\underrightarrow@#1#2{\vtop{\ialign{##\crcr$\m@th\hfil#1#2\hfil
  $\crcr\noalign{\nointerlineskip}\rightarrowfill@#1\crcr}}}%
\def\underleftarrow{\mathpalette\underleftarrow@}%
\def\underleftarrow@#1#2{\vtop{\ialign{##\crcr$\m@th\hfil#1#2\hfil
  $\crcr\noalign{\nointerlineskip}\leftarrowfill@#1\crcr}}}%
\def\underleftrightarrow{\mathpalette\underleftrightarrow@}%
\def\underleftrightarrow@#1#2{\vtop{\ialign{##\crcr$\m@th
  \hfil#1#2\hfil$\crcr
 \noalign{\nointerlineskip}\leftrightarrowfill@#1\crcr}}}%
\def\qopnamewl@#1{\mathop{\operator@font#1}\nlimits@}
\let\nlimits@\displaylimits
\def\setboxz@h{\setbox\z@\hbox}
\def\varlim@#1#2{\mathop{\vtop{\ialign{##\crcr
 \hfil$#1\m@th\operator@font lim$\hfil\crcr
 \noalign{\nointerlineskip}#2#1\crcr
 \noalign{\nointerlineskip\kern-\ex@}\crcr}}}}
 \def\rightarrowfill@#1{\m@th\setboxz@h{$#1-$}\ht\z@\z@
  $#1\copy\z@\mkern-6mu\cleaders
  \hbox{$#1\mkern-2mu\box\z@\mkern-2mu$}\hfill
  \mkern-6mu\mathord\rightarrow$}
\def\leftarrowfill@#1{\m@th\setboxz@h{$#1-$}\ht\z@\z@
  $#1\mathord\leftarrow\mkern-6mu\cleaders
  \hbox{$#1\mkern-2mu\copy\z@\mkern-2mu$}\hfill
  \mkern-6mu\box\z@$}
\def\projlim{\qopnamewl@{proj\,lim}}
\def\injlim{\qopnamewl@{inj\,lim}}
\def\varinjlim{\mathpalette\varlim@\rightarrowfill@}
\def\varprojlim{\mathpalette\varlim@\leftarrowfill@}
\def\varliminf{\mathpalette\varliminf@{}}
\def\varliminf@#1{\mathop{\underline{\vrule\@depth.2\ex@\@width\z@
   \hbox{$#1\m@th\operator@font lim$}}}}
\def\varlimsup{\mathpalette\varlimsup@{}}
\def\varlimsup@#1{\mathop{\overline
  {\hbox{$#1\m@th\operator@font lim$}}}}
\def\align{\@verbatim \frenchspacing\@vobeyspaces \@alignverbatim
You are using the "align" environment in a style in which it is not defined.}
\let\csname endalign*\endcsname =\endtrivlist
\def\alignat{\@verbatim \frenchspacing\@vobeyspaces \@alignatverbatim
You are using the "alignat" environment in a style in which it is not defined.}
\let\csname endalignat*\endcsname =\endtrivlist
\def\xalignat{\@verbatim \frenchspacing\@vobeyspaces \@xalignatverbatim
You are using the "xalignat" environment in a style in which it is not defined.}
\let\csname endxalignat*\endcsname =\endtrivlist
\def\gather{\@verbatim \frenchspacing\@vobeyspaces \@gatherverbatim
You are using the "gather" environment in a style in which it is not defined.}
\let\csname endgather*\endcsname =\endtrivlist
\def\multiline{\@verbatim \frenchspacing\@vobeyspaces \@multilineverbatim
You are using the "multiline" environment in a style in which it is not defined.}
\let\csname endmultiline*\endcsname =\endtrivlist
\def\arrax{\@verbatim \frenchspacing\@vobeyspaces \@arraxverbatim
You are using a type of "array" construct that is only allowed in AmS-LaTeX.}
\def\tabulax{\@verbatim \frenchspacing\@vobeyspaces \@tabulaxverbatim
You are using a type of "tabular" construct that is only allowed in AmS-LaTeX.}
\let\csname endarrax*\endcsname =\endtrivlist
\let\csname endtabulax*\endcsname =\endtrivlist
 \def\endequation{%
     \ifmmode\ifinner 
      \iftag@
        \addtocounter{equation}{-1} 
        $\hfil
           \displaywidth\linewidth\@taggnum\egroup \endtrivlist
        \global\tag@false
        \global\@ignoretrue   
      \else
        $\hfil
           \displaywidth\linewidth\@eqnnum\egroup \endtrivlist
        \global\tag@false
        \global\@ignoretrue 
      \fi
     \else   
      \iftag@
        \addtocounter{equation}{-1} 
        \eqno \hbox{\@taggnum}
        \global\tag@false%
        $$\global\@ignoretrue
      \else
        \eqno \hbox{\@eqnnum}
        $$\global\@ignoretrue
      \fi
     \fi\fi
 } 
 \newif\iftag@ \tag@false
 \def\TCItag{\@ifnextchar*{\@TCItagstar}{\@TCItag}}
 \def\@TCItag#1{%
     \global\tag@true
     \global\def\@taggnum{(#1)}}
 \def\@TCItagstar*#1{%
     \global\tag@true
     \global\def\@taggnum{#1}}
     \def\tag{\@ifnextchar*{\@tagstar}{\@tag}}
     \def\@tag#1{%
         \global\tag@true
         \global\def\@taggnum{(#1)}}
     \def\@tagstar*#1{%
         \global\tag@true
         \global\def\@taggnum{#1}}
\providecommand{\keywords}[1]
{
  \small	
  \textbf{\textit{Keywords---}} #1
}
\begin{document}
\title{Optimal linear response for Markov Hilbert-Schmidt integral operators and
stochastic dynamical systems}
\author{Fadi Antown$^{1}$, Gary Froyland$^{1}$, and Stefano  Galatolo$^{2}$ \\ \\
 \small $^{1}$School of Mathematics and Statistics, University of New South Wales, Sydney NSW 2052, Australia \\
        \small $^{2}$Department of Mathematics, University of Pisa,
Via Buonarroti 1, 56127 Pisa, Italy \\}
\date{\today}
\maketitle
\begin{abstract}
We consider optimal control problems for discrete-time random dynamical systems, finding unique perturbations that provoke maximal responses of statistical properties of the system.
We treat systems whose transfer operator has an $L^2$ kernel, and we consider the problems of finding (i) the infinitesimal perturbation maximising the expectation of a given observable and (ii) the infinitesimal perturbation maximising the spectral gap, and hence the exponential mixing rate of the system.
Our perturbations are either (a)~perturbations of the kernel or (b) perturbations of a deterministic map subjected to additive noise.
We develop a general setting in which these optimisation problems have a unique solution and construct explicit formulae for the unique optimal perturbations.
We apply our results to a Pomeau-Manneville map and an interval exchange map, both subjected to additive noise, to explicitly compute the perturbations provoking maximal responses. 
\end{abstract}

\keywords{Stochastic dynamical system, optimal linear response, transfer operator, mixing rate, optimal control}

[{\bf MSC2020}]{: 37H30, 47A55, 49J50, 49N05}

\section{Introduction}
The statistical properties of the long-term behaviour of deterministic or stochastic dynamical systems are strongly related to the properties of invariant or stationary measures and to the spectral properties of the associated transfer operator. 
When the dynamical system is perturbed it is useful to understand and predict the response of the statistical properties of the system through these objects. 
When such responses are differentiable, we say that the system exhibits a \emph{linear response} to the class of perturbations.
To first order, this response can be described by a suitable derivative expressing the infinitesimal rate of change in e.g.\ the natural invariant measure or in the spectrum.
Understanding the response of
statistical properties 
to perturbation has particular importance in applications, including
to  climate science 
(see e.g.\  \cite{GhLu}, \cite{HM} and the references therein).

In the present paper we go beyond quantifying responses and address 
natural problems concerning the \emph{optimal response}, namely 
which perturbations elicit a \emph{maximal} response. 
For example, given an observation function, which perturbation produces the greatest change in the expectation of this observation, and which perturbation produces the greatest change in the rate of convergence to equilibrium.
Continuing the climate science application, one may wish to know which small climate action (which perturbation) would produce the greatest reduction in the average
temperature (the expected observation value).
We note that by considering trajectories of a perturbed map and using ergodicity, one may view the problem of maximising the response in the expectation of an observation as an infinite-horizon optimal control problem, averaging an observation along trajectories. 


The linear response of dynamical systems is an area of intense research and we 
present a brief overview of the literature 
that is related to the present work.
Early results concerning the response of invariant measures to the perturbation of a deterministic system have been
obtained by Ruelle \cite{R2} in the uniformly hyperbolic case. 
More recently, these results
have been extended to several other situations in which one has some hyperbolicity
and sufficient regularity of the system and its perturbations. 
We
refer the reader to the survey \cite{BB} for an extended discussion of the literature
about linear response (and its failure) for deterministic systems. 

The mathematical literature on linear response of invariant measures of stochastic or random dynamical systems is more recent.
In the framework of continuous-time random processes and stochastic differential equations, linear response results were proved in \cite{HM,KLP19}.
Results related to the linear response of the stationary measure for diffusion in random media appear in
\cite{KOlla,GMP,Gantert,Faggio,MatPi}.
In the discrete-time case, examples of linear response for small random perturbations of uniformly hyperbolic deterministic
systems appeared in \cite{GL}.
In \cite{BRS} linear response results are given for random compositions of expanding or non-uniformly expanding maps.
In the paper \cite{ZH07} the smoothness of the invariant measure response under suitable perturbations is proved for a class of random diffeomorphisms, but no explicit formula is given for the derivatives; an application to the smoothness of the rotation number of Arnold circle maps with additive noise is presented.
Systems generated by the iteration of a deterministic map subjected to i.i.d.\ additive random perturbations are one class of stochastic systems studied in the present paper (see Section \ref{sec:sys-add-noise}).
The linear response of such systems  is considered systematically in \cite{GG} and linear response results are proved for perturbations to the deterministic map or to the additive noise. 
These results are used to by \cite{MSGDG} to extend some  results of \cite{ZH07}  outside the diffeomorphism case and applied to an idealized model of El Ni\~{n}o-Southern Oscillation, given by a noninvertible circle map with additive noise.
Higher derivative results for the response of systems with additive noise are presented in \cite{GS}.
Response results for random systems in the so-called {\em quenched} point of view appeared recently in \cite{SJ}, \cite{SRu} where the random composition of expanding maps is considered using Hilbert cones techniques and in  \cite{DS} where the random composition of hyperbolic maps is considered by a  transfer operator based approach.

We remark that the addition of random perturbations is not necessarily sufficient to guarantee a linear response.
An i.i.d.\ composition of the identity map and a rotation on the circle is considered in \cite{Gpre}, and it is shown that
using observables with square-integrable first derivative, one only has H\"older continuity of the response with respect to $C^0$ perturbations of the circle rotation.


One can similarly consider the linear response of the dominant eigenvalues of the transfer operator under perturbation.
In the literature there are several results describing the way eigenvalues and eigenvectors of suitable classes of operators change when those operators are perturbed in some way, for example classical results  concerning compact operators subjected to analytic perturbations \cite{K}, and    quasi-compact Markov operators subjected to $C^k$ perturbations \cite{HH}.
In specific classes of dynamics, differentiability of isolated spectral data is demonstated in \cite{GL} for transfer operators of Anosov maps where the map is subjected to $C^k$ perturbations and in \cite{KLP19} for transfer operators arising from SDEs subjected to $C^k$  perturbations of the drift. 


Optimal linear response questions have been considered in the dynamical setting of homogeneous (and inhomogeneous) finite-state Markov chains \cite{ADF}, where explicit formulae are provided for the unique maximising perturbations that (i) maximise the norm of the response, (ii) maximise the expectation of a given observable, and (iii) maximise the spectral gap.
The efficient Lagrange multiplier approach  developed in \cite{ADF} for questions (ii) and (iii) will be extended to the infinite-dimensional setting in the present paper.
In continuous time, \cite{FS17} maximised the spectral gap of a numerical discretisation of a periodically forced Fokker-Planck equation (perturbing the velocity field to maximally speed up or slow down the exponential mixing rate).
The same problem is considered by \cite{FKP}, but for general aperiodic forcing over a finite time, using the Lagrange multiplier approach of \cite{ADF}.
A non-spectral approach to increasing mixing rates by optimal kernel perturbations in discrete time is \cite{FGTW16}.

Related optimal control problems have been considered in \cite{GP} 
where the goal was to find a minimal perturbation realising a specific response to the invariant measure of a deterministic system (about the problem of finding an infinitesimal perturbation realising a given response see also \cite{Kl}).
These kinds of questions and other similar ones were also briefly considered in \cite{GG} for random dynamical systems consisting of deterministic maps perturbed by additive noise. Similar problems in the case of   probabilistic cellular automata were considered in \cite{Mac}.

The present work takes the point of view  of \cite{ADF}, extending the theory to the infinite-dimensional setting of stochastic integral operators, proving the existence of unique optimal perturbations, deriving explicit formulae for these optimal perturbations, and illustrating the formulae and their conclusions via two topical examples.
We consider the class of stochastic dynamical systems with transfer operators representable by an $L^2$-compact, integral operator, which includes deterministic systems perturbed by additive noise.
The transfer operator $L$ has the form 
\begin{equation}
    \label{introLeqn}
    Lf(x)=\int k(x,y)f(y)\ dy,
\end{equation} 
where $k$ is a stochastic kernel; in the case of deterministic systems $T$ with additive noise, $k(x,y)=\rho(x-T(y))$, with $\rho$ a probability density representing the distribution of the noise intensity (see Section \ref{sec:sys-add-noise}).
We consider perturbations of two types: firstly, perturbations to the kernel $k$, and secondly, perturbations to the map $T$.
An outline of the paper is as follows.
In Section \ref{SEC2} we consider general compact, integral-preserving operators $L:L^2 \to L^2$ (see \eqref{eq:int-pers-cond}) and state general linear response statements for the normalised fixed points and the leading eigenvalues of these operators (Theorem \ref{th:linearresponse copy(1)} and Proposition \ref{th:lin-resp-2nd-eval}).
In Section \ref{newsec3}, we derive response formulae for the normalised fixed points (Corollary \ref{LR}) and spectral values (Corollary \ref{cor:lin-resp-2nd-eval-HS}) of operators of the form \eqref{introLeqn}, under perturbation of the kernel $k$.
In Section \ref{sec4}
we consider the problem of finding the perturbation that provokes a \emph{maximal} response in 
the average of a given observable (General Problem 1) and the spectral gap (General Problem 2).
We show that if the feasible set of perturbations is convex, an optimal solution exists, and that this optimum is unique if the feasible set is strictly convex.
In Section \ref{sec:explicit-formula}, using Lagrange multipliers we derive 
an explicit formula for the unique optimal kernel perturbation that maximises the expectation of an observable (Theorem \ref{thm:explicit-formula}).
In section \ref{seceig} we prove an explicit formula for the perturbation that maximise the change in spectral gap (and therefore the rate of mixing) of the system (Theorem \ref{thm:explicit-formula-eval}).

In Section \ref{sec:sys-add-noise} we specialise our integral operators to annealed transfer operators corresponding to deterministic maps $T$ with additive noise. 
For these systems the kernel $k$ has the form $k(x,y)=\rho(x-T(y))$ for some nonsingular transformation $T$, and we consider perturbations of the map $T$ directly.
Response formulas for these perturbations are developed in Proposition \ref{prop:lin-resp-formula-add-noise} and Proposition \ref{prop:lin-resp-formula-add-noise-eval} for the invariant measure and the dominating eigenvalues, respectively.
In this framework we again prove existence and uniqueness of the map perturbation maximising the derivative of the expectation of an observation (Proposition \ref{solutionT}) and then derive an explicit formula for the extremiser (Theorem \ref{thm:explicit-formula2}).
Proposition \ref{solutionT2} and Theorem \ref{extremiser2} state results analogous to 
Proposition \ref{solutionT} and Theorem \ref{thm:explicit-formula2} for the optimization of the spectral gap and mixing rate.




In section \ref{sec:numerics} we apply and illustrate the theoretical findings of this work on the Pomeau-Manneville map and a weakly mixing interval exchange, each perturbed by additive noise. 
For each map we numerically estimate (i) the optimal stochastic perturbation (perturbing the kernel $k$) and (ii) the optimal deterministic perturbation (perturbing the map $T$) that maximises the derivatives of the expectation of an observable and the mixing rate.
One of the interesting lessons is that to maximally increase the mixing rate of the noisy Pomeau-Manneville map, one should perturb the kernel (stochastic perturbation) to move mass away from the indifferent fixed point or deform the map to transport mass away from the fixed point (deterministic perturbation);  see Figures \ref{fig:PM-mix} and \ref{fig:PM-map-mix}, respectively.
Further numerical outcomes are discussed and explained in Section \ref{sec:numerics}.

\bigskip

\bigskip

\section{Linear response for compact integral-preserving operators}\label{SEC2}

In this section we introduce general response results for integral-preserving compact operators. We consider both
the response of the invariant measure to the perturbations and the response of the dominant eigenvalues.

\subsection{Existence of linear response for the invariant measure}

\label{sec:gen-result-resp-inv-meas}

In the following, we consider integral-preserving compact operators acting on $L^{2}$, which are not necessarily positive. 
We will give a general linear response statement for their invariant measures.
In Section \ref{newsec3} we show how these results can be applied to Hilbert-Schmidt integral
operators, which will later be transfer operators of suitable random dynamical systems.

Let $L^{2}([0,1])$ be the space of square-integrable functions over the unit interval (considered with the Lebesgue measure $m$); for brevity, we will denote it as simply $L^{2}$. 
We remark that the  analysis in the rest of the paper can be extended to manifolds, but we keep the setting simple so as not to obscure the main ideas.
Let us consider the space of zero-average functions
\begin{equation*}
V:=\bigg\{f\in L^{2}~s.t.~~\int f\,dm=0\bigg\}.
\end{equation*}

\begin{definition}
We say that an operator $L:L^{2}\rightarrow L^{2}$ has  \emph{exponential contraction of the zero average space} $V$ if there are $C\geq 0$ and $%
\lambda <0$ such that $\forall g\in V$
\begin{equation}
\Vert L^{n}g\Vert _{2}\leq Ce^{\lambda n}\Vert g\Vert _{2}  \label{equil}
\end{equation}%
for all $n\geq 0$.
\end{definition}

For $\bar{\delta}>0$ and $\delta\in [0,\bar{\delta})$, we consider a family of integral-preserving, compact operators $L_{\delta }:L^{2}\rightarrow L^{2}$; we think of $L_{\delta }$ as perturbations of $L_{0}$.
We say that $f_\delta\in L^2$ is an \emph{invariant function} of $L_\delta$ if $L_\delta f_\delta=f_\delta$.
We will see that under natural assumptions, the operators $L_\delta$, $\delta\in[0,\bar{\delta})$, have a family of normalized invariant functions $f_{\delta } \in L^{2}$.
Furthermore, for suitable perturbations the invariant functions vary smoothly in $L^{2}$ and we get an explicit formula for the resulting derivative $\frac{df_\delta}{d \delta}$.
We remark that since the operators we consider are not necessarily positive, the invariant 
functions are not necessarily positive.

\begin{theorem}[Linear response for integral-preserving compact operators]
\label{th:linearresponse copy(1)} 
Let us consider a family of compact operators $L_{\delta }:L^{2}\rightarrow L^{2}$, with $\delta \in \left[ 0,\overline{\delta }\right)$, preserving the integral: for each $g\in L^2$
\begin{equation}  \label{eq:int-pers-cond}
\int L_\delta g~dm=\int g~dm.
\end{equation} Then,
\begin{enumerate}
    \item[(I)] 
The operators have invariant functions in $L^2$: for each $\delta$ there is $g_\delta \neq 0$ such that $L_\delta g_\delta=g_\delta $.
\item[(II)] 
Suppose $L_0$ also satisfies the following:

(A1) (mixing of the unperturbed operator) For every $g\in V$,
\begin{equation*}
\lim_{n\rightarrow \infty }\Vert L_{0}^{n}g\Vert _{2}=0.
\end{equation*}
Under this assumption, the unperturbed operator $L_0 $ has a unique {\em normalized} invariant function $f_0$ such that $\int {f}_0\ dm=1$. Furthermore, $L_0$ has exponential contraction of the zero average space $V$.
\item[(III)]  
Suppose the family of operators $L_\delta$ also satisfy the following:

(A2) ($L_\delta$ are small perturbations and existence of derivative operator at $f_0$) Suppose there is a $%
K\geq 0$ such that $\left\vert |L_{\delta}-L_{0}|\right\vert
_{L^{2}\rightarrow L^{2}}\leq K\delta $ for small $\delta $. 
Furthermore, suppose there exist
 $\hat{f} \in V$ such that
 
 \begin{equation*}
\underset{\delta \rightarrow 0}{\lim }  \frac{(L_{\delta}-L_{0})}{%
\delta }f_0 =\hat{f}.
\end{equation*}

Under these assumptions, the following hold:
\begin{enumerate}
\item There exists a $\delta_2>0$ such that for each $0\leq \delta <\delta_2 $, the operators $L_\delta$ have unique invariant functions ${f}_\delta$ such that $\int {f}_\delta dm=1.$
\item The resolvent operator $({Id}-L_0)^{-1}:V\rightarrow $ $V$ is continuous.

\item \begin{equation*}
\lim_{\delta \rightarrow 0}\left\Vert \frac{f_{\delta }-f_{0}}{\delta }-({Id}%
-L_0)^{-1}\hat{f}\right\Vert _{2}=0;
\end{equation*}%
thus, $({Id}-L_0)^{-1}\hat{f}$ represents the first order term in the perturbation of the invariant function for the family of systems $L_{\delta}$.
\end{enumerate}
\end{enumerate}
\end{theorem}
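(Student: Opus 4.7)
The plan for Part (I) is to invoke the Riesz--Schauder theorem via duality: the integral-preservation hypothesis \eqref{eq:int-pers-cond} is equivalent to $L_\delta^* \mathbf{1} = \mathbf{1}$, so $1$ lies in the point spectrum of the compact adjoint $L_\delta^*$. Since the non-zero spectrum of a compact operator coincides with that of its adjoint, $1$ is also an eigenvalue of $L_\delta$, producing a non-trivial fixed function $g_\delta$.

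For Part (II), first observe that integral preservation also gives $L_0(V)\subseteq V$, so $L_0|_V$ is a compact operator on the Hilbert space $V$. Any eigenvalue $\mu$ of $L_0|_V$ with $|\mu|\geq 1$ would produce a nonzero $g\in V$ with $\|L_0^n g\|_2=|\mu|^n\|g\|_2\not\to 0$, contradicting (A1). Hence the spectral radius of $L_0|_V$ is strictly less than $1$, and Gelfand's formula $r(L_0|_V)=\lim_n\|L_0^n|_V\|^{1/n}$ upgrades this to the operator-norm bound \eqref{equil} for some $\lambda<0$. Uniqueness of the normalized invariant function then follows: the difference of any two such functions lies in $V$ and is fixed by $L_0$, so by \eqref{equil} it must vanish.

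Part (III)(b) is immediate from (II): the Neumann series $\sum_{n\geq 0} L_0^n$ converges in operator norm on $V$ by the exponential bound, so $(Id - L_0)|_V$ has a bounded inverse. For (III)(a), I would combine this with the estimate $\|L_\delta - L_0\|\leq K\delta$ in a second Neumann-series argument (perturbation of a bounded invertible operator) to conclude that $(Id - L_\delta)|_V$ is invertible for all sufficiently small $\delta$, with $(Id - L_\delta)^{-1}\to (Id - L_0)^{-1}$ in operator norm on $V$. Uniqueness of the normalized $f_\delta$ then follows at once: any $L_\delta$-invariant $g$ with $\int g\, dm=0$ lies in the kernel of $(Id-L_\delta)|_V$ and so vanishes; combined with Part (I), every non-trivial invariant function has non-zero integral and can thus be uniquely normalized.

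For (III)(c), the key identity is obtained by writing $f_\delta=f_0+h_\delta$ with $h_\delta\in V$ (using $\int f_\delta\,dm=\int f_0\,dm=1$) and expanding $L_\delta f_\delta=f_\delta$, $L_0 f_0 = f_0$ to get
\begin{equation*}
(Id-L_\delta)h_\delta=(L_\delta-L_0)f_0,
\end{equation*}
so after dividing by $\delta$,
\begin{equation*}
\frac{f_\delta-f_0}{\delta}=(Id-L_\delta)^{-1}\,\frac{(L_\delta-L_0)f_0}{\delta}.
\end{equation*}
Letting $\delta\to 0$ and using (A2) together with the norm-convergence of the resolvents yields the claimed limit, noting that $(L_\delta-L_0)f_0\in V$ by integral preservation, so the resolvent applies. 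The main obstacle is the uniform control of the perturbed resolvents on $V$: one must leverage the spectral gap from Part (II) together with the $O(\delta)$ operator-norm bound on $L_\delta-L_0$ to maintain invertibility of $(Id-L_\delta)|_V$ and to obtain the norm-convergence of $(Id-L_\delta)^{-1}$ required to pass to the limit.
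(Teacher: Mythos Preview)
Your proposal is correct and follows essentially the same spectral-gap/Neumann-series strategy as the paper. The main organizational difference is in Part~(III)(c): the paper applies the \emph{unperturbed} resolvent, writing
\[
(\text{Id}-L_0)\,\frac{f_\delta-f_0}{\delta}=\frac{(L_\delta-L_0)}{\delta}f_\delta,
\]
which leaves $f_\delta$ on the right-hand side and therefore requires a separate estimate $\|f_\delta-f_0\|_2\to 0$ (obtained by showing $\|L_\delta^n|_V\|\le 2/3$ for small $\delta$ and iterating). You instead apply the \emph{perturbed} resolvent $(\text{Id}-L_\delta)^{-1}$ to $(L_\delta-L_0)f_0$, trading that continuity estimate for the norm-convergence $(\text{Id}-L_\delta)^{-1}\to(\text{Id}-L_0)^{-1}$ on $V$, which your Neumann perturbation argument for (III)(a) already supplies (together with a uniform bound on $\|(\text{Id}-L_\delta)^{-1}\|_{V\to V}$ needed to pass the limit through). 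Both routes are standard; yours is slightly more economical in that $f_\delta\to f_0$ falls out automatically rather than needing its own computation. Similarly, for (III)(a) the paper shows mixing persists ($\|L_\delta^n|_V\|\le 2/3$) rather than directly perturbing the resolvent---again a cosmetic difference.
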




\begin{proof}
\, 
   \paragraph{Claim (I):}
We start by proving the existence of the invariant functions $g_\delta$ for the operators $L_\delta$. 
Since the operators are compact and integral preserving, $L_\delta $ has an eigenvalue $1$ for each $\delta$. Indeed, let us consider the adjoint operators $L^*_\delta:L^2\to L^2$ defined by the duality relation $\langle L_\delta f,g\rangle=\langle f,L^*_\delta g \rangle $ for all $f,g\in L^2.$
Because of the integral-preserving assumption, we have
$\langle f, L^*_\delta \mathbf{1}\rangle = \langle L_\delta f,\mathbf{1}\rangle = \int L_\delta f\ dm = \int f\ dm = \langle f,\mathbf{1}\rangle$.\footnote{We use the notation $\mathbf{1}$ for the constant function and $\mathbf{1}_A$ for the indicator function of the set $A$.}
This implies $L^*_\delta \mathbf{1}=\mathbf{1}$ and thus, $1$ is in the spectrum of $L^*_\delta$ and $L_\delta$. 
Since $L_\delta$ is compact, its spectrum equals the eigenvalues and we have nontrivial fixed points for the 
operators $L_\delta$.

\paragraph{Claim (III)(a) for $\delta=0$:}
Now we prove the uniqueness of the normalized invariant function of $L_0$. 
Above we proved that $L_0 $ has some invariant function $g_0\neq 0$.
The mixing assumption $(A1)$ implies that
$\int g_{0}\ dm\neq 0$; to see this, we note that if $\int g_{0}\ dm=0$, then $g_0\in V$, and, by $(A1)$, $g_0$ cannot be a nontrivial fixed point of $L_0$.
We claim that $f_0=\frac{g_0}{\int g_{0}\ dm}$ is the unique normalized invariant function for $L_0$.
To see this, suppose there was a second normalized invariant function $f'_0$; then, $f'_0-f_0$ would be an invariant function in $V$, which is a contradiction. 

\paragraph{Claim (II):}
To show that $L_0$ has exponential contraction on $V$, we first note that for
$f\in L^2$, we can write $f=f_0\int f\ dm+[f-f_0\int f\ dm]$. Since $[f-f_0\int f\ dm]\in V$, it follows from $(A1)$ that $L_0^n f\to_{L^2} f_0\int f\ dm$. 
Thus, the spectrum of $L_0$ is contained in the unit disk by the spectral radius theorem.
Now suppose $\lambda$ is in the spectrum of $L_0$ and $|\lambda|=1$. By the compactness assumption, there is  an eigenvector $f_{\lambda}$ for $\lambda$ and then we have $||L_0^n(f_{\lambda})||_2=||f_{\lambda}||_2$. However, $L_0^n(f_{\lambda})\to_{L^2} f_0\int f_\lambda\ dm$, which is not possible unless $\lambda=1$. Hence, the spectrum of $L_0|_V$ is strictly contained in the unit disk. Thus, by the spectral radius theorem, there is an $n>0$ such that 
$||L_0^n|_V||_{L^2\rightarrow L^2}\leq \frac{1}{2}$ and we have exponential contraction of $L_0$ on $V$. 

\paragraph{Claim (III)(a) for $\delta\in [0,\bar{\delta}]$:}
From the assumption $||L_0-L_\delta||_{L^2\rightarrow L^2}\leq K\delta $, 
we have for small enough $\delta$ that  
$||L_\delta^n|_V||_{L^2\rightarrow L^2}\leq \frac{2}{3}$ and therefore, $L_\delta$ is also mixing. 
We can apply the argument above to the operators $L_\delta$ and obtain, for each small enough $\delta$, a unique normalized invariant function $f_\delta$.

\paragraph{Claim (III)(b):}
Using the exponential contraction of 
$L_0$ on $V$, we now show that $(\text{Id}-L_{0})^{-1}:V\rightarrow V$ is continuous. 
Indeed, for $f\in V$, we get $(\text{Id}-L_0)^{-1}f=f+\sum_{n=1}^{\infty }L_{0}^{n}f$. 
Since $L_{0}$ is exponentially
contracting on $V$, and $\sum_{n=1}^{\infty }Ce^{\lambda n}:=M<\infty ,$
the sum $\sum_{n=1}^{\infty }L_{0}^{n}f$ converges in $V$ with respect to
the $L^{2}$ norm. The resolvent $(\text{Id}-L_{0})^{-1}:V\rightarrow V$ is then a continuous operator and $||(\text{Id}-L_{0})^{-1}||_{V\rightarrow V}\leq 1+M.$ We remark
that since $\hat{f}\in V,$ the resolvent can be computed at $\hat{f}$. 

\paragraph{Claim (III)(c):}
Now we are ready to prove the linear response formula. 
Furthermore, we have 
%
\begin{eqnarray*}
\Vert f_{\delta }-f_{0}\Vert _{2} &\leq &\Vert L_{\delta }^{n}f_{\delta
}-L_{0}^{n}f_{0}\Vert _{2} \\
&\leq &\Vert L_{\delta }^{n}f_{0}-L_{0}^{n}f_{0}\Vert _{2}+\Vert L_{\delta
}^{n}f_{\delta }-L_{\delta }^{n}f_{0}\Vert _{2} \\
&\leq &\Vert L_{\delta }^{n}-L_{0}^{n}\Vert _{2}\Vert f_{0}\Vert_{2}+\Vert
L_{\delta }^{n}|_{V}\Vert_{L^2\rightarrow L^2}\Vert f_{\delta }-f_{0}\Vert _{2}
\\
&\leq &
\Vert L_{\delta }^{n}-L_{0}^{n}\Vert _{2}\Vert f_{0}\Vert_{2}+
\frac23\Vert f_{\delta }-f_{0}\Vert _{2},
\end{eqnarray*}
from which we get $\Vert f_{\delta }-f_{0}\Vert _{2}\leq 3 \Vert L_{\delta }^{n}-L_{0}^{n}\Vert _{L^2\rightarrow L^2}\Vert f_{0}\Vert_{2}$. 
Since $||L_0-L_\delta||_{L^2\rightarrow L^2}\leq K\delta $, we have 
$\Vert f_{\delta }-f_{0}\Vert _{2} \to 0$ as $\delta \to 0.$
Since $f_{0}$
and $f_{\delta }$ are the invariant functions of $L_0$ and $L_\delta$, we have
\begin{equation*}
(\text{Id}-L_{0})\frac{f_{\delta }-f_{0}}{\delta }=\frac{1}{\delta }(L_{\delta
}-L_{0})f_{\delta }.
\end{equation*}%
By applying the resolvent to both sides we obtain
\begin{equation*}
\begin{aligned} \frac{f_{\delta }-f_{0}}{\delta }& =
(\text{Id}-L_0)^{-1}\frac{L_{\delta }-L_{0}}{\delta }f_{\delta } \\
&=(\text{Id}-L_0)^{-1}\frac{L_{\delta }-L_{0}}{\delta
}f_{0}+(\text{Id}-L_0)^{-1}\frac{L_{\delta }-L_{0}}{\delta }(f_{\delta }-f_{0}).
\end{aligned}  \label{eq:derivation-linear-resp}
\end{equation*}
Moreover, from assumption $(A2)$, we have for sufficiently small $\delta$ that
\begin{equation*}
\left\Vert (\text{Id}-L_{0})^{-1}\frac{L_{\delta }-L_{0}}{\delta }(f_{\delta
}-f_{0})\right\Vert _{2}\leq \Vert (\text{Id}-L_{0})^{-1}\Vert _{V\rightarrow
V}K\Vert f_{\delta }-f_{0}\Vert _{2}.
\end{equation*}%
Since we already proved that $\lim_{\delta \rightarrow 0}\Vert f_{\delta }-f_{0}\Vert _{2}=0$, 
we are left with
\begin{equation*}
\lim_{\delta \rightarrow 0}\frac{f_{\delta }-f_{0}}{\delta }=(\text{Id}%
-L_{0})^{-1}\hat{f}
\end{equation*}%
converging in the $L^{2}$ norm.
\end{proof}

We remark that the strategy of proof of Theorem \ref{th:linearresponse copy(1)} is similar to the one of Theorem 3 of \cite{GG} although the assumptions made are quite different, here we consider a compact integral preserving operator on $L^2$, while in \cite{GG} several norms are considered to allow low regularity perturbations and the operator is required to be positive.

It is worth to remark that the above proof gives a description of the spectral picture of $L_0$. By Theorem \ref{th:linearresponse copy(1)}, if $L_0$ satisfies $(A1)$ then the invariant
function is unique, up to normalization; this shows that $1$ is a simple eigenvalue.  
Furthermore, $L_0$ preserves the direct sum $L^2=$ span$\{f_0\} \oplus V$ and the spectrum of $L_0$ is strictly inside the unit disk when $L_0$ is restricted to $V$. 
Hence, the spectrum of $L_0$ is contained in the unit disk and there is a spectral gap.

\begin{remark}\label{assumptions} The mixing assumption in $(A1)$ is required only for the 
{unperturbed operator} $L_{0}$. 
This assumption is satisfied, for example, if $L_0$ is an integral operator and an iterate of this  operator has a strictly positive
kernel, see Corollary 5.7.1 of \cite{LM}.
Later in Remark \ref{6.4} we show this assumption is verified for a wide range of examples of stochastic dynamical systems.
\end{remark}

\subsection{Existence of linear response of the dominant eigenvalues}

In this section, we consider the existence of linear response for the second
largest eigenvalues (in magnitude) and provide a formula for the linear
response. 
An important object needed to quantify linear response statements is a ``derivative'' of the transfer operator with respect to the perturbation.

\begin{definition}\label{dop}
 We define $\dot{L}:L^2\to V$ as the unique linear operator satisfying  
$$\lim_{\delta\to 0}\left\|\frac{(L_\delta-L_0)}{\delta}-\dot{L}\right\|_{L^2\to V}=0.$$
\end{definition}

Let $\mathcal{B}(L^2)$ denote the space of bounded linear operators from the Banach space $L^2$
to itself and $r(L)$ denote the spectral radius of an operator $L$; we begin with the 
following definition.

\begin{definition}[\protect\cite{HH}, Definition III.7]
\label{def:HH-dom-sim-eval} Let $s\in\mathbb{N}, s\ge 1$. 
We say that $L\in%
\mathcal{B}(L^2([0,1],\mathbb{C}))$ has $s$ dominating simple eigenvalues if there exists
closed subspaces $E$ and $\tilde{E}$ such that

\begin{enumerate}
\item $L^2([0,1],\mathbb{C}) = E\oplus \tilde{E}$,

\item $L(E)\subset E$, $L(\tilde{E})\subset \tilde{E}$,

\item dim$(E)=s$ and $L|_{E}$ has $s$ geometrically simple eigenvalues $\lambda_i$, $%
i=1,\dots, s$,

\item $r(L|_{\tilde{E}})<\min \{|\lambda_i|:i=1,\dots,s\}$.
\end{enumerate}
\end{definition}

Adapting  Theorem III.8 and Corollary III.11 of \cite{HH} to our situation, we can now state a linear response result for these eigenvalues.

\begin{proposition}
\label{th:lin-resp-2nd-eval} 
Let $L_\delta:L^2([0,1],{\mathbb{C}}%
)\rightarrow L^2([0,1],{\mathbb{C}})$, where $\delta\in [0,\bar\delta)=:I_0$, be integral-preserving (see equation \eqref{eq:int-pers-cond}) compact operators. 
Assume that the map $\delta\mapsto L_\delta$ is in 
$C^1(I_0,\mathcal{B}(L^2([0,1],\bC)))$ and $L_0$ is mixing (see $(A1)$ in Theorem \ref{th:linearresponse copy(1)}). Then, $\lambda_{1,0}:= 1\in\sigma (L_0)$ and $r(L_0)=1$. 
Let $\mathcal{I}\subset\sigma(L_0)\setminus\{1\}$
be the eigenvalue(s) of maximal modulus strictly inside the unit disk; assume they are geometrically simple 
and let $s:=|\mathcal{I}|+1$. 
Then there exists an interval $I_1:=[0,\delta_1) $, $I_1\subset I_0$ such that for 
$\delta\in I_1$, $L_\delta$ has $s$ dominating simple eigenvalues. Thus, there exists functions $e_{i, (\cdot)},\ \hat{e}_{i,(\cdot)}\in C^1(I_1,L^2([0,1],\bC))$ and
$\lambda_{i,(\cdot)}\in C^1(I_1,\bC)$ such that for $\delta\in I_1$ and $i,j = 2,\dots, s$
\begin{itemize}
\item[(i)] $L_\delta e_{i,\delta} = \lambda_{i,\delta} e_{i,\delta}$, $L^*_\delta\hat{e}_{i,\delta} = \lambda_{i,\delta}\hat{e}_{i,\delta}$,
\item[(ii)] $\langle e_{i,\delta},\hat{e}_{j,\delta}\rangle_{L^2([0,1],\bC)} = \delta_{i,j}$, where
$\delta_{i,j}$ is the Kronecker delta.
\end{itemize}
Furthermore, let  $\dot{\lambda}_i\in \bC$ satisfy
\begin{equation*}
\lim_{\delta\rightarrow 0}\bigg|\frac{\lambda_{i,\delta}-\lambda_{i,0}}{\delta}%
-\dot{\lambda}_{i}\bigg| = 0,
\end{equation*}
then 
\begin{equation}  \label{eq:lin-resp-2nd-eval-pre}
\begin{aligned} \dot{\lambda}_i = \langle \hat{e}_{i,0},\dot{L} e_{i,0}
\rangle_{L^2([0,1],\bC)}, \end{aligned}
\end{equation}
where $\dot{L}$ is as in Definition \ref{dop}.

\end{proposition}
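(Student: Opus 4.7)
The plan is to verify that the hypotheses of \cite[Theorem III.8]{HH} are satisfied in our setting, invoke it to obtain the $C^1$ perturbation theory of the dominating simple eigenvalues, and then perform a short computation to extract the stated derivative formula.

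First I would establish the spectral picture of $L_0$. By Theorem \ref{th:linearresponse copy(1)}, the mixing assumption $(A1)$ implies that $1$ is a simple eigenvalue of $L_0$, and $L_0$ preserves the decomposition $L^2=\mathrm{span}\{f_0\}\oplus V$ with $r(L_0|_V)<1$. Since $L_0$ is compact, its nonzero spectrum consists of isolated eigenvalues of finite algebraic multiplicity accumulating only at $0$, so the set $\mathcal{I}$ of eigenvalues of maximal modulus in $\sigma(L_0)\setminus\{1\}$ is finite and separated from the rest of the spectrum by a strictly positive gap. Using the associated Riesz projections, one decomposes $L^2([0,1],\mathbb{C}) = E\oplus \tilde E$, where $E$ is the range of the spectral projection corresponding to $\{1\}\cup\mathcal{I}$ and $\tilde E$ is its complement; combined with the geometric simplicity hypothesis, this verifies Definition \ref{def:HH-dom-sim-eval}, so $L_0$ has $s$ dominating simple eigenvalues.

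Next I would apply \cite[Theorem III.8]{HH} (and its Corollary III.11): since $\delta\mapsto L_\delta$ lies in $C^1(I_0,\mathcal{B}(L^2([0,1],\mathbb{C})))$ and the $s$ dominating eigenvalues of $L_0$ are isolated, upper-semicontinuity of the spectrum guarantees the existence of $I_1\subset I_0$ on which the Riesz projections about each $\lambda_{i,0}$ remain disjoint and depend in a $C^1$ fashion on $\delta$; the resulting eigenvalues $\lambda_{i,\delta}$ and their right and left eigenvectors $e_{i,\delta}$ and $\hat e_{i,\delta}$ (eigenvectors for $L_\delta^*$) inherit this $C^1$ regularity. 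A standard biorthogonal normalisation produces (i) and (ii). To obtain the derivative formula I would differentiate the identity $L_\delta e_{i,\delta}=\lambda_{i,\delta}e_{i,\delta}$ at $\delta=0$ to get
\begin{equation*}
\dot{L}\,e_{i,0}+L_0\,\dot e_{i,0}=\dot\lambda_i\,e_{i,0}+\lambda_{i,0}\,\dot e_{i,0},
\end{equation*}
then pair with $\hat e_{i,0}$ and use $L_0^*\hat e_{i,0}=\lambda_{i,0}\hat e_{i,0}$ so that the two terms involving $\dot e_{i,0}$ cancel; the biorthogonality $\langle e_{i,0},\hat e_{i,0}\rangle=1$ then yields $\dot\lambda_i=\langle \hat e_{i,0},\dot L\,e_{i,0}\rangle_{L^2([0,1],\mathbb{C})}$.

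The main obstacle I expect is the spectral-separation step: one must carefully promote the qualitative mixing hypothesis on $V$, together with compactness of $L_0$, to a quantitative statement that $\{1\}\cup\mathcal{I}$ is isolated from $\sigma(L_0)\setminus(\{1\}\cup\mathcal{I})$ and that each eigenvalue in $\mathcal{I}$ is isolated from the others. This separation is exactly what makes the Riesz projection construction robust under the $C^1$ perturbation, and hence what allows \cite{HH} to be applied. Once the dominating-simple-eigenvalue structure is in place, both the perturbation theory and the derivative computation are routine.
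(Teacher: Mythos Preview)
Your proposal is correct and follows essentially the same approach as the paper: verify that $L_0$ has $s$ dominating simple eigenvalues via compactness and the mixing hypothesis (invoking Theorem \ref{th:linearresponse copy(1)}), then apply \cite[Theorem III.8 and Corollary III.11]{HH}. One small technical point the paper makes explicit and you do not: \cite[Theorem III.8]{HH} is stated for an open interval about $0$, so the $C^1$ map $\delta\mapsto L_\delta$ on $[0,\bar\delta)$ must first be extended to $(-\bar\delta,\bar\delta)$; this is trivial but should be mentioned. Your direct differentiation argument for the formula is precisely the content of \cite[Corollary III.11]{HH}, which the paper simply cites.
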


\begin{proof}
From Theorem \ref{th:linearresponse copy(1)} and the discussion following it, $1\in\sigma(L_0)$ and 
$r(L_0)=1$.


We now use Theorem III.8 in \cite{HH} to obtain the
existence of linear response and Corollary III.11 \cite{HH} to obtain the formula.
We begin by verifying the two hypotheses of Theorem III.8 \cite{HH}.
We remark that our map $\delta\mapsto L_\delta$ belonging to $C^1([0,\bar\delta),\mathcal{B}(L^2([0,1],\bC)))$  can be extended to a map  $C^1((-\bar\delta,\bar\delta),\mathcal{B}(L^2([0,1],\bC)))$.

Doing so,  
hypothesis $(H1)$ of Theorem III.8 \cite{HH} is satisfied. Since $r(L_0)=1$, we just need to show that $L_0$ has $s$ dominating eigenvalues. 
 Since
$L_0 $ is a compact operator, the eigenvalues $\lambda_{i,0}\in\mathcal{I}$ are isolated. 
Let $\Pi_i$ be the eigenprojection onto the
eigenspace of $\lambda_{i,0}$ and $E_i:=\Pi_i (L^2([0,1],{\mathbb{C}}))$. 
Define the eigenspaces $E:=\bigoplus_{i=1}^s E_i$ and $\widetilde{E}: = (\text{Id}%
-\sum_{i=1}^s\Pi_i)(L^2([0,1],{\mathbb{C}}))$. We thus have:

\begin{itemize}
\item[(1)] $L^2([0,1],{\mathbb{C}}) = E\oplus \widetilde{E}$.

\item[(2)] $L_0\left(E\right)\subset E$ and $L_0(\widetilde{E}%
)\subset \widetilde{E}$.

\item[(3)] dim$\left(E\right) =s$ and $L_0|_{E}$ has $s$
simple eigenvalues $\lambda_{1,0}\cup\mathcal{I}$. This point 
follows from the assumption that the eigenvalues in $\mathcal{I}$ are geometrically
simple and the fact that $\lambda_{1,0}$ is simple 
(see Theorem \ref{th:linearresponse copy(1)}).

\item[(4)] $r(L_0|_{\widetilde{E}}) <|\lambda_{i,0}|$ where $\lambda_{i,0}\in \mathcal{I}$.
\end{itemize}
Thus, $L_0$ satisfies hypothesis (H2) of
Theorem III.8 since it has $s$ dominating simple eigenvalues and 
$r(L_0)=1$. Hence, from Theorem III.8 \cite{HH}, the map $%
\delta\mapsto\lambda_{i,\delta}$ is differentiable at $\delta=0$.

We can now apply the argument in Corollary III.11 \cite{HH} for $\lambda_{i,0}$ to obtain %
\eqref{eq:lin-resp-2nd-eval} (the result and proof of Corollary III.11 \cite{HH} is
for the top eigenvalue, however the argument still holds for any eigenvalue
$\lambda_{i,0}$, $\in\mathcal{I}$ by changing the index value in the proof of the corollary).
\end{proof}

\section{Application to Hilbert-Schmidt integral operators}\label{newsec3}

In this section we apply the results of the previous section to Hilbert-Schmidt integral operators
and suitable perturbations. 
The operators we consider are compact operators on $L^{2}([0,1],\bR)$ (or $L^{2}([0,1],\bC)$); for brevity we will denote\footnote{We will also denote $L^p:=L^p([0,1],\bR)$; this notation will not be used for $L^{2}([0,1],\bC)$.} $L^{2}:=L^{2}([0,1],\bR)$. 
To avoid confusion we point out that in the following we will also consider the space $L^{2}([0,1]^{2})$ of square integrable real functions on the unit square; 
this space contains the kernels of the operators we consider. 

Let $k\in L^{2}([0,1]^{2})$
and consider the operator $L:L^2 \to L^2$ defined in the following way:
for $f\in L^{2}$
\begin{equation}
Lf(x)=\int k(x,y)f(y)dy;  \label{kernelL}
\end{equation}%
such an operator is called a Hilbert-Schmidt integral operator. Such
operators may represent the annealed transfer operators of systems perturbed by
additive noise (see Section \ref{sec:sys-add-noise}). 

We now list some well-known and basic facts about Hilbert-Schmidt integral operators with kernels 
in $L^{2}([0,1]^{2})$:
\begin{itemize}
\item The operator $L:L^{2}\rightarrow L^{2}$ is bounded and
\begin{equation}
||Lf||_{2}\leq ||k||_{L^{2}([0,1]^{2})}||f||_{2}  \label{KF}
\end{equation}%
(see Proposition 4.7 in II.\S 4 \cite{C}).
\item If $k\in L^{\infty}([0,1]^{2})$, then
\begin{equation}
||Lf||_{\infty }\leq ||k||_{L^{\infty }([0,1]^{2})}||f||_{1}  \label{KF2}
\end{equation}%
and the operator $L:L^1\rightarrow L^{\infty }$ is bounded. Furthermore, 
$\|L\|_{L^p\rightarrow L^\infty}\le \|k\|_{L^\infty([0,1]^2)}$ for $1\le p\le \infty$.  
\item If for almost every $y\in[0,1]$ we have $$\int k(x,y) dx=1,$$ then the Hilbert-Schmidt 
integral operator associated to the kernel $k$ is integral preserving (satisfies 
\eqref{eq:int-pers-cond}).
\item The operator $L:L^2\rightarrow L^2$ is compact (see \cite{KF}).
\end{itemize}
Combining the last two points, we have from Theorem \ref{th:linearresponse copy(1)} that
such an operator has an invariant function in $L^{2}$.
Furthermore, for $k\in L^\infty([0,1]^2)$ we have an analogous result.
\begin{lemma}
Let $L:L^2\rightarrow L^2$ be an integral operator, with integral-preserving kernel $k\in L^\infty([0,1]^2)$, that is mixing (satisfies $(A1)$ of
Theorem \ref{th:linearresponse copy(1)}). Then, there exists a unique fixed point $f\in L^\infty$ of $L$ satisfying $\int f\ dm =1$.
Furthermore, if the kernel is nonnegative, then $f$ is nonnegative.
\end{lemma}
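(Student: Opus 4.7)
The plan is to bootstrap the results already proved. First, I would invoke Theorem \ref{th:linearresponse copy(1)} applied to $L$ itself (taking $L_\delta\equiv L$): the operator is a Hilbert-Schmidt integral operator with kernel in $L^\infty([0,1]^2)\subset L^2([0,1]^2)$, hence compact on $L^2$, it is integral-preserving, and it satisfies the mixing assumption $(A1)$ by hypothesis. Claims (I)--(II) of that theorem then yield a \emph{unique} normalized $L^2$-fixed point $f$ with $\int f\,dm=1$, and also tell us that the fixed-point space of $L$ in $L^2$ is one-dimensional (spanned by $f$).

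Second, I would upgrade the regularity from $L^2$ to $L^\infty$ using inequality \eqref{KF2}. Since $m([0,1])=1$ we have $f\in L^2\subset L^1$, so
\begin{equation*}
\|f\|_\infty=\|Lf\|_\infty\le \|k\|_{L^\infty([0,1]^2)}\|f\|_1<\infty,
\end{equation*}
which shows $f\in L^\infty$. Uniqueness within $L^\infty$ is automatic: any $L^\infty$ fixed point with unit integral lies in $L^2$ (again because $[0,1]$ has finite measure), hence must coincide with $f$ by the uniqueness established in step one.

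Third, for the nonnegativity claim when $k\ge 0$, I would argue via the standard lattice inequality for positive integral operators. The pointwise estimate
\begin{equation*}
L|f|(x)=\int k(x,y)|f(y)|\,dy\ge \bigg|\int k(x,y)f(y)\,dy\bigg|=|f(x)|
\end{equation*}
holds almost everywhere. Integrating and using integral-preservation gives $\int(L|f|-|f|)\,dm=0$, and since $L|f|-|f|\ge 0$ we conclude $L|f|=|f|$ almost everywhere. Thus $|f|$ is a fixed point of $L$ in $L^2$, and by the one-dimensionality of the fixed-point space, $|f|=c\,f$ for some constant $c=\int|f|\,dm\ge |\int f\,dm|=1>0$. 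Examining signs pointwise shows $f$ cannot change sign (otherwise $c$ would need to take two distinct values), and positivity of $c$ forces $f\ge 0$, which incidentally also yields $c=1$.

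The only delicate point is the nonnegativity step; the rest is a direct assembly of facts already in the paper. Even the nonnegativity is standard once one notices that the uniqueness conclusion of Theorem \ref{th:linearresponse copy(1)} gives a one-dimensional fixed-point space, so the main conceptual input (mixing $\Rightarrow$ simple eigenvalue at $1$) has already been done.
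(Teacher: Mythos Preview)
Your proof is correct. The first two steps (existence/uniqueness in $L^2$ via Theorem \ref{th:linearresponse copy(1)}, then the upgrade to $L^\infty$ via \eqref{KF2}) are exactly the paper's argument.

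For the nonnegativity part, however, you take a genuinely different route. The paper argues indirectly: it bounds the iterated kernels $k^j$ uniformly in $L^\infty$, so that $\|L^jg\|_\infty$ is bounded for any probability density $g$, and then invokes an external result (Corollary~5.2.2 of Lasota--Mackey) to produce a nonnegative invariant density $\hat f$, which must coincide with $f$ by uniqueness. Your argument is self-contained: the positivity of $k$ gives the pointwise lattice inequality $L|f|\ge |Lf|=|f|$, integral preservation forces equality, and then the one-dimensionality of the fixed-point space (already established in step one) yields $|f|=cf$ with $c>0$, hence $f\ge 0$. Your approach avoids the external citation and the iterated-kernel computation; the paper's approach has the minor advantage of directly producing a probability density rather than deducing nonnegativity after the fact. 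Both are perfectly valid.
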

\begin{proof}
Since $k$ is an integral-preserving kernel, $L_0$ satisfies \eqref{eq:int-pers-cond}. Thus, we
can apply Theorem \ref{th:linearresponse copy(1)} to conclude that
there exists a unique $f\in L^2$,
$\int f\ dm=1$, such that $Lf=f$.
Noting that $k\in L^\infty([0,1]^2)$, we have from inequality \eqref{KF2} that $f\in L^\infty$.

We now assume $k$ is nonnegative. Let $k^j$ be the kernel of the operator $L^{j}$. Since $k$ is an integral-preserving kernel, we have 
\begin{equation*}
\begin{aligned}
    |k^2(x,y)| &= \bigg|\int k(x,z)k(z,y)dz\bigg|\le \int |k(x,z)k(z,y)|dz\\
    &\le \|k\|_{L^\infty([0,1]^2)}\int k(z,y)dz = \|k\|_{L^\infty([0,1]^2)};
\end{aligned}
\end{equation*}
 it easily follows that
$\|k^j\|_{L^\infty([0,1]^2)}\le \|k\|_{L^\infty([0,1]^2)}$.
Thus, for any probability density $g\in L^1$, we have
$\|L^jg\|_\infty\le\|k\|_{L^\infty([0,1]^2)}$; thus, by Corollary 5.2.2 in
\cite{LM}, there exists a probability density $\hat{f}\in L^1$ 
such that $L\hat{f} = \hat{f}$.
Since $f$ is the unique invariant function with integral $1$, we have $\hat{f}=f$; 
thus, $f$ is a probability density.
\end{proof}

\subsection{Characterising valid perturbations and the derivative of the transfer operator}
\label{subsubsec:lin-resp-form-ker-setting}

In this subsection we consider perturbations of integral-preserving
Hilbert-Schmidt integral operators such that assumption (A2) of Theorem \ref%
{th:linearresponse copy(1)} can be verified and the derivative operator $%
\dot{L}$ computed. We begin, however, by first characterizing
the set of perturbations for which the integral
preserving property of the operators is preserved.

Consider the set $V_{\ker }$ of kernels having zero average in the $x$
direction, defined as
\begin{equation*}
V_{\ker }:=\bigg\{k\in L^{2}([0,1]^{2}): \int k(x,y)dx=0~for~a.e.~y\bigg\}.
\end{equation*}

\begin{lemma}
\label{lem:charac-mean-0} Consider a kernel operator $A:L^{2}([0,1])
\rightarrow L^{2}([0,1])$ defined by $Af(x)=\int k(x,y)f(y)dy$. Then, the
following are equivalent

\begin{enumerate}
\item $A(L^{2}([0,1]))\subseteq V$,
\item $k\in V_{\ker}$.
\end{enumerate}
\end{lemma}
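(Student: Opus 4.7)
The plan is to prove the two implications directly, using Fubini's theorem in both directions. The lemma is essentially a duality statement: $V_{\ker}$ is the kernel analogue of the zero-average space $V$ in the $x$-variable, and applying the operator $A$ integrates against $k$ in the $y$-variable, so the mean of $Af$ is determined by $\int k(x,y)\,dx$.

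For the direction $(2)\Rightarrow(1)$, I would take any $f\in L^2([0,1])$ and compute the mean of $Af$ by swapping the order of integration. Since $k\in L^2([0,1]^2)$ and $f\in L^2([0,1])$, the function $(x,y)\mapsto k(x,y)f(y)$ is integrable on $[0,1]^2$ by Cauchy--Schwarz, so Fubini applies. This yields
\[
\int Af(x)\,dx \;=\; \int f(y)\left(\int k(x,y)\,dx\right)dy \;=\; 0,
\]
so $Af\in V$.

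For the direction $(1)\Rightarrow(2)$, the key observation is that the function $\phi(y):=\int k(x,y)\,dx$ belongs to $L^2([0,1])$: by Cauchy--Schwarz, $|\phi(y)|^2\le \int |k(x,y)|^2\,dx$, and integrating in $y$ gives $\|\phi\|_2^2\le \|k\|_{L^2([0,1]^2)}^2<\infty$. The hypothesis that $Af\in V$ for every $f\in L^2$ gives, via Fubini exactly as above,
\[
\int f(y)\,\phi(y)\,dy \;=\; 0 \qquad \text{for every } f\in L^2([0,1]).
\]
Choosing $f=\phi$ yields $\|\phi\|_2^2=0$, hence $\phi(y)=0$ for almost every $y$, which is exactly $k\in V_{\ker}$.

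I do not anticipate any real obstacle here: the only thing to be careful about is the applicability of Fubini, which is ensured by Cauchy--Schwarz and the fact that $k\in L^2([0,1]^2)$ together with $f\in L^2([0,1])$. The argument does not require $k$ to be bounded or nonnegative, nor does it use any further structure of $A$, so it is a purely measure-theoretic equivalence.
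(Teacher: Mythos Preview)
Your proof is correct. Both you and the paper reduce the nontrivial direction $(1)\Rightarrow(2)$ to the identity $\int Af\,dm = \int f(y)\phi(y)\,dy$ with $\phi(y)=\int k(x,y)\,dx$, obtained via Fubini. The only difference is how each concludes $\phi=0$ a.e.: the paper argues by contrapositive, extracting a set $S$ of positive measure where $\phi\geq\epsilon$ (or $\leq-\epsilon$) and testing with $f=\mathbf{1}_S$, while you observe that $\phi\in L^2$ and test with $f=\phi$ to get $\|\phi\|_2^2=0$. Your route is marginally cleaner since it avoids the sign-case split; the paper's is slightly more concrete in that it exhibits an explicit $f$ with $Af\notin V$. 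Either way the content is the same.
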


\begin{proof}
Clearly, the second condition implies the first. For the other direction we prove the contrapositive.
If $\int
k(x,y)dx\neq 0$ on a set of positive measure, then for a small $\epsilon >0$
there is a set $S$ of positive measure $m(S) >0$ such that $\int k(x,y)dx\geq \epsilon $ or $\int
k(x,y)dx\leq -\epsilon $ for each $y\in S$.
Suppose $\int k(x,y)dx\geq \epsilon $ in this set, consider $f:=\mathbf{1}_{S}$ and $%
g:=Af.$ Then, $g(x)=\int k(x,y)\mathbf{1}_{S}(y)dy$ and we have $\int g(x)dx= \int_S \int k(x,y) dx dy \geq \epsilon \ m(S) $ and $ g\notin V$. The other case $\int k(x,y)dx\leq -\epsilon $  is analogous.
\end{proof}

We now prove that $V_{\ker }$ is closed.
\begin{lemma}
\label{lemma:closedsub} The set $V_{\ker }$ is a closed vector subspace of $%
L^{2}([0,1]^{2}).$
\end{lemma}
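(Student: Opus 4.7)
The plan is to realize $V_{\ker}$ as the kernel (preimage of $\{0\}$) of a bounded linear operator, which immediately gives both the vector subspace and the closedness properties.

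First I would define the ``partial integration'' map
\[
\Phi : L^{2}([0,1]^{2}) \longrightarrow L^{2}([0,1]), \qquad \Phi(k)(y) := \int_{0}^{1} k(x,y)\,dx.
\]
Note that $\Phi(k)$ is well-defined in $L^{2}([0,1])$ for a.e.\ $y$ by Fubini's theorem (applied after Cauchy--Schwarz in $x$), and $\Phi$ is linear. With this definition, $V_{\ker} = \Phi^{-1}(\{0\})$, so the subspace property is immediate.

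The main step is to check that $\Phi$ is continuous, after which closedness follows because $\{0\}$ is closed in $L^{2}([0,1])$. I would apply Cauchy--Schwarz pointwise in $y$ and then integrate:
\[
\|\Phi(k)\|_{L^{2}([0,1])}^{2} = \int_{0}^{1} \Bigl|\int_{0}^{1} k(x,y)\,dx\Bigr|^{2} dy \leq \int_{0}^{1}\int_{0}^{1} |k(x,y)|^{2}\,dx\,dy = \|k\|_{L^{2}([0,1]^{2})}^{2},
\]
so $\Phi$ has operator norm at most $1$.

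I do not expect any real obstacle here; the only technical point worth mentioning is the measurability/integrability of $y \mapsto \int k(x,y)\,dx$, which is handled by Fubini together with the Cauchy--Schwarz bound above. Concluding: if $k_{n}\to k$ in $L^{2}([0,1]^{2})$ with each $k_{n}\in V_{\ker}$, then $\Phi(k_{n})=0$ and $\Phi(k_{n})\to \Phi(k)$ in $L^{2}([0,1])$, forcing $\Phi(k)=0$, i.e.\ $k\in V_{\ker}$.
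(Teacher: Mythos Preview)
Your proof is correct. You realize $V_{\ker}$ as the kernel of a single bounded linear operator $\Phi:L^2([0,1]^2)\to L^2([0,1])$ given by partial integration in $x$, and the Cauchy--Schwarz estimate shows $\|\Phi\|\le 1$.

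The paper takes a different route: rather than introducing $\Phi$, it fixes $f\in L^2([0,1])$ and considers the map $K_f:L^2([0,1]^2)\to L^2([0,1])$, $K_f(k)(x)=\int k(x,y)f(y)\,dy$, which is continuous by the Hilbert--Schmidt bound \eqref{KF}. Then, invoking the characterization in Lemma~\ref{lem:charac-mean-0}, it writes $V_{\ker}=\bigcap_{f\in L^2([0,1])}K_f^{-1}(V)$, an intersection of preimages of the closed subspace $V$ under continuous maps. Your argument is more direct and self-contained, needing neither Lemma~\ref{lem:charac-mean-0} nor an uncountable intersection; the paper's version has the (mild) advantage of reusing the integral-operator estimate already set up and of linking $V_{\ker}$ explicitly to the range condition $A(L^2)\subseteq V$ that drives the rest of Section~\ref{newsec3}.
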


\begin{proof}
The fact that $V_{\ker}$ is a vector space is trivial. For fixed $f\in
L^{2}([0,1])$, the set of $k\in L^2([0,1]^2)$ such that $\int k(x,y)f(y)dx\in V$ is closed.
To see this, define the function $K_{f}:L^{2}([0,1]^{2})\rightarrow L^{2}([0,1])$ as
\begin{equation}
K_{f}(k)=\int k(x,y)f(y)dy.  \label{kf}
\end{equation}%
By \eqref{KF}, $K_{f}$ is continuous. Since $V$ is closed in $L^{2}([0,1])$, this implies that
$K_{f}^{-1}(V)$ is closed in $L^{2}([0,1]^{2}).$ Finally, $V_{\ker}$ is closed in $L^2([0,1]^2)$
because $V_{\ker}=\cap_{f\in L^{2}([0,1])}K_{f}^{-1}(V)$.
\end{proof}

We now introduce the type of perturbations which we will investigate throughout
the paper. Let $L_{\delta}:L^{2}\rightarrow L^{2}$ be a family of integral operators,
with kernels $k_{\delta }\in L^{2}([0,1]^{2})$, given by
\begin{equation*}
L_{\delta }f(x)=\int k_{\delta }(x,y)f(y)dy.
\end{equation*}

\begin{lemma}
\label{ldot2}Let $k_{\delta }\in L^{2}([0,1]^{2})$ for
each $\delta \in [0,\bar{\delta}).$ Suppose that
\begin{equation}
k_{\delta }=k_{0}+\delta \cdot \dot{k}+r_\delta  \label{perturb1}
\end{equation}%
where $\dot{k},\ r_\delta \in L^{2}([0,1]^{2})$ and $%
||r_\delta||_{L^{2}([0,1]^{2})} = o(\delta).$
The bounded linear operator $\dot{L}:L^2\to V$ defined by
\begin{equation}
    \label{dotL}
    \dot{L}f(x):=\int \dot{k}(x,y)f(y)dy
\end{equation} satisfies
\begin{equation*}
\lim_{\delta \rightarrow 0}\bigg\|\frac{L_{\delta }-L_{0}}{\delta }-\dot{L}\bigg\|_{L^2\to V}=0.
\end{equation*}%
If additionally the derivative of the map $\delta\mapsto k_\delta$  with respect to $\delta$ varies continuously in a neighborhood of $\delta=0$, then $\delta\mapsto L_\delta$  has a continuous derivative in a neighborhood of  $\delta=0$.
\end{lemma}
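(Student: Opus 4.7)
The plan is to reduce the statement to a direct application of the Hilbert--Schmidt norm bound (\ref{KF}) applied to the remainder kernel $r_\delta$, after first checking that $\dot L$ genuinely takes values in $V$.

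First I would verify well-definedness of $\dot L$ as an operator into $V$. In the context of this paper all $k_\delta$ are integral-preserving, so $k_\delta-k_0\in V_{\ker}$ for every $\delta\in(0,\bar\delta)$. Dividing by $\delta$ we have $(k_\delta-k_0)/\delta = \dot k + r_\delta/\delta$, and since $\|r_\delta/\delta\|_{L^2([0,1]^2)}=o(1)$ this expression converges to $\dot k$ in $L^2([0,1]^2)$ as $\delta\to 0$. Because $V_{\ker}$ is closed in $L^2([0,1]^2)$ by Lemma \ref{lemma:closedsub}, we conclude $\dot k\in V_{\ker}$, and then Lemma \ref{lem:charac-mean-0} guarantees $\dot L\, f\in V$ for every $f\in L^2$. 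The boundedness of $\dot L$ follows from (\ref{KF}) applied to the kernel $\dot k$.

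Second, for the main limit, I would simply subtract the expansions. For every $f\in L^2$,
\begin{equation*}
\left(\frac{L_\delta-L_0}{\delta}-\dot L\right)f(x) \;=\; \int \frac{k_\delta(x,y)-k_0(x,y)-\delta\,\dot k(x,y)}{\delta}\,f(y)\,dy \;=\; \int \frac{r_\delta(x,y)}{\delta}\,f(y)\,dy.
\end{equation*}
The right-hand side defines a Hilbert--Schmidt integral operator whose kernel $r_\delta/\delta$ lies in $V_{\ker}$ (as the difference of elements of $V_{\ker}$, since $\delta\dot k$ and $k_\delta-k_0$ both do). Invoking (\ref{KF}) gives
\begin{equation*}
\Big\|\tfrac{L_\delta-L_0}{\delta}-\dot L\Big\|_{L^2\to V} \;\le\; \tfrac{1}{\delta}\,\|r_\delta\|_{L^2([0,1]^2)} \;\longrightarrow\; 0
\end{equation*}
as $\delta\to 0$, by the hypothesis $\|r_\delta\|_{L^2([0,1]^2)}=o(\delta)$.

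Finally, for continuity of the derivative near $\delta=0$, I would apply the same machinery pointwise in $\delta$. If $\delta\mapsto k_\delta$ has a continuous $L^2([0,1]^2)$-derivative $\dot k_\delta$ in a neighborhood of $0$, the argument above identifies the derivative of $\delta\mapsto L_\delta$ at each such $\delta$ as the Hilbert--Schmidt operator $\dot L_\delta$ with kernel $\dot k_\delta$. Inequality (\ref{KF}) then yields $\|\dot L_{\delta_1}-\dot L_{\delta_2}\|_{L^2\to L^2}\le \|\dot k_{\delta_1}-\dot k_{\delta_2}\|_{L^2([0,1]^2)}$, so continuity of $\delta\mapsto \dot k_\delta$ in $L^2([0,1]^2)$ transfers to continuity of $\delta\mapsto \dot L_\delta$ in operator norm. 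The only non-mechanical step here is the closedness/integral-preservation argument that places $\dot k$ in $V_{\ker}$; everything else is a one-line estimate from (\ref{KF}).
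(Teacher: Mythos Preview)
Your proof is correct and follows essentially the same approach as the paper: both reduce the main limit to the Hilbert--Schmidt bound (\ref{KF}) applied to the remainder kernel, and both handle the $C^1$ statement by applying (\ref{KF}) to differences of derivative kernels. Your treatment is in fact slightly more careful than the paper's in two places: you spell out the closure argument (via Lemmas \ref{lem:charac-mean-0} and \ref{lemma:closedsub}) placing $\dot k$ in $V_{\ker}$, and you correctly carry the factor $1/\delta$ through the estimate, obtaining $\|r_\delta\|_{L^2([0,1]^2)}/\delta\to 0$ rather than the paper's $\|r_\delta\|_{L^2([0,1]^2)}=o(\delta)$ (which omits the division by $\delta$ in the displayed chain, though the conclusion is unaffected).
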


\begin{proof}
By integral preservation of $L_\delta$ and the fact that $\dot{k}\in L^2([0,1]^2)$, one sees that $\dot{L}:L^2\to V$ and is bounded. 
By (\ref{perturb1}),
\begin{eqnarray*}
\left\|\frac{L_{\delta }-L_{0}}{\delta }-\dot{L}\right\|_{L^2\to V} &=&\sup_{\|f\|_{L^2}=1}\left\|\int \frac{k_{\delta }(x,y)-k_{0}(x,y)%
}{\delta }f(y)\ dy - \int \dot{k}(x,y)f(y)\ dy\right\|_{L^2} \\
&=&\sup_{\|f\|_{L^2}=1}\left\|\int r_\delta(x,y)f(y)\ dy\right\|_{L^2}\\
&\le&\|r_\delta\|_{L^2([0,1]^2)}=o(\delta).
\end{eqnarray*}%
Proceeding similarly, one shows that if the map $\delta\mapsto k_\delta$ has a continuous derivative with respect to $\delta$ in a neighborhood of $\delta=0$, then $\delta\mapsto L_\delta$  has a continuous derivative. 
Indeed we are supposing that for each $\delta \in [0,\overline{ \delta})$ there is $\dot{k}_\delta $ such that for small enough $h$  $$k_{\delta+h}=k_\delta+h\cdot \dot{k}_\delta +r_{\delta,h} $$
where $\dot{k}_\delta,\ r_{\delta,h} \in L^{2}([0,1]^{2})$, $
||r_{\delta,h}  ||_{L^{2}([0,1]^{2})} = o(h)$ and furthermore $\delta\mapsto \dot{k}_\delta$ is continuous. We have then by \eqref{KF} that the  associated    operators $\dot{L}_\delta$ defined as
\begin{equation}
    \dot{L}_\delta f(x):=\int \dot{k}_\delta (x,y)f(y)dy
\end{equation}
also varies in a continuous way as $\delta$ increases.

\end{proof}


\subsection{A formula for the linear response of the invariant measure and its continuity}

Now we apply Theorem \ref{th:linearresponse copy(1)} to Hilbert-Schmidt integral
operators to obtain a linear response formula for $L^2$ perturbations.

\begin{corollary}[Linear response formula for kernel operators]
\label{LR}Suppose $L_{\delta}:L^2\rightarrow L^2$ are integral-preserving (satisfying \eqref{eq:int-pers-cond}) integral operators with stochastic kernels $k_{\delta}\in L^2([0,1]^{2})$ as in \eqref{perturb1}. Suppose $L_0$ satisfies assumption $(A1)$ of Theorem \ref{th:linearresponse copy(1)}. Then $\dot{k}\in V_{\ker }$, the system has linear
response for this perturbation and an explicit formula for it is given by%
\begin{equation}
\lim_{\delta \rightarrow 0}\frac{f_{\delta }-f_{0}}{\delta }=(\text{Id}%
-L_{0})^{-1}\int \dot{k}(x,y)f_{0}(y)dy  \label{kerrsep}
\end{equation}%
with convergence in $L^{2}.$
\end{corollary}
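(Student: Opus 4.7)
The plan is to recognize this corollary as a direct application of Theorem \ref{th:linearresponse copy(1)}(III)(c) to the family $L_\delta$, once two preliminary facts are checked: that $\dot{k}\in V_{\ker}$ (so that the derivative operator maps into $V$ and the resolvent $(\text{Id}-L_0)^{-1}$ can be applied to it), and that assumption $(A2)$ holds for this family.

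First, I would verify $\dot{k}\in V_{\ker}$. Because every $L_\delta$ preserves the integral \eqref{eq:int-pers-cond}, the difference $L_\delta-L_0$ sends $L^2$ into the zero-mean subspace $V$. By Lemma \ref{lem:charac-mean-0}, this is equivalent to $k_\delta-k_0\in V_{\ker}$ for each $\delta>0$. Dividing by $\delta$ and using that $V_{\ker}$ is a vector space, $\dot{k}+r_\delta/\delta\in V_{\ker}$. Since $\|r_\delta\|_{L^2([0,1]^2)}=o(\delta)$ implies $r_\delta/\delta\to 0$ in $L^2([0,1]^2)$, and $V_{\ker}$ is closed by Lemma \ref{lemma:closedsub}, the limit $\dot{k}$ also lies in $V_{\ker}$.

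Second, I would verify $(A2)$. The Lipschitz-type bound $\|L_\delta-L_0\|_{L^2\to L^2}\le K\delta$ is immediate from the Hilbert-Schmidt estimate \eqref{KF} applied to the expansion $k_\delta-k_0=\delta\dot{k}+r_\delta$. The existence of the derivative at $f_0$ is supplied by Lemma \ref{ldot2}: setting $\hat{f}:=\dot{L}f_0=\int\dot{k}(x,y)f_0(y)\,dy$, the fact that $\dot{L}:L^2\to V$ ensures $\hat{f}\in V$, and Lemma \ref{ldot2} gives $(L_\delta-L_0)f_0/\delta\to\hat{f}$ in $L^2$. Since $(A1)$ is a hypothesis, Theorem \ref{th:linearresponse copy(1)}(III)(c) applies and yields
$$\lim_{\delta\to 0}\left\|\frac{f_\delta-f_0}{\delta}-(\text{Id}-L_0)^{-1}\hat{f}\right\|_2=0,$$
which, upon substituting the formula for $\hat{f}$, is exactly \eqref{kerrsep}.

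The only genuinely non-routine step is the first one: the integral-preservation of each $L_\delta$ must be translated into the structural property $\dot{k}\in V_{\ker}$ of the perturbation direction, which is what permits the resolvent $(\text{Id}-L_0)^{-1}$ to be applied to $\dot{L}f_0$ in the conclusion. The closedness of $V_{\ker}$ (Lemma \ref{lemma:closedsub}) is what makes this passage to the limit work. Everything else is straightforward substitution into the abstract machinery already assembled in Section \ref{SEC2}.
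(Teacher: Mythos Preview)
Your proof is correct and follows essentially the same route as the paper: verify $\dot{k}\in V_{\ker}$, check assumption $(A2)$ via \eqref{KF} and Lemma~\ref{ldot2}, then invoke Theorem~\ref{th:linearresponse copy(1)}(III)(c). The only difference is in the first step: the paper shows $\int\dot{k}(x,y)\,dx=0$ for a.e.\ $y$ by a direct pointwise estimate $\big|\int\dot{k}(x,y)\,dx\big|\le\delta^{-1}\int|r_\delta(x,y)|\,dx$, whereas you obtain it more cleanly by observing $\dot{k}+r_\delta/\delta\in V_{\ker}$ and passing to the limit using the closedness of $V_{\ker}$ (Lemma~\ref{lemma:closedsub}); your argument is arguably tidier here.
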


\begin{proof}
Since $L_\delta$, $\delta\in[0,\bar\delta)$, is integral preserving, we have
$(L_\delta-L_0)(L^2)\subset V$ and therefore, $k_\delta-k_0\in V_{\ker}$ by Lemma 
\ref{lem:charac-mean-0}, i.e.\ $\delta\k+r_\delta\in V_{\ker}$. Then, for a.e. $y\in[0,1]$ 
and $\delta\not= 0$, we have
\begin{equation*}
\begin{aligned}
    \bigg|\int\k(x,y)dx\bigg|\le \frac{1}{\delta}\int|r_\delta(x,y)|dx \le \frac{1}{\delta}\|r_\delta\|_{L^2([0,1]^2)}.
\end{aligned}    
\end{equation*}
As $\delta\rightarrow0$, the right hand side approaches $0$ and, since $\int\k(x,y)dx$ is 
independent of $\delta$, we have $\int\k(x,y)dx=0$ for a.e. $y\in[0,1]$, i.e.\ $\k\in V_{\ker}$.

Furthermore by $(\ref{perturb1})$  there is a $K\geq 0$ such that
\begin{equation}
\left\vert |L_{0}-L_{\delta }|\right\vert _{L^{2}\rightarrow L^{2}}\leq
K\delta .  \label{1of3}
\end{equation}%
Hence the family of operators satisfy the first part of assumption $(A2)$. The second part of this assumption is established by  the results of Lemma \ref{ldot2}.

Since the operators $L_\delta$ are compact, integral preserving, and satisfy  assumptions $(A1)$
and $(A2)$ we can conclude by applying
Theorem \ref{th:linearresponse copy(1)} to this family of operators, obtaining
\begin{equation*}
\lim_{\delta \rightarrow 0}\left\Vert \frac{f_{\delta }-f_{0}}{\delta }-(%
\text{Id}-L_{0})^{-1}\int \dot{k}(x,y)f_{0}(y)dy\right\Vert _{2}=0.
\end{equation*}

\end{proof}

Now we show that the linear response of the invariant measure is continuous with respect to the kernel perturbation. This will be used in Section \ref{sec4} for the proof of the existence of solutions of our main optimization problems.

Consider the transfer operator $L_{0}$, having a kernel $k_{0}\in L^{2}([0,1]^2)$, and a set of 
infinitesimal perturbations $P\subset V_{\ker}$ of $k_{0}$. We will endow $P$ with the topology induced by its inclusion in $L^2([0,1]^2)$. Suppose $L_{\delta}$ is a perturbation
of $L_0$ satisfying the assumptions of Lemma \ref{ldot2}.
By Corollary \ref{LR}, the linear response will depend on the first-order term of the perturbation, $\dot k \in P$, allowing us to define the function $R:P\rightarrow V$ by
\begin{equation}
R(\dot{k}):=(\text{Id}%
-L_{0})^{-1}\int \dot{k}(x,y)f_{0}(y)dy.
\label{R}
\end{equation}%
By \eqref{KF} and the continuity of the resolvent operator it follows directly that the response function $R:(P,\|\cdot\|_{L^2([0,1]^2)})\to (V,\|\cdot\|_{L^2})$ is continuous.


\subsection{A formula for the linear response of the dominant eigenvalues and its continuity}

We apply Proposition \ref{th:lin-resp-2nd-eval} to Hilbert-Schmidt integral
operators and obtain a linear response formula for the dominant eigenvalues in the case of $L^2$ perturbations.

\begin{corollary}\label{cor:lin-resp-2nd-eval-HS}
Suppose $L_{\delta}:L^2([0,1],\bC)\rightarrow L^2([0,1],\bC)$ are integral-preserving (satisfying \eqref{eq:int-pers-cond}) integral operators with kernels $k_{\delta}\in L^2([0,1]^{2})$ satisfying $\delta\mapsto k_\delta\in C^1([0,\bar{\delta}),L^2([0,1]^{2}))$. Suppose $L_0$ 
satisfies $(A1)$ of Theorem 
\ref{th:linearresponse copy(1)}.
Let $\lambda_0\in \bC$ be an eigenvalue of $L_0$ with the largest magnitude strictly inside the unit circle and assume that $\lambda_0$ is geometrically simple. Then, there exists
$\dot{\lambda}\in\mathbb{C}$ such
that
\begin{equation*}
\lim_{\delta\rightarrow 0}\bigg|\frac{\lambda_{\delta}-\lambda_{0}}{\delta}%
-\dot{\lambda}\bigg| = 0.
\end{equation*}
Furthermore, 
\begin{equation}  \label{eq:lin-resp-2nd-eval}
\begin{aligned} \dot{\lambda} &= \int_0^1\int_0^1\dot{k}(x,y)\left(\Re(\hat{e})(x)\Re(e)(y) + \Im(\hat{e})(x)\Im(e)(y)%
\right) dydx\\ &\qquad +
i\int_0^1\int_0^1\dot{k}(x,y)\left(\Im(\hat{e})(x)\Re(e)(y) - \Re(\hat{e})(x)\Im(e)(y)
\right) dydx,
\end{aligned}
\end{equation}
where $e\in L^2([0,1],\mathbb{C})$ is the eigenvector of $L_0$ associated to
the eigenvalue $\lambda_0$, $\hat{e}\in L^2([0,1],\mathbb{C})$ is the
eigenvector of $L_0^*$ associated to the eigenvalue $\lambda_{0}$ and $\dot{L}$
is the operator in Lemma \ref{ldot2}.
\end{corollary}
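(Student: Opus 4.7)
This is a direct application of Proposition \ref{th:lin-resp-2nd-eval} followed by a bookkeeping computation in which the abstract Hermitian inner product is written out componentwise. My plan is to first verify the hypotheses of Proposition \ref{th:lin-resp-2nd-eval} for the family $L_\delta$ acting on $L^2([0,1],\bC)$, then invoke its conclusion to obtain $\dot\lambda=\langle \hat e,\dot L e\rangle_{L^2([0,1],\bC)}$, and finally separate real and imaginary parts to recover \eqref{eq:lin-resp-2nd-eval}.

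For the hypothesis check, compactness of each $L_\delta$ on $L^2([0,1],\bC)$ is immediate since Hilbert--Schmidt integral operators with $L^2([0,1]^2)$ kernels are compact on both the real and the complex $L^2$ spaces. Integral preservation is assumed. The $C^1$ regularity $\delta\mapsto L_\delta\in C^1([0,\bar\delta),\mathcal{B}(L^2([0,1],\bC)))$ follows by reusing the argument of Lemma \ref{ldot2}: any $f\in L^2([0,1],\bC)$ splits as $f=f_r+if_i$ with $f_r,f_i$ real, and the estimate $\|\int r_\delta(\cdot,y)f(y)\,dy\|_{L^2}\le\|r_\delta\|_{L^2([0,1]^2)}\|f\|_{L^2}$ coming from \eqref{KF} applies unchanged, with derivative operator still given by \eqref{dotL}. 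The mixing condition (A1) extends to the complex zero-average subspace by the same splitting, and the geometric simplicity of $\lambda_0$ is assumed, so all the ingredients needed to apply Proposition \ref{th:lin-resp-2nd-eval} are in place.

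Applying that proposition yields the existence of $\dot\lambda$ together with
\begin{equation*}
\dot\lambda=\langle \hat e,\dot L e\rangle_{L^2([0,1],\bC)}=\int_0^1 \hat e(x)\,\overline{\dot L e(x)}\,dx.
\end{equation*}
To make this explicit I would write $e=\Re e+i\Im e$ and $\hat e=\Re\hat e+i\Im\hat e$, and use that $\dot k$ is real-valued so that
\begin{equation*}
\overline{\dot L e(x)}=\int_0^1 \dot k(x,y)\Re e(y)\,dy-i\int_0^1 \dot k(x,y)\Im e(y)\,dy.
\end{equation*}
Multiplying by $\hat e(x)=\Re\hat e(x)+i\Im\hat e(x)$, integrating in $x$, and separating real and imaginary parts produces exactly the two double integrals in \eqref{eq:lin-resp-2nd-eval}.

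There is no genuine obstacle. The only subtle point is keeping the conventions consistent: the Hermitian inner product forces a complex conjugation on $\dot L e$, and it is precisely the real-valuedness of $\dot k$ that lets the conjugation pass inside the integral and collapses the four cross terms into the two stated real integrals.
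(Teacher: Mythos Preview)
Your proposal is correct and follows essentially the same route as the paper: verify compactness, integral preservation, and $C^1$ dependence (via Lemma \ref{ldot2}) to apply Proposition \ref{th:lin-resp-2nd-eval}, then expand $\langle \hat e,\dot L e\rangle_{L^2([0,1],\bC)}=\int_0^1\int_0^1\dot k(x,y)\hat e(x)\bar e(y)\,dy\,dx$ into real and imaginary parts. The paper's proof is slightly terser on the real-to-complex passage, but the argument is the same.
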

\begin{proof}
Since $k_\delta\in L^2([0,1]^2)$, the operator $L_\delta:L^2([0,1],\bC)\rightarrow L^2([0,1],\bC)$
is compact; by assumption, it also satisfies \eqref{eq:int-pers-cond}. 
From Lemma \ref{ldot2}, the map $\delta\mapsto L_\delta$ is $C^1$. 
Hence,
by Proposition \ref{th:lin-resp-2nd-eval}, we have $\dot{\lambda} = \langle \hat{e},\dot{L} e
\rangle_{L^2([0,1],\mathbb{C})}$. Finally, we compute 
\begin{equation*}
\begin{aligned} \dot{\lambda}=\langle \hat{e},\dot{L} e
\rangle_{L^2([0,1],\mathbb{C})} &= \int_0^1\hat{e}(x) \overline{\dot{L}e}(x)
dx \\ &= \int_0^1\hat{e}(x)\overline{\int_0^1 \k(x,y)e(y)dy}dx\\ &=
\int_0^1\int_0^1\dot{k}(x,y)\hat{e}(x)\bar{e}(y) dydx\\ &=
\int_0^1\int_0^1\dot{k}(x,y)\left(\Re(\hat{e})(x)\Re(e)(y) + \Im(\hat{e})(x)\Im(e)(y)%
\right) dydx\\ &\qquad +
i\int_0^1\int_0^1\dot{k}(x,y)\left(\Im(\hat{e})(x)\Re(e)(y) - \Re(\hat{e})(x)\Im(e)(y)
\right) dydx. \end{aligned}
\end{equation*}
\end{proof}
From the expression in the final line of the proof above, it is clear that if we consider $\dot{\lambda}$ as a function of $\dot{k}$, the map $\dot{\lambda}:(V_{\ker},\|\cdot\|)_{L^2([0,1]^2)}\to\mathbb{C}$ is continuous.

\section{Optimal response:  optimising the expectation of observables and mixing rate} \label{sec4}

Having described the responses of our dynamical systems to perturbations, 
it is natural to consider the optimisation problem of finding perturbations that provoke  \emph{maximal} responses.
We consider the problems of finding the 
infinitesimal perturbation that maximises the expectation of a given observable and the
 infinitesimal perturbation that maximally enhances mixing.
In doing so, we extend the approach in \cite{ADF} from the setting of finite-state Markov chains to the integral operators considered in the present paper.

We show that at an abstract level these problems reduce to the optimization of a linear continuous functional $\mathcal{J}$ on a convex set $P$ of feasible perturbations; this problem has a solution and the solution is unique if the set $P$ of allowed infinitesimal perturbations is strictly convex. 
The convexity assumption on $P$ is natural because if two different perturbations of
the system are possible, then their convex combination (applying the two perturbations with different intensities) will also be possible.
After introducing the abstract setting, we construct the objective functions for our two optimal response problems and state general existence and uniqueness results for the optima.
Later, in Section \ref{sec:explicit} we focus on the construction of the set of feasible perturbations and provide explicit formulae for the maximising perturbations.

\subsection{General optimisation setting, existence and uniqueness}\label{subsec:abst-opt-setng}

We recall some general results (adapted for our purposes) on optimizing a linear\
continuous function on convex sets;  see also Lemma 6.2 \cite{FKP}.
The abstract problem is to find $\dot{k}$ such that
\begin{equation}
\mathcal{J}(\dot{k})=\max_{\dot{h}\in P}\mathcal{J}(\dot{h}),  \label{gen-func-opt-prob}
\end{equation}%
where $\mathcal{J}:\mathcal{H}\rightarrow {\mathbb{R}}$ is a continuous linear
function, $\mathcal{H}$ is a separable Hilbert space and $P\subset \mathcal{H%
}$.

\begin{proposition}[Existence of the optimal solution]
\label{prop:exist} Let $P$ be bounded, convex, and closed in $\mathcal{H}$.
Then, problem considered at \eqref{gen-func-opt-prob} has at least one solution.
\end{proposition}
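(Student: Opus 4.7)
The plan is to exploit weak compactness of bounded closed convex sets in a Hilbert space, together with the fact that any continuous linear functional is automatically weakly continuous. The statement is a standard variational existence result, so the proof will proceed along textbook lines.

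First, I would take a maximizing sequence: let $M:=\sup_{\dot h\in P}\mathcal{J}(\dot h)$, which is finite because $\mathcal{J}$ is bounded on the bounded set $P$ (continuity and linearity give $|\mathcal{J}(\dot h)|\le \|\mathcal{J}\|\,\|\dot h\|_{\mathcal{H}}$). Choose $\dot h_n\in P$ with $\mathcal{J}(\dot h_n)\to M$. Since $P$ is bounded in the separable Hilbert space $\mathcal{H}$, the sequence $\{\dot h_n\}$ lies in a ball, which is weakly sequentially compact (this is where separability of $\mathcal{H}$ makes the argument purely sequential; otherwise one appeals to Banach--Alaoglu together with metrizability of weak topology on bounded sets of a separable Hilbert space). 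Hence there exists a subsequence $\dot h_{n_j}$ converging weakly to some $\dot k\in\mathcal{H}$.

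Next I would show $\dot k\in P$. Because $P$ is convex and strongly closed, it is weakly closed by Mazur's theorem, so the weak limit $\dot k$ of elements of $P$ belongs to $P$. Finally, the linear functional $\mathcal{J}$ is continuous, hence weakly continuous, so $\mathcal{J}(\dot h_{n_j})\to \mathcal{J}(\dot k)$. Combining with $\mathcal{J}(\dot h_{n_j})\to M$ yields $\mathcal{J}(\dot k)=M$, and $\dot k$ is the desired maximizer.

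There is no real obstacle here; the only small care is to invoke Mazur correctly (convex plus strongly closed implies weakly closed) rather than trying to pass to the weak limit inside an arbitrary closed set. As a remark, no uniqueness is claimed in this proposition, which is consistent with the fact that a linear functional can attain its maximum on an entire face of $P$; uniqueness would require strict convexity of $P$, which is addressed separately in the paper.
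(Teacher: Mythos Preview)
Your proof is correct and follows essentially the same approach as the paper: take a maximizing sequence, extract a weakly convergent subsequence using boundedness, invoke weak closedness of $P$ via convexity plus strong closedness (Mazur), and pass to the limit using weak continuity of the linear functional $\mathcal{J}$. Your write-up is slightly more explicit about the role of separability and the name of Mazur's theorem, but the argument is the same.
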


\begin{proof}
Since $P$ is bounded and $\mathcal{J}$ is continuous, we have that $\sup_{k\in P}\mathcal{J}(k)<\infty $.
Consider a maximizing sequence $k_{n}$ such that $%
\lim_{n\rightarrow \infty }\mathcal{J}(k_{n})=\sup_{k\in P}\mathcal{J}(k)$. Then, $k_{n}$ has a
subsequence $k_{n_{j}}$ converging in the weak topology. Since $P$ is
strongly closed and convex in $\mathcal{H}$, we have that it is weakly closed. This
implies that $\overline{k}:=\lim_{j\rightarrow \infty }k_{n_{j}}\in P.$
Also, since $\mathcal{J}(k)$ is continuous and linear, it is continuous in the weak
topology. Then we have that $\mathcal{J}(\overline{k})=\lim_{j\rightarrow \infty
}\mathcal{J}(k_{n_{j}})=\sup_{k\in P}\mathcal{J}(k)$ and we realise a maximum.
\end{proof}

Uniqueness of the optimal solution will be provided by strict convexity of the feasible set.
\begin{definition}
\label{stconv}We say that a convex closed set $A\subseteq \mathcal{H}$ is
\emph{strictly convex} if for each pair $x,y\in A$ and for all $0<\gamma<1$, the points $\gamma x+(1-\gamma)y\in \mathrm{int}(A)$, where the relative interior\footnote{The relative interior of a closed convex set $C$ is the interior of $C$ relative to the closed affine hull of $C$, see e.g.\ \cite{borwein}.} is meant.

\end{definition}

\begin{proposition}[Uniqueness of the optimal solution]
\label{prop:uniqe} 
Suppose $P$ is closed, bounded, and strictly convex subset of $\cal{H}$, and that $P$ contains the zero vector in its relative interior.
If $\mathcal{J}$ is not uniformly vanishing on $P$
then the optimal
solution to \eqref{gen-func-opt-prob} is unique.
\end{proposition}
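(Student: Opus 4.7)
The plan is to argue by contradiction, using strict convexity to push a hypothetical second optimum into the relative interior and then exploiting linearity of $\mathcal{J}$ to force $\mathcal{J}$ to vanish on all of $P$, contradicting the hypothesis.

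First I would set up notation. Let $W$ denote the closed affine hull of $P$. Because $0 \in P$, the affine hull actually passes through the origin, so $W$ is a closed linear subspace of $\mathcal{H}$. The assumption that $0$ lies in the relative interior of $P$ says there is $\varepsilon_0 > 0$ with $\{v \in W : \|v\| < \varepsilon_0\} \subset P$. From this, and the hypothesis that $\mathcal{J}$ is not uniformly vanishing on $P$, I would first observe that the maximal value $M := \sup_{h \in P}\mathcal{J}(h)$ is strictly positive: pick any $x_0 \in P$ with $\mathcal{J}(x_0) \neq 0$; then $x_0 \in W$, and replacing $x_0$ by $-x_0$ if necessary (small multiples of $\pm x_0$ lie in $P$ by the relative-interior property at $0$), we obtain an element of $P$ on which $\mathcal{J}$ is strictly positive.

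Next I would carry out the main step. Suppose $\dot{k}_1, \dot{k}_2 \in P$ are two distinct optimal solutions, i.e.\ $\mathcal{J}(\dot{k}_1) = \mathcal{J}(\dot{k}_2) = M$. Let $z := \tfrac{1}{2}\dot{k}_1 + \tfrac{1}{2}\dot{k}_2$. By linearity, $\mathcal{J}(z) = M$, so $z$ is also a maximizer. By the strict convexity of $P$ (Definition \ref{stconv}), $z$ lies in the relative interior of $P$, so there exists $\varepsilon > 0$ with $\{z + v : v \in W,\ \|v\| < \varepsilon\} \subset P$. For any $v \in W$ with $\|v\| < \varepsilon$, both $z+v$ and $z-v$ belong to $P$, and maximality of $z$ gives
\begin{equation*}
\mathcal{J}(z) + \mathcal{J}(v) \leq \mathcal{J}(z) \quad \text{and} \quad \mathcal{J}(z) - \mathcal{J}(v) \leq \mathcal{J}(z),
\end{equation*}
hence $\mathcal{J}(v)=0$. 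Scaling arbitrary elements of $W$ into this ball and using linearity of $\mathcal{J}$, I conclude $\mathcal{J} \equiv 0$ on $W$. Since $P \subset W$, this contradicts the hypothesis that $\mathcal{J}$ is not uniformly vanishing on $P$. Therefore the maximizer is unique.

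The step I expect to be the most delicate is making sure the definition of strict convexity used by the authors (that \emph{every} proper convex combination of two points of $P$ falls in the relative interior) is applied correctly, and that the use of the relative interior meshes with the assumption that $0$ is in the relative interior so that $W$ is a genuine subspace rather than an affine subspace; once this is in place, the linearity-plus-maximality argument is immediate. Existence is already provided by Proposition \ref{prop:exist}, so nothing further is needed there.
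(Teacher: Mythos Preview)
Your argument is correct and follows essentially the same route as the paper: assume two distinct maximizers, use strict convexity to place their midpoint $z$ in the relative interior, and then exploit linearity of $\mathcal{J}$ together with maximality of $z$ to reach a contradiction with the non-vanishing hypothesis. The only cosmetic difference is that you conclude $\mathcal{J}\equiv 0$ on the affine span $W$ and then invoke the contradiction, whereas the paper directly picks a direction $v$ with $\mathcal{J}(v)>0$ and exhibits $z+tv\in P$ with $\mathcal{J}(z+tv)>M$; these are equivalent endpoints of the same idea.
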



\begin{proof}
Suppose that there are two distinct maxima $\dot{k}_1,\dot{k}_2\in P$ with $\mathcal{J}(\dot{k}_1)=\mathcal{J}(\dot{k}_2)=\alpha$.
Let $0<\gamma<1$ and set $z=\gamma \dot{k}_1+(1-\gamma)\dot{k}_2$.
By strict convexity of $P$, $z\in \mathrm{int}(P)$, and by linearity of $\mathcal{J}$, $\mathcal{J}(z)=\alpha$.
Let $B_r(z)$ denote a (relative in $P$) open ball of radius $r$ centred at $z$, with $r>0$ chosen small enough so that $B_r(z)\subset \mathrm{int}(P)$.
Because the zero vector lies in the relative interior of  $P$, and $\mathcal{J}$ does not uniformly vanish on $P$, 
there exists a vector $v\in B_r(z)$ such that $\mathcal{J}(v)>0$.
Now $z+\frac{rv}{2\|v\|}\in \mathrm{int}(P)$ and $\mathcal{J}(z+\frac{rv}{2\|v\|})>\alpha$, 
contradicting maximality of $\dot{k}_1$.
\end{proof}

In the following subsections we apply the general results of this section to our specific optimisation problems.

\subsection{Optimising the response of the expectation of an observable}
\label{subsubsec1}
Let $c\in L^2$ be a given observable. 
We consider the problem of finding an infinitesimal perturbation that maximises the expectation of $c$.
The perturbations we consider are perturbations to the kernels of Hilbert-Schmidt integral operators, of the form \eqref{perturb1}.
If we denote the average of $c$ with respect to the perturbed invariant
density $f_{\delta }$ by
\begin{equation*}
\mathbb{E}_{c,\delta }:=\int c~f_{\delta }~dm,
\end{equation*}
we have
\begin{equation*}
\frac{d\mathbb{E}_{c,\delta }}{d\delta }\bigg|_{\delta =0}=\lim_{\delta \rightarrow 0}%
\frac{\mathbb{E}_{c,\delta }-\mathbb{E}_{c,0}}{\delta }=\lim_{\delta \rightarrow 0}\int c~%
\frac{f_{\delta }-f_{0}}{\delta }~dm=\int c~R(\dot{k})~dm,
\end{equation*}%
where the last equality follows from Corollary \ref{LR}. 

The function $\mathcal{J}(\dot{k})=\langle c,R(\dot{k})\rangle$ is clearly  continuous as a map from $(V_{\ker},\|\cdot\|)_{L^2([0,1]^2)}$ to $\mathbb{R}$.  
Suppose that $P$ is a closed, bounded, convex subset of $V_{\ker}$ 
containing the zero perturbation, and that $\mathcal{J}$ is not uniformly vanishing on $P$.
We wish to solve the following problem:
\begin{gproblem}
\label{prob1}
Find $\dot{k}\in P$ such that
\begin{equation}
\big\langle c,R(\dot{k})\big\rangle_{L^{2}([0,1],\bR)}=\max_{\dot{h}\in P}%
\big\langle c,R(\dot{h})\big\rangle_{L^{2}([0,1],\bR)}.  \label{P1}
\end{equation}
\end{gproblem}
We may immediately apply Proposition \ref{prop:exist} to obtain that \textit{there exists a solution to (\ref{P1}).}
If, in addition, $P$ is strictly convex, then by Proposition \ref{prop:uniqe} the solution to (\ref{P1}) \emph{is unique}.

To end this subsection we note that without loss of generality, we may assume that $c\in
$ span$\{f_0\}^\perp$.  This is because for $c\in L^{2}$, we have
\begin{equation*}
\langle c,R(\dot{k})\rangle_{L^{2}([0,1],\bR)}
=\langle c- \langle c,f_0\rangle_{L^{2}([0,1],\bR)}\mathbf{1},R(\dot{k}%
)\rangle_{L^{2}([0,1],\bR)},
\end{equation*}
since $R(\dot{k})\in V$. 
From $\int f_0(x) dx = 1,$ we have that 
$f\mapsto \langle f,f_0\rangle_{L^{2}([0,1],\bR)}\mathbf{1}$ is a projection onto span$\{\mathbf{1}\}$ and so
$f\mapsto
f- \langle f,f_0\rangle_{L^{2}([0,1],\bR)}\mathbf{1}$ is a projection onto span$%
\{f_0\}^\perp $. 



\subsection{Optimising the response of the rate of mixing}

We now consider the linear response problem of optimising the rate of mixing.
Let $\lambda_0\in \bC$ denote an eigenvalue of $L_0$ strictly inside the unit circle with largest magnitude. 
From now on, whenever discussing the linear response of eigenvalues to kernel perturbations we assume  the conditions of Corollary \ref{cor:lin-resp-2nd-eval-HS}. 
We recall that $e$ and $\hat{e}$ are the eigenfunctions of $L_0$ and $L_0^*$, respectively, corresponding
to the eigenvalue $\lambda_0$.

%
To find the kernel perturbations that enhance mixing, we follow the approach taken in
\cite{ADF} (see also \cite{FS17,FKP} in the continuous time setting), namely perturbing our original dynamics $L_0$ in such a way that the modulus of the second eigenvalue of the perturbed dynamics decreases. 
Equivalently, we want to decrease the real part of the logarithm of the perturbed second eigenvalue.
The following result provides an explicit formula for this instantaneous rate of change.
Define
\begin{equation}  \label{eq:almost-soln-2nd-evalue}
 E(x,y):= \left(\Re(\hat{e})(x)\Re(e)(y) + \Im(\hat{e})(x)\Im(e)(y)\right)\Re(\lambda_{0})
+ \left(\Im(\hat{e})(x)\Re(e)(y) - \Re(\hat{e})(x)\Im(e)(y)\right)\Im(\lambda_{0}).
\end{equation}

\begin{lemma}\label{lem:2-eval-conj-uncess}
One has 
\begin{equation*}
\frac{d}{d\delta}\Re\left(\log\lambda_{\delta}\right)\bigg|_{\delta=0} = \frac{%
\big\langle\dot{k}, E\big\rangle_{L^2([0,1]^2,\bR)}}{|\lambda_{0}|^2}.
\end{equation*}
\end{lemma}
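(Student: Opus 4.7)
The plan is to reduce the statement to an application of the chain rule together with the explicit formula for $\dot\lambda$ from Corollary \ref{cor:lin-resp-2nd-eval-HS}. Since $\delta\mapsto \lambda_\delta$ is $C^1$ near $\delta=0$ (by Proposition \ref{th:lin-resp-2nd-eval} applied in the setting of Corollary \ref{cor:lin-resp-2nd-eval-HS}) and $\lambda_0\neq 0$, the complex logarithm is well-defined and differentiable in a neighborhood of $\lambda_0$, so one may apply the standard computation
\begin{equation*}
\frac{d}{d\delta}\Re(\log \lambda_\delta)\bigg|_{\delta=0} \;=\; \Re\!\left(\frac{\dot\lambda}{\lambda_0}\right) \;=\; \frac{\Re(\dot\lambda)\Re(\lambda_0) + \Im(\dot\lambda)\Im(\lambda_0)}{|\lambda_0|^2},
\end{equation*}
where in the last equality we multiplied numerator and denominator by $\overline{\lambda_0}$.

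Next, I would read off $\Re(\dot\lambda)$ and $\Im(\dot\lambda)$ directly from formula \eqref{eq:lin-resp-2nd-eval} in Corollary \ref{cor:lin-resp-2nd-eval-HS}. Since $\dot k$ is real-valued (it lies in $V_{\ker}\subset L^2([0,1]^2,\mathbb R)$), separating that identity into real and imaginary parts immediately yields
\begin{equation*}
\Re(\dot\lambda) = \int_0^1\!\!\int_0^1 \dot k(x,y)\bigl(\Re(\hat e)(x)\Re(e)(y) + \Im(\hat e)(x)\Im(e)(y)\bigr)\,dy\,dx,
\end{equation*}
\begin{equation*}
\Im(\dot\lambda) = \int_0^1\!\!\int_0^1 \dot k(x,y)\bigl(\Im(\hat e)(x)\Re(e)(y) - \Re(\hat e)(x)\Im(e)(y)\bigr)\,dy\,dx.
\end{equation*}

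Substituting these into the chain-rule identity and pulling the real constants $\Re(\lambda_0)$ and $\Im(\lambda_0)$ inside the integrals, the integrands combine to form exactly the kernel $E(x,y)$ defined in \eqref{eq:almost-soln-2nd-evalue}, giving the claimed formula $\langle \dot k,E\rangle_{L^2([0,1]^2,\mathbb R)}/|\lambda_0|^2$.

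There is essentially no obstacle: the only point worth checking carefully is that the logarithm is well-defined and smooth at $\lambda_0$ (which follows since $\lambda_0$ is a nonzero eigenvalue and $\delta\mapsto\lambda_\delta$ is continuous), and that $\dot k$ being real is what lets us pair it with the real kernel $E$ via the real $L^2$ inner product rather than the complex one. Everything else is bookkeeping on real and imaginary parts.
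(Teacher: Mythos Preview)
Your proposal is correct and follows essentially the same approach as the paper: the paper also computes $\frac{d}{d\delta}\Re(\log\lambda_\delta)|_{\delta=0}=\Re(\dot\lambda/\lambda_0)=(\Re(\dot\lambda)\Re(\lambda_0)+\Im(\dot\lambda)\Im(\lambda_0))/|\lambda_0|^2$ via the chain rule, reads off $\Re(\dot\lambda)$ and $\Im(\dot\lambda)$ from \eqref{eq:lin-resp-2nd-eval}, and recombines them into $\langle\dot k,E\rangle/|\lambda_0|^2$. The only difference is the order of presentation.
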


\begin{proof}
From \eqref{eq:lin-resp-2nd-eval}, we have that
\begin{equation}  \label{eq:lin-resp-2nd-eval-2-Re}
\Re(\dot{\lambda}_{0}) = \int_0^1\int_0^1\dot{k}(x,y)\left(\Re(\hat{e})(x)\Re(e)(y) + \Im(\hat{e})(x)\Im(e)(y)%
\right) dydx
\end{equation}
and
\begin{equation}  \label{eq:lin-resp-2nd-eval-2-Im}
\Im(\dot{\lambda}_{0}) = \int_0^1\int_0^1\dot{k}(x,y)\left(\Im(\hat{e})(x)\Re(e)(y) - \Re(\hat{e})(x)\Im(e)(y)
\right) dydx.
\end{equation}

Next, we note that
\begin{equation}  \label{eq:dif-re-log-mu-del}
\frac{d}{d\delta} \Re(\log \lambda_{\delta}) = \Re\left(\frac{d}{d\delta}%
\log\lambda_{\delta}\right) = \Re\left(\frac{d\lambda_{\delta}}{d\delta}\frac{1}{%
\lambda_{\delta}}\right).
\end{equation}

From \eqref{eq:lin-resp-2nd-eval-2-Re}-\eqref{eq:dif-re-log-mu-del}, we
obtain
\begin{equation*}
\frac{d}{d\delta}\Re\left(\log\lambda_{\delta}\right)\bigg|_{\delta=0}=
\Re\left(\frac{\dot{\lambda}_{0}}{\lambda_{0}}\right)=
\Re\left(\frac{\dot{\lambda}_{0}}{\lambda_{0}}\frac{\overline{\lambda_{0}}}{\overline{%
\lambda_{0}}}\right)=
\frac{\Re(\dot{\lambda}_{0})\Re(\lambda_{0})+\Im(\dot{\lambda}_{0})\Im(\lambda_{0})}{|%
\lambda_{0}|^2}=
\frac{\big\langle\k, E\big\rangle_{L^2([0,1]^2,\bR)}}{|\lambda_{0}|^2}.
\end{equation*}
\end{proof}

The function $\mathcal{J}(\dot{k})=\langle \dot{k},E\rangle$ is clearly continuous as a map from $(V_{\ker},\|\cdot\|_{L^2([0,1]^2)})$ to $\mathbb{R}$.
As in subsection \ref{subsubsec1}, suppose that $P$ is a closed, bounded, strictly convex subset of $V_{\ker}$ containing the zero element, and that $\mathcal{J}$ is not uniformly vanishing on $P$.
We wish to solve the following problem:

\begin{gproblem}
Find $\dot{k}\in P$ such that
\begin{equation}
\label{P2}
\langle \dot{k},E\rangle_{L^2([0,1]^2,\bR)}=\min_{\dot{h}\in P}
\langle \dot{k},E\rangle_{L^2([0,1]^2,\bR)}.  
\end{equation}
\end{gproblem}

We may immediately apply Proposition \ref{prop:exist} to obtain that \textit{there exists a solution to (\ref{P1}).}
If, in addition, $P$ is strictly convex, then by Proposition \ref{prop:uniqe} the solution to (\ref{P2}) \emph{is unique}.

\section{Explicit formulae for the optimal perturbations}
\label{sec:explicit}

Thus far we have not been specific about the feasible set $P$;  we take up this issue in this and the succeeding subsections to provide explicit formulae for the optimal responses in both problems (\ref{P1}) and (\ref{P2}).
First, we have not required that the perturbed
kernel $k_\delta$ in \eqref{perturb1} be nonnegative for $\delta>0$,
however, this is a natural
assumption.
To facilitate this, for $0<l<1$, define \begin{equation}
    \label{FSeqn}
F_l:=\{(x,y)\in[0,1]^2:k_0(x,y)\ge l\}\quad\mbox{ and }\quad
S_{k_0,l}:= \{k\in L^2([0,1]^2): \text{supp}(k)\subseteq F_l\}.
\end{equation}
 The set of allowable perturbations that we will  consider in the sequel is
\begin{equation}\label{PL}
P_l := V_{\ker}\cap S_{k_0,l}\cap B_1,
\end{equation}
where $B_1$ is the closed unit ball in
$L^2([0,1]^2)$.

We now begin verifying the conditions on $P_l$ and $\mathcal{J}$ required by Proposition \ref{prop:uniqe}.
First, $P_l$ is clearly bounded in $L^2([0,1]^2)$.
Second, we note that as long as $F_l$ has positive Lebesgue measure, the zero kernel is in the relative interior of $P_l$.
Third, the following lemma handles closedness of $P_l$.
Fourth, from this, since $V_{\ker}$ and $S_{k_0,l}$ are closed subspaces, $%
V_{\ker}\cap S_{k_0,l}$ is itself a Hilbert space, and hence, $P_l$ is strictly
convex.
Finally, sufficient conditions for the objective function to not uniformly vanish are given in Lemma \ref{lem:nonzeroobj}.



\begin{lemma}\label{Sk0lem}
The set $S_{k_0,l}$ is a closed subspace of $L^2([0,1]^2)$.
\end{lemma}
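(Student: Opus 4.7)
The plan is to verify the two properties in the statement—linearity and topological closedness—separately, both being routine once one adopts the right viewpoint.

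For the subspace property, I would observe that $\mathrm{supp}(\alpha k_1 + \beta k_2) \subseteq \mathrm{supp}(k_1) \cup \mathrm{supp}(k_2)$ up to a null set, so if both $k_1,k_2 \in S_{k_0,l}$, any linear combination also has its support contained in $F_l$. The zero function is obviously in $S_{k_0,l}$, completing this part.

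For closedness, my preferred approach is to exhibit $S_{k_0,l}$ as the kernel of a bounded linear operator. Define the multiplication operator $M : L^2([0,1]^2) \to L^2([0,1]^2)$ by $Mk := \mathbf{1}_{F_l^c} \cdot k$. Since $\mathbf{1}_{F_l^c}$ is bounded by $1$, $M$ is bounded (in fact a projection of norm at most one), and a function $k \in L^2([0,1]^2)$ satisfies $\mathrm{supp}(k) \subseteq F_l$ (up to a null set) if and only if $Mk = 0$ in $L^2$. Thus $S_{k_0,l} = M^{-1}(\{0\})$, the preimage of a closed singleton under a continuous linear map, hence closed.

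The main (and only) subtle point is the convention that support is defined up to measure-zero modification, which is the natural one when working in $L^2$; this is what allows the characterization $\mathrm{supp}(k) \subseteq F_l \iff Mk=0$ and underlies both steps. Once this is noted, no genuine obstacle remains. An alternative equally clean route is to pick any $L^2$-convergent sequence $k_n \to k$ with $k_n \in S_{k_0,l}$, extract an a.e.\ convergent subsequence, and conclude that $k=0$ a.e.\ on $F_l^c$; I would mention this only if a second argument is wanted.
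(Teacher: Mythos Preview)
Your proof is correct. Your main route for closedness---realising $S_{k_0,l}$ as the kernel of the bounded multiplication operator $Mk=\mathbf{1}_{F_l^c}k$---is a slightly more conceptual packaging than the paper's argument, which proceeds directly by sequences: given $k_n\to k$ in $L^2$ with $k_n\in S_{k_0,l}$, the paper splits $\|k_n-k\|_{L^2}^2$ over $F_l$ and $F_l^c$ and observes that the $F_l^c$ piece is $\int_{F_l^c}k^2$, forcing $k=0$ a.e.\ on $F_l^c$. This is exactly the alternative you mention at the end, so the two arguments are very close in spirit; your operator-kernel formulation has the minor advantage of making the closedness immediate from continuity without any case distinction on whether $F_l^c$ is null.
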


\begin{proof}
The fact that $S_{k_0,l}$ is a subspace is trivial. Let $\{k_n\}\subset
S_{k_0,l}$ and suppose $k_n\rightarrow_{L^2} k\in L^2([0,1]^2)$.
Further suppose $\{(x,y)\in [0,1]^2: k_0(x,y)< l\}$ is not a null set;  otherwise $S_{k_0,l}=L^2([0,1]^2)$ and the result immediately follows. 
Then, we have
\begin{equation*}
\int_{\{k_0\ge l\}} (k_n(x,y)-k(x,y))^2dydx + \int_{\{k_0 < l\}}
k(x,y)^2 dxdy\rightarrow 0.
\end{equation*}
Since $\int_{\{k_0\ge l\}} (k_n(x,y)-k(x,y))^2dydx\ge0$,
if $\int_{\{k_0< l\}} k(x,y)^2 dxdy>0$ then we obtain a contradiction; thus, $%
\int_{\{k_0< l\}} k(x,y)^2 dxdy=0$ and therefore $k=0$ a.e. on $\{(x,y)\in
[0,1]^2: k_0(x,y)< l\}$. Hence, $S_{k_0,l}$ is closed.
\end{proof}

Let 
\begin{equation}
    \label{Fly}
F_l^y:=\{x\in[0,1]:(x,y)\in F_l\},
\end{equation}
 and for $F_l\subset [0,1]^2$, define 
$$\Xi(F_l)=\{y\in [0,1]: m(F_l^y)>0\}.$$
The following lemma provides sufficient conditions for a functional of the general form we wish to optimise to not uniformly vanish.
The general objective has the form $\mathcal{J}(\dot{k})=\int\int \dot{k}(x,y)\mathcal{E}(x,y)\ dy\ dx$; in our first specific objective (optimising response of expectations) we put $\mathcal{E}(x,y)=((\text{Id}-L_0^*)^{-1}c)(x)\cdot f_0(y)$ and in our second specific objective (optimising mixing) we put $\mathcal{E}(x,y)=E(x,y)$ from (\ref{eq:almost-soln-2nd-evalue}).
Let $\mathcal{E}^+$ and $\mathcal{E}^-$ denote the positive and negative parts of $\mathcal{E}$.
For $y\in \Xi(F_l)$, let $A(y)=\int_{F_l^y} \mathcal{E}^+(x,y)\ dx$ and $a(y)=\int_{F_l^y} \mathcal{E}^-(x,y)\ dx$.
\begin{lemma}
\label{lem:nonzeroobj}
Assume that there is $\Xi'\subset \Xi(F_l)$ such that $m(\Xi')>0$ and $A(y),a(y)>0$ for $y\in \Xi'$.
Then there is a $\dot{k}\in P_l$ such that $\mathcal{J}(\dot{k})>0$.
\end{lemma}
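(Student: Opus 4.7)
The plan is to exhibit an explicit $\dot{k}\in P_l$ realising $\mathcal{J}(\dot{k})>0$, built fibre-by-fibre in $y$: on each fibre we will redistribute mass from the negative part of $\mathcal{E}(\cdot,y)$ to its positive part, keeping the redistribution inside $F_l^y$ so that $\mathrm{supp}(\dot{k})\subseteq F_l$ and so that the fibre integral vanishes in $x$. For each $y\in\Xi'$ I would set
\[
\phi_y(x) := \mathbf{1}_{F_l^y}(x)\left(\frac{\mathcal{E}^+(x,y)}{A(y)} - \frac{\mathcal{E}^-(x,y)}{a(y)}\right).
\]
A direct computation gives $\int_0^1\phi_y(x)\,dx = 1-1 = 0$, so that the joint kernel will lie in $V_{\ker}$ by Lemma \ref{lem:charac-mean-0}; and since $\mathcal{E}^+$ and $\mathcal{E}^-$ have disjoint supports, the key pairing is
\[
\int_0^1 \phi_y(x)\mathcal{E}(x,y)\,dx = \frac{\int_{F_l^y}(\mathcal{E}^+(x,y))^2\,dx}{A(y)} + \frac{\int_{F_l^y}(\mathcal{E}^-(x,y))^2\,dx}{a(y)} > 0,
\]
where positivity uses that $A(y),a(y)>0$ on $\Xi'$ are themselves the $L^1$ norms of $\mathcal{E}^\pm(\cdot,y)\mathbf{1}_{F_l^y}$, so neither $\mathcal{E}^+(\cdot,y)$ nor $\mathcal{E}^-(\cdot,y)$ vanishes a.e.\ on $F_l^y$.

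The main technical obstacle is integrability of the joint function: the denominators $A(y)$, $a(y)$ can be arbitrarily small, so the naive choice $\mathbf{1}_{\Xi'}(y)\phi_y(x)$ is not guaranteed to lie in $L^2([0,1]^2)$. To handle this I would exhaust $\Xi'$ by the measurable sets $\Xi_n := \{y\in\Xi' : A(y)\geq 1/n,\ a(y)\geq 1/n\}$; since $\Xi'=\bigcup_n \Xi_n$ has positive measure, some $\Xi_{n_0}$ has $m(\Xi_{n_0})>0$. Setting $\dot{k}(x,y):=\mathbf{1}_{\Xi_{n_0}}(y)\,\phi_y(x)$ (which is jointly measurable by Fubini since $A,a$ are measurable functions of $y$), and using that $\mathcal{E}\in L^2([0,1]^2)$ in both applications of the lemma,
\[
\|\dot{k}\|_{L^2([0,1]^2)}^2 \leq n_0^2\int_{\Xi_{n_0}}\int_{F_l^y}\bigl((\mathcal{E}^+(x,y))^2+(\mathcal{E}^-(x,y))^2\bigr)\,dx\,dy \leq n_0^2\,\|\mathcal{E}\|_{L^2([0,1]^2)}^2 < \infty.
\]
Consequently $\dot{k}\in V_{\ker}\cap S_{k_0,l}$ and $\mathcal{J}(\dot{k})=\int_{\Xi_{n_0}}\int\phi_y(x)\mathcal{E}(x,y)\,dx\,dy>0$, the latter by the fibrewise positivity established above and non-negativity of the integrand on the complementary $y$-set.

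To land the construction inside $B_1$, I would finally rescale by $1/\max(1,\|\dot{k}\|_{L^2})$; support, fibrewise zero-mean and strict positivity of $\mathcal{J}$ are all preserved under positive scalar multiplication, so the rescaled perturbation belongs to $V_{\ker}\cap S_{k_0,l}\cap B_1 = P_l$ and still satisfies $\mathcal{J}(\cdot)>0$. The substantive step is the $L^2$ control; the rest is manipulation of the defining properties of $P_l$ and $\Xi'$.
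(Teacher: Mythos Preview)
Your proof is correct and follows essentially the same fibrewise strategy as the paper: build a zero-mean perturbation on each $x$-fibre $F_l^y$ that correlates positively with $\mathcal{E}(\cdot,y)$. The only difference is normalization: the paper sets $\dot{k}(x,y)=\mathbf{1}_{F_l^y}(x)\bigl(a(y)\mathcal{E}^+(x,y)-A(y)\mathcal{E}^-(x,y)\bigr)$ on all of $\Xi(F_l)$, multiplying by $a(y),A(y)$ rather than dividing, and then appeals to ``trivial scaling'' to land in $B_1$. Your version, by dividing and then restricting to $\Xi_{n_0}$, is actually more explicit about the $L^2$-membership of $\dot{k}$, a point the paper leaves implicit at the level of this lemma.
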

\begin{proof}
For $y\in\Xi(F_l)$, set $\dot{k}(x,y)=\mathbf{1}_{F_l^y}(x)\left(a(y)\mathcal{E}^+(x,y)-A(y)\mathcal{E}^-(x,y)\right)$.
To show $\dot{k}\in P_l$ we need to check that (i) the support of $\dot{k}$ is contained in $F_l$ and (ii) $\int_{F_l^y} \dot{k}(x,y)\ dx=0$ for a.e. $y\in \Xi(F_l)$;  these points show $\dot{k}\in S_{k_0,l}\cap V_{\ker}$ and by trivial scaling we may obtain $\dot{k}\in B_1$.
Item (i) is obvious from the definition of $\dot{k}$.
For item (ii) we compute
\begin{equation*}
\int_{F_l^y} \dot{k}(x,y)\ dx=\int_{F_l^y}(a(y)\mathcal{E}^+(x,y)-A(y)\mathcal{E}^-(x,y))\ dx=a(y)A(y)-A(y)a(y)=0.
\end{equation*}
Finally, we check that $\mathcal{J}(\dot{k})>0$.
One has
\begin{eqnarray*}
\lefteqn{\int_{F_l} \dot{k}(x,y)\mathcal{E}(x,y)\ dx\ dy}\\
&=&\int_{F_l} \left(a(y)\mathcal{E}^+(x,y)-A(y)\mathcal{E}^-(x,y)\right)\cdot \mathcal{E}(x,y)\ dx\ dy\\
&=&\int_{F_l} a(y)(\mathcal{E}^+(x,y))^2+A(y)(\mathcal{E}^-(x,y))^2\ dx\ dy\\
&=&\int_{\Xi(F_l)}\left[\left(\int_{F_l^y} \mathcal{E}^-(x,y)\ dx\right)\cdot\left(\int_{F_l^y} (\mathcal{E}^+(x,y))^2\ dx\right)+\left(\int_{F_l^y} \mathcal{E}^+(x,y)\ dx\right)\cdot\left(\int_{F_l^y} (\mathcal{E}^-(x,y))^2\ dx\right)\right]\ dy.
\end{eqnarray*}
This final expression is positive due by the hypotheses of the Lemma.
\end{proof}
\begin{remark}
We note that in the situation where $\mathcal{E}(x,y)$ is in separable form $\mathcal{E}(x,y)=h_1(x)h_2(y)$---as in the case of optimising the derivative of the expectation of an observable $c$ , and in the case of optimising the derivative of a real eigenvalue---then $A(y)=h_2(y)\int_{F_l^y} h_1^+(x)\ dx$ and $a(y)=h_2(y)\int_{F_l^y} h_1^-(x)\ dx$. 
Because $h_2=f_0$ and $h_2=e$ are not the zero function, and $h_1=(\text{Id}-L_0^*)^{-1}c$ and $h_1=\hat{e}$ are both nontrivial signed functions, the conditions of Lemma \ref{lem:nonzeroobj} are relatively easy to satisfy.
\end{remark}



\subsection{Maximising the expectation of an observable}
\label{sec:explicit-formula}

In this section we provide an explicit formula for the optimal kernel perturbation to increase the expectation of an observation function $c$ by the greatest amount.
Since the objective function in \eqref{P1} is linear in $\dot{k}$, a
maximum will occur on $\partial B_1\cap V_{\ker}\cap S_{k_0,l}$ (i.e.\ we
only need to consider the optimization over the unit sphere and not the unit
ball). 
Thus, we consider the
following reformulation of the general problem \ref{prob1}:

\begin{problem}
\label{prob3}
Given $l > 0 $ and $c\in $ span$\{f_0\}^\perp$, solve
\begin{eqnarray}
\min_{\dot{k}\in V_{\ker }\cap S_{k_0,l}} &&-\big\langle c,R(\dot{k})%
\big\rangle_{L^{2}([0,1],\bR)}  \label{obj_lin_fun-min} \\
\mbox{subject to} &&\Vert \dot{k}\Vert _{L^{2}([0,1]^{2})}^{2}-1=0.
\label{constraint-bound}
\end{eqnarray}

\end{problem}

Our first main result is:
\begin{theorem}
\label{thm:explicit-formula} 
Let $L_0:L^2\rightarrow L^2$ be an integral operator 
with the stochastic kernel $k_0\in L^2([0,1]^2)$. 
Suppose that $L_0$ satisfies $(A1)$ of Theorem 
\ref{th:linearresponse copy(1)} and 
that there is a $\Xi'\subset \Xi(F_l)$ with $m(\Xi')>0$ and $f_0(y)>0, \int_{F_l^y} ((\text{Id}-L_0^*)^{-1}c)^+(x)\ dx>0$, and $\int_{F_l^y} ((\text{Id}-L_0^*)^{-1}c)^-(x)\ dx>0$ 
for $y\in\Xi'$.
Then the unique
solution to  Problem \ref{prob3}   is
\begin{equation}  \label{opt-soln}
\dot{k}(x,y)=
\begin{cases}
\frac{f_0(y)}{\alpha}\left(((\text{Id}-L_{0}^{\ast })^{-1}c)(x)-\frac{%
\int_{F_l^y}((\text{Id}-L_{0}^{\ast })^{-1}c)(z)dz}{m(F_l^y)} \right) & (x,y)\in F_l, \\
0 & \text{otherwise},%
\end{cases}%
\end{equation}
where $\alpha>0$ is selected so that $\|\dot{k}%
\|_{L^2([0,1]^2)}=1$. 
Furthermore, if $c\in W:=$ span$\{f_0\}^\perp\cap L^\infty$, $k_0\in L^\infty([0,1]^2)$, and $k_0$ is such that $%
L_0:L^1\rightarrow L^1$ is compact, then $\dot{k}\in
L^\infty([0,1]^2)$.
\end{theorem}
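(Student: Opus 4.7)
The plan is to (i) rewrite the linear objective via the adjoint resolvent, (ii) invoke the abstract existence/uniqueness machinery of Section~\ref{subsec:abst-opt-setng}, (iii) derive the explicit formula using Lagrange multipliers, and (iv) bootstrap the $L^\infty$ regularity from the formula. For step~(i), since $L_0$ is integral-preserving we have $L_0^*\mathbf{1}=\mathbf{1}$, so $W:=\mathrm{span}\{f_0\}^\perp$ is $L_0^*$-invariant; because $L_0$ is compact with $1$ a simple isolated eigenvalue (Theorem~\ref{th:linearresponse copy(1)}), the same holds for $L_0^*$, and hence $L_0^*|_W$ has spectral radius strictly less than $1$. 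Thus $(\text{Id}-L_0^*)^{-1}:W\to W$ is continuous and $\phi:=(\text{Id}-L_0^*)^{-1}c\in W$ is well-defined. Taking adjoints in the formula of Corollary~\ref{LR} will yield
$$\langle c,R(\dot k)\rangle_{L^2}=\int\!\!\int_{F_l}\dot k(x,y)\,\phi(x)f_0(y)\,dx\,dy,$$
reducing Problem~\ref{prob3} to minimizing a continuous linear functional over $P_l$.

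For step~(ii), I note that $P_l$ is the unit ball in the Hilbert space $\mathcal H:=V_{\ker}\cap S_{k_0,l}$ (closed by Lemmas~\ref{lemma:closedsub} and~\ref{Sk0lem}), hence bounded, closed, and strictly convex, with $0$ in its relative interior. I will apply Lemma~\ref{lem:nonzeroobj} with $\mathcal E(x,y)=\phi(x)f_0(y)$: the hypotheses of the theorem on $f_0(y)>0$ and on positivity of $\int_{F_l^y}((\text{Id}-L_0^*)^{-1}c)^{\pm}$ for $y\in\Xi'$ are precisely what is required to make the quantities $A(y),a(y)$ of Lemma~\ref{lem:nonzeroobj} positive, so the objective does not uniformly vanish on $P_l$. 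Propositions~\ref{prop:exist} and~\ref{prop:uniqe} then produce a unique maximizer, and since the objective is linear, the norm constraint $\|\dot k\|_{L^2}=1$ must be active.

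For step~(iii), I introduce a scalar multiplier $\alpha$ for the norm constraint and a multiplier function $\mu(y)$ for the continuum of constraints $\{\int \dot k(x,y)\,dx=0\}_{y\in[0,1]}$, keeping the support constraint implicit by restricting variations to $S_{k_0,l}$. The Lagrangian
$$\mathcal L(\dot k)=-\int\!\!\int_{F_l}\dot k(x,y)\phi(x)f_0(y)\,dx\,dy+\alpha\big(\|\dot k\|_{L^2}^2-1\big)+\int_0^1\mu(y)\int_0^1 \dot k(x,y)\,dx\,dy$$
gives, upon varying $\dot k$ over test functions supported in $F_l$, the Euler--Lagrange equation $-\phi(x)f_0(y)+2\alpha\dot k(x,y)+\mu(y)=0$ on $F_l$. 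Solving yields $\dot k(x,y)=(\phi(x)f_0(y)-\mu(y))/(2\alpha)$; enforcing $\int_{F_l^y}\dot k(x,y)\,dx=0$ then determines $\mu(y)=f_0(y)\,m(F_l^y)^{-1}\int_{F_l^y}\phi(z)\,dz$. Substituting and absorbing $2\alpha$ into a single positive normalizing constant produces formula~\eqref{opt-soln}; positivity of $\alpha$ follows from a Cauchy--Schwarz computation showing $\langle c,R(\dot k)\rangle>0$ for this candidate, matching the sign demanded by maximality. Uniqueness from Proposition~\ref{prop:uniqe} then identifies this critical point as the sought solution.

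For step~(iv), I bootstrap $\phi\in L^\infty$ from the identity $\phi=c+L_0^*\phi$: since $\phi\in L^2$ and the kernel of $L_0^*$ is $(y,x)\mapsto k_0(x,y)\in L^\infty([0,1]^2)$, the bound~\eqref{KF2} gives $\|L_0^*\phi\|_\infty\le\|k_0\|_\infty\|\phi\|_1\le\|k_0\|_\infty\|\phi\|_2<\infty$, and $c\in L^\infty$ then yields $\phi\in L^\infty$ (the compactness hypothesis on $L_0:L^1\to L^1$ can alternatively be used to obtain the spectral gap of $L_0^*$ on $L^\infty$). Since $f_0\in L^\infty$ by the lemma in Section~\ref{newsec3} using $k_0\in L^\infty$, formula~\eqref{opt-soln} immediately shows $\dot k\in L^\infty$. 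The principal technical obstacle will be the infinite-dimensional Lagrange multiplier step---specifically, handling the continuum of mean-zero constraints indexed by $y$ rigorously. The strategy of relaxing only these via the multiplier function $\mu$, keeping the support constraint implicit in the variational class, and invoking Proposition~\ref{prop:uniqe} to certify optimality of the explicit candidate cleanly circumvents this difficulty.
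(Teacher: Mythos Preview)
Your approach is correct and differs from the paper in two places. For the derivation of the formula (step iii), the paper keeps $V_{\ker}\cap S_{k_0,l}$ as the ambient Hilbert space, uses only a single scalar multiplier $\mu$ for the norm constraint, verifies by direct computation that the candidate satisfies the first-order condition $f(\tilde k)+2\mu\langle\dot k,\tilde k\rangle=0$ for all $\tilde k\in V_{\ker}\cap S_{k_0,l}$, and then checks second-order sufficient conditions (Bonnans--Shapiro, Lemma~3.65) to certify local optimality before invoking Proposition~\ref{prop:uniqe}. Your function-valued multiplier $\mu(y)$ is formally equivalent---it amounts to computing the orthogonal projection of $\phi(x)f_0(y)\mathbf 1_{F_l}$ onto $V_{\ker}\cap S_{k_0,l}$---but your certification ``critical point plus uniqueness'' is incomplete as stated: uniqueness of the maximizer does not by itself identify a given critical point as that maximizer (e.g.\ $-\dot k$ is also critical). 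The clean completion is to observe directly that the objective on the unit ball of $\mathcal H=V_{\ker}\cap S_{k_0,l}$ equals $\langle\Psi,\cdot\rangle_{\mathcal H}$ with $\Psi$ the projection just described, so Cauchy--Schwarz gives the maximum $\|\Psi\|$ attained uniquely at $\Psi/\|\Psi\|=\dot k$, bypassing both the infinite-dimensional multiplier theory and the second-order analysis. For step~(iv), your one-line bootstrap $\phi=c+L_0^*\phi\in L^\infty$ (using $\phi\in L^2\subset L^1$, $k_0\in L^\infty$, and \eqref{KF2}) is strictly simpler than the paper's route, which uses compactness of $L_0$ on $L^1$ to obtain exponential contraction on zero-mean $L^1$ functions and then applies the Closed Range Theorem and the Fredholm alternative to show $(\mathrm{Id}-L_0^*)^{-1}$ is bounded as an operator on $L^\infty\cap\mathrm{span}\{f_0\}^\perp$; your argument shows the $L^1$-compactness hypothesis is not needed for the stated conclusion. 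One small omission in your outline: you should verify the unnormalized candidate lies in $L^2([0,1]^2)$, since the fibrewise-average term carries a factor $1/m(F_l^y)$; the paper handles this via a Cauchy--Schwarz estimate $|\hat g(y)|^2\le\|\phi\|_2^2/m(F_l^y)$, which cancels after integrating over $x\in F_l^y$.
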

\begin{proof}
See Appendix \ref{apx:thm:explicit-formula}.
\end{proof}
Note that the expression for the optimal perturbation $\dot{k}$ in (\ref{opt-soln}) depends only on $k_0$ and $c$.
This is in part a consequence of the fact that the linear response formula (\ref{kerrsep}) depends only on the first order term $\dot{k}$ (the ``direction'' of the perturbation) in the expansion of $k_\delta$.
Thus, in order to find the unique perturbation that optimises our linear response, we seek the best ``direction'' for  the perturbation. 
Similar comments hold for our other three optimal linear perturbation results in later sections.




\begin{remark}
In certain situations we may desire to make non-infinitesimal perturbations $k_\delta:= k_0 + \delta\cdot \dot{k}$ that remain stochastic for small $\delta>0$.
If
$\dot{k}\in L^\infty ([0,1]^2)\cap V_{\ker}\cap S_{k_0,l}$,
clearly 
 $k_\delta= k_0 + \delta\cdot \dot{k}$ satisfies $\int k_\delta(x,y) dx =1$
for a.e. $y$. Also, as we are only perturbing at values where $k_0\ge
l>0 $, and since $\dot{k}$ is essentially bounded, there exists a $\bar{\delta}>0$ such that $k_\delta \ge 0$ a.e. for all $%
\delta\in (0,\bar\delta)$.
In summary, for $\delta \in (0,\bar{\delta})$, $k_\delta
$ is a stochastic kernel.

The compactness condition on $L_0:L^1\to L^1$ required for essential boundedness of $\dot{k}$ can be addressed as follows.
A criterion for $L_0$ to be compact on $L^1([0,1])$ is the following (see \cite{E}): Given $%
\varepsilon>0$ there exists $\beta>0$ such that for a.e. $y\in [0,1]$ and $%
\gamma\in {\mathbb{R}}$ with $|\gamma|<\beta$,
\begin{equation*}
\int_{{\mathbb{R}}}\big|\tilde{k}(x+\gamma,y)-\tilde{k}(x,y)\big|%
dx<\varepsilon,
\end{equation*}
where $\tilde{k}:\mathbb{R}\times [0,1]\to\mathbb{R}$ is defined by
\begin{equation*}
\tilde{k}(x,y) =
\begin{cases}
k_0(x,y) & x\in[0,1], \\
0 & \text{otherwise}.%
\end{cases}%
\end{equation*}
A class of kernels that satisfy this are
essentially bounded kernels $k_0:[0,1]\times[0,1]\rightarrow\bR$ that are uniformly
continuous in the first coordinate.
Such a class naturally arises in our dynamical systems settings.
\end{remark}

\subsection{Maximally increasing the mixing rate}
\label{seceig}

Let $\lambda_0\in \bC$ denote a geometrically simple eigenvalue of $L_0$ strictly inside the unit circle and $e$ and $\hat{e}$ denote the corresponding eigenvectors of $L_0$ and $L_0^*$, respectively.
Our results concerning optimal rate of movement of $\lambda_0$ under system perturbation work for any $\lambda_0$ as above, but eigenvalues of largest magnitude inside the unit circle have the additional significance of controlling the exponential rate of mixing. 
We therefore primarily focus on these eigenvalues and in this section we consider again the linear response problem for enhancing the rate of mixing, now providing explicit formulae for optimal perturbations and the response.
%

Since we
are again interested in kernel perturbations that will ensure that the perturbed
kernel $k_\delta$ is nonnegative, 
we consider the constraint set $P_l$, as in Section \ref{subsec:abst-opt-setng},
where $0<l<1$. 
The objective function of \eqref{P2} is linear and therefore, we only need
to consider the optimization problem on $V_{\ker}\cap S_{k_0,l}\cap \partial
B_1$. Thus, to obtain the perturbation $\dot{k}$ that will enhance the
mixing rate, we solve the following optimization problem:
\begin{problem}\label{prob3.1}
Given $l > 0$, solve
\begin{eqnarray}
\min_{\dot{k}\in V_{\ker }\cap S_{k_0,l}} &&\big\langle\dot{k},E\big\rangle%
_{L^{2}([0,1]^{2},\bR)\label{obj_lin_fun-min-2nd-eval}} \\
\mbox{such that} &&\Vert \dot{k}\Vert _{L^{2}([0,1]^{2},\bR)}^{2}-1=0,
\label{constraint-bound-2nd-eval}
\end{eqnarray}
where $E$ is defined in \eqref{eq:almost-soln-2nd-evalue}.
\end{problem}
\begin{theorem}
\label{thm:explicit-formula-eval} Let $L_0:L^2([0,1],\bC)\rightarrow L^2([0,1],\bC)$ be an integral operator 
with the stochastic kernel $k_0\in L^2([0,1]^2,\bR)$. 
Suppose that $L_0$ satisfies $(A1)$ of Theorem 
\ref{th:linearresponse copy(1)} and 
that there is a $\Xi'\subset \Xi(F_l)$ with $m(\Xi')>0$, and $\int_{F_l^y} E(x,y)^+\ dx>0$ and $\int_{F_l^y} E(x,y)^-\ dx>0$ 
for $y\in\Xi'$.
Then, the unique solution to Problem \ref{prob3.1}  is
\begin{equation}  \label{eq:soln-2nd-eval}
\dot{k}(x,y)=
\begin{cases}
\frac{1}{\alpha}\left(\frac{1}{m(F_l^y)}\int_{F_l^y} E(x,y)dx - E(x,y)\right) & (x,y)\in F_l\\
0 & \text{otherwise},%
\end{cases}%
\end{equation}
where $E$ is given in \eqref{eq:almost-soln-2nd-evalue} and $\alpha>0$ is
selected so that $\|\dot{k}\|_{L^2([0,1]^2,\bR)}=1$.
Furthermore, if $k_0\in L^\infty([0,1]^2,\bR)$ then $\dot{k}\in L^\infty([0,1]^2,\bR)$.
\end{theorem}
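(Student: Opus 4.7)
The plan is to adapt the Lagrange multiplier argument that establishes Theorem \ref{thm:explicit-formula} (detailed in the appendix) to the new linear objective $\mathcal{J}(\dot k)=\langle \dot k,E\rangle_{L^2([0,1]^2,\mathbb{R})}$. Problem \ref{prob3.1} has the same feasible set structure as Problem \ref{prob3}: a support constraint $\mathrm{supp}(\dot k)\subseteq F_l$, a fiber-wise zero-mean constraint $\int_{F_l^y}\dot k(x,y)\,dx=0$ for a.e.\ $y$, and a unit $L^2$-norm constraint. Only the linear functional to be minimised has changed, from $-\langle c,R(\cdot)\rangle$ to $\langle\cdot,E\rangle$, so everything reduces to projecting $E$ onto $V_{\ker}\cap S_{k_0,l}$, normalising, and flipping the sign.

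Concretely, I would form the Lagrangian
\begin{equation*}
\mathcal{L}(\dot k,\mu,\alpha)=\int_{F_l}\dot k(x,y)E(x,y)\,dx\,dy+\int \mu(y)\int_{F_l^y}\dot k(x,y)\,dx\,dy+\alpha\bigl(\|\dot k\|_{L^2}^2-1\bigr),
\end{equation*}
where $\mu:[0,1]\to\mathbb{R}$ is the multiplier function enforcing the zero-mean constraint on each fiber and $\alpha\in\mathbb{R}$ enforces the norm constraint. Stationarity in $\dot k(x,y)$ on $F_l$ gives $E(x,y)+\mu(y)+2\alpha\,\dot k(x,y)=0$, and integrating the resulting expression against $\mathbf{1}_{F_l^y}(x)\,dx$ forces $\mu(y)=-\frac{1}{m(F_l^y)}\int_{F_l^y}E(x,y)\,dx$ for $y\in\Xi(F_l)$. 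Substituting back yields \eqref{eq:soln-2nd-eval} up to the positive normalising constant $\alpha$, chosen so that $\|\dot k\|_{L^2}=1$; the sign is fixed by the requirement that we \emph{minimise} the linear objective.

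Uniqueness then follows by invoking Proposition \ref{prop:uniqe} with $\mathcal{H}=V_{\ker}\cap S_{k_0,l}$ (closed by Lemma \ref{lemma:closedsub} and Lemma \ref{Sk0lem}) and $P=P_l=\mathcal{H}\cap B_1$, which is bounded, strictly convex, and contains the zero vector in its relative interior whenever $F_l$ has positive Lebesgue measure. The hypothesis that $\int_{F_l^y}E^+(x,y)\,dx>0$ and $\int_{F_l^y}E^-(x,y)\,dx>0$ on a set $\Xi'\subset\Xi(F_l)$ of positive measure is exactly the content of Lemma \ref{lem:nonzeroobj} with $\mathcal{E}=E$, ensuring that $\mathcal{J}$ does not uniformly vanish on $P_l$. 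Proposition \ref{prop:uniqe} then yields uniqueness of the minimiser, which must coincide with the stationary point constructed above.

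For the final $L^\infty$ claim, I would use a bootstrap on the eigenfunction equations. Since $k_0\in L^\infty([0,1]^2,\mathbb{R})$, inequality \eqref{KF2} gives $\|L_0 g\|_\infty\le \|k_0\|_\infty\|g\|_1$, and the same holds for $L_0^*$ with kernel $k_0(y,x)$. Applying this to $e=\lambda_0^{-1}L_0 e$ and $\hat e=\bar\lambda_0^{-1}L_0^*\hat e$ (both well-defined since $\lambda_0\neq 0$) yields $e,\hat e\in L^\infty([0,1],\mathbb{C})$, hence $E\in L^\infty([0,1]^2,\mathbb{R})$ from its definition in \eqref{eq:almost-soln-2nd-evalue}. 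The averaging term $\frac{1}{m(F_l^y)}\int_{F_l^y}E(x,y)\,dx$ is bounded pointwise by $\|E\|_\infty$, and so the right-hand side of \eqref{eq:soln-2nd-eval} lies in $L^\infty$. The main obstacle I anticipate is the rigorous justification of the Lagrange multiplier step with a continuum of fiber-wise constraints, but since this is already carried out in the appendix proof of Theorem \ref{thm:explicit-formula} with the same constraint structure, only the objective needs updating and the argument transfers verbatim.
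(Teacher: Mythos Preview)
Your proposal is correct in all essentials and leads to the same formula by the same underlying mechanism (fibre-wise centering of $E$, then normalisation, with uniqueness via Proposition~\ref{prop:uniqe} and Lemma~\ref{lem:nonzeroobj}, and the $L^\infty$ bootstrap via \eqref{KF2}). There is one point worth flagging in how your Lagrangian differs from the paper's.

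You introduce a \emph{function-valued} multiplier $\mu(y)$ to handle the continuum of fibre constraints $\int_{F_l^y}\dot k(x,y)\,dx=0$, then solve for $\mu(y)$ by integrating the stationarity condition over $F_l^y$. The paper instead takes $V_{\ker}\cap S_{k_0,l}$ as the ambient Hilbert space from the outset and uses a \emph{single scalar} multiplier for the sphere constraint only; the fibre/zero-mean conditions are absorbed into the space rather than enforced by multipliers. The first-order condition then reads $\langle\tilde k,\,E+2\mu\dot k\rangle_{L^2([0,1]^2)}=0$ for all $\tilde k\in V_{\ker}\cap S_{k_0,l}$, and one \emph{verifies directly} that the proposed $\dot k$ makes the residual $E+2\mu\dot k$ constant in $x$ on each fibre (hence orthogonal to $V_{\ker}\cap S_{k_0,l}$). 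So your remark that the appendix ``carries out'' the continuum-multiplier argument is not quite accurate: the paper sidesteps that issue entirely by working in the subspace, which lets it invoke the second-order theory from \cite{BS} without worrying about infinite-dimensional constraint manifolds. Your route is more heuristic in deriving the formula but would require separate justification of the $\mu(y)$ multiplier rule; the paper's route is cleaner for the rigorous optimality check. Both yield the same $\dot k$, and the paper also includes an explicit verification that the unnormalised candidate $M=\mathbf 1_{F_l}(E-h)$ lies in $L^2([0,1]^2)$ (a Cauchy--Schwarz estimate on the fibre averages), which you implicitly assume.
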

\begin{proof}
See Appendix \ref{apx:thm:explicit-formula-eval}.
\end{proof}

If $\lambda_0$ is real, the optimal kernel has a simpler form:
\begin{corollary}
\label{cor:maxmix}
If $\lambda_{0}$ is real and $%
k_0\ge l$, then the solution to Problem \ref{prob3.1}  is
\begin{equation}
\label{optkernelevaleqn}
\dot{k}(x,y) = sgn(\lambda_0) \frac{e(y)}{\|e\|_2}\left( \frac{\langle\hat{e},\mathbf{1}\rangle_{L^2([0,1],\bR)}\mathbf{1} -%
\hat{e}(x)}{\|\langle\hat{e},\mathbf{1}\rangle_{L^2([0,1],\bR)}\mathbf{1}-\hat{e}\|_2}\right).
\end{equation}
\end{corollary}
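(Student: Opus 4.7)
The plan is to derive (\ref{optkernelevaleqn}) as a direct specialisation of the formula (\ref{eq:soln-2nd-eval}) in Theorem \ref{thm:explicit-formula-eval}. Since the kernel $k_0$ is real and $\lambda_0\in\mathbb{R}$, I would first select the eigenvectors $e$ and $\hat{e}$ of $L_0$ and $L_0^{*}$ (associated with $\lambda_0$) to be real-valued, so that $\Im(e)=\Im(\hat{e})=0$. Plugging this together with $\Im(\lambda_0)=0$ into the definition (\ref{eq:almost-soln-2nd-evalue}) collapses $E$ to the separable form $E(x,y)=\lambda_0\,\hat{e}(x)\,e(y)$.

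Next, the hypothesis $k_0\geq l$ on $[0,1]^2$ gives $F_l=[0,1]^2$, and hence $F_l^y=[0,1]$ with $m(F_l^y)=1$ for every $y\in[0,1]$. Substituting the simplified $E$ into (\ref{eq:soln-2nd-eval}) yields
$$\dot{k}(x,y)=\frac{\lambda_0\,e(y)}{\alpha}\Bigl(\langle\hat{e},\mathbf{1}\rangle_{L^2([0,1],\mathbb{R})}-\hat{e}(x)\Bigr).$$
The separable product structure makes the normalising constant $\alpha$ easy to compute from $\|\dot{k}\|_{L^2([0,1]^2,\mathbb{R})}=1$: one obtains $\alpha=|\lambda_0|\,\|e\|_2\,\|\langle\hat{e},\mathbf{1}\rangle_{L^2([0,1],\mathbb{R})}\mathbf{1}-\hat{e}\|_2$. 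Factoring $\lambda_0/|\lambda_0|=\mathrm{sgn}(\lambda_0)$ out of the prefactor produces exactly (\ref{optkernelevaleqn}).

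Before applying Theorem \ref{thm:explicit-formula-eval} I would verify its positivity hypothesis on $E$: namely that $\int_{F_l^y} E^{+}(x,y)\,dx$ and $\int_{F_l^y} E^{-}(x,y)\,dx$ are both strictly positive on a set of $y$ of positive Lebesgue measure. With the separable form $E(x,y)=\lambda_0\hat{e}(x)e(y)$ and the fact that $e$ is nontrivial, this reduces to showing that $\hat{e}$ changes sign. This follows from the duality computation $\lambda_0\langle\hat{e},f_0\rangle=\langle L_0^{*}\hat{e},f_0\rangle=\langle\hat{e},L_0 f_0\rangle=\langle\hat{e},f_0\rangle$, so $(\lambda_0-1)\langle\hat{e},f_0\rangle=0$; since $\lambda_0\neq 1$, we get $\langle\hat{e},f_0\rangle=0$, and because $f_0$ is a nontrivial nonnegative density, $\hat{e}$ must take both positive and negative values on sets of positive measure.

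The main obstacle is essentially just bookkeeping: no step is substantial, and once the real-valued reduction of $E$ is in place the formula (\ref{optkernelevaleqn}) drops out of the normalisation calculation. The only point requiring a small auxiliary argument is the sign-change property of $\hat{e}$, which is handled cleanly by the orthogonality identity above.
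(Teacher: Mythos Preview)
Your approach is essentially the same as the paper's: both simplify $E$ to the separable form $\lambda_0\,\hat{e}(x)\,e(y)$ and then specialise formula (\ref{eq:soln-2nd-eval}) with $F_l^y=[0,1]$. Your version is in fact more complete than the paper's terse argument, since you explicitly verify the sign-change hypothesis on $E$ required to invoke Theorem \ref{thm:explicit-formula-eval} (via the orthogonality $\langle\hat{e},f_0\rangle=0$, which indeed forces $\hat{e}$ to change sign because $k_0\ge l$ guarantees $f_0\ge l>0$ a.e.) and you carry out the normalisation of $\alpha$ explicitly.
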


\begin{proof}
We have $E(x,y) = \lambda_0\hat{e}(x)e(y)$; thus, 
the solution to the
optimization problem \eqref{obj_lin_fun-min-2nd-eval}-%
\eqref{constraint-bound-2nd-eval} is
\begin{equation*}
\dot{k}(x,y) = (\lambda_{0}/\alpha)\left(\int_0^1 \hat{e}(x)dx - \hat{e}%
(x)\right)e(y),
\end{equation*}
where $\alpha>0$ is the normalization constant such that $\|\dot{k}%
\|_{L^2([0,1]^2,\bR)}^2=1$. 
\end{proof}


\section{Linear response for map perturbations}

\label{sec:sys-add-noise}

In this section we consider
random dynamics governed
by the composition of a deterministic map $%
T_{\delta }$, $\delta\in[0,\bar{\delta})$, and additive i.i.d.\ perturbations, or ``additive noise''.
We will assume that
the noise is distributed according to a certain Lipschitz kernel $\rho 
$ 
and impose a
reflecting boundary condition that ensures that the dynamics remain in the interval $[0,1]$. More precisely, we consider a random
dynamical system 
whose trajectories are given
by%
\begin{equation}
x_{n+1}=T_{\delta }(x_{n})\ \hat{+}\ \omega _{n}, \label{systm}
\end{equation}%
where $\hat{+}$ is the ``boundary reflecting" sum, defined by $a\hat{+}b:=\pi (a+b)$, and
 $\pi :\mathbb{R}\rightarrow \lbrack 0,1]$  is the piecewise
linear map
$\pi (x)=\min_{i\in \mathbb{Z}}|x-2i|$.
%
We assume throughout that 
\begin{itemize}
\item[(T1)] $T_{\delta }:[0,1]\rightarrow \lbrack 0,1]$ is a Borel-measurable map for each $\delta\in [0,\bar\delta)$,
\item[(T2)] $\omega _{n}$ is an i.i.d. process distributed according to a
probability density $\rho \in Lip(\mathbb{R})$, supported on $[-1,1]$ with Lipschitz constant $K$.
\end{itemize}

\subsection{Expressing the map perturbation as a kernel perturbation}

In this subsection we describe precisely the kernel of the transfer operator of the system (\ref{systm}).
Associated with the  process \eqref{systm} is an integral-type transfer
operator $L_\delta$, which we will derive (following the method of \S 10.5 in \cite{LM}). 
Noting that $|\pi'(z)|=1$ for all $z\in\mathbb{R}$, the Perron-Frobenius operator $P_\pi:L^1(\mathbb{R})\rightarrow L^1([0,1])$
associated to the map $\pi$ is given by 
\begin{equation}\label{eq:PF-pi}
P_\pi f(x) = \sum_{z\in\pi^{-1}(x)}f(z)=\sum_{i\in 2\mathbb{Z}}(f(i+x)+f(i-x)).
\end{equation}
For $b\in\bR$ consider the shift operator $\tau_b$ defined by
$(\tau_b g)(y):=g(y+b)$ for $g\in Lip(\mathbb{R})$. 
For the process \eqref{systm}, suppose that $x_n$ has the distribution $f_n:[0,1]\to\mathbb{R}^+$ (i.e.\ $f_n\in L^1,\ f_n\ge 0$ and $\int f_n\ dm =1$).
We note that
$T_\delta(x_n)$ and $\omega_{n}$ are independent
and thus the joint density of $(x_n,\omega_n)\in[0,1]\times[-1,1]$ is $f_n\cdot\rho$. 
Let $h:[0,1]\rightarrow\bR$ be a bounded, measurable function and let $\mathbb{E}$ denote expectation with respect to Lebesgue measure; we then 
compute 
\begin{equation*}
\begin{aligned}
\mathbb{E}(h(x_{n+1})) 
&= \int_{-\infty}^{\infty}\int_0^1 h(\pi(T_\delta(y)+z))f_n(y)\rho(z)dydz\\
&= \int_0^1\int_{-\infty}^{\infty} h(\pi(z'))f_n(y)\rho(z'-T_\delta(y))dz'dy\\
&= \int_0^1 f_n(y)\int_{-\infty}^{\infty} h(\pi(z'))(\tau_{-T_\delta(y)}\rho)(z')dz'dy\\
&= \int_0^1 f_n(y)\int_{0}^{1} h(z') (P_\pi\tau_{-T_\delta(y)}\rho)(z')dz' dy,
\end{aligned}
\end{equation*}
where the last equality 
follows from the duality of the Perron-Frobenius and the Koopman 
operators for $\pi$.
Since $\mathbb{E}(h(x_{n+1})) = \int_{0}^1 h(x) f_{n+1}(x)dx$, and $h$ is arbitrary, the map
$f_n\mapsto f_{n+1}$ is given by 
\begin{equation*}
f_{n+1}(z')=\int_0^1 (P_\pi\tau_{-T_\delta(y)}\rho)(z') f_n(y)dy
\end{equation*}
for all $z'\in[0,1]$.
Thus, for $\delta\in[0,\bar{\delta})$ the integral operator $L_\delta:L^2([0,1])\rightarrow L^2([0,1])$
associated to the process \eqref{systm} is given by
\begin{equation}
L_{\delta }f(x)=\int k_{\delta }(x,y)f(y)dy,  \label{kernelL2}
\end{equation}%
where
\begin{equation}
k_{\delta }(x,y)=(P_\pi\tau_{-T_\delta(y)}\rho)(x)
\label{eq:ker-add-noise-explicit}
\end{equation}%
and $x,y\in [0,1]$. 

\begin{lemma}\label{lem:prop-add-noise-ker}
The kernel \eqref{eq:ker-add-noise-explicit} is a stochastic kernel in $L^\infty([0,1]^2)$. 
\end{lemma}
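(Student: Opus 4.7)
The plan is to verify three properties of $k_\delta(x,y)=(P_\pi\tau_{-T_\delta(y)}\rho)(x)$: (i) nonnegativity, (ii) the normalisation $\int_0^1 k_\delta(x,y)\,dx=1$ for a.e.\ $y\in[0,1]$, and (iii) the essential boundedness $k_\delta\in L^\infty([0,1]^2)$. The first two properties together constitute being a stochastic kernel (in the sense used throughout the section), while the third is an added regularity statement. All three will follow from the explicit series formula \eqref{eq:PF-pi} for $P_\pi$ together with the standing hypotheses on $\rho$ and $\pi$.

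For nonnegativity I will note that $\rho\ge 0$ as a probability density, hence $\tau_{-T_\delta(y)}\rho(z)=\rho(z-T_\delta(y))\ge 0$ for every $y$. Because $P_\pi$ is the Perron--Frobenius operator of the measurable map $\pi$, it is a positive operator on $L^1(\mathbb R)$, so $k_\delta(x,y)\ge 0$ for a.e.\ $(x,y)$. For the normalisation I will use that $P_\pi:L^1(\mathbb R)\to L^1([0,1])$ preserves the integral:
\begin{equation*}
\int_0^1 k_\delta(x,y)\,dx=\int_0^1 (P_\pi\tau_{-T_\delta(y)}\rho)(x)\,dx=\int_{\mathbb R}\rho(z-T_\delta(y))\,dz=\int_{\mathbb R}\rho\,dm=1,
\end{equation*}
using translation invariance of Lebesgue measure in the penultimate step. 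This holds for every $y\in[0,1]$, hence a.e.

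For the $L^\infty$ bound I will unfold \eqref{eq:PF-pi} to write
\begin{equation*}
k_\delta(x,y)=\sum_{i\in 2\mathbb Z}\bigl(\rho(i+x-T_\delta(y))+\rho(i-x-T_\delta(y))\bigr).
\end{equation*}
Because $\rho$ is supported in $[-1,1]$ and $x,T_\delta(y)\in[0,1]$, the arguments $i\pm x-T_\delta(y)$ lie in $[-1,1]$ only for at most finitely many even integers $i$ (independent of $x$, $y$ and $\delta$; a quick interval check shows $i\in\{-2,0,2\}$ suffices in each of the two sums). Since $\rho$ is Lipschitz with compact support in $[-1,1]$, it is bounded, with $\|\rho\|_\infty\le 2K$ (from $\rho(x)=\rho(x)-\rho(1)$ and the Lipschitz estimate), so there is a uniform constant $C$ with $|k_\delta(x,y)|\le C\|\rho\|_\infty$ for a.e.\ $(x,y)$. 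Hence $k_\delta\in L^\infty([0,1]^2)$.

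The proof is essentially bookkeeping; the only step that requires a moment's thought is the finite--support count in the sum defining $P_\pi\tau_{-T_\delta(y)}\rho$, which I will do by a short interval argument as above. No delicate estimate is needed, since all bounds are uniform in $\delta$ and $y$.
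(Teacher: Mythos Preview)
Your proof is correct and follows essentially the same approach as the paper: nonnegativity and stochasticity come from the Perron--Frobenius operator $P_\pi$ preserving these properties of the shifted density $\tau_{-T_\delta(y)}\rho$, while essential boundedness follows because $\rho$ is Lipschitz (hence bounded) with compact support, so only finitely many terms in the series defining $P_\pi$ contribute. You simply give more explicit detail on the finite-support count and the bound $\|\rho\|_\infty\le 2K$ than the paper does.
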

\begin{proof}
Stochasticity and nonnegativity of $k_\delta$ follow from stochasticity and nonnegativity of $\rho$ and the fact that Perron-Frobenius operators preserve these properties.
Essential boundedness of $k_\delta$ follows from the facts that $\rho$ is Lipschitz (thus essentially bounded), $\tau$ is a shift, and $P_\pi$ is constructed from a finite sum because $\rho$ has compact support. 
\end{proof}

\begin{proposition}
\label{kdottt}Assume that $k_{\delta }$  arising from the system $(T_{\delta },\rho
)$ is given by %
\eqref{eq:ker-add-noise-explicit}. Suppose that the family of interval maps $\{T_\delta\}_{\delta\in[0,\bar\delta)}$ satisfies
\begin{equation*}
T_{\delta }=T_{0}+\delta \cdot \dot{T} +t_\delta,
\end{equation*}
where $\dot{T},t_\delta\in L^{2}$ and $\|t_\delta\|_{2} =o(\delta)$.
Then
\begin{equation*}
k_{\delta }=k_{0}+\delta \cdot \dot{k}+r_\delta
\end{equation*}%
where $\k\in L^2([0,1]^2)$ is given by
\begin{equation}\label{derivk}
\dot{k}(x,y)=-\left(P_\pi\left(\tau_{-T_0(y)}\frac{d\rho}{dx}\right) \right)(x)\cdot \dot{T}(y)  
\end{equation}
and $r_\delta\in L^{2}([0,1]^{2})$ satisfies $\|r_\delta\|_{L^2([0,1]^2)}=o(\delta)$.

If additionally, $d\rho/dx$ is Lipschitz and the derivative of the map $\delta\mapsto T_\delta$ with respect to $\delta$ varies continuously in $L^2$ in a neighborhood of $\delta=0$, then $\delta\mapsto k_\delta$ has a continuous derivative with respect to $\delta$ in a neighborhood of $\delta=0$.
\end{proposition}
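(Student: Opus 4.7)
The plan is to write out $k_\delta - k_0$ exactly, using the Lipschitz regularity of $\rho$ to replace the increment of $\rho$ by an integral involving its a.e.\ derivative $d\rho/dx$, and then to peel off the first-order term and bound the remainder using continuity of translation in $L^2$.

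First I would set $\Delta_\delta(y) := T_\delta(y)-T_0(y) = \delta\dot T(y) + t_\delta(y)$ and observe that since $\rho$ is Lipschitz, the fundamental theorem of calculus yields, for a.e.\ $x,y$,
\[
\rho(x-T_\delta(y)) - \rho(x-T_0(y)) \;=\; -\Delta_\delta(y)\int_0^1 \tfrac{d\rho}{dx}\bigl(x - T_0(y) - s\Delta_\delta(y)\bigr)\,ds.
\]
Because $\rho$ is supported in $[-1,1]$ and $T_\delta(y)\in[0,1]$, the sum defining $P_\pi$ in (\ref{eq:PF-pi}) reduces to finitely many terms uniformly in $y$, so $P_\pi$ restricted to functions supported in $[-1,2]$ is a bounded operator both $L^2(\mathbb{R})\to L^2([0,1])$ and $L^\infty(\mathbb{R})\to L^\infty([0,1])$. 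Applying $P_\pi$ (in $x$) to the identity above, the leading-order part is $-\delta\dot T(y)\cdot P_\pi(\tau_{-T_0(y)}\tfrac{d\rho}{dx})(x)$, i.e.\ $\delta\dot k(x,y)$ as in \eqref{derivk}. The remainder $r_\delta := k_\delta - k_0 - \delta\dot k$ then decomposes naturally as $r_\delta = r_\delta^{(1)} + r_\delta^{(2)}$, where
\[
r_\delta^{(2)}(x,y) = -t_\delta(y)\int_0^1 P_\pi\!\left(\tau_{-T_0(y)-s\Delta_\delta(y)}\tfrac{d\rho}{dx}\right)\!(x)\,ds,
\]
\[
r_\delta^{(1)}(x,y) = -\delta\dot T(y)\int_0^1 P_\pi\!\left(\tau_{-T_0(y)-s\Delta_\delta(y)}\tfrac{d\rho}{dx} - \tau_{-T_0(y)}\tfrac{d\rho}{dx}\right)\!(x)\,ds.
\]

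To bound $r_\delta^{(2)}$ I would use $\|d\rho/dx\|_\infty \le K$ and $L^\infty$-boundedness of the finite-sum $P_\pi$ to get $|r_\delta^{(2)}(x,y)|\le C K|t_\delta(y)|$, hence $\|r_\delta^{(2)}\|_{L^2([0,1]^2)} \le CK\|t_\delta\|_2 = o(\delta)$. For $r_\delta^{(1)}$, Cauchy--Schwarz in $s$, Fubini, and $L^2$-boundedness of $P_\pi$ give
\[
\|r_\delta^{(1)}\|_{L^2([0,1]^2)}^{2} \le C\delta^2 \int_0^1 |\dot T(y)|^2 \int_0^1 \left\|\tau_{-s\Delta_\delta(y)}\tfrac{d\rho}{dx} - \tfrac{d\rho}{dx}\right\|_{L^2(\mathbb{R})}^{2} ds\, dy.
\]
The inner integrand is bounded uniformly by $8K^2|\dot T(y)|^2 \in L^1$; the main technical step, which I expect to be the most delicate, is passing $\delta\to 0$ inside. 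Since $\|\Delta_\delta\|_2\to 0$, along any subsequence of $\delta\to 0$ I can extract a further subsequence with $\Delta_\delta(y)\to 0$ for a.e.\ $y$, and then continuity of translation in $L^2(\mathbb{R})$ gives the integrand $\to 0$ pointwise. Dominated convergence yields $\|r_\delta^{(1)}\|_{L^2}^2 = o(\delta^2)$ along the subsequence, and the subsequence argument promotes this to the full limit. Combining gives $\|r_\delta\|_{L^2([0,1]^2)}=o(\delta)$.

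For the additional smoothness claim, at a general $\delta$ the same computation identifies the $L^2$-derivative $\dot k_\delta(x,y) = -P_\pi(\tau_{-T_\delta(y)}\tfrac{d\rho}{dx})(x)\cdot \dot T_\delta(y)$. Continuity of $\delta\mapsto \dot k_\delta$ in $L^2([0,1]^2)$ follows by writing $\dot k_{\delta'}-\dot k_\delta$ as a sum of two terms (one with $\dot T_\delta$ fixed and the $P_\pi$-factor varying, the other with the $P_\pi$-factor fixed and $\dot T$ varying): the first uses the assumed Lipschitz regularity of $d\rho/dx$, which gives the pointwise estimate $\|\tau_{-T_{\delta'}(y)}\tfrac{d\rho}{dx}-\tau_{-T_\delta(y)}\tfrac{d\rho}{dx}\|_{L^2(\mathbb{R})} \le C|T_{\delta'}(y)-T_\delta(y)|$, combined with continuity of $\delta\mapsto T_\delta$ in $L^2$ (which is implied by the assumed continuity of its derivative); the second uses the assumed $L^2$-continuity of $\delta\mapsto \dot T_\delta$ together with the uniform bound $\|P_\pi(\tau_{-T_\delta(y)}\tfrac{d\rho}{dx})\|_\infty \le CK$. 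This yields the required continuous differentiability of $\delta\mapsto k_\delta$ in a neighbourhood of $\delta=0$.
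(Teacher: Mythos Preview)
Your argument is correct and follows the same overall strategy as the paper: split the remainder into a piece carrying $t_\delta$ and a ``derivative'' piece, control the first with the Lipschitz bound on $\rho$, and the second by an $L^2$ dominated-convergence argument after invoking the finite-sum boundedness of $P_\pi$ (the paper packages this last fact as Lemma~\ref{lem:upbndP}).

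The only genuine difference is in how the derivative piece is handled. The paper inserts the intermediate point $T_0+\delta\dot T$ and then uses pointwise a.e.\ differentiability of $\rho$, with the difference quotient dominated by $K|\dot T(y)|$; because the shift $\delta\dot T(y)$ tends to $0$ for \emph{every} fixed $y$, no subsequence extraction is needed. You instead write the full increment exactly via the fundamental theorem of calculus and then appeal to $L^2$-continuity of translations acting on $d\rho/dx$; since $\Delta_\delta\to 0$ only in $L^2$, you are forced through the sub-subsequence argument. Your integral remainder representation is arguably cleaner and makes the structure of $r_\delta^{(1)}$ explicit, while the paper's route is slightly shorter. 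For the $C^1$ statement the two arguments coincide.
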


\begin{proof}

We show that $\|k_{\delta }(x,y)-k_{0}(x,y) - \delta\cdot\k(x,y)\|_{L^2([0,1]^2)}=o(\delta)$, where 
$\dot{k}$ is as in (\ref{derivk}). We have
\begin{eqnarray}
\label{prop62eqn}\lefteqn{\left\|k_{\delta }(x,y)-k_{0}(x,y) - \delta\cdot\k(x,y)\right\|_{L^2([0,1]^2)}}\\
\nonumber&\le&\left\|(P_\pi\tau_{-T_\delta(y)}\rho)(x) - (P_\pi\tau_{-(T_0(y)+\delta\cdot\dot{T}(y))}\rho)(x)\right\|_{L^2([0,1]^2)}\\
\nonumber&&+\left\|(P_\pi\tau_{-(T_0(y)+\delta\cdot\dot{T}(y))}\rho)(x) - (P_\pi\tau_{-T_0(y)}\rho)(x)- \delta \left(-\left(P_\pi\left(\tau_{-T_0(y)}\frac{d\rho}{dx}\right) \right)(x)\cdot \dot{T}(y)\right)\right\|_{L^2([0,1]^2)}.
\end{eqnarray}
We begin by showing that the first term on the right hand side of (\ref{prop62eqn}) is $o(\delta)$. 
Since $\rho$ is Lipschitz with constant $K$, one has
\begin{equation}
    \label{Keqn}
\big|(\tau_{-(T_\delta(y))}\rho)(x)-(\tau_{-(T_0(y)+\delta\cdot\dot{T}(y))}\rho)(x)\big| = \big|\rho(x-T_\delta(y))-\rho(x-T_0(y)-\delta\cdot\dot{T}(y))\big|\le K|t_\delta(y)|.
\end{equation}
Because the support of $\tau_{-(T_\delta(y))}\rho-\tau_{-(T_0(y)+\delta\cdot\dot{T}(y))}\rho$ is contained in 2 intervals, each of length 2, by (\ref{Keqn}) and Lemma \ref{lem:upbndP}, we therefore see that
 $$\left\|(P_\pi\tau_{-T_\delta(y)}\rho)(x) - (P_\pi\tau_{-(T_0(y)+\delta\cdot\dot{T}(y))}\rho)(x)\right\|_{L^2([0,1]^2)}\le 6K\|t_\delta\|_{L^2}=o(\delta).$$

Next we show that the second term on the right hand side of (\ref{prop62eqn}) is $o(\delta)$. 
Using the definition of the derivative and the fact that $\rho$ is differentiable a.e.\ we see that 
\begin{equation}\label{eq:deriv-rho}
\lim_{\delta \rightarrow 0}D(\delta):=\lim_{\delta \rightarrow 0}\left[\frac{\rho(x-T_{0}(y)-\delta\cdot \dot{T}%
(y))-\rho(x-T_{0}(y))}{\delta }-\left(-\frac{d\rho}{dx}(x-T_{0}(y))\dot{T%
}(y)\right)\right]=0
\end{equation}
for a.e.\ $x,y$. Since $\bigg|\frac{\rho(x-T_{0}(y)-\delta\cdot \dot{T}%
(y))-\rho(x-T_{0}(y))}{\delta }\bigg|\leq K\dot{T}(y)$, by
dominated convergence the limit (\ref{eq:deriv-rho}) also converges in $L^{2}.$ 
Hence, applying Lemma \ref{lem:upbndP} to the second term on the right hand side of (\ref{prop62eqn}), noting that $D(\delta)$ in (\ref{eq:deriv-rho}) is square-integrable and supported in at most 3 intervals of length at most 2, we obtain
\begin{eqnarray*}
&&{\left\|(P_\pi\tau_{-(T_0(y)+\delta\cdot\dot{T}(y))}\rho)(x) - (P_\pi\tau_{-T_0(y)}\rho)(x)- \delta \left(-\left(P_\pi\left(\tau_{-T_0(y)}\frac{d\rho}{dx}\right) \right)(x)\cdot \dot{T}(y)\right)\right\|_{L^2([0,1]^2)}}\\
\qquad&\le&9\delta D(\delta)=o(\delta).
\end{eqnarray*}
Regarding the final statement, suppose that $\delta\mapsto T_\delta$ has a continuous derivative with respect to $\delta$ at a neighborhood of $\delta=0$.
This implies that $\dot{T}$ exists and varies continuously on a small interval $[0,\delta^*]$, with $0<\delta^*\le \bar{\delta}$. 
Denote the derivative $dT_\delta/d\delta$ at $\delta$ by $\dot{T}_\delta$, and similarly for $\dot{k}$.
One has
\begin{eqnarray*}
\|\dot{k}_\delta-\dot{k}_0\|_{L^2([0,1]^2)}&=&\left\|\left(P_\pi\left(\tau_{-T_\delta(y)}\frac{d\rho}{dx}\right) \right)(x)\cdot \dot{T}_\delta(y)-\left(P_\pi\left(\tau_{-T_0(y)}\frac{d\rho}{dx}\right) \right)(x)\cdot \dot{T}_0(y)\right\|_{L^2([0,1]^2)}\\
&\le&\left\|\left(P_\pi\left(\tau_{-T_\delta(y)}\frac{d\rho}{dx}\right) \right)(x)\cdot (\dot{T}_\delta(y)-\dot{T}_0(y))\right\|_{L^2([0,1]^2)}\\
&&\quad+\left\|\left[\left(P_\pi\left(\tau_{-T_\delta(y)}\frac{d\rho}{dx}\right) \right)(x)-\left(P_\pi\left(\tau_{-T_0(y)}\frac{d\rho}{dx}\right) \right)(x)\right]\cdot \dot{T}_0(y)\right\|_{L^2([0,1]^2)}\\
&\le&3\|d\rho/dx\|_2\|\dot{T}_\delta-\dot{T}_0\|_2+6{\rm Lip}(d\rho/dx)\|\delta\cdot\dot{T}_0+r_\delta\|_2\|\dot{T}_0\|_2,
\end{eqnarray*}
where the final inequality follows from Lemma \ref{lem:upbndP} applied to each term in the previous line, noting that $\rho$ is supported in a single interval of length 2.
The first term in the final inequality goes to zero as $\delta\to 0$ by continuity of $\dot{T}$, and the second term goes to zero as $\delta\to 0$ since $\|r_\delta\|_2\to 0$.
\end{proof}

\subsection{A formula for the linear response of the invariant measure and continuity with respect to map perturbations}

By considering the kernel form of map perturbations, we can apply  
Corollary \ref{LR} to obtain the following.
\begin{proposition}
\label{prop:lin-resp-formula-add-noise}
Let $L_\delta:L^2\rightarrow L^2$, $\delta\in [0,\bar{\delta})$,
be the integral operators in \eqref{kernelL2} with the
kernels $k_\delta$ as in \eqref{eq:ker-add-noise-explicit}. Suppose that $L_0$ satisfies $(A1)$ of Theorem \ref{th:linearresponse copy(1)}.
Then 
the kernel $\k$ in \eqref{derivk} is in $V_{\ker}$ and
\begin{equation*}
\lim_{\delta \rightarrow 0}\frac{f_{\delta }-f_{0}}{\delta }%
= -(\text{Id}-L_{0})^{-1}\int_0^1 \left(P_\pi\left(\tau_{-T_0(y)}\frac{d\rho}{dx}\right) \right)(x)
\dot{T}(y)f_{0}(y)dy,
\end{equation*}
with convergence in $L^{2}.$
\end{proposition}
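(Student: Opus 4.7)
The strategy is to reduce the proposition to a direct application of Corollary \ref{LR} by viewing the map perturbation of $T_\delta$ as a kernel perturbation, with the bridge provided by Proposition \ref{kdottt}. Assuming $T_\delta = T_0 + \delta\cdot\dot T + t_\delta$ with $\|t_\delta\|_2=o(\delta)$ (which is implicit in writing $\dot T$), Proposition \ref{kdottt} already yields the expansion $k_\delta = k_0 + \delta\cdot\dot k + r_\delta$ with $\|r_\delta\|_{L^2([0,1]^2)}=o(\delta)$ and $\dot k$ as in \eqref{derivk}. Lemma \ref{lem:prop-add-noise-ker} guarantees that each $k_\delta$ is a stochastic kernel (so $L_\delta$ is integral-preserving) and lies in $L^\infty([0,1]^2)\subset L^2([0,1]^2)$, so compactness of $L_\delta$ on $L^2$ is automatic.

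The only genuine verification needed before invoking Corollary \ref{LR} is $\dot k\in V_{\ker}$, i.e.\ $\int_0^1 \dot k(x,y)\,dx=0$ for a.e.\ $y$. Since
\begin{equation*}
\int_0^1 \dot k(x,y)\,dx
= -\dot T(y)\int_0^1 \bigl(P_\pi(\tau_{-T_0(y)}\tfrac{d\rho}{dx})\bigr)(x)\,dx,
\end{equation*}
and the Perron-Frobenius operator $P_\pi:L^1(\mathbb{R})\to L^1([0,1])$ preserves integrals, the inner integral equals $\int_{\mathbb{R}}\tfrac{d\rho}{dx}(z-T_0(y))\,dz=\int_{\mathbb{R}}\tfrac{d\rho}{dx}(u)\,du$. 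Because $\rho\in\mathrm{Lip}(\mathbb{R})$ is supported in $[-1,1]$, it is continuous on $\mathbb{R}$ with $\rho(\pm 1)=0$, hence $\int_{\mathbb{R}}\tfrac{d\rho}{dx}\,du=\rho(\infty)-\rho(-\infty)=0$. Thus $\dot k\in V_{\ker}$ as required.

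With all hypotheses verified, Corollary \ref{LR} applies and gives
\begin{equation*}
\lim_{\delta\to 0}\frac{f_\delta-f_0}{\delta}
= (\mathrm{Id}-L_0)^{-1}\int_0^1 \dot k(x,y)\,f_0(y)\,dy
\end{equation*}
with convergence in $L^2$. Substituting the explicit form \eqref{derivk} of $\dot k$ yields the claimed formula.

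I expect no serious obstacle: Proposition \ref{kdottt} and Corollary \ref{LR} do all the heavy lifting, and the check $\dot k\in V_{\ker}$ is a short computation relying only on the integral-preserving property of $P_\pi$ and the compact support of $\rho$. The only point deserving care is making sure the perturbation hypothesis on $T_\delta$ (so that $\dot T$ is well-defined in $L^2$ and $t_\delta=o(\delta)$ in $L^2$) is stated in the proof so that Proposition \ref{kdottt} applies verbatim.
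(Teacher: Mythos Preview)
Your proposal is correct and follows essentially the same route as the paper: verify via Lemma~\ref{lem:prop-add-noise-ker} that each $k_\delta$ is a stochastic $L^2$ kernel (so $L_\delta$ is compact and integral-preserving), invoke Proposition~\ref{kdottt} to put $k_\delta$ in the form \eqref{perturb1}, and then apply Corollary~\ref{LR}. One minor point: you treat $\dot k\in V_{\ker}$ as a hypothesis to be checked before applying Corollary~\ref{LR}, but in fact it is a \emph{conclusion} of that corollary (derived there from integral preservation of all the $L_\delta$), so your direct computation---while correct and arguably more transparent---is not strictly needed.
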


\begin{proof}
The result is a direct application of Corollary \ref{LR}; we verify its
assumptions. From Lemma \ref{lem:prop-add-noise-ker}, $k_\delta\in L^2([0,1]^2)$ is a stochastic 
kernel and so $L_\delta$ is an integral-preserving compact operator. From Proposition \ref{kdottt}, $k_\delta$ has the form \eqref{perturb1}.
Thus, we can apply Corollary \ref{LR} to obtain the result.
\end{proof}

\begin{remark}\label{6.4}
If $T$ is 
covering\footnote{We say $T$ is covering if for each small open interval $I\subseteq [0,1]$ there is $n=n(I)$ such that $T^n(I)=[0,1]$.} 
and  $\rho$ is 
strictly positive in a neighbourhood of zero one can show the corresponding transfer operator $L_0$  satisfies assumption $(A1)$ of Theorem \ref{th:linearresponse copy(1)}, using arguments similar to e.g.\ \cite{ZH07} Proposition 8.1, \cite{Fr13} Lemmas 3 and 10, or \cite{GG}, Lemma 41.
Let $f\in L^1$ have zero average: $\int_{[0,1]} f=0$.
If $f$ is $0$ almost everywhere, $L_0^n(f)= 0$ and we are done. 
Otherwise, given $\epsilon<0$, we can find an $f_1$ such that $\|f-f_1\|_1<\epsilon$ and $f_1$ is positive in some small interval $I\subset [0,1]$. 
Since $\rho$ is positive in a neighbourhood of zero,  
$\supp(L_0(f_1^+))\supset T(I)$. 
By the covering condition there is some $n'\in {\mathbb N}$ such that $\supp(L_0^{n'}(f_1^+))=[0,1]$.
It is then standard to deduce that there is an $n_0\ge n'$ such that $\|L_0^n(f_1)\|_1<\epsilon$ for $n\ge n_0$.
%
Since the transfer operator contracts the $L^1$ norm, then $||L_0^n f ||_1 \leq 2 \epsilon$ for $n\ge n_0$ and since $\epsilon$ was arbitrary, this implies that $L_0$ satisfies $(A1)$.
\end{remark}

Let the linear response $\widehat{R}:L^2\rightarrow L^2$ of the invariant density
be defined as
\begin{equation}  \label{Linreap2}
 \hspace*{-.5cm}\widehat{R}(\dot{T}):= -(\text{Id}-L_{0})^{-1}\int_0^1 \left(P_\pi\left(\tau_{-T_0(y)}\frac{d\rho}{dx}\right) \right)(x)
\dot{T}(y)f_{0}(y)dy.
\end{equation}
\begin{lemma}\label{lem:cont-resp-ref-bnd}
The function $\widehat{R}:L^2\rightarrow L^2$ is continuous.
\end{lemma}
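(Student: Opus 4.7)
The plan is to observe that $\widehat{R}$ is a \emph{linear} map, so continuity reduces to boundedness, and to exhibit $\widehat{R}$ as a composition of three bounded operators:
\begin{equation*}
\widehat{R} = -(\text{Id}-L_0)^{-1}\circ A \circ M_{f_0},
\end{equation*}
where $M_{f_0}\dot T := \dot T \cdot f_0$ and $A g(x) := \int_0^1 K(x,y) g(y)\,dy$ with kernel
\begin{equation*}
K(x,y) := \left(P_\pi\left(\tau_{-T_0(y)}\tfrac{d\rho}{dx}\right)\right)(x).
\end{equation*}

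First I would verify that $K\in L^\infty([0,1]^2)$. Since $\rho\in \mathrm{Lip}(\bR)$ with constant $K_\rho$ and $\supp(\rho)\subseteq[-1,1]$, its derivative $d\rho/dx$ exists a.e., is essentially bounded by $K_\rho$, and is supported in $[-1,1]$. Hence $\tau_{-T_0(y)}(d\rho/dx)$ is supported in $[T_0(y)-1,T_0(y)+1]\subseteq[-1,2]$, so the sum in \eqref{eq:PF-pi} defining $P_\pi$ contains only a uniformly bounded number of nonzero terms (three suffice for $x\in[0,1]$), giving $\|K\|_{L^\infty([0,1]^2)}\le 3K_\rho$. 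Thus $K\in L^2([0,1]^2)$ and by \eqref{KF}, $A:L^2\to L^2$ is bounded.

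Second I would check that $A$ maps into $V$: for a.e.\ $y\in[0,1]$, since $P_\pi$ preserves the integral and $\rho$ is Lipschitz with compact support (so $\int_\bR d\rho/dx\,dz=0$),
\begin{equation*}
\int_0^1 K(x,y)\,dx = \int_\bR \tau_{-T_0(y)}\!\left(\tfrac{d\rho}{dx}\right)\!(z)\,dz = \int_\bR \tfrac{d\rho}{dx}(z)\,dz = 0.
\end{equation*}
By Fubini, $\int_0^1 Ag(x)\,dx=0$ for every $g\in L^2$, i.e.\ $A:L^2\to V$. Boundedness of $M_{f_0}:L^2\to L^2$ follows from $f_0\in L^\infty$, which in turn is granted by the lemma following \eqref{KF2} since $k_0\in L^\infty([0,1]^2)$ by Lemma \ref{lem:prop-add-noise-ker} and $L_0$ satisfies $(A1)$ (used in Proposition \ref{prop:lin-resp-formula-add-noise}); explicitly, $\|M_{f_0}\dot T\|_2\le\|f_0\|_\infty\|\dot T\|_2$.

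Finally, by Theorem \ref{th:linearresponse copy(1)}(III)(b) the resolvent $(\text{Id}-L_0)^{-1}:V\to V$ is bounded. Composing yields
\begin{equation*}
\|\widehat{R}(\dot T)\|_2 \le \|(\text{Id}-L_0)^{-1}\|_{V\to V}\cdot\|K\|_{L^2([0,1]^2)}\cdot\|f_0\|_\infty\cdot\|\dot T\|_2,
\end{equation*}
which proves continuity of $\widehat{R}:L^2\to L^2$. There is no real obstacle here: the only step requiring mild care is confirming that $A$ lands in $V$ (so that the resolvent is applicable), which uses both the integral-preserving nature of $P_\pi$ and the vanishing of $\int d\rho/dx$ at infinity due to Lipschitz continuity of $\rho$ with compact support.
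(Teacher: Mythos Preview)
Your proof is correct and follows essentially the same approach as the paper: both show that the kernel $K(x,y)=\left(P_\pi\left(\tau_{-T_0(y)}\tfrac{d\rho}{dx}\right)\right)(x)$ lies in $L^\infty([0,1]^2)$, that $f_0\in L^\infty$, and then combine the Hilbert--Schmidt bound \eqref{KF} with the continuity of the resolvent $(\text{Id}-L_0)^{-1}:V\to V$. Your explicit verification that $A$ maps into $V$ (so the resolvent is applicable) is a nice addition; the paper relies on this fact as well but establishes it separately in Proposition~\ref{prop:lin-resp-formula-add-noise}.
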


\begin{proof}
We have
\begin{equation*}
\widehat{R}(\dot{T}_1)-\widehat{R}(\dot{T}_2)= -(\text{Id}-L_0)^{-1}\int_0^1
\tilde{k}(x,y)\left(\dot{T}_1(y)-\dot{T}_2(y)\right)dy,
\end{equation*}
where $\tilde{k}(x,y) := \left(P_\pi\left(\tau_{-T_0(y)}\frac{d\rho}{dx}\right) \right)(x)f_0(y)$.
Since $\frac{d\rho}{dx}\in L^\infty$, we have 
$\left(P_\pi\left(\tau_{-T_0(y)}\frac{d\rho}{dx}\right) \right)(x)\in L^\infty([0,1]^2)$. 
From inequality \eqref{KF2}, we then have $f_0\in L^\infty$
and so
$\tilde{k}\in L^\infty([0,1]^2)$. 
We finally have
\begin{equation*}
\|\widehat{R}(\dot{T}_1)-\widehat{R}(\dot{T}_2)\|_2 \le l \|(\text{Id}%
-L_0)^{-1}\|_{V\rightarrow V} \|\tilde{k}\|_{L^2([0,1]^2)}\cdot\|\dot{T}_1-\dot{T}_2\|_2.
\end{equation*}

\end{proof}

\subsection{A formula for the linear response of the dominant eigenvalues and continuity with respect to map perturbations}
\label{sec:responsemapeig}

We are also able to
express the linear 
response of the dominant eigenvalues as a function
of the perturbing map $\dot{T}$.  
Define 
$$H(y)=-\bar{e}(y)\int_0^1\left(P_\pi\left(\tau_{-T_0(y)}\frac{d\rho}{dx}\right)\right)(x)\hat{e}(x)dx.$$ 
\begin{proposition}
\label{prop:lin-resp-formula-add-noise-eval} Let $L_\delta:L^2([0,1],\bC)\rightarrow L^2([0,1],\bC)$, $\delta\in [0,\bar{\delta})$,
be integral operators generated by the
kernels $k_\delta $ as in \eqref{eq:ker-add-noise-explicit},  assume that $d\rho/dx$ is Lipschitz and $\delta\mapsto T_\delta$ is $C^1$.
Let $\lambda_{\delta}$ be an eigenvalue of $L_\delta$ with second
largest magnitude strictly inside the unit disk.
Suppose that $L_{0}$ satisfies $(A1)$ of Theorem 
\ref{th:linearresponse copy(1)}
and $\lambda_{0}$ is geometrically simple. Then
\begin{equation}  \label{eq:lin-resp-formula-add-noise-eval}
\frac{d\lambda_{\delta}}{d\delta}\bigg|_{\delta =0} = \langle H, \dot{T}
\rangle_{L^2([0,1],\mathbb{C})},
\end{equation}
where $e$ is the eigenvector of $L_0$ associated to the eigenvalue $%
\lambda_{0} $ and $\hat{e}$ is the eigenvector of $L_0^*$ associated to the
eigenvalue $\lambda_{0}$.
\end{proposition}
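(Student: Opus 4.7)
The plan is to apply Corollary \ref{cor:lin-resp-2nd-eval-HS} to the integral operators $L_\delta$ with kernels $k_\delta$ given by \eqref{eq:ker-add-noise-explicit}, and then substitute into its formula the explicit expression for $\dot{k}$ supplied by Proposition \ref{kdottt}. The substantive work is therefore verifying the hypotheses needed to chain these two results.

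First I would verify the hypotheses of Corollary \ref{cor:lin-resp-2nd-eval-HS}. Lemma \ref{lem:prop-add-noise-ker} shows $k_\delta$ is stochastic and in $L^\infty([0,1]^2) \subset L^2([0,1]^2)$, so each $L_\delta$ is compact and integral-preserving. The assumption that $L_0$ satisfies $(A1)$ and that $\lambda_0$ is geometrically simple is imposed directly. The remaining regularity condition, namely $\delta\mapsto k_\delta \in C^1([0,\bar\delta), L^2([0,1]^2))$, is exactly the content of the final statement of Proposition \ref{kdottt} under the standing assumptions that $d\rho/dx$ is Lipschitz and $\delta\mapsto T_\delta$ is $C^1$.

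Next, Corollary \ref{cor:lin-resp-2nd-eval-HS} yields
\begin{equation*}
\dot{\lambda} \;=\; \int_0^1\!\!\int_0^1 \dot{k}(x,y)\,\hat{e}(x)\,\bar{e}(y)\,dy\,dx,
\end{equation*}
since the real-and-imaginary-part expression given there is precisely the unfolding of $\langle \hat{e},\dot{L}e\rangle_{L^2([0,1],\bC)}$. Substituting the expression
\begin{equation*}
\dot{k}(x,y) \;=\; -\Bigl(P_\pi\bigl(\tau_{-T_0(y)}\tfrac{d\rho}{dx}\bigr)\Bigr)(x)\cdot \dot{T}(y)
\end{equation*}
from Proposition \ref{kdottt}, and noting that $\dot{T}$ and the operator $P_\pi\tau_{-T_0(y)}d\rho/dx$ are real-valued, I factor $\dot{T}(y)\bar{e}(y)$ out of the $x$-integral to obtain
\begin{equation*}
\dot{\lambda} \;=\; -\int_0^1 \bar{e}(y)\,\dot{T}(y) \int_0^1 \Bigl(P_\pi\bigl(\tau_{-T_0(y)}\tfrac{d\rho}{dx}\bigr)\Bigr)(x)\,\hat{e}(x)\,dx\,dy \;=\; \int_0^1 H(y)\,\dot{T}(y)\,dy,
\end{equation*}
which equals $\langle H,\dot{T}\rangle_{L^2([0,1],\bC)}$ because $\dot{T}$ is real and hence $\overline{\dot{T}(y)}=\dot{T}(y)$.

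The only potential obstacle is bookkeeping: confirming that the $C^1$ addendum of Proposition \ref{kdottt} applies in the sense required by Corollary \ref{cor:lin-resp-2nd-eval-HS} (i.e.\ $\delta\mapsto L_\delta$ is $C^1$ into $\mathcal{B}(L^2([0,1],\bC))$, not merely that $\dot{L}$ exists at $\delta=0$), but the norm bound $\|L\|_{L^2\to L^2}\le \|k\|_{L^2([0,1]^2)}$ from \eqref{KF} transfers $L^2$-continuity of $\delta\mapsto k_\delta$ directly to operator-norm continuity of $\delta\mapsto L_\delta$, so this is routine.
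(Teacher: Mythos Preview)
Your proposal is correct and follows essentially the same route as the paper. The paper invokes Proposition \ref{th:lin-resp-2nd-eval} directly (after using Proposition \ref{kdottt} and Lemma \ref{ldot2} to obtain that $\delta\mapsto L_\delta$ is $C^1$), whereas you route through Corollary \ref{cor:lin-resp-2nd-eval-HS}, which is just the packaging of those same ingredients; the subsequent substitution of $\dot{k}$ from Proposition \ref{kdottt} and the Fubini computation leading to $\langle H,\dot{T}\rangle_{L^2([0,1],\mathbb{C})}$ match the paper line for line.
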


\begin{proof}
Since $k_\delta\in L^2([0,1]^2,\bR)$, 
$L_\delta:L^2([0,1],\bC)\rightarrow L^2([0,1],\bC)$ is compact. From Lemma \ref{lem:prop-add-noise-ker}
we have that $k_\delta$ is a stochastic kernel and so $L_\delta$ preserves the integral 
(i.e.\ it satisfies \eqref{eq:int-pers-cond}).
By Proposition \ref{kdottt} the kernel $k_\delta$ is in the form \eqref{perturb1} and the map $\delta\mapsto k_\delta$ is $C^1$.
By Lemma \ref{ldot2} we see that
$\delta\mapsto L_\delta$ is $C^1$,
%
where the 
derivative operator $\dot{L}$ is the integral operator with the kernel 
$\k$. Using the 
assumption that $L_0$ is mixing and $\lambda_{0}$ is
geometrically simple, we apply Proposition \ref%
{th:lin-resp-2nd-eval} to obtain $\frac{d\lambda_{\delta}}{d\delta}\big|_{\delta =0} = \langle \hat{e}, \dot{L}
e\rangle_{L^2([0,1],\mathbb{C})}$. Finally, we compute 
\begin{equation*}
\begin{aligned}
\langle \hat{e}, \dot{L}e\rangle_{L^2([0,1],\mathbb{C})} &= \int_0^1\hat{e}(x)\overline{\int_0^1 \k(x,y)e(y)dy}dx\\
&= \int_0^1\int_0^1\hat{e}(x)\k(x,y)\bar{e}(y)dxdy\\
&= -\int_0^1 \bar{e}(y)\int_0^1\left(P_\pi\left(\tau_{-T_0(y)}\frac{d\rho}{dx}\right) \right)(x)\hat{e}(x)dx\ \dot{T}(y)dy\\
&= \langle H, \dot{T}\rangle_{L^2([0,1],\mathbb{C})}.
\end{aligned}
\end{equation*}
\end{proof}

From \eqref{eq:lin-resp-formula-add-noise-eval}, the linear response of the 
dominant eigenvalues is continuous with respect to map perturbations. 

\begin{lemma}
\label{mapevalctslem}
The eigenvalue response function $\check{R}:L^2\to\mathbb{C}$ given by  $\check{R}(\dot{T})=\langle H,\dot{T}\rangle$ is continuous.
\end{lemma}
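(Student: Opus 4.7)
The plan is to establish that $H \in L^2([0,1],\mathbb{C})$, and then continuity of $\check{R}$ follows immediately from the Cauchy--Schwarz inequality applied to the inner product.

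First, I would estimate $|H(y)|$ pointwise. Since $d\rho/dx$ is Lipschitz and compactly supported, it belongs to $L^\infty(\mathbb{R})$. The shift operator $\tau_{-T_0(y)}$ is an $L^\infty$-isometry, and $P_\pi$ applied to an $L^\infty$ function supported on an interval of bounded length yields a bounded function on $[0,1]$ (as used in Lemma \ref{lem:prop-add-noise-ker} and Proposition \ref{kdottt}). Hence the kernel
$$\tilde{k}(x,y):=\left(P_\pi\left(\tau_{-T_0(y)}\frac{d\rho}{dx}\right)\right)(x)$$
is in $L^\infty([0,1]^2)$ with a bound depending only on $\rho$. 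Using Cauchy--Schwarz on the inner integral gives
$$|H(y)| \le |\bar{e}(y)|\cdot \|\tilde{k}(\cdot,y)\|_{L^2([0,1])}\cdot \|\hat{e}\|_{L^2([0,1],\mathbb{C})} \le \|\tilde{k}\|_{L^\infty([0,1]^2)}\,\|\hat{e}\|_{L^2([0,1],\mathbb{C})}\,|e(y)|.$$
Since $e\in L^2([0,1],\mathbb{C})$, this shows $H\in L^2([0,1],\mathbb{C})$ with $\|H\|_2 \le \|\tilde{k}\|_\infty \|\hat{e}\|_2 \|e\|_2$.

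Then for any $\dot{T}_1,\dot{T}_2\in L^2$, Cauchy--Schwarz yields
$$|\check{R}(\dot{T}_1)-\check{R}(\dot{T}_2)|=|\langle H,\dot{T}_1-\dot{T}_2\rangle|\le \|H\|_2\,\|\dot{T}_1-\dot{T}_2\|_2,$$
so $\check{R}$ is Lipschitz, hence continuous. There is no real obstacle here: the only nontrivial step is noting that essential boundedness of $d\rho/dx$ (from the Lipschitz hypothesis on $\rho$) propagates through $\tau$ and $P_\pi$ to give a bounded kernel $\tilde{k}$, which is exactly the same mechanism already used in the proof of Lemma \ref{lem:prop-add-noise-ker}.
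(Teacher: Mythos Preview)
Your proof is correct and follows essentially the same approach as the paper: show $H\in L^2([0,1],\mathbb{C})$ and then apply Cauchy--Schwarz. The only minor difference is that the paper actually shows the stronger fact $H\in L^\infty$ by additionally invoking $e,\hat{e}\in L^\infty$ (via \eqref{KF2} and $k_0\in L^\infty([0,1]^2)$), whereas you only use $e,\hat{e}\in L^2$; your route is thus slightly more economical for the stated lemma.
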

\begin{proof}
This follows from
Cauchy-Schwarz and the fact that $H\in L^2([0,1],\bC)$; the latter claim
follows from the fact that
$\left(P_\pi\left(\tau_{-T_0(y)}\frac{d\rho}{dx}\right) \right)(x)\in L^\infty([0,1]^2,\bR)$ (see proof of 
Lemma \ref{lem:cont-resp-ref-bnd}) and
that $e,\hat{e}\in L^\infty([0,1],\bC)$ 
(which follows from \eqref{KF2} and the fact that $k_0\in L^\infty([0,1]^2,\bR)$, see Lemma \ref{lem:prop-add-noise-ker}). 
\end{proof}

\section{Optimal linear response for map perturbations}
\label{subsec:exp-obs-add-noise}

In this section we derive formulae for the map perturbations that maximise our two types of linear response.
We begin by formalising the set of allowable map perturbations then state the formulae.

\subsection{The feasible set of perturbations}

Before we formulate the optimization problem, we note that in this setting, we
require some restriction on the space of allowable perturbations to $T_0$
if we are to interpret $T_0+\delta\dot{T}$ as a map of the unit interval for some $\delta$ strictly greater than 0 (a non-infinitesimal perturbation).
With this in mind, let
$\ell>0$ and $\widetilde{F}_\ell:=\{x\in[0,1]:\ell\le T_0(x)\le 1-\ell\}$;  it will turn out that we obtain for free that $\dot{T}\in L^\infty$.
Note that in principle, $\ell>0$ can be taken as small as one likes, and indeed if one wishes to consider only infinitesimal perturbations $\dot{T}$ then one may set $\widetilde{F}_\ell=\widetilde{F}_0=[0,1]$.
Of course if $T:S^1\to S^1$ then may may use $\widetilde{F}_\ell=\widetilde{F}_0=[0,1]$ even for non-infinitesimal perturbations.
Recalling that 
in Proposition \ref{kdottt} we are considering $L^2$ perturbations $\dot{T}$ of the map $T_0$,  we define
\begin{equation}\label{lem:rest-map-supp-closed-subspc}
S_{T_0,\ell}:= \{T\in L^2: \text{supp}(T)\subseteq\widetilde{F}_\ell\}.
\end{equation}
\begin{lemma}\label{Lem7.1}
$S_{T_0,\ell}$ is a closed subspace of $L^2$.
\end{lemma}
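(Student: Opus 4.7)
The plan is to mirror the proof of Lemma \ref{Sk0lem} in this simpler one-dimensional setting. The subspace property is immediate: if $T_1, T_2 \in S_{T_0,\ell}$ and $\alpha,\beta \in \mathbb{R}$, then $\text{supp}(\alpha T_1+\beta T_2) \subseteq \text{supp}(T_1) \cup \text{supp}(T_2) \subseteq \widetilde{F}_\ell$, so $\alpha T_1 + \beta T_2 \in S_{T_0,\ell}$.

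For closedness, I would take a sequence $\{T_n\} \subset S_{T_0,\ell}$ with $T_n \to T$ in $L^2$, and show $T=0$ a.e.\ on $[0,1]\setminus \widetilde{F}_\ell$. If $\widetilde{F}_\ell$ has full measure in $[0,1]$, then $S_{T_0,\ell} = L^2$ and the claim is trivial, so assume $m([0,1]\setminus \widetilde{F}_\ell)>0$. Since each $T_n$ vanishes a.e.\ on $[0,1]\setminus \widetilde{F}_\ell$, the $L^2$ convergence splits as
\begin{equation*}
\|T_n - T\|_2^2 = \int_{\widetilde{F}_\ell}(T_n-T)^2\, dm + \int_{[0,1]\setminus \widetilde{F}_\ell} T^2\, dm \longrightarrow 0.
\end{equation*}
Both integrands are nonnegative, so $\int_{[0,1]\setminus \widetilde{F}_\ell} T^2\, dm = 0$, hence $T=0$ a.e.\ on $[0,1]\setminus \widetilde{F}_\ell$, giving $T \in S_{T_0,\ell}$.

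Alternatively, one can phrase this as the kernel of the continuous linear restriction operator $R:L^2\to L^2([0,1]\setminus \widetilde{F}_\ell)$ given by $R(T) = T|_{[0,1]\setminus \widetilde{F}_\ell}$; the kernel of any bounded linear map is closed.

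There is no real obstacle here; the only subtle point is being careful about the measure-zero caveat when $\widetilde{F}_\ell$ exhausts $[0,1]$, which is handled by the trivial case above.
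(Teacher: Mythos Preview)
Your proof is correct and follows essentially the same approach as the paper's own proof: split the $L^2$ distance over $\widetilde{F}_\ell$ and its complement, handle the trivial full-measure case separately, and conclude from nonnegativity that the limit vanishes on the complement. The alternative kernel-of-a-bounded-operator remark is a nice extra observation not in the paper, but the main argument is identical.
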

\begin{proof}
It is clear that $S_{T_0,\ell}$ is a subspace. To show it is closed,
let $\{f_n\}\subset S_{T_0,\ell}$ and suppose that $f_n\rightarrow_{L^2} f\in L^2$.
Further, suppose that $\widetilde{F}_\ell$ is not $[0,1]$ up to measure zero;  otherwise $S_{T_0,\ell}=L^2$, which is closed.
Then, we have
\begin{equation*}
\|f_n-f\|_2^2=\int_{\widetilde{F}_\ell}(f_n(x)-f(x))^2dx + \int_{\widetilde{F}_\ell^c}f(x)^2\ dx\rightarrow0.
\end{equation*}
If $\int_{\widetilde{F}_\ell^c}f(x)^2dx>0$, we obtain a contradiction since
$\int_{\widetilde{F}_\ell}(f_n(x)-f(x))^2dx\ge 0$; thus, $\int_{\widetilde{F}_\ell^c}f(x)^2dx=0$
and so $f=0$ a.e. on $\widetilde{F}_\ell^c$. Hence, $S_{T_0,\ell}$ is closed.
\end{proof}

For the remainder of this section, the set of allowable perturbations that we consider is
\begin{equation}\label{eq:allow-map-pert}
P_\ell:= S_{T_0,\ell}\cap B_1,
\end{equation}
where $B_1$ is the unit ball in $L^2$.
Since $S_{T_0,\ell}$ is a closed subspace of $L^2$, it is itself a Hilbert space
and so $P_\ell$ is strictly convex.
The following lemma concerns the existence of a perturbation $\dot{T}$ for which our objectives will be nonzero;  that is, our objective $\mathcal{J}$ is not uniformly vanishing.
Denote $\mathcal{P}(x,y):=P_\pi\left(\tau_{-T_0(y)}\frac{d\rho}{dx}\right)(x)$ 
and
let $$\mathcal{J}(\dot{T}):=\int_{\Xi(\widetilde{F}_\ell)}\int_0^1 \mathcal{P}(x,y)\dot{T}(y)\mathcal{E}(x,y)\ dx\ dy$$ be our objective.
In our first specific objective (optimising response of expectations) we will insert $\mathcal{E}(x,y)=((\text{Id}-L_0^*)^{-1}c)(x)f_0(y)$ and in our second specific objective (optimising mixing) we will insert $\mathcal{E}(x,y)=E(x,y)$ from (\ref{eq:almost-soln-2nd-evalue}).

\begin{lemma}
\label{lem:nonzeroobjmap}
Assume that there is $F'\subset \widetilde{F}_\ell$ such that $m(F')>0$ and $\mathcal{E}(\cdot,y)\notin \spn\{\mathcal{P}(\cdot,y)\}^\perp$ for all $y\in F'$.
 Then there is a $\dot{T}\in P_\ell$ such that $\mathcal{J}(\dot{T})>0$.
\end{lemma}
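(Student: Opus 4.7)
The plan is to reduce the double integral defining $\mathcal{J}$ to a single integral over $y$ and then exhibit a feasible $\dot{T}$ whose sign aligns with that of the inner integral. Define
\[
G(y) := \int_0^1 \mathcal{P}(x,y)\mathcal{E}(x,y)\,dx = \langle \mathcal{P}(\cdot,y),\mathcal{E}(\cdot,y)\rangle_{L^2([0,1])}.
\]
Since $\mathcal{P}(\cdot,y)\in L^\infty$ (by essential boundedness of $P_\pi \tau_{-T_0(y)}(d\rho/dx)$, as noted in the proof of Lemma \ref{lem:cont-resp-ref-bnd}) and $\mathcal{E}(\cdot,y)\in L^2$ in each of the two intended applications, Cauchy--Schwarz gives $\mathcal{P}\mathcal{E}\in L^1([0,1]^2)$; by Fubini, $G$ is measurable. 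By the hypothesis, for every $y\in F'$ the function $\mathcal{E}(\cdot,y)$ is not orthogonal to $\mathcal{P}(\cdot,y)$, so $\mathcal{P}(\cdot,y)\not\equiv 0$ and $G(y)\neq 0$ on $F'$.

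Set
\[
\dot{T}(y):=\frac{1}{\sqrt{m(F')}}\,\mathbf{1}_{F'}(y)\,\mathrm{sgn}(G(y)).
\]
Then $\dot{T}$ is measurable, supported in $F'\subset \widetilde{F}_\ell$, so $\dot{T}\in S_{T_0,\ell}$, and $\|\dot{T}\|_2^2 = m(F')/m(F')=1$, giving $\dot{T}\in B_1$; hence $\dot{T}\in P_\ell$. By Fubini,
\[
\mathcal{J}(\dot{T}) = \int \dot{T}(y)\,G(y)\,dy = \frac{1}{\sqrt{m(F')}}\int_{F'} |G(y)|\,dy > 0,
\]
since $|G|>0$ on the positive-measure set $F'$.

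There is no real obstacle here; the construction is essentially forced once one observes that $\mathcal{J}(\dot{T})$ is a linear functional of $\dot{T}$ with kernel $G(y)$, and the only verification needed is measurability of $\mathrm{sgn}(G)$, which follows from Fubini. The same pattern is used in Lemma \ref{lem:nonzeroobj} for the kernel-perturbation setting, where the extra complication there was the mean-zero constraint $\dot{k}\in V_{\ker}$; in the present map-perturbation setting no such constraint on $\dot{T}$ is imposed, so the construction is strictly simpler.
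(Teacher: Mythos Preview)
Your proof is correct and follows essentially the same approach as the paper: both rewrite $\mathcal{J}(\dot{T})=\int \dot{T}(y)\,G(y)\,dy$ with $G(y)=\int_0^1 \mathcal{P}(x,y)\mathcal{E}(x,y)\,dx$, observe that the hypothesis forces $G(y)\neq 0$ on $F'$, and then choose $\dot{T}$ supported on $F'$ so that $\dot{T}\,G\ge 0$. The paper takes $\dot{T}=G\cdot\mathbf{1}_{F'}$ (then rescales), yielding $\int_{F'} G^2\,dy>0$; you take $\dot{T}=\mathrm{sgn}(G)\cdot\mathbf{1}_{F'}/\sqrt{m(F')}$, yielding $\int_{F'}|G|\,dy>0$. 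These are interchangeable variants of the same construction, and your observation that this setting is simpler than Lemma~\ref{lem:nonzeroobj} because no zero-mean constraint is imposed on $\dot{T}$ is exactly the point.
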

\begin{proof}
Because $$\mathcal{J}(\dot{T})=\int_{\Xi(\widetilde{F}_\ell)}\dot{T}(y)\left(\int_0^1\mathcal{P}(x,y)\mathcal{E}(x,y)\ dx\right)\ dy,$$ we may set $\dot{T}(y)=\int_0^1\mathcal{P}(x,y)\mathcal{E}(x,y)\ dx$ for $y\in F'$ and $\dot{T}(y)=0$ otherwise to obtain $\mathcal{J}(\dot{T})>0$. Trivial scaling yields $\dot{T}\in B_1$.
\end{proof}
We expect the hypotheses of Lemma \ref{lem:nonzeroobjmap} to be satisfied ``generically''.

\subsection{Explicit formula for the optimal map perturbation that maximally increases the expectation
of an observable}
\label{sssec:for-opt-pert-obs-add-nos}

In this section we consider the problem of finding 
the optimal map perturbation that maximizes the expectation of some observable $c\in L^2$.
We first present a result that ensures a unique solution exists and then derive an explicit
expression for the optimal perturbation. 

We begin by noting that $\widehat{R}(\dot{T})\in V$; this follows from the fact that
$\left(P_\pi\left(\tau_{-T_0(y)}\frac{d\rho}{dx}\right) \right)(x)f_0(y)\in V_{\ker}$
(since $\k\in V_{\ker}$, see Proposition \ref{prop:lin-resp-formula-add-noise})
and therefore $\int_0^1 \left(P_\pi\left(\tau_{-T_0(y)}\frac{d\rho}{dx}\right) \right)(x)f_0(y) g(y)dy\in V$ for $g\in L^2$ (see Lemma \ref{lem:charac-mean-0}).
Hence, we only need to consider $c\in$ span$\{f_0\}^\perp$
(see the discussion at the end of Section \ref{subsubsec1}).

\begin{proposition}
\label{solutionT}
Let $c\in$ span$\{f_0\}^\perp$ and $P_{\ell}$ be the set in \eqref{eq:allow-map-pert}. 
Assume that the function $\mathcal{J}(\dot{T}):=\big\langle c,\widehat{R}(\dot{T})\big\rangle_{L^{2}([0,1],\bR)}$ is not uniformly vanishing on $P_\ell$.
Then the optimisation problem
\begin{equation}  \label{P1-add}
\big\langle c,\widehat{R}(\dot{T})\big\rangle_{L^{2}([0,1],\bR)}=\max_{\dot{h}%
\in P_{\ell}}\big\langle c,\widehat{R}(\dot{h})\big\rangle_{L^{2}([0,1],\bR)},
\end{equation}
where $\widehat{R}$ is as in \eqref{Linreap2}, has a unique solution $\dot{T}\in L^2$. 
\end{proposition}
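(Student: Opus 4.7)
The plan is to cast Proposition \ref{solutionT} as an instance of the abstract optimisation framework developed in Section \ref{subsec:abst-opt-setng}, and then invoke Propositions \ref{prop:exist} and \ref{prop:uniqe}. Concretely, I would take $\mathcal{H}=S_{T_0,\ell}$ (a Hilbert space by Lemma \ref{Lem7.1}) and $P=P_\ell=S_{T_0,\ell}\cap B_1$, and verify that the objective $\mathcal{J}(\dot{T}):=\langle c,\widehat{R}(\dot{T})\rangle_{L^2([0,1],\mathbb{R})}$ is a continuous linear functional on $\mathcal{H}$ and that $P_\ell$ meets the geometric hypotheses of the two abstract propositions.

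First I would check linearity and continuity of $\mathcal{J}$. Linearity of $\widehat{R}$ is immediate from the definition \eqref{Linreap2}, since the resolvent $(\text{Id}-L_0)^{-1}$ acts linearly on $V$ and the map $\dot{T}\mapsto \int_0^1 (P_\pi\tau_{-T_0(y)}\tfrac{d\rho}{dx})(x)\,\dot{T}(y)f_0(y)\,dy$ is linear in $\dot{T}$. Continuity of $\widehat{R}:L^2\to L^2$ is exactly Lemma \ref{lem:cont-resp-ref-bnd}, so $\mathcal{J}=\langle c,\widehat{R}(\cdot)\rangle$ is continuous by Cauchy--Schwarz.

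Next I would verify the geometric properties of $P_\ell$. Boundedness is built in since $P_\ell\subset B_1$. Closedness follows because $S_{T_0,\ell}$ is closed in $L^2$ by Lemma \ref{Lem7.1} and $B_1$ is closed, so $P_\ell$ is the intersection of two closed sets. Viewing $P_\ell$ as the closed unit ball of the Hilbert space $S_{T_0,\ell}$, it is convex and, by strict convexity of the $L^2$ norm, strictly convex in the sense of Definition \ref{stconv}; moreover the zero vector lies in its relative interior. Existence of a maximiser in $P_\ell$ then follows from Proposition \ref{prop:exist} applied with $\mathcal{H}=S_{T_0,\ell}$.

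Finally, for uniqueness I would appeal to Proposition \ref{prop:uniqe}: the hypothesis that $\mathcal{J}$ does not uniformly vanish on $P_\ell$ is assumed in the statement of the proposition (and in typical situations is guaranteed by Lemma \ref{lem:nonzeroobjmap}), and all remaining hypotheses have already been verified in the previous step. Hence the maximiser is unique. The only mild subtlety, which I would flag explicitly, is the ambient-space choice: we want $\dot{T}\in L^2([0,1])$ supported in $\widetilde{F}_\ell$, and working inside $\mathcal{H}=S_{T_0,\ell}$ rather than all of $L^2$ is what makes the zero perturbation relatively interior to $P_\ell$ so that the uniqueness argument applies without modification. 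No other step poses a real obstacle.
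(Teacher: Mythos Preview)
Your proposal is correct and follows essentially the same route as the paper: verify that $\mathcal{J}$ is linear and continuous via Lemma~\ref{lem:cont-resp-ref-bnd}, check that $P_\ell$ is closed (Lemma~\ref{Lem7.1}), bounded, strictly convex, and contains zero in its relative interior, and then invoke Propositions~\ref{prop:exist} and~\ref{prop:uniqe}. The only cosmetic difference is that the paper takes the ambient Hilbert space to be $L^2$ rather than $S_{T_0,\ell}$, relying on the notion of relative interior; your choice of $\mathcal{H}=S_{T_0,\ell}$ is equivalent and arguably slightly cleaner.
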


\begin{proof}
Let $\mathcal{H} = L^2$, $P=P_\ell$
and $\mathcal{J}(\dot{h}) = \langle c,\widehat{R}(\dot{h})\rangle_{L^2([0,1],\bR)}$.
Using Lemma \ref{Lem7.1} we note that $P_\ell$ is closed, as well as bounded, strictly convex and that it contains the zero element of $\mathcal{H}$.
From Lemma \ref{lem:cont-resp-ref-bnd}, it follows that $\langle c,\widehat{R}(\dot{h}%
)\rangle_{L^2([0,1],\bR)}$ is continuous as a function of $\dot{h}$; note that it is also linear in $\dot{h}$.
By hypothesis, $\mathcal{J}$ is not uniformly vanishing on $P_\ell$.
We can therefore apply  Propositions \ref{prop:exist} and \ref{prop:uniqe} to conclude that 
\eqref{P1-add} has a unique solution. 
\end{proof}

Before we present the explicit formula for the optimal solution, we will 
reformulate the optimization problem \eqref{P1-add} to simplify the analysis. 
We first note that since the objective
function in \eqref{P1-add} is linear in $\dot{T}$, the maximum will occur
on $S_{T_0,\ell}\cap \partial B_1$. Combining this with the fact that we only need 
$c\in$ span$\{f_0\}^\perp$, we consider the following reformulation of \eqref{P1-add}:
\begin{problem} Given $\ell \geq0 $ and $c\in$ span$\{f_0\}^\perp$ solve\label{problem7.4}
\begin{eqnarray}
\min_{\dot{T}\in S_{T_0,\ell}} &&-\big\langle c,\widehat{R}(\dot{T})\big\rangle_{L^2([0,1],\bR)}
\label{obj_lin_fun-min2} \\
\mbox{subject to} &&\Vert \dot{T}\Vert^2_2-1=0.
\label{constraint-bound2}
\end{eqnarray}
\end{problem}

\begin{theorem}

\label{thm:explicit-formula2} 
Suppose the transfer operator $L_0$ associated with the system $(T_0,\rho)$ has a kernel $k_0$ as in \eqref{eq:ker-add-noise-explicit}, which satisfies $(A1)$ of Theorem \ref{th:linearresponse copy(1)}, and there is a $F'\subset \widetilde{F}_\ell$ such that $m(F')>0$, and $f_0(y)>0$ and $(\text{Id}-L_0^*)^{-1}c\notin \spn\{\mathcal{P}(\cdot,y)\}^\perp$ for all $y\in F'$.
Let $\mathcal{G}:L^2\rightarrow L^2$ be defined as
\begin{equation}  \label{eq:def-op-G}
\mathcal{G}f(y) := \int_0^1 \left(P_\pi\left(\tau_{-T_0(y)}\frac{d\rho}{dx}\right) \right)(x)f(x)dx.
\end{equation}
Then, the unique solution to Problem \ref{problem7.4} is
\begin{equation}  \label{opt-soln2}
\dot{T} (y) =  \begin{cases}
      -f_0(y)\mathcal{G}((\text{Id}-L_0^*)^{-1}c)(y)/\|f_0\mathcal{G}((\text{Id}-L_0^*)^{-1}c)\mathbf{1}_{\widetilde{F}_\ell}\|_2  & y\in \widetilde{F}_\ell, \\
      0 & \text{otherwise}.
   \end{cases}
\end{equation}
Furthermore, $\dot{T}\in L^\infty$.
\end{theorem}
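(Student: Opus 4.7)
The plan is to reduce Problem \ref{problem7.4} to a standard problem of minimising a continuous linear functional on the unit sphere of the closed subspace $S_{T_0,\ell}\subset L^2$, whose unique solution is read off by a Cauchy--Schwarz argument, and then to verify essential boundedness of the optimiser from the $L^\infty$ properties of the ingredients.

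First, I would invoke Proposition \ref{solutionT} to obtain existence and uniqueness of the optimiser in $P_\ell$. The only hypothesis that is not immediate is that $\mathcal{J}(\dot{T}) = \langle c,\widehat{R}(\dot{T})\rangle_{L^{2}([0,1],\bR)}$ is not uniformly vanishing on $P_\ell$, and I would verify this by applying Lemma \ref{lem:nonzeroobjmap} with $\mathcal{E}(x,y) = ((\text{Id}-L_0^*)^{-1}c)(x)\,f_0(y)$. The theorem's hypotheses that $f_0(y)>0$ and $(\text{Id}-L_0^*)^{-1}c\notin\spn\{\mathcal{P}(\cdot,y)\}^\perp$ on a set $F'\subset\widetilde{F}_\ell$ of positive measure are precisely what is needed to conclude $\mathcal{E}(\cdot,y)\notin\spn\{\mathcal{P}(\cdot,y)\}^\perp$ on $F'$.

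Next, I would rewrite the objective in closed form. Using the response formula \eqref{Linreap2}, the adjoint of the resolvent (well-defined on $\spn\{f_0\}^\perp$, dual to the contraction of $L_0$ on $V$ in Theorem \ref{th:linearresponse copy(1)}), and Fubini, one computes
\begin{equation*}
\langle c,\widehat{R}(\dot{T})\rangle_{L^2} = -\Big\langle (\text{Id}-L_0^*)^{-1}c,\, \int_0^1 \mathcal{P}(\cdot,y)\,\dot{T}(y)\,f_0(y)\,dy \Big\rangle_{L^2} = -\big\langle \dot{T},\, g \big\rangle_{L^2},
\end{equation*}
where $g(y) := f_0(y)\,\mathcal{G}((\text{Id}-L_0^*)^{-1}c)(y)$ and $\mathcal{G}$ is as in \eqref{eq:def-op-G}. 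Since $\dot{T}$ is supported in $\widetilde{F}_\ell$, minimising $-\langle c,\widehat{R}(\dot{T})\rangle$ on the unit sphere of $S_{T_0,\ell}$ is the same as minimising $\langle \dot{T}, g\mathbf{1}_{\widetilde{F}_\ell}\rangle_{L^2}$ on the unit sphere of $L^2$ restricted to supports in $\widetilde{F}_\ell$. By Cauchy--Schwarz, or equivalently a one-parameter Lagrange multiplier argument mirroring the proof of Theorem \ref{thm:explicit-formula} in Appendix \ref{apx:thm:explicit-formula}, the unique minimiser is
\begin{equation*}
\dot{T} = -\frac{g\,\mathbf{1}_{\widetilde{F}_\ell}}{\|g\,\mathbf{1}_{\widetilde{F}_\ell}\|_2},
\end{equation*}
which coincides with \eqref{opt-soln2}. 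The denominator is strictly positive by the not-uniformly-vanishing property verified in the first step.

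Finally, for $\dot{T}\in L^\infty$ it suffices to show $g\in L^\infty$. The kernel $\mathcal{P}(x,y) = P_\pi(\tau_{-T_0(y)} d\rho/dx)(x)$ lies in $L^\infty([0,1]^2)$ because $\rho\in\text{Lip}(\bR)$ implies $d\rho/dx\in L^\infty(\bR)$, and on compactly supported functions $P_\pi$ reduces to a uniformly finite sum of shifts. By the $L^1\!\to\! L^\infty$ analogue of \eqref{KF2} applied to $\mathcal{G}$, one has $\mathcal{G}((\text{Id}-L_0^*)^{-1}c)\in L^\infty$; combined with $f_0\in L^\infty$ (from Lemma \ref{lem:prop-add-noise-ker} and the bounded invariant-density lemma of Section \ref{newsec3}) this gives $g\in L^\infty$ and hence $\dot{T}\in L^\infty$. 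The only real technical point I expect is ensuring that the reduction to $c\in\spn\{f_0\}^\perp$ made before Problem \ref{problem7.4} places $c$ in the natural domain of $(\text{Id}-L_0^*)^{-1}$; this is just the dual statement to the continuity of $(\text{Id}-L_0)^{-1}:V\to V$ proved in Theorem \ref{th:linearresponse copy(1)}.
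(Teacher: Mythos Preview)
Your proposal is correct and follows essentially the same route as the paper's proof in Appendix \ref{apx:thm:explicit-formula2}: both rewrite $-\langle c,\widehat{R}(\dot T)\rangle$ as $\langle \dot T,\,f_0\,\mathcal{G}((\text{Id}-L_0^*)^{-1}c)\rangle$, identify the optimiser on the unit sphere of $S_{T_0,\ell}$, use the hypotheses on $F'$ (via Lemma \ref{lem:nonzeroobjmap}) to ensure the denominator is nonzero, invoke Proposition \ref{solutionT} for uniqueness, and deduce $\dot T\in L^\infty$ from $\mathcal{P}\in L^\infty([0,1]^2)$ and $f_0\in L^\infty$. The only cosmetic difference is that the paper carries out the optimisation via the formal first- and second-order Lagrange multiplier conditions (as in Appendix \ref{apx:thm:explicit-formula}), whereas you appeal directly to Cauchy--Schwarz; your shortcut is valid here since the objective is linear, and you rightly note the two arguments are equivalent.
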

\begin{proof}
See Appendix \ref{apx:thm:explicit-formula2}.
\end{proof}

\subsection{Explicit formula for the optimal map  perturbation that maximally increases the mixing rate}
\label{sec:expliciteigmap}

In this section we set up the optimisation problem for mixing enhancement and derive a formula for the optimal map perturbation.
We remark that related spectral approaches to mixing enhancement for continuous-time flows were developed in \cite{FS17,FKP}.

Recall that to enhance mixing in Section \ref{seceig}, we 
perturbed $k_0$ so that the logarithm of the real part of the second eigenvalue decreases. 
From Lemma \ref{lem:2-eval-conj-uncess}, we have 
\begin{equation}
\label{loglameqn}
\frac{d}{d\delta}\Re(\log \lambda_\delta)\bigg|_{\delta =0}= \frac{\langle%
\dot{k}, E\rangle_{L^2([0,1]^2,\bR)}}{|\lambda_{0}|^2},
\end{equation}
where $\lambda_\delta$ denotes the second largest
eigenvalue in magnitude (assumed to be simple) of the integral operator $L_\delta$ with the kernel 
$k_\delta = k_0+\delta\cdot \dot{k} + o(\delta)$, where $\delta\mapsto k_\delta$ is $C^1$ at $\delta=0$.
Since we want to perturb $T_0$ by $\dot{T}$, we  reformulate the above inner product.
Define
\begin{equation}\label{eq:pre-opt-sol-eval}
\widehat{E}(y)= 
-\int_0^1 \left(P_\pi\left(\tau_{-T_0(y)}\frac{d\rho}{dx}\right) \right)(x)E(x,y) dx,
\end{equation}
where $E(x,y)$ is as in (\ref{eq:almost-soln-2nd-evalue}).
\begin{proposition}\label{prop:add-noise-eval-obj-mix}
Let $L_\delta:L^2([0,1],\bC)\rightarrow L^2([0,1],\bC)$, $\delta\in [0,\bar{\delta})$,
be integral operators generated by the
kernels $k_\delta $ as in \eqref{eq:ker-add-noise-explicit},  assume that $d\rho/dx$ is Lipschitz and $\delta\mapsto T_\delta$ is $C^1$.
Let $\lambda_{\delta}$ be an eigenvalue of $L_\delta$ with second
largest magnitude strictly inside the unit disk.
Suppose that $L_{0}$ satisfies $(A1)$ of Theorem 
\ref{th:linearresponse copy(1)}
and $\lambda_{0}$ is geometrically simple.
Let $e$ and $\hat{e}$ be the eigenvectors of $L_0$ and $L_0^*$,
respectively, corresponding to the eigenvalue $\lambda_{0}$.
Then
$\widehat{E}\in L^\infty([0,1],\bR)$ and
\begin{equation*}
\big\langle\dot{k},E\big\rangle_{L^2([0,1]^2,\bR)} = \big\langle\dot{T},
\widehat{E}\big\rangle_{L^2([0,1],\bR)}.
\end{equation*}
\end{proposition}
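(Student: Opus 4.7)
The plan is to handle the two claims separately. The assertion that $\widehat{E}\in L^\infty$ is essentially a boundedness bookkeeping exercise, while the inner-product identity is just Fubini after substituting the explicit form of $\dot{k}$ from \eqref{derivk}.

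First, I would establish essential boundedness of $\widehat{E}$. Denote $\mathcal{P}(x,y):=\left(P_\pi(\tau_{-T_0(y)}\frac{d\rho}{dx})\right)(x)$ so that $\widehat{E}(y)=-\int_0^1 \mathcal{P}(x,y)E(x,y)\,dx$. Since $\rho$ is Lipschitz with compact support $[-1,1]$, its derivative $d\rho/dx$ lies in $L^\infty(\mathbb{R})$ with compact support; the shift $\tau_{-T_0(y)}$ preserves this, and $P_\pi$ is a finite sum (bounded uniformly in $y$ by $\rho$ having compact support as in the proof of Lemma \ref{lem:prop-add-noise-ker}). Hence $\mathcal{P}\in L^\infty([0,1]^2,\mathbb{R})$. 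For $E(x,y)$ defined at \eqref{eq:almost-soln-2nd-evalue}, I would invoke \eqref{KF2} together with $k_0\in L^\infty([0,1]^2,\mathbb{R})$ (Lemma \ref{lem:prop-add-noise-ker}) to conclude $e,\hat{e}\in L^\infty([0,1],\mathbb{C})$; this argument is identical to the one in the proof of Lemma \ref{mapevalctslem}. Consequently $E\in L^\infty([0,1]^2,\mathbb{R})$ and the product $\mathcal{P}\cdot E\in L^\infty([0,1]^2,\mathbb{R})$. Integrating in $x$ over a set of finite measure then yields $\widehat{E}\in L^\infty([0,1],\mathbb{R})$.

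Next, for the inner-product identity, the essential boundedness just established (together with $\dot{T}\in L^2$) makes the integrand in
\[
\int_0^1\int_0^1 \dot{k}(x,y)E(x,y)\,dx\,dy
\]
absolutely integrable, so Fubini applies. Substituting \eqref{derivk} gives
\[
\langle \dot{k},E\rangle_{L^2([0,1]^2,\mathbb{R})}=-\int_0^1\int_0^1 \mathcal{P}(x,y)\,\dot{T}(y)\,E(x,y)\,dx\,dy=\int_0^1 \dot{T}(y)\left(-\int_0^1\mathcal{P}(x,y)E(x,y)\,dx\right)dy,
\]
where I have used that $\dot{T}(y)$ is independent of $x$. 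The inner integral is precisely $\widehat{E}(y)$ by definition \eqref{eq:pre-opt-sol-eval}, so the right-hand side is $\langle \dot{T},\widehat{E}\rangle_{L^2([0,1],\mathbb{R})}$.

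There is essentially no obstacle beyond the routine verifications above; the only mildly subtle point is checking $e,\hat{e}\in L^\infty$, which requires pointing back to \eqref{KF2} and Lemma \ref{lem:prop-add-noise-ker} rather than just using the abstract $L^2$ eigenfunction property. Everything else is a direct application of Fubini and the definitions.
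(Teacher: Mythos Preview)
Your proof is correct and follows essentially the same approach as the paper. The only cosmetic difference is that the paper decomposes $E(x,y)=\sum_{i=1}^4\beta_i g_i(x)h_i(y)$ and then argues $\mathcal{G}g_i\in L^\infty$ (citing the proof of Theorem~\ref{thm:explicit-formula2}) and $h_i\in L^\infty$, whereas you show $\mathcal{P}\in L^\infty([0,1]^2)$ and $E\in L^\infty([0,1]^2)$ directly before integrating; both routes rely on the same ingredients ($k_0\in L^\infty$ from Lemma~\ref{lem:prop-add-noise-ker} and~\eqref{KF2}) and the inner-product computation is identical.
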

\begin{proof}
We first show that $\widehat{E}\in L^\infty([0,1],\bR)$. We can write
\begin{equation*}
    \begin{aligned}
    -\int_0^1 \left(P_\pi\left(\tau_{-T_0(y)}\frac{d\rho}{dx}\right) \right)(x)E(x,y) dx  &= -\sum_{i=1}^4\beta_ih_i(y)\int_0^1 \left(P_\pi\left(\tau_{-T_0(y)}\frac{d\rho}{dx}\right) \right)(x)g_i(x)dx\\
    &= -\sum_{i=1}^4\beta_ih_i(y)(\mathcal{G}g_i)(y),
    \end{aligned}
\end{equation*}
where $\beta_1=\beta_2 = \Re(\lambda_{0})$, $\beta_3=-\beta_4 =
\Im(\lambda_{0}), g_1=g_4 = \Re(\hat{e}), g_2=g_3=\Im(\hat{e})$, $h_1=h_3 =
\Re(e),h_2=h_4 = \Im(e)$.
From the proof of Theorem \ref{thm:explicit-formula2}, we have $\mathcal{G}g_i \in L^\infty([0,1],\bR)$.
Also, from Lemma \ref{lem:prop-add-noise-ker}, we have that $k_0\in L^\infty([0,1]^2)$ and therefore
$h_i\in L^\infty([0,1],\bR)$; thus, $\widehat{E}\in L^\infty([0,1],\bR)$.

Finally, we compute
\begin{eqnarray*}
\langle\dot{k},E\rangle_{L^2([0,1]^2,\bR)}&=&\int_0^1\int_0^1 \dot{k}(x,y)E(x,y) dx dy\\ 
&=&-\int_0^1\int_0^1 \left(P_\pi\left(\tau_{-T_0(y)}\frac{d\rho}{dx}\right) \right)(x)\dot{T}(y)E(x,y)dxdy\\ 
&=&\int_0^1 \dot{T}(y)
\widehat{E}(y) dy = \big\langle\dot{T}, \widehat{E}\big\rangle_{L^2([0,1],\bR)}.
\end{eqnarray*}
\end{proof}


From equation (\ref{loglameqn}), in order to maximally increase the spectral gap, by Proposition \ref{prop:add-noise-eval-obj-mix}, we should choose the map perturbation $\dot{T}$ to minimise $\langle \dot{T},\widehat{E}\rangle$.
We first show this optimisation problem has a unique solution.
\begin{proposition}
\label{solutionT2}
Let $P_\ell$ be the set in \eqref{eq:allow-map-pert} and assume that $\mathcal{J}(\dot{T})=\langle \dot{T},\widehat{E}\rangle$ does not uniformly vanish on $P_\ell$.
Then, 
the problem of finding $\dot{T}\in P_\ell$ such that
\begin{equation}
\big\langle\dot{T}, \widehat{E}\big\rangle_{L^2([0,1],\bR)}=\min_{\dot{h}\in P_\ell}%
\big\langle \dot{h},\widehat{E}\big\rangle_{L^2([0,1],\bR)} \label{P2-add}
\end{equation}
has a unique solution.
\end{proposition}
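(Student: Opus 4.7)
The plan is to prove Proposition \ref{solutionT2} by appealing directly to the abstract existence and uniqueness results from Section \ref{subsec:abst-opt-setng}, exactly mirroring the strategy used in Proposition \ref{solutionT}. Note first that minimising $\langle \dot{h},\widehat{E}\rangle_{L^2([0,1],\bR)}$ over $P_\ell$ is equivalent to maximising the linear continuous functional $\mathcal{J}(\dot{h}):=-\langle \dot{h},\widehat{E}\rangle_{L^2([0,1],\bR)}$ over the same set, bringing the problem into the form of \eqref{gen-func-opt-prob} with $\mathcal{H}=L^2$ and $P=P_\ell$.

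Next I would verify the hypotheses of Propositions \ref{prop:exist} and \ref{prop:uniqe}. From Proposition \ref{prop:add-noise-eval-obj-mix}, $\widehat{E}\in L^\infty([0,1],\bR)\subset L^2$, so by Cauchy--Schwarz $\mathcal{J}$ is continuous on $L^2$; linearity is immediate. The set $P_\ell=S_{T_0,\ell}\cap B_1$ is bounded (being contained in $B_1$), closed as the intersection of the closed subspace $S_{T_0,\ell}$ (Lemma \ref{Lem7.1}) with the closed unit ball $B_1$, and strictly convex because $S_{T_0,\ell}$ is a closed subspace of $L^2$ and hence itself a Hilbert space, so $P_\ell$ is the closed unit ball of a Hilbert space, which is strictly convex in the sense of Definition \ref{stconv}. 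Moreover $0\in P_\ell$ and, since $P_\ell$ is the unit ball of $S_{T_0,\ell}$, the zero vector lies in the relative interior of $P_\ell$.

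With these properties in hand, Proposition \ref{prop:exist} yields the existence of a solution to \eqref{P2-add}. Uniqueness then follows from Proposition \ref{prop:uniqe}, whose only remaining hypothesis---that $\mathcal{J}$ does not uniformly vanish on $P_\ell$---is the standing assumption on $\langle\cdot,\widehat{E}\rangle$ (equivalently, on $\mathcal{J}=-\langle\cdot,\widehat{E}\rangle$) in the statement of the proposition; the existence of such a non-vanishing perturbation is made plausible by the general criterion of Lemma \ref{lem:nonzeroobjmap} applied with $\mathcal{E}(x,y)=E(x,y)$.

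The argument is essentially routine given the abstract machinery already in place, so there is no substantive obstacle; the only point worth care is the observation that $P_\ell$ inherits strict convexity from the fact that $S_{T_0,\ell}$ is itself a Hilbert space, rather than from $L^2$ as a whole, and that the zero element lies in the \emph{relative} interior of $P_\ell$ inside this subspace---this is exactly what Proposition \ref{prop:uniqe} requires and why the reformulation through the closed subspace $S_{T_0,\ell}$ is needed.
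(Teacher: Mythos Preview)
Your proposal is correct and follows essentially the same approach as the paper: both verify that $P_\ell$ is closed, bounded, strictly convex and contains zero, that $\mathcal{J}$ is linear and continuous, and then apply Propositions \ref{prop:exist} and \ref{prop:uniqe} directly. The paper's proof is terser (it does not spell out the min-to-max reformulation or invoke $\widehat{E}\in L^\infty$ explicitly for continuity), but the substance is identical.
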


\begin{proof}
Note that $P_\ell$ is closed (by Lemma \ref{Lem7.1}), bounded, strictly convex and contains the zero element of $L^2$.
Now, since $\mathcal{J}(\dot{h}) := \langle \dot{h},\widehat{E}\rangle_{L^2([0,1],\bR)}$ is linear and
continuous and by hypothesis does not vanish everywhere on $P_\ell$, we may apply 
Propositions \ref{prop:exist} and \ref{prop:uniqe} to obtain the result.
\end{proof}


Since the objective function in %
\eqref{P2-add} is linear, all optima will lie in $S_{T_0,\ell}\cap\partial B_1$.
Hence, we equivalently consider the following optimization problem:
\begin{problem} \label{prob7.8} Given $\ell \geq 0$, solve
\begin{eqnarray}
\min_{\dot{T}\in S_{T_0,\ell}} &&\big\langle\dot{T}, \widehat{E}\big\rangle_{L^2([0,1],\bR)}\label{obj_lin_fun-min-2nd-eval-add} \\
\mbox{such that} &&\Vert \dot{T}\Vert _{2}^{2}-1= 0.
\label{constraint-bound-2nd-eval-add}
\end{eqnarray}

\end{problem}

We now state a formula for the unique optimum.
\begin{theorem}
\label{extremiser2}
Let $(T_0,\rho)$
be a deterministic system with additive noise satisfying $(T1)$ and $(T2)$.
Suppose the associated transfer operator $L_0:L^2([0,1],\bC)\rightarrow L^2([0,1],\bC)$,
with the kernel $k_0$ as in \eqref{eq:ker-add-noise-explicit}, satisfies $(A1)$ of Theorem \ref{th:linearresponse copy(1)}, and that there is a $F'\subset \tilde{F}_\ell$ with $m(F')>0$ and $E(\cdot,y)\notin\spn\{\mathcal{P}(\cdot,y)\}^\perp$ for all $y\in F'$.
Suppose $\lambda_0$
is geometrically simple. 
Then, the unique solution to the optimization problem \ref{prob7.8} is
\begin{equation}  \label{eq:soln-2nd-eval-add}
\dot{T}(y) = \begin{cases}
\frac{1}{\alpha}\int_0^1 \left(P_\pi\left(\tau_{-T_0(y)}\frac{d\rho}{dx}\right) \right)(x)E(x,y) dx & y\in \widetilde{F}_\ell,\\
0 & \text{otherwise},
\end{cases}
\end{equation}
where $E(x,y)$ is as in (\ref{eq:almost-soln-2nd-evalue})
and $\alpha>0$ 
is selected so that $\|\dot{T}\|_2 =1$. 
Furthermore, $\dot{T}\in L^\infty$.
\end{theorem}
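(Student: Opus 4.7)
The plan is to mimic the Lagrange-multiplier / Cauchy-Schwarz argument used for Theorem \ref{thm:explicit-formula2} (see Appendix \ref{apx:thm:explicit-formula2}), but applied directly at the level of the reduced functional on $L^2([0,1],\bR)$. First I would invoke Proposition \ref{prop:add-noise-eval-obj-mix} to replace the original inner product $\langle \dot{k},E\rangle_{L^2([0,1]^2,\bR)}$ with $\langle \dot{T},\widehat{E}\rangle_{L^2([0,1],\bR)}$, turning Problem \ref{prob7.8} into the minimisation of a linear functional on the closed subspace $S_{T_0,\ell}$ subject to the unit-norm constraint. Lemma \ref{Lem7.1} guarantees that $S_{T_0,\ell}$ is itself a Hilbert space and that $P_\ell$ is closed, bounded and strictly convex there.

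Next, I would observe that minimising $\dot{T}\mapsto \langle \dot{T},\widehat{E}\mathbf{1}_{\widetilde{F}_\ell}\rangle_{L^2}$ over $\{\dot{T}\in S_{T_0,\ell}:\|\dot{T}\|_2=1\}$ is an immediate Cauchy-Schwarz problem inside the Hilbert space $S_{T_0,\ell}$: its unique minimiser is
\[
\dot{T}^{*} \;=\; -\,\frac{\widehat{E}\,\mathbf{1}_{\widetilde{F}_\ell}}{\|\widehat{E}\,\mathbf{1}_{\widetilde{F}_\ell}\|_2}.
\]
Unfolding the definition of $\widehat{E}$ in \eqref{eq:pre-opt-sol-eval} recovers formula \eqref{eq:soln-2nd-eval-add} with $\alpha=\|\widehat{E}\,\mathbf{1}_{\widetilde{F}_\ell}\|_2$. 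Equivalently, one may introduce a Lagrange multiplier $\mu$ for the norm constraint, conclude from stationarity on $S_{T_0,\ell}$ that $2\mu\dot{T}+\widehat{E}\mathbf{1}_{\widetilde{F}_\ell}=0$, and then pick the sign corresponding to the minimum (rather than maximum).

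The only substantive check is that $\alpha>0$. This is where the hypothesis that there exists $F'\subset \widetilde{F}_\ell$ of positive measure with $E(\cdot,y)\notin \spn\{\mathcal{P}(\cdot,y)\}^\perp$ for $y\in F'$ enters: it guarantees, via the construction in Lemma \ref{lem:nonzeroobjmap} applied with $\mathcal{E}=E$, that $\widehat{E}\mathbf{1}_{\widetilde{F}_\ell}$ is not identically zero and hence that the denominator does not vanish. Uniqueness of the optimiser is then automatic from Proposition \ref{solutionT2}; our computation merely identifies the explicit extremiser. Finally, the $L^\infty$ claim is immediate from Proposition \ref{prop:add-noise-eval-obj-mix}, which asserts $\widehat{E}\in L^\infty([0,1],\bR)$; normalisation preserves essential boundedness.

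The main obstacle — really the only nontrivial point — is the translation of the nonvanishing hypothesis into $\|\widehat{E}\mathbf{1}_{\widetilde{F}_\ell}\|_2>0$; once that is in hand, everything reduces to a textbook Cauchy-Schwarz / Riesz representation calculation on a closed subspace, combined with the previously established abstract existence and uniqueness result.
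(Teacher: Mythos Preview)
Your proposal is correct and follows essentially the same route as the paper's proof in Appendix~\ref{apx:extremiser2}: reduce to minimising $\langle \dot T,\widehat E\rangle$ on $S_{T_0,\ell}\cap\partial B_1$ via Proposition~\ref{prop:add-noise-eval-obj-mix}, solve by Lagrange multipliers (equivalently, your Cauchy--Schwarz observation) to get $\dot T=-\mathbf 1_{\widetilde F_\ell}\widehat E/\|\mathbf 1_{\widetilde F_\ell}\widehat E\|_2$, verify the normalising constant is nonzero using the $E(\cdot,y)\notin\spn\{\mathcal P(\cdot,y)\}^\perp$ hypothesis (Lemma~\ref{lem:nonzeroobjmap}), invoke Proposition~\ref{solutionT2} for uniqueness, and read off $L^\infty$ from $\widehat E\in L^\infty$. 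Your Cauchy--Schwarz phrasing is arguably cleaner than the paper's explicit Lagrange-multiplier bookkeeping, but the substance is identical.
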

\begin{proof}
See Appendix \ref{apx:extremiser2}.
\end{proof}

\begin{corollary}
If $\lambda_{0}$ is real, then
\begin{equation*}
\dot{T}(y) = \begin{cases}
\text{sgn}(\lambda_0)\frac{e(y)(\mathcal{G}\hat{e})(y)}{\|e\mathcal{G}\hat{e}\mathbf{1}_{\widetilde{F}_\ell}\|_2} & y\in \widetilde{F}_\ell,\\
0 & \text{otherwise},
\end{cases}
\end{equation*}
where $\mathcal{G}$ is the operator in \eqref{eq:def-op-G}.
Furthermore, if there exists
an $\ell>0$ such that $\ell\le T_0(x)\le 1-\ell$ for $x\in[0,1]$, then
\begin{equation}\label{eq:simp-opt-soln-mix-add-noise}
\dot{T} = \text{sgn}(\lambda_0)\frac{e\cdot \mathcal{G}\hat{e}}{\|e\cdot \mathcal{G}\hat{e}\|_2}.
\end{equation}
\end{corollary}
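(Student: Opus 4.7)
The plan is to derive the corollary by direct substitution into the formula \eqref{eq:soln-2nd-eval-add} from Theorem \ref{extremiser2}, using that when $\lambda_0$ is real the relevant eigenfunctions can be chosen real. Since $L_0$ acts on real-valued functions (its kernel $k_0$ is real, see Lemma \ref{lem:prop-add-noise-ker}), and since $\lambda_0$ is a real geometrically simple eigenvalue, the corresponding eigenvectors $e$ and $\hat{e}$ can be chosen in $L^2([0,1],\mathbb{R})$. Consequently $\Im(e)=\Im(\hat{e})=\Im(\lambda_0)=0$, and the definition \eqref{eq:almost-soln-2nd-evalue} collapses to
\begin{equation*}
E(x,y) \;=\; \lambda_0\,\hat{e}(x)\,e(y).
\end{equation*}

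Next I would substitute this simplified $E$ into the integral appearing in \eqref{eq:soln-2nd-eval-add}. Pulling the $y$-dependent factors out of the $x$-integral and recognising the operator $\mathcal{G}$ from \eqref{eq:def-op-G}, one finds
\begin{equation*}
\int_0^1 \left(P_\pi\!\left(\tau_{-T_0(y)}\tfrac{d\rho}{dx}\right)\right)(x)\,E(x,y)\,dx \;=\; \lambda_0\,e(y)\,(\mathcal{G}\hat{e})(y).
\end{equation*}
Hence \eqref{eq:soln-2nd-eval-add} gives $\dot{T}(y) = \frac{\lambda_0}{\alpha}\,e(y)(\mathcal{G}\hat{e})(y)\,\mathbf{1}_{\widetilde{F}_\ell}(y)$ with $\alpha>0$ chosen so that $\|\dot T\|_2=1$. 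Computing the $L^2$-norm of $\lambda_0\, e\,(\mathcal{G}\hat{e})\,\mathbf{1}_{\widetilde{F}_\ell}$ and solving for $\alpha$ yields $\alpha = |\lambda_0|\,\|e\,(\mathcal{G}\hat{e})\,\mathbf{1}_{\widetilde{F}_\ell}\|_2$, so the ratio $\lambda_0/\alpha$ becomes $\mathrm{sgn}(\lambda_0)/\|e\,(\mathcal{G}\hat{e})\,\mathbf{1}_{\widetilde{F}_\ell}\|_2$, which is exactly the claimed expression.

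For the second assertion, observe that the hypothesis $\ell \le T_0(x) \le 1-\ell$ on all of $[0,1]$ means $\widetilde{F}_\ell = [0,1]$ up to a null set, so $\mathbf{1}_{\widetilde{F}_\ell}\equiv 1$ and the indicator drops out, giving the simplified formula \eqref{eq:simp-opt-soln-mix-add-noise}. The only mildly delicate point is justifying that $e$ and $\hat{e}$ can be taken real; this is standard for a real compact operator with a real simple eigenvalue (decompose any complex eigenvector into real and imaginary parts, each of which is itself an eigenvector, and use geometric simplicity to conclude one of them is a nonzero real eigenvector, then do the same for $L_0^*$). Apart from that, the proof is a one-line substitution followed by a normalisation computation.
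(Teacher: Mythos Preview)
Your proof is correct and follows essentially the same approach as the paper: simplify $E(x,y)$ to $\lambda_0\hat e(x)e(y)$ in the real case, substitute into \eqref{eq:soln-2nd-eval-add}, recognise $\mathcal{G}$, and then observe $\widetilde F_\ell=[0,1]$ for the second part. You actually give more detail than the paper (which is a two-line proof), in particular the justification that $e,\hat e$ may be taken real and the explicit computation of $\alpha$.
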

\begin{proof}
Since $e, \hat{e}$ and $\lambda_0$ are real, we have $E(x,y) = \hat{e}(x)e(y)\lambda_0$
and
the expression for $\dot{T}$ follows from \eqref{eq:soln-2nd-eval-add}.
Finally, if $\ell\le T_0(x)\le 1-\ell$,
then $\widetilde{F}_\ell = [0,1]$ and we have \eqref{eq:simp-opt-soln-mix-add-noise}. 
\end{proof}

\section{Applications and numerical experiments}

\label{sec:numerics}

In this section we will consider two stochastically perturbed deterministic systems, namely the Pomeau-Manneville map and a weakly mixing interval exchange map. 
For each of these maps we numerically estimate:
\begin{enumerate}
    \item The unique kernel perturbation that maximises the change in expectation of a prescribed observation function (see Problem \ref{prob3}).  An expression for this optimal kernel is given by (\ref{opt-soln}).
    \item The unique kernel perturbation that maximally increases the mixing rate (see Problem \ref{prob3.1}).
    An expression for this optimal kernel is given by (\ref{eq:soln-2nd-eval}) and (\ref{optkernelevaleqn}).
    \item The unique map perturbation that maximises the change in expectation of a prescribed observation function (see Problem \ref{problem7.4}). An expression for this optimal map perturbation is given by (\ref{opt-soln2}).
    \item The unique map perturbation that maximally increases the mixing rate (see Problem \ref{prob7.8}). An expression for this optimal map perturbation is given by (\ref{eq:soln-2nd-eval-add}) and (\ref{eq:simp-opt-soln-mix-add-noise}).
\end{enumerate}
The numerics will be explained as we proceed through these four optimisation problems.
We refer the reader to \cite{ADF} for additional details on the implementation and related experiments.


\subsection{Pomeau-Manneville map}
\label{sec:PM}
We consider the Pomeau-Manneville map \cite{LSV} 
\begin{equation}
    \label{PMmap}
T_0(x)=\left\{
\begin{array}{ll}
x(1+(2x)^\alpha),&\hbox{$x\in[0,1/2)$;}\\
2x-1,&\hbox{$x\in[1/2,1]$}
\end{array}
    \right.,
\end{equation}
with parameter value $\alpha=1/2$.
For this parameter choice it is known that the map $T_0$ admits a unique absolutely continuous invariant probability measure, but only algebraic decay of correlations \cite{LSV}.
With the addition of noise as per (\ref{systm}), the transfer operator defined by (\ref{kernelL2}) and (\ref{eq:ker-add-noise-explicit}) for $\delta=0$ becomes compact as an operator on $L^2$. 
In our numerical experiments we will use the smooth noise kernel $\rho_\epsilon:[-\epsilon,\epsilon]\to\mathbb{R}$, defined by $\rho_\epsilon(x)=N(\epsilon)\exp(-\epsilon^2/(\epsilon^2-x^2))$, where $N(\epsilon)$ is a normalisation factor ensuring $\int \rho_\epsilon(x)\ dx=1$.

We now begin to set up our numerical procedure for estimating $L_0$, which is a standard application of Ulam's method \cite{ulam}.
Let $B_n = \{I_1,\dots, I_n\}$ denote an equipartition of $[0,1]$ into $n$ subintervals, and set $\mathcal{B}_n = $ span$\{\mathbf{1}%
_{I_1},\dots,\mathbf{1}_{I_n}\}$. 
We define the (Ulam) projection $\pi_n:L^2([0,1])
\rightarrow\mathcal{B}_n$ by $\pi_n(g) = \sum_{i=1}^n\left(\frac{1}{m(I_i)}
\int_{I_i}g(x)dx\right) \mathbf{1}_{I_i}$. 
The finite-rank
transfer operator $L_{n}:=\pi_n L_0:L^2([0,1])\rightarrow \mathcal{B}_n$ can be computed numerically.
We use MATLAB's built-in functions \verb"integral.m" and  \verb"integral2.m" to perform the $\rho$-convolution (using an explicit form of $\rho_\epsilon$) and the Ulam projections, respectively.
Figure \ref{fig:PMtranspt1} displays the nonzero entries in the column-stochastic matrix corresponding to $L_n$ for $\epsilon=0.1$. 
\begin{figure}[h!]
    \begin{center}
        \includegraphics[width=0.5\textwidth]{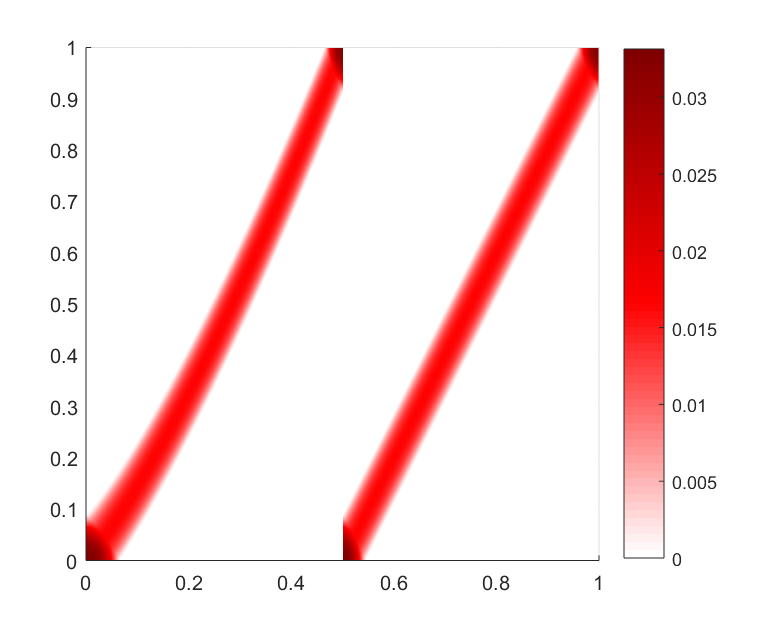}
         \caption{Transition matrix $L_n$ for the system (\ref{systm}) generated by the Pomeau-Manneville map $T_0$ (\ref{PMmap}) using $n=500$ subintervals of equal length. The matrix entries are located according to the subinterval positions in the domain $[0,1]$, so that the image appears as a ``blurred'' version of the graph of $T_0$.  The additive noise in (\ref{systm}) is drawn according to $\rho_\epsilon$ with $\epsilon=1/10$.}
        \label{fig:PMtranspt1}
        \end{center}
        \end{figure}
        
Approximations to the invariant probability densities for our stochastic dynamics are displayed in Figure \ref{fig:PMdens} (left) for large and small noise supports.  
  \begin{figure}[h!]
    \begin{center}
        \includegraphics[width=\textwidth]{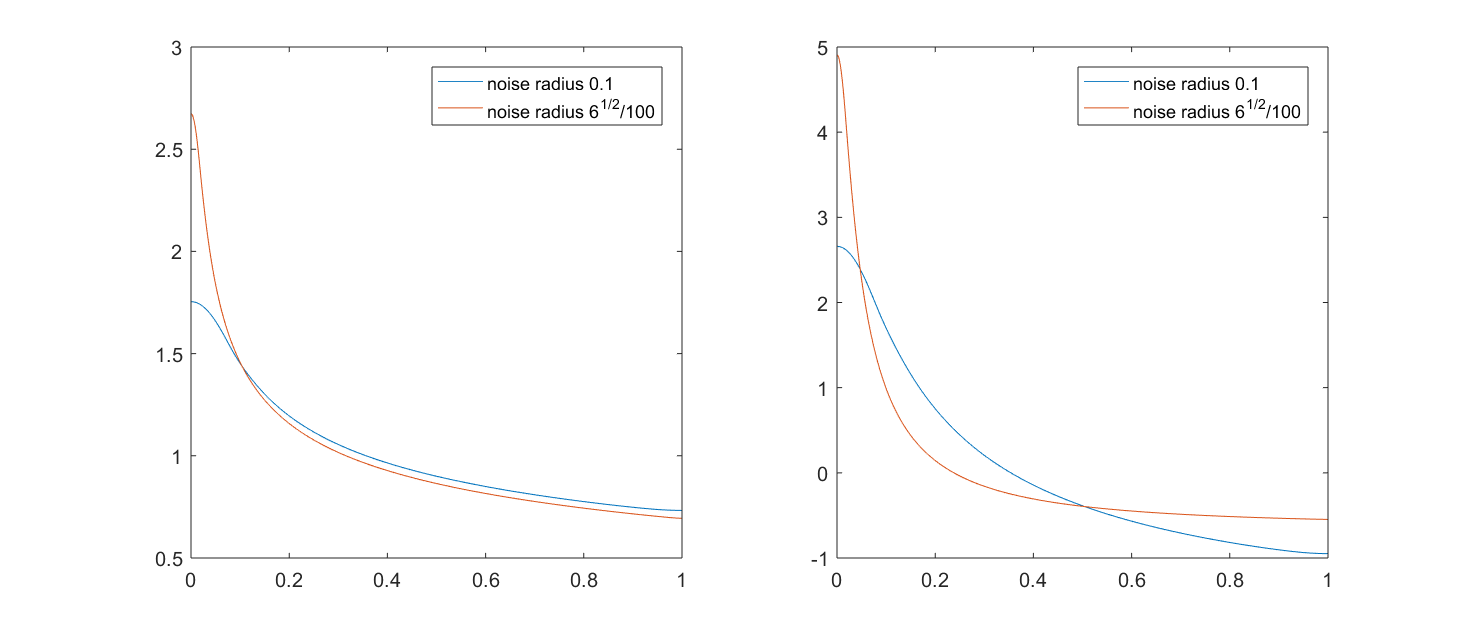}
         \caption{Approximate invariant densities (left) and eigenfunctions corresponding to the 2nd largest eigenvalue of $L_0$ (right) for the  system (\ref{systm}) with $T_0$ given by the Pomeau-Manneville map (\ref{PMmap}).  The additive noise in (\ref{systm}) is drawn according to $\rho_\epsilon$ with $\epsilon$ taking the values 1/10 (blue) and $\sqrt{6}/100$ (red). The Ulam matrix $L_n$ is constructed with 500 subintervals.}
        \label{fig:PMdens}
        \end{center}
\end{figure}
A lower level of noise permits greater concentration of invariant probability mass near the fixed point $x=0$ of the map $T_0$.
Also shown in Figure \ref{fig:PMdens} (right) are the estimated eigenfunctions corresponding to the second-largest eigenvalue of $L_n$.
The signs of these second eigenfunctions split the interval $[0,1]$ into left and right hand portions, broadly indicating that the slow mixing is due to positive mass near $x=0$ and negative mass away from $x=0$ \cite{DFS00};  see \cite{FMS11} for further discussion of this point in the Pomeau-Manneville setting.

\subsubsection{Kernel perturbations}
\label{sec:kerper}
In the framework of Problems \ref{prob3} and  \ref{prob3.1} we use the (arbitrarily chosen) monotonically increasing observation function $c(x)=-\cos(x)$.
In order to estimate $\dot{k}$ as in (\ref{opt-soln}) we use the code from Algorithm 3 \cite{ADF};
the inputs are the Ulam matrix $L_n$ and $c_n$ (obtained as $\pi_n(c)$).
Equivalently, directly using (\ref{opt-soln}) one may substitute $f_n$ (obtained as the leading eigenvector of $L_n$) for $f$, $L_n$ for $L$, $c_n$ as above for $c$, and solve $(Id-L_n^*)^{-1}c_n$ (obtained as a vector $y\in\mathbb{R}^n$ by numerically solving the linear system $(Id-L_n^*)y=c_n, f_n^\top y=0$).
Figure \ref{fig:PM_kernel_obs} shows the optimal kernel perturbations $\dot{k}_n$ for $n=500$.
\begin{figure}[h!]
    \begin{center}
        \includegraphics[width=0.45\textwidth]{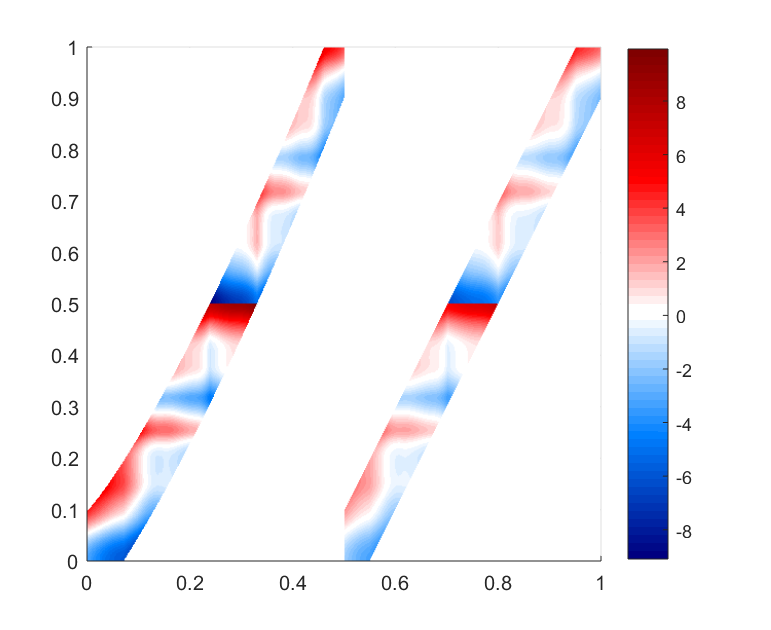}
        \includegraphics[width=0.45\textwidth]{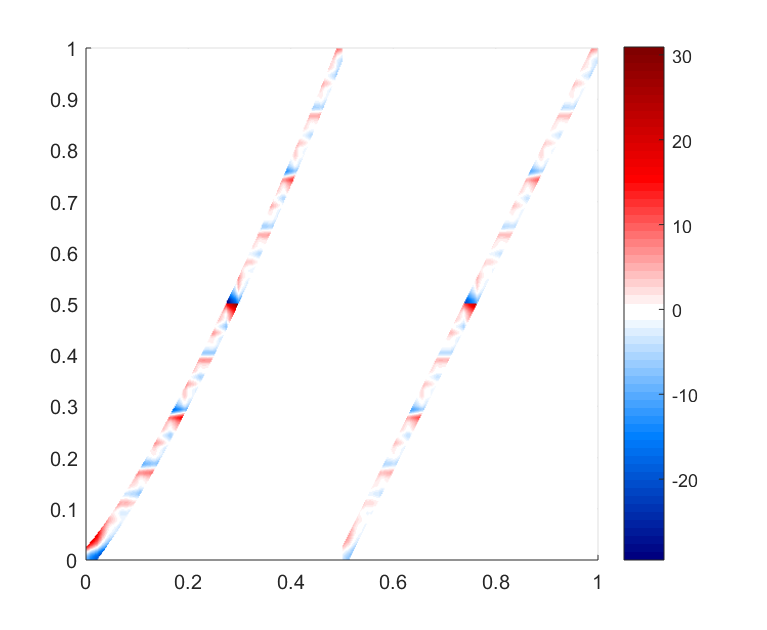}
        \caption{Optimal kernel perturbations for the Pomeau-Manneville map to maximise the change in expectation of $c(x)=-\cos(x)$, based on an Ulam approximation of (\ref{opt-soln}) with $n=500$ subintervals.  Left: $\epsilon=1/10$, Right: $\epsilon=\sqrt{6}/100$.}
        \label{fig:PM_kernel_obs}
        \end{center}
        \end{figure}
        Because $c$ is an increasing function, intuitively one might expect the kernel perturbation to try to shift mass in the invariant density from left to right.
        Broadly speaking, this is what one sees in the high-noise case in Figure \ref{fig:PM_kernel_obs} (left): vertical strips typically have red above blue, corresponding to a shift of mass to the right in $[0,1]$.
        The main exception to this is around the $y$-axis value of 1/2, where red is strongly below blue along vertical strips.
        This is because at the next iteration, these red regions will be mapped near $x=1$ and achieve the highest value of $c$, while the blue regions will be mapped near to $x=0$ with the least value of $c$.
        In the low-noise case of Figure \ref{fig:PM_kernel_obs} (right), we see a similar solution with higher spatial frequencies, and strong perturbations near the critical values of $x=0$ and $T_0(x)=1/2$.
        
To investigate the optimal kernel perturbation to maximally increase the rate of mixing in the stochastic system, we use the expression $\dot{k}$ in (\ref{eq:soln-2nd-eval}).
A natural approximate version (\ref{eq:soln-2nd-eval}) requires estimates of the left and right eigenfunctions of $L_0$ corresponding to the second largest eigenvalue $\lambda_2$;  these are obtained directly as eigenvectors of $L_n$.
Figure \ref{fig:PM-mix} shows the resulting optimal kernel perturbations, computed using the code from Algorithm 4 \cite{ADF} with input $L_n$.
 \begin{figure}[h!]
    \begin{center}
            \includegraphics[width=0.45\textwidth]{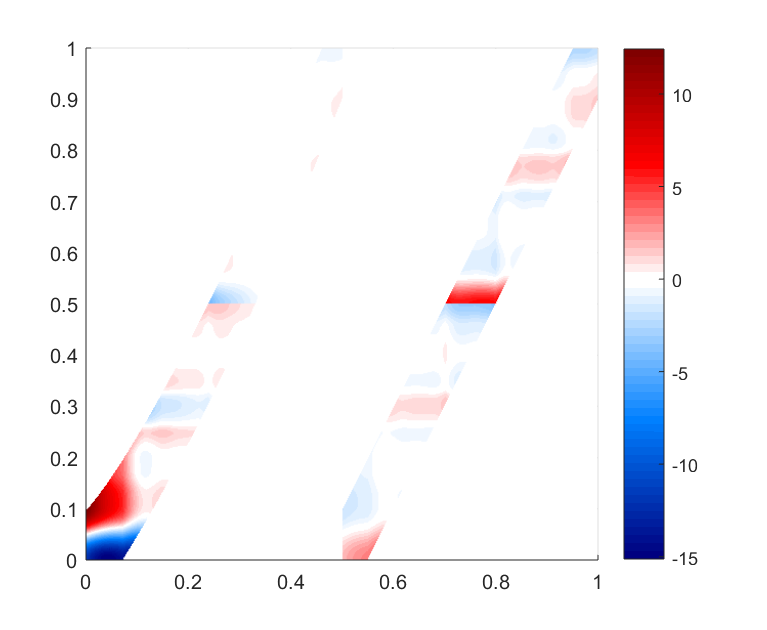}
        \includegraphics[width=0.45\textwidth]{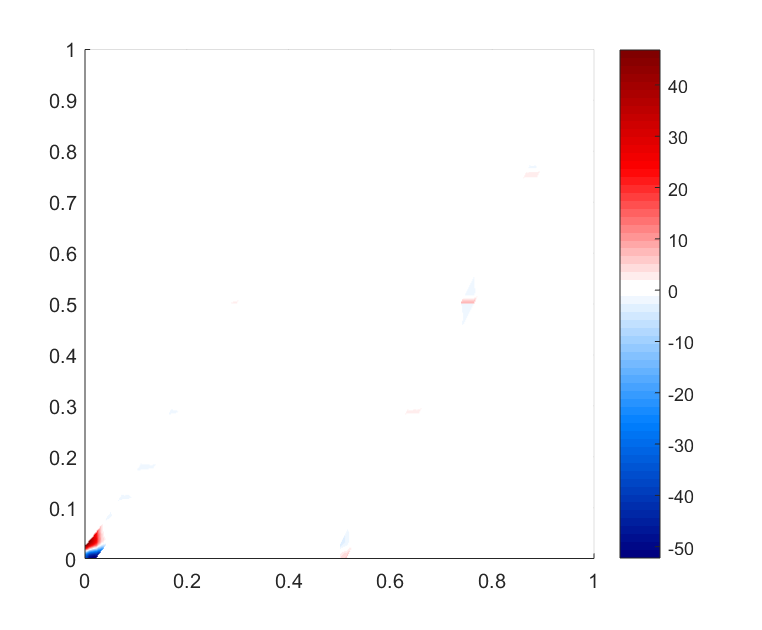}
            \caption{Optimal kernel perturbation for the Pomeau-Manneville map to maximally increase the mixing rate, computed with $n=500$ subintervals. Left: $\epsilon=1/10$, Right: $\epsilon=\sqrt{6}/100$. }
        \label{fig:PM-mix}
        \end{center}
        \end{figure}
Because the fixed point at $x=0$ is responsible for the slow algebraic decay of correlations for the deterministic dynamics of $T_0$, the fixed point will also play a dominant role in the mixing rate of the stochastic system for low to moderate levels of noise.
Indeed, Figure \ref{fig:PM-mix} shows that the optimal perturbation concentrates its effort in a neighbourhood of the fixed point, and pushes mass away from the fixed point as much as possible.
This is particularly extreme in the low noise case of Figure \ref{fig:PM-mix} (right) with the perturbation almost exclusively concentrated in a small neighbourhood of $x=0$.


\subsubsection{Map perturbations}
\label{sec:mapper}

We now turn to the problem of finding the unique map perturbation $\dot{T}$ that maximises the change in expectation of the observation $c(x)=-\cos(x)$ (see Problem \ref{problem7.4} for a precise formulation) and maximises  the speed of mixing (see Problem \ref{prob7.8}).
We use the natural Ulam discretisation of the expression\footnote{Note that since $T_0^{-1}(\{0,1\})$ is a finite set, we may take $\ell>0$ as small as we like.  In the computations we set $\ell=0$, so that $\widetilde{F}_\ell=[0,1]$ mod $m$.} (\ref{opt-soln2}). 
The objects $f_n$ and $(Id-L_n^*)^{-1}c_n$ are computed exactly as before in Section \ref{sec:kerper}.
The action of the operator $\mathcal{G}$ in (\ref{opt-soln2}) is computed using MATLAB's built-in function \verb"integral.m" using an explicit form of $d\rho_\epsilon/dx$ for $d\rho/dx$ in (\ref{opt-soln2}).

Figure \ref{fig:PM-map-obs} (left) shows the optimal $\dot{T}$ for the two noise amplitudes $\epsilon=1/10$ and $\epsilon=\sqrt{6}/100$.
 \begin{figure}[h!]
    \begin{center}
            \includegraphics[width=0.45\textwidth]{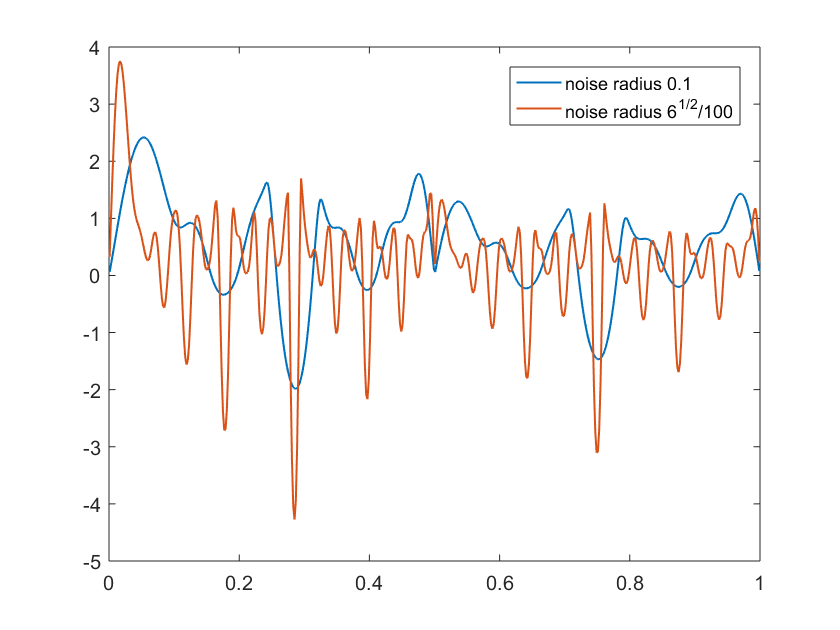}
        \includegraphics[width=0.45\textwidth]{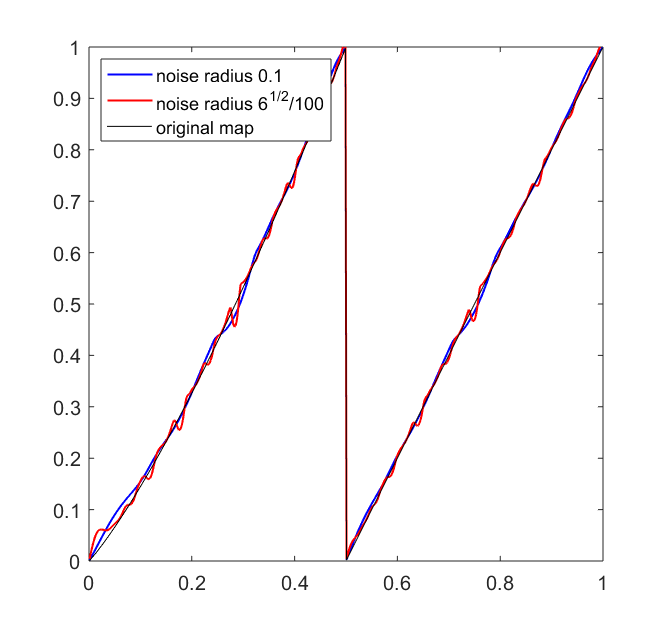}
            \caption{Left: Optimal map perturbation $\dot{T}$ for the Pomeau-Manneville map to maximise the change in expectation of $c(x)=-\cos(x)$, computed using (\ref{opt-soln2}) with $n=500$.  Right: Illustration of $T_0+\dot{T}/100$.}
        \label{fig:PM-map-obs}
        \end{center}
        \end{figure}
Note that for the noise amplitude $\epsilon=0.1$ (blue curve in Figure \ref{fig:PM-map-obs}) the map perturbation $\dot{T}$ is mostly positive, corresponding to moving probability mass to the right, as expected because we are maximising the change in expectation of an increasing observation function $c$.
The blue curve is most negative in neighbourhoods of the two preimages of $x=1/2$, corresponding to moving probability mass to the left.
The reason for this is identical to the discussion of the ``blue above red'' effect in Figure \ref{fig:PM_kernel_obs}, namely moving mass to the left creates a very large increase in the objective function value at the next iterate.
This ``look ahead'' effect is even more pronounced in the low noise case (red curve of Figure \ref{fig:PM-map-obs}), where $\dot{T}$ is mostly positive, but has deep negative perturbations at multiple preimages of $x=1/2$ reaching further into the past.

Figure \ref{fig:PM-map-obs} (right) illustrates the Pomeau-Manneville map (black) with perturbed maps $T_0+\dot{T}/100$.
We have chosen a scale factor of 1/100 for visualisation purposes;  one should keep in mind we have optimised for an infinitesimal change in the map.
Figure \ref{fig:PM_kernelequiv_obs} shows the kernel derivatives $\dot{k}$ corresponding to the optimal map derivatives $\dot{T}$ for the two noise levels.       
\begin{figure}[h!]
    \begin{center}
        \includegraphics[width=0.45\textwidth]{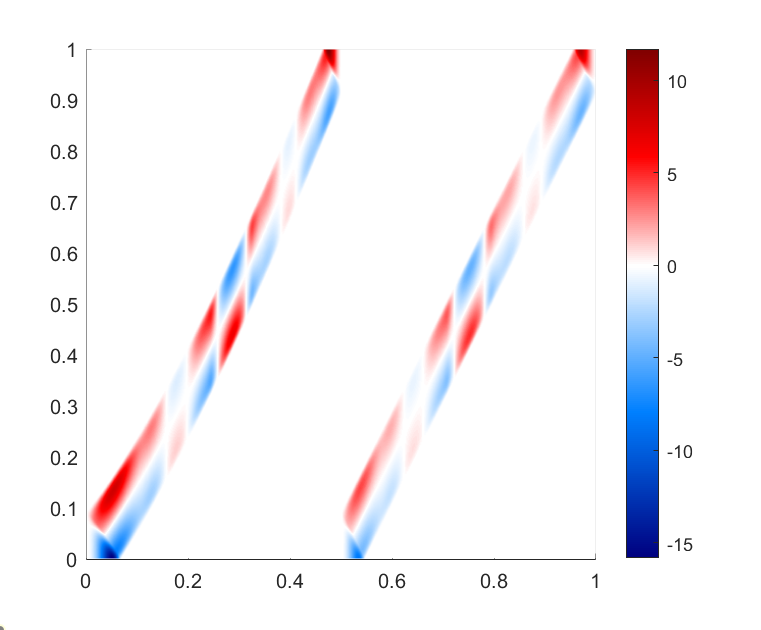}
        \includegraphics[width=0.45\textwidth]{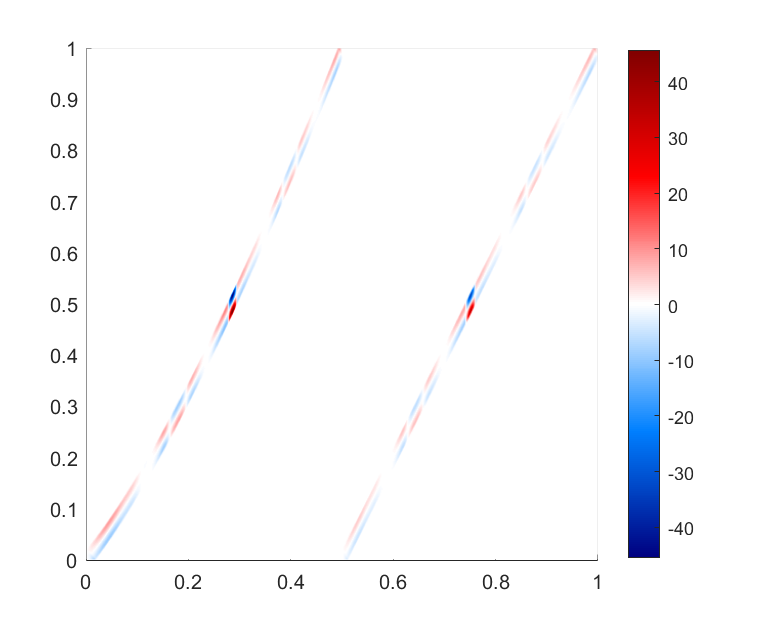}
        \caption{Kernel perturbations corresponding to the optimal map perturbations in Figure \ref{fig:PM-map-obs}. Left: $\epsilon=1/10$, Right: $\epsilon=\sqrt{6}/100$.}
        \label{fig:PM_kernelequiv_obs}
        \end{center}
        \end{figure}
These kernel derivatives have a restricted form because they arise purely from a derivative in the map.
One may compare Figure \ref{fig:PM_kernelequiv_obs} with Figure \ref{fig:PM_kernel_obs}  and note that the kernel derivative in Figure \ref{fig:PM_kernelequiv_obs} (left) attempts to follow the general structure of the kernel derivative in Figure \ref{fig:PM_kernel_obs} (left), while obeying its structural restrictions arising from the less flexible \textit{map} perturbation.
Broadly speaking, in Figure \ref{fig:PM_kernelequiv_obs} (left), red lies above blue (mass is shifted to the right). 
Exceptions are near $y=1/2$ because at the next iteration these red points will land near $x=1$, achieving very high objective value, while the blue region will get mapped to near $x=0$, encountering the lowest value of $c$.
Note that the perturbation decreases from a peak to very close to zero near $x=0$. 
This is because in a small neighbourhood of $x=0$ there is already some stochastic perturbation away from $x=0$ ``for free'' due to the reflecting boundary conditions.
Thus, the map perturbation $\dot{T}$ does not need to invest energy in large perturbations very close to $x=0$.

The map perturbation that maximally increases the rate of mixing is a particularly interesting question.
Our computations use the natural Ulam discretisation of (\ref{eq:simp-opt-soln-mix-add-noise}).
The computations follow as in Section \ref{sec:kerper} with the action of $\mathcal{G}$ computed as above.
Figure \ref{fig:PM-map-mix} (left) shows the optimal $\dot{T}$ for the two noise amplitudes $\epsilon=1/10$ and $\epsilon=\sqrt{6}/100$.
  \begin{figure}[h!]
    \begin{center}
            \includegraphics[width=0.45\textwidth]{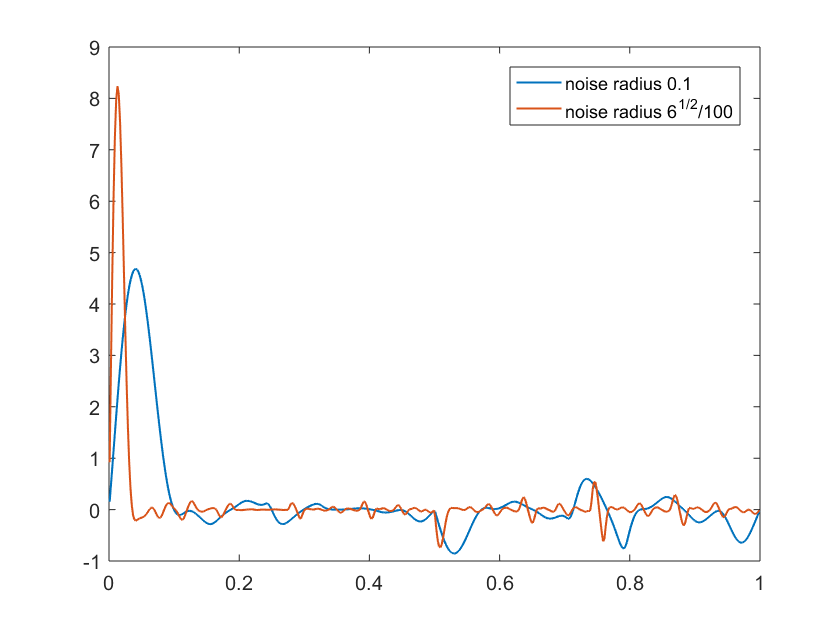}
        \includegraphics[width=0.45\textwidth]{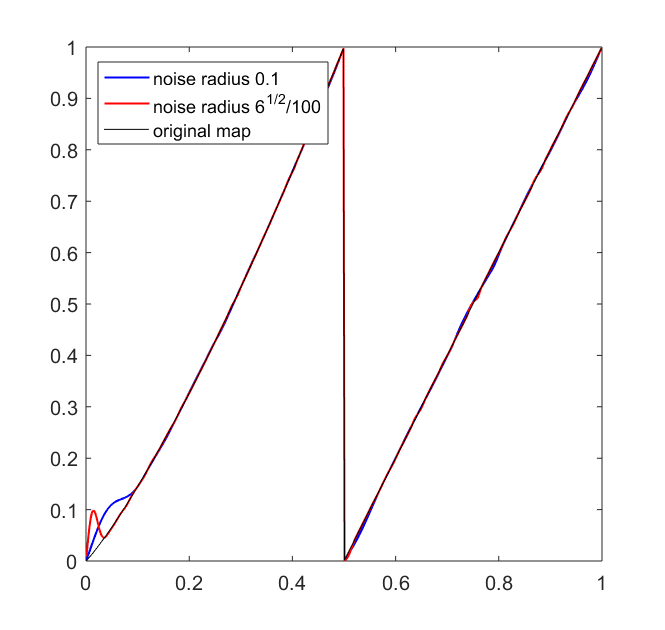}
            \caption{Left: Optimal map perturbation $\dot{T}$ for the Pomeau-Manneville map to maximise the change in the mixing rate, computed using (\ref{eq:simp-opt-soln-mix-add-noise}) with $n=500$.  Right: Illustration of $T_0+\dot{T}/100$.}
        \label{fig:PM-map-mix}
        \end{center}
        \end{figure}
A sharp map perturbation away from $x=0$ is seen for both noise levels, with the perturbation sharper for the lower noise case.
In both cases, the perturbations far from $x=0$ are weak (low magnitude values of $\dot{T}$).
This result corresponds well with the results seen for the optimal kernel perturbations in Figure \ref{fig:PM-mix}, where mass was primarily moved away from $x=0$.
As in the optimal solution shown in Figure \ref{fig:PM-map-obs} (left), the optimal perturbation in Figure \ref{fig:PM-map-mix} decreases from a sharp peak down to zero near $x=0$. 
This is again because in a small neighbourhood of $x=0$ the system experiences ``free'' stochastic perturbations away from $x=0$ due to the reflecting boundary conditions, and thus the map perturbation $\dot{T}$ need not need invest energy in large perturbations very close to $x=0$.
Figure \ref{fig:PM-map-mix} (right) illustrates the Pomeau-Manneville map (black) with perturbed maps $T_0+\dot{T}/100$, where again the factor $1/100$ is just for illustrative purposes.
When inspecting the kernel derivatives $\dot{k}$ corresponding to the optimal map perturbations $\dot{T}$ in Figure \ref{fig:PM_kernelequiv_mix}, we see similar behaviour to those in Figure \ref{fig:PM-map-mix}. 
        \begin{figure}[h!]
    \begin{center}
        \includegraphics[width=0.45\textwidth]{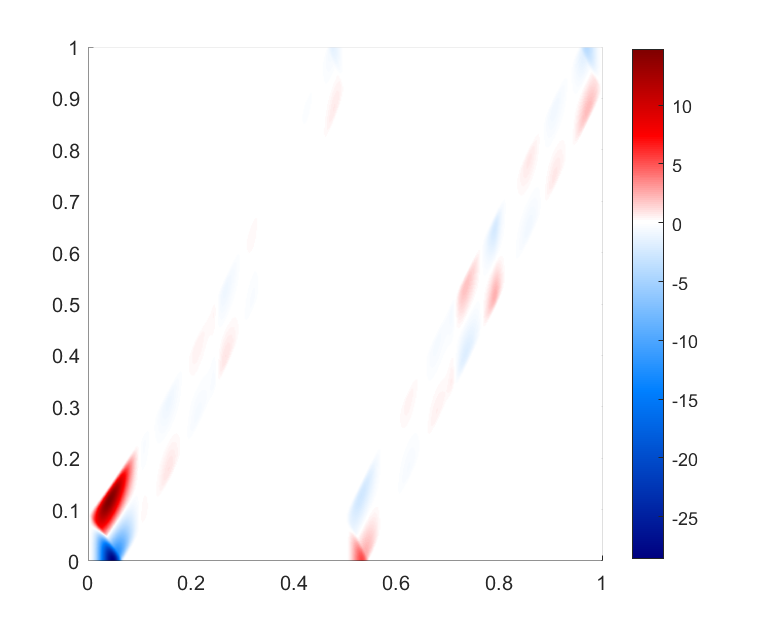}
        \includegraphics[width=0.45\textwidth]{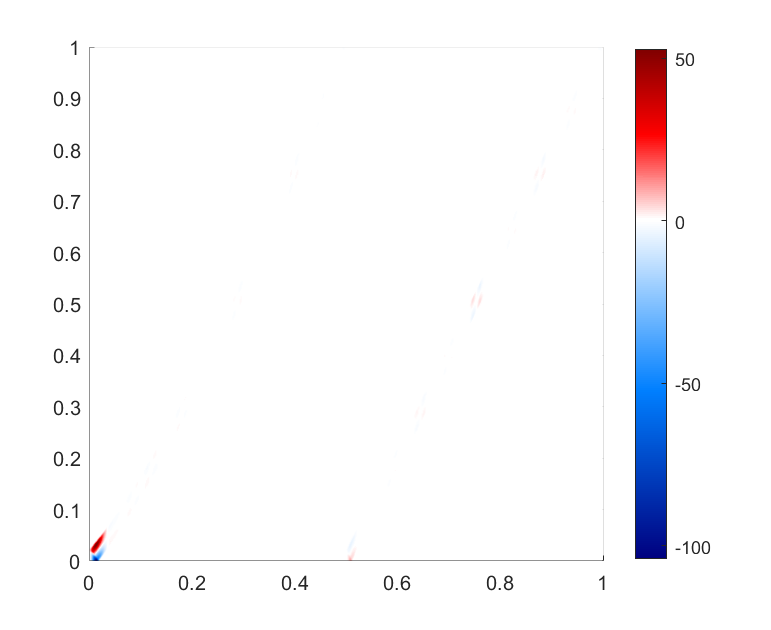}
        \caption{Kernel perturbations corresponding to the optimal map perturbations in Figure \ref{fig:PM-map-mix}. Left: $\epsilon=1/10$, Right: $\epsilon=\sqrt{6}/100$.}
        \label{fig:PM_kernelequiv_mix}
        \end{center}
        \end{figure}

\subsection{Interval exchange map}

In our second example, we consider a weak-mixing interval exchange map.
This is because of an existing literature in mixing optimisation for these classes of maps with the addition of noise.
Avila and Forni \cite{avilaforni} prove that a typical interval exchange is either weak mixing or an irrational rotation.
We use a specific weak-mixing \cite{ulcigraisinai} interval exchange map $T_0$ with interval permutation $(1234)\mapsto (4321)$ and interval lengths given by the normalised entries of the leading eigenvector of the matrix $\left(
  \begin{array}{cccc}
    13&37&77&47\\
    10&30&60&37\\
    3&10&24&14\\
    4&10&19&12
  \end{array}
\right)$;
see equation (51) in \cite{ulcigraisinai}.
We again form a stochastic system using the same noise kernels as for the Pomeau-Manneville map in Section \ref{sec:PM}.
The mixing properties of this map  have been studied in \cite{FGTW16}.
Figure \ref{fig:IEtranspt1} shows the column-stochastic matrix corresponding to $L_n$ for $n=500$ and $\epsilon=0.1$.
\begin{figure}[h!]
    \begin{center}
        \includegraphics[width=0.5\textwidth]{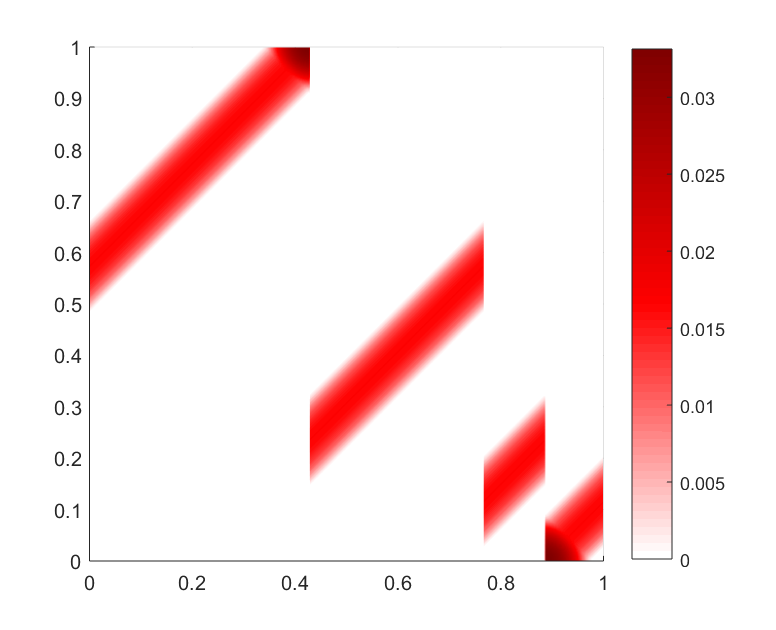}
        \caption{Transition matrix for the system (\ref{systm}) for $\delta=0$ and $T_0$ given by the interval exchange map above using $n=500$ subintervals.  The additive noise is drawn from the density $\rho_\epsilon$ with $\epsilon=1/10$.}
        \label{fig:IEtranspt1}
        \end{center}
        \end{figure}

\subsubsection{Kernel perturbations} \label{sec:kerper2}

In the framework of Problem \ref{prob3}, we use the same observation function $c(x)=-\cos(x)$ as in the Pomeau-Manneville case study, and estimate the optimal kernel perturbation $\dot{k}$ that maximally increases the expectation of $c$ in an identical fashion.
In broad terms, one again sees that $\dot{k}$ attempts to shift invariant probability mass to the right in $[0,1]$.
In Figure \ref{fig:IEobs} (left), in each smooth part of the support of $\dot{k}$, red is ``above'' blue, meaning mass is pushed to the right.
   \begin{figure}[h!]
    \begin{center}
        \includegraphics[width=0.45\textwidth]{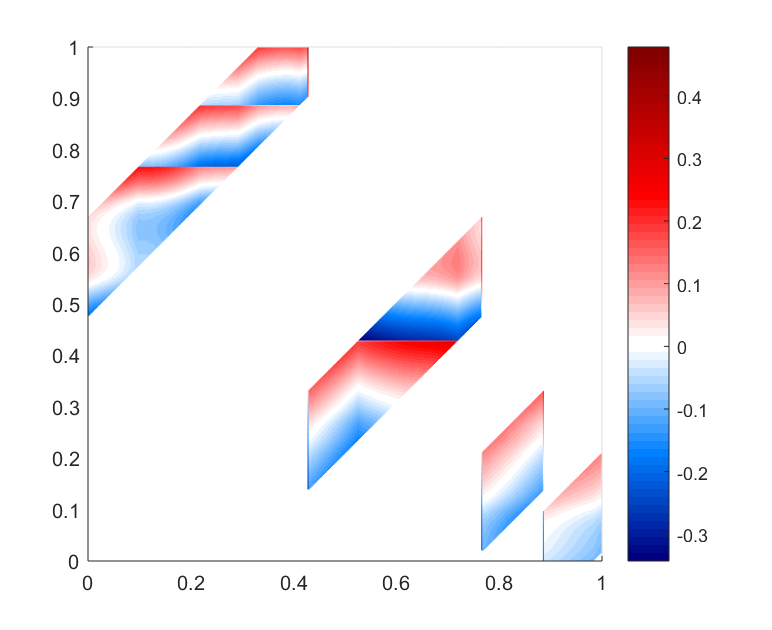}
        \includegraphics[width=0.45\textwidth]{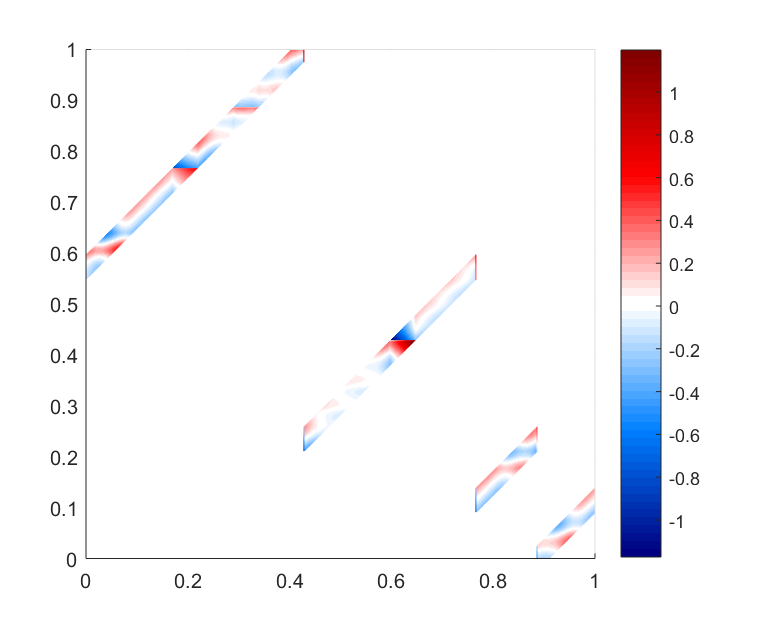}
             \caption{Optimal kernel perturbation for the interval exchange map to maximise the change in expectation of $c(x)=-\cos(x)$, computed with $n=500$ Ulam subintervals. Left: $\epsilon=1/10$, Right: $\epsilon=\sqrt{6}/100$.}
        \label{fig:IEobs}
        \end{center}
        \end{figure}
        
Clear exceptions to the ``red above blue'' scheme are seen as three sharp horizontal lines.
The $y$-coordinates of these three sharp horizontal lines coincide with the three points of discontinuity in the domain of the interval exchange at approximately $x=0.43, 0.77, 0.89$.
Consider the sharp horizontal ``blue above red'' line at $y\approx 0.43$. 
According to Figure \ref{fig:IEtranspt1}, under the action of the kernel $k_0$, mass in the vicinity of $x=0.6$ will be transported near to $x=0.43$.
The perturbation $\dot{k}$ shown in Figure \ref{fig:IEobs} will then tend to push this mass to the left of $x=0.43$.
Thus, on the next iteration there will be a bias for mass to be mapped near to $x=1$ rather than near $x=0.25$, achieving a much larger objective value at this iterate.
A similar reasoning applies to the ``blue above red'' horizontal lines at $y\approx 0.77$ and $0.89$;  the contrast is a little weaker because the potential gain at the next iterate is also weaker.
In the low noise case, Figure \ref{fig:IEobs} (right), displays similar behaviour to the higher noise case of Figure \ref{fig:IEobs} (left). With lower noise, the deterministic dynamics plays a greater role and additional preimages are taken into account, leading to a more oscillatory optimal $\dot{k}$.
      
To investigate the optimal kernel perturbation to maximally increase the rate of mixing in the stochastic system (in the framework of Problem \ref{prob3.1}) we use the expression $\dot{k}$ in (\ref{eq:soln-2nd-eval}).
The method of numerical approximation is identical to that used for the Pomeau-Manneville map.
Figure \ref{fig:IEefns} shows the signed distribution of mass that is responsible for the slowest real\footnote{In our numerical experiments the largest magnitude real eigenvalue appears as the sixth (resp.\ fourth) eigenvector of $L_{500}$ for $\epsilon=1/10$ (resp.\ $\epsilon=\sqrt{6}/100$).  Slightly larger complex eigenvalues are present, but we do not investigate these in order to make the dynamic interpretation more straightforward.} exponential rate of decay in the stochastic system. 
     \begin{figure}[h!]
        \begin{center}
        \includegraphics[width=0.45\textwidth]{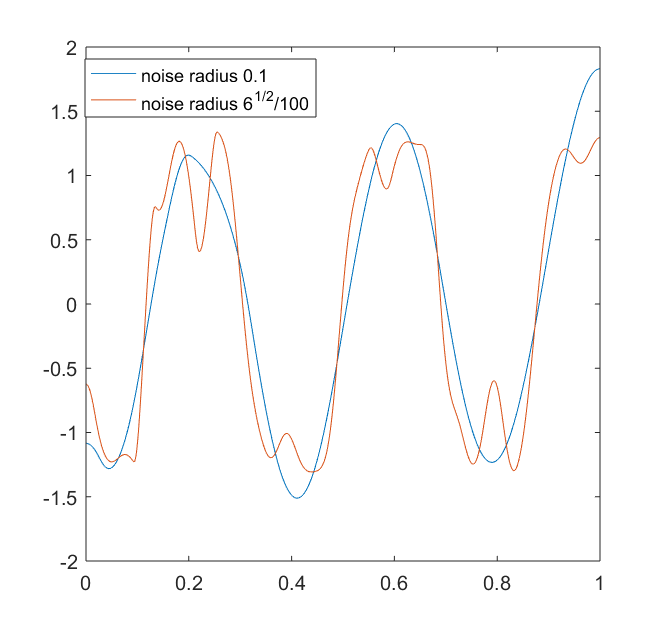}
        \caption{Approximate second eigenfunctions of the transfer operator $L_0$ of the system (\ref{systm}) with $T_0$ given by the interval exchange map above.  The additive noise in (\ref{systm}) is drawn from the density $\rho_\epsilon$ with $\epsilon$ taking the values 1/10 (blue) and $\sqrt{6}/100$ (red).}
            \label{fig:IEefns}
        \end{center}
        \end{figure}
This eigenfunction becomes more oscillatory as the level of noise decreases, and as must be the case, the magnitude of the corresponding eigenvalue increases from $\lambda\approx -0.7476$ ($\epsilon=1/10$) to $\lambda\approx -0.9574$ ($\epsilon=\sqrt{6}/100$).
Because the sign of these eigenvalues is negative, one expects a pair of almost-2-cyclic sets \cite{DJ}, consisting of three subintervals each, given by the positive and negative supports of the eigenfunctions.

Figure \ref{fig:IEmix} shows the approximate optimal kernel perturbations.
 \begin{figure}[h!]
    \begin{center}
        \includegraphics[width=0.45\textwidth]{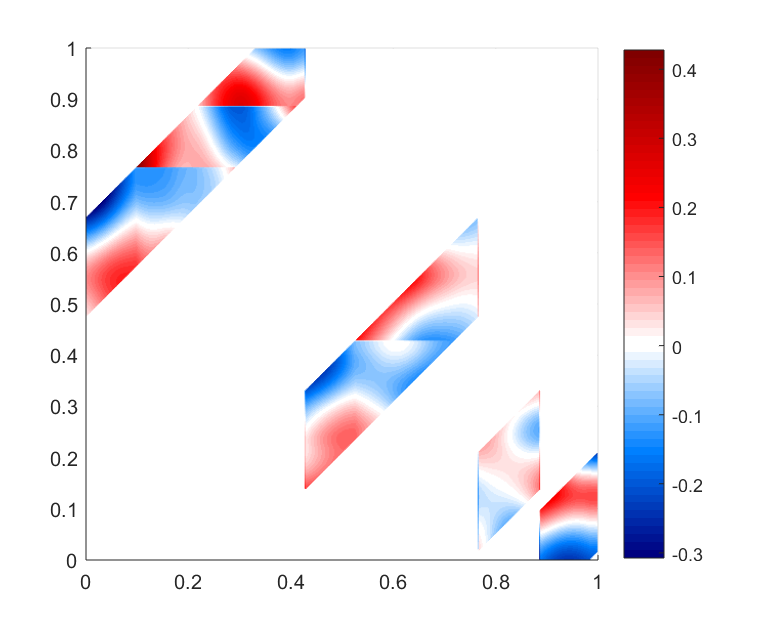}
        \includegraphics[width=0.45\textwidth]{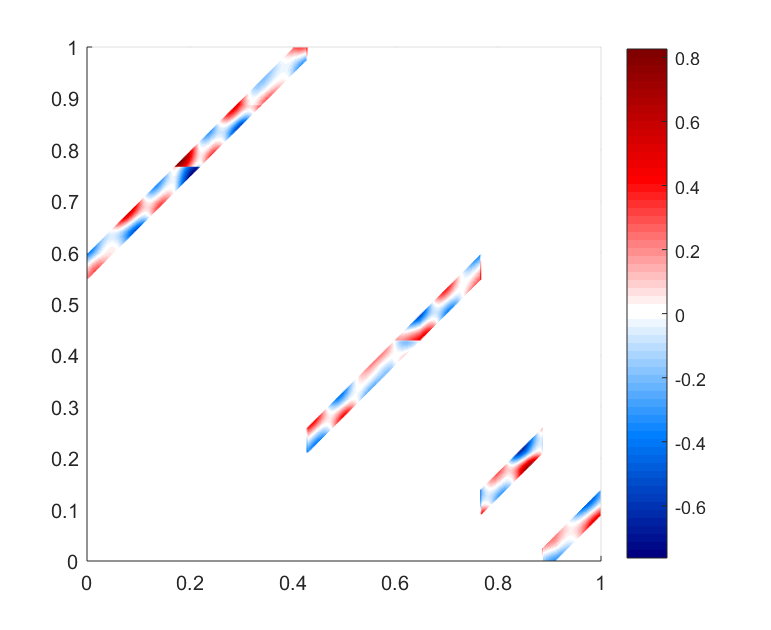}
         \caption{Optimal kernel perturbation for the interval exchange map to maximally increase the mixing rate, computed with $n=500$ Ulam subintervals. Left: $\epsilon=1/10$, Right: $\epsilon=\sqrt{6}/100$. }
        \label{fig:IEmix}
        \end{center}
        \end{figure}
In the high-noise situation of Figure \ref{fig:IEmix} (left), the sharp horizontal changes are present at preimages of the deterministic dynamics, as they were in to Figure \ref{fig:IEobs} (left).
The importance of the break points to the overall mixing rate is thus clearly borne out in the optimal $\dot{k}$;  a precise interpretation of the optimal $\dot{k}$ is not very straightforward.
For the low noise case  (Figure \ref{fig:IEmix} (right)) it appears that there is an alternating shifting of mass left and right with alternating ``red above blue'' and ``blue above red''.
This leads to greater mixing at smaller spatial scales than is possible in a single iteration of the deterministic interval exchange.
We anticipate that decreasing the noise amplitude further will result in more rapid alternation of ``red above blue'' and ``blue above red''.
As the diffusion amplitude decreases, the efficient large-scale diffusive mixing is no longer possible and so a transition is made to small-scale mixing, accessed by
increasing oscillation in the kernel.
        
\subsubsection{Map perturbations} \label{sec:mapper2}

The computations in this section follow those of Section \ref{sec:mapper}.
Figure \ref{fig:IE-map-obs} (left) shows the optimal map perturbations $\dot{T}$ at two different noise levels.
\begin{figure}[h!]
    \begin{center}
            \includegraphics[width=0.45\textwidth]{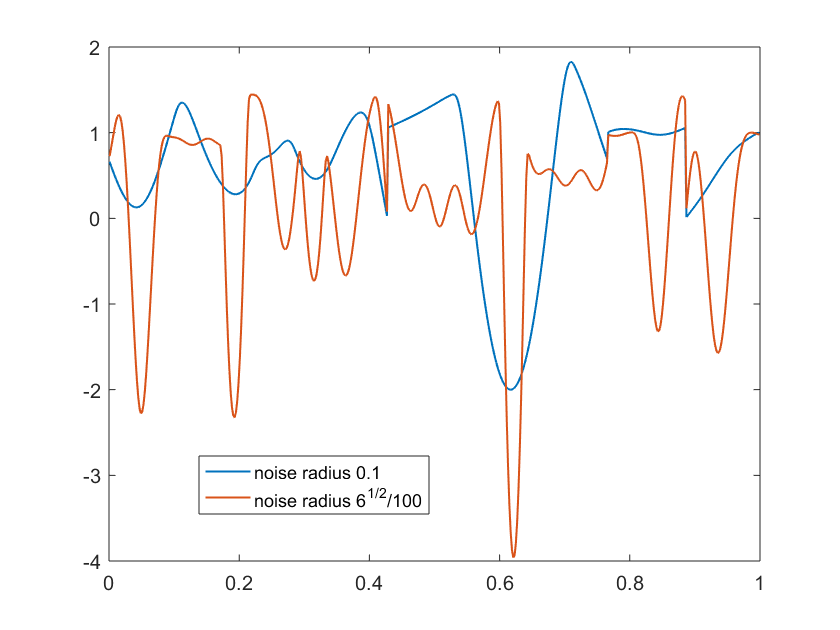}
        \includegraphics[width=0.45\textwidth]{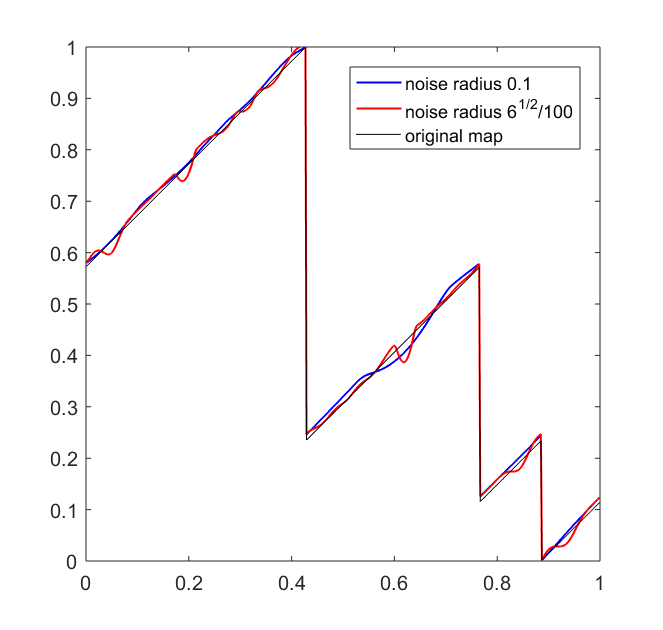}
            \caption{Left: Optimal map perturbation $\dot{T}$ for the interval exchange map to maximise the change in expectation of $c(x)=-\cos(x)$, computed using (\ref{opt-soln2}) with $n=500$.  Right: Illustration of $T_0+\dot{T}/100$.}
        \label{fig:IE-map-obs}
        \end{center}
        \end{figure}
Figure \ref{fig:IE-map-obs} (right) illustrates $T_0+\dot{T}/100$ for the two different levels of noise.
The kernel perturbations generated by these optimal map perturbations are displayed in Figure \ref{fig:IE_kernelequiv_obs}.
        \begin{figure}[h!]
    \begin{center}
        \includegraphics[width=0.45\textwidth]{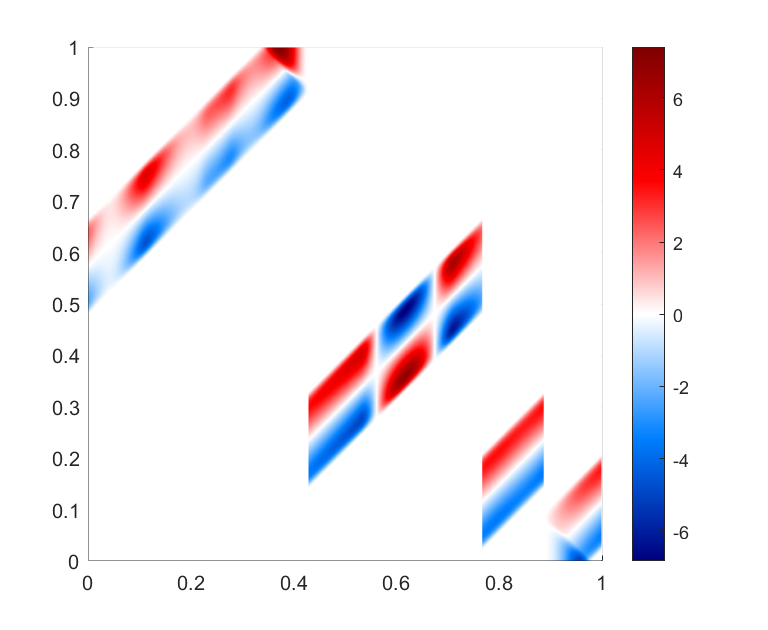}
        \includegraphics[width=0.45\textwidth]{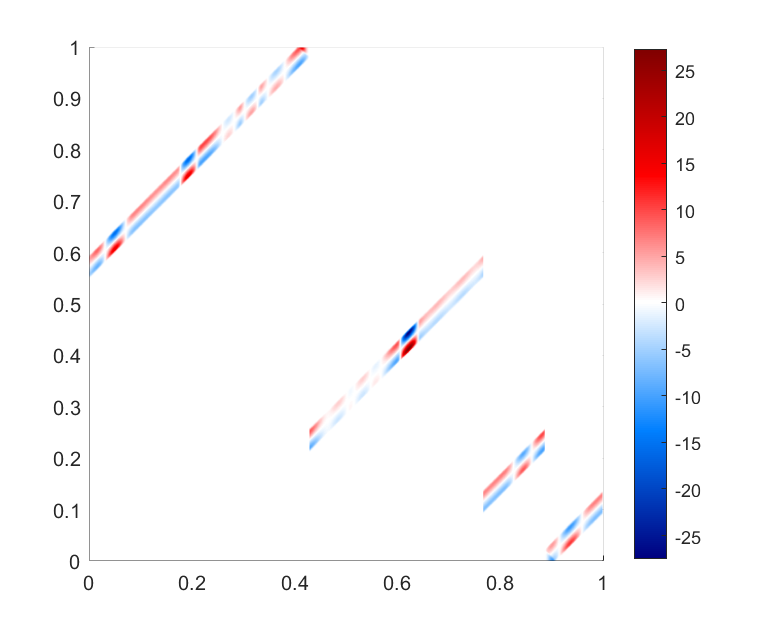}
        \caption{Kernel perturbations corresponding to the optimal map perturbations in Figure \ref{fig:IE-map-obs}. Left: $\epsilon=1/10$, Right: $\epsilon=\sqrt{6}/100$.}
        \label{fig:IE_kernelequiv_obs}
        \end{center}
        \end{figure}
If one compares the kernel perturbations in Figure \ref{fig:IE_kernelequiv_obs} with those more flexible kernel perturbations in Figure \ref{fig:IEobs}, one sees that the two sets of kernel perturbations are broadly equivalent with one another in terms of the relative positions of the positive and negative (red and blue) perturbations. Note that the more restrictive kernel derivative in Figure \ref{fig:IE_kernelequiv_obs} by construction cannot replicate the sharp horizontal red-blue switches in Figure \ref{fig:IEobs}.
It turns out that the strongest of these red-blue switches, namely the one at $y\approx 0.43$ in Figure \ref{fig:IEobs}~(left) is approximated as best as is allowed by a map perturbation, see Figure \ref{fig:IE_kernelequiv_obs}~(left), while the other two (weaker) horizontal red/blue switches seen in \ref{fig:IEobs} are ignored.

We now turn to optimal map perturbations for the mixing rate.
The combined effect of the ``cutting and shuffling'' of interval exchanges with diffusion on mixing rates has been widely studied, e.g.\ \cite{ashwin02,sturman12,FGTW16,kreczak17,wang2018}, including investigations of the impact of changing the diffusion or the interval exchange on mixing.  
The very general type of formal map optimisation we consider here has not been attempted before, and we hope that our novel techniques will stimulate interesting new research questions and motivate more sophisticated experiments in the field of mixing optimisation. 
  
Under repeated iteration, the original interval exchange $T_0$ cuts and shuffles the unit interval into an increasing number of smaller pieces, assisting the small scale mixing of diffusion.
Our results in Figure \ref{fig:IE-map-mix} (left) show an oscillatory $\dot{T}$, with increasing oscillations as the noise amplitude decreases.
 \begin{figure}[h!]
    \begin{center}
            \includegraphics[width=0.45\textwidth]{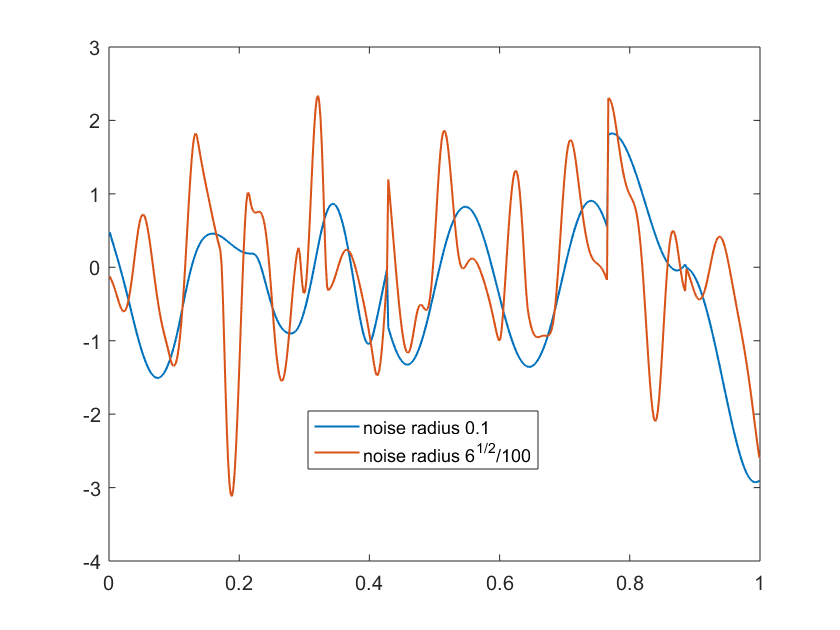}
        \includegraphics[width=0.45\textwidth]{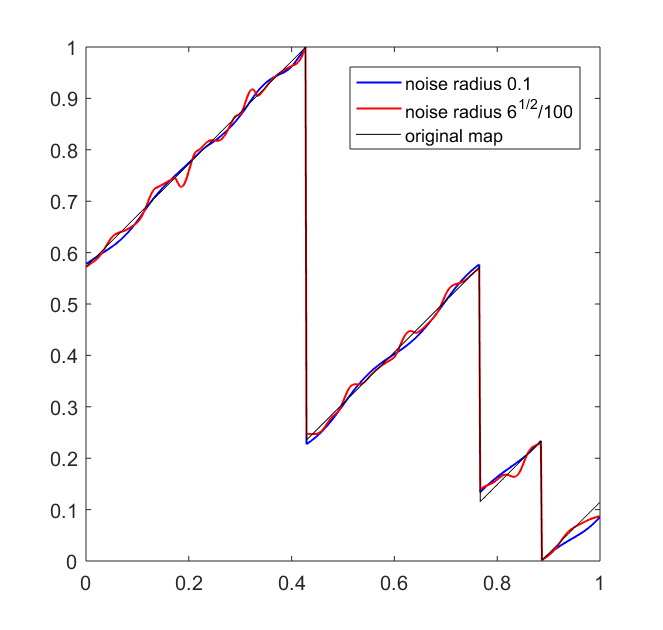}
            \caption{Left: Optimal map perturbation $\dot{T}$ for the interval exchange map to maximise the change in the mixing rate, computed using (\ref{eq:simp-opt-soln-mix-add-noise}) with $n=500$.  Right: Illustration of $T_0+\dot{T}/100$.}
        \label{fig:IE-map-mix}
        \end{center}
        \end{figure}
This increased oscillation effect is also seen when comparing the left and right panes of Figure \ref{fig:IE_kernelequiv_mix}.
Thus, the optimisation attempts to include some additional mixing by rapid local warping of the phase space.
It is plausible that this additional warping effect enhances mixing beyond the rigid shuffling of the interval exchange.
An illustration of $T_0+\dot{T}/100$ is given in Figure \ref{fig:IE-map-mix}. 
  \begin{figure}[h!]
    \begin{center}
        \includegraphics[width=0.45\textwidth]{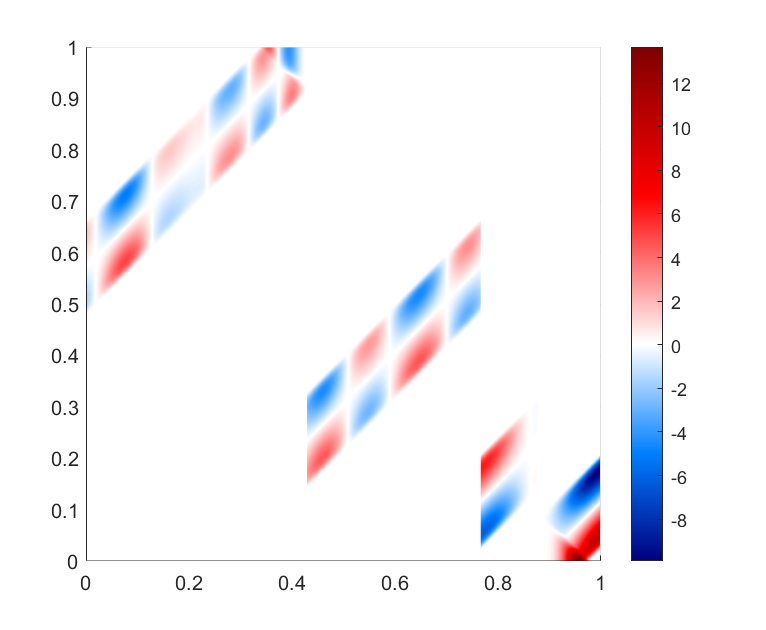}
        \includegraphics[width=0.45\textwidth]{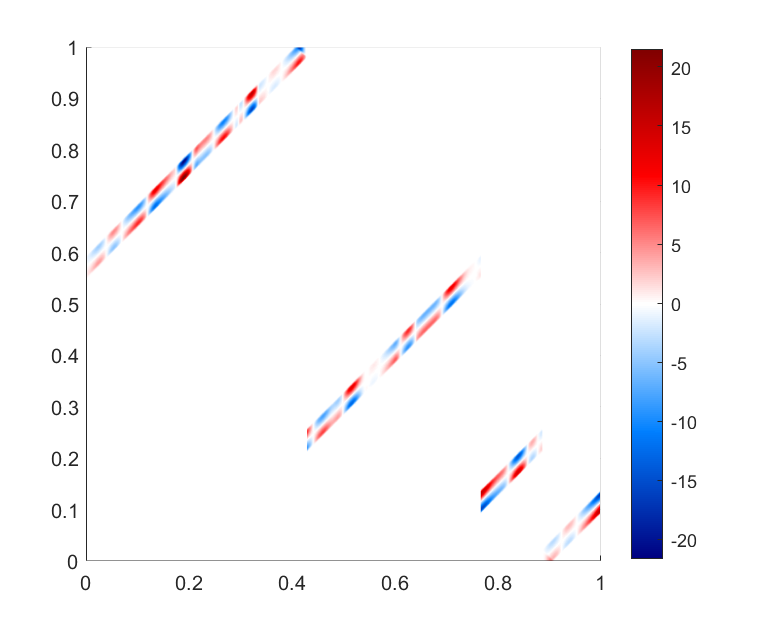}
        \caption{Kernel perturbations corresponding to the optimal map perturbations in Figure \ref{fig:IE-map-mix}. Left: $\epsilon=1/10$, Right: $\epsilon=\sqrt{6}/100$.}
        \label{fig:IE_kernelequiv_mix}
        \end{center}
        \end{figure}
We emphasise that the factor $1/100$ is only for visualisation purposes and for smaller factors, the perturbed map would remain a piecewise homeomorphism (modulo small overshoots at the boundaries, which are taken care of by the reflecting boundary conditions on the noise).
      



\section{Acknowledgments} FA is supported by a UNSW University Postgraduate Award. GF is partially supported by an ARC Discovery Project.  FA and GF thank the Department of Mathematics at the University of Pisa for generous support and hospitality. SG is partially supported by the research project \verb"PRIN 2017S35EHN_004" ``Regular and stochastic behaviour in dynamical systems'' of the Italian Ministry of Education and Research. 

\appendix

\section{Proof of Theorem \ref{thm:explicit-formula}}\label{apx:thm:explicit-formula}

First we need a technical lemma.
We note that the statement of the lemma is analogous to the continuity of $(\text{Id}-L_0)^{-1}$, which was treated in the proof of Theorem \ref{th:linearresponse copy(1)}.


\begin{lemma}
\label{sublem:adj-res-bnd} Consider the closed subspace span$\{f_0\}^\perp\subset L^2$ equipped with the $L^2$ norm. Then, the operator $(\text{Id}-L_0^*)^{-1}:$ span$%
\{f_0\}^\perp\rightarrow$ span$\{f_0\}^\perp$ is bounded.
\end{lemma}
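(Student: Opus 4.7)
The plan is to mimic the argument used in Claim (III)(b) of the proof of Theorem \ref{th:linearresponse copy(1)}, which established boundedness of $(\text{Id}-L_0)^{-1}$ on $V$ via exponential contraction of $L_0$ on $V$ together with a Neumann series. Here I would establish the analogous exponential contraction of $L_0^*$ on $\spn\{f_0\}^\perp$ and then sum the Neumann series.

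\textbf{Step 1 (Invariance).} First I would verify that $\spn\{f_0\}^\perp$ is $L_0^*$-invariant. Indeed, for $g\in\spn\{f_0\}^\perp$, since $L_0f_0=f_0$,
\begin{equation*}
\langle L_0^*g,f_0\rangle = \langle g,L_0f_0\rangle = \langle g,f_0\rangle = 0,
\end{equation*}
so $L_0^*g\in\spn\{f_0\}^\perp$. Hence the restriction $L_0^*\big|_{\spn\{f_0\}^\perp}$ is well defined as a bounded operator on the closed subspace $\spn\{f_0\}^\perp$ of $L^2$.

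\textbf{Step 2 (Exponential contraction on $\spn\{f_0\}^\perp$).} This is the key step. From Theorem \ref{th:linearresponse copy(1)} (Claim II) we know $L^2 = \spn\{f_0\}\oplus V$, both summands are $L_0$-invariant, $1$ is a simple eigenvalue of $L_0$ on $\spn\{f_0\}$, and $r(L_0|_V)<1$. Taking adjoints of this direct-sum decomposition yields the dual decomposition $L^2=\spn\{\mathbf{1}\}\oplus\spn\{f_0\}^\perp$, where $\spn\{\mathbf{1}\}$ is the range and $\spn\{f_0\}^\perp$ is the kernel of the adjoint projection $P^*:h\mapsto\langle f_0,h\rangle\mathbf{1}$. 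Since $L_0$ commutes with the spectral projection $P$ onto $\spn\{f_0\}$ along $V$, $L_0^*$ commutes with $P^*$, so $L_0^*$ preserves both $\spn\{\mathbf{1}\}$ and $\spn\{f_0\}^\perp$. The operator $L_0^*$ is then block-diagonal with blocks $L_0^*|_{\spn\{\mathbf{1}\}}=\Id$ and $L_0^*|_{\spn\{f_0\}^\perp}$. Using $\sigma(L_0^*)=\sigma(L_0)=\{1\}\cup\sigma(L_0|_V)$ together with this block structure yields
\begin{equation*}
\sigma\bigl(L_0^*|_{\spn\{f_0\}^\perp}\bigr)=\sigma(L_0|_V)\subset\{z\in\mathbb{C}:|z|<1\}.
\end{equation*}
By the spectral radius theorem there exists $n\geq 1$ such that $\|(L_0^*)^n|_{\spn\{f_0\}^\perp}\|_{L^2\to L^2}\leq \tfrac{1}{2}$, and hence constants $C\geq 0$ and $\mu<0$ with $\|(L_0^*)^n g\|_2\leq Ce^{\mu n}\|g\|_2$ for every $g\in\spn\{f_0\}^\perp$.

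\textbf{Step 3 (Neumann series).} For $g\in\spn\{f_0\}^\perp$, the series $(\Id-L_0^*)^{-1}g=g+\sum_{n=1}^\infty (L_0^*)^n g$ converges in $L^2$ by Step 2, and
\begin{equation*}
\|(\Id-L_0^*)^{-1}g\|_2 \leq \Bigl(1+\sum_{n=1}^\infty Ce^{\mu n}\Bigr)\|g\|_2,
\end{equation*}
which gives the claimed boundedness. By Step 1 the image lies in $\spn\{f_0\}^\perp$, completing the proof.

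The main obstacle is Step 2: justifying the spectral identity $\sigma(L_0^*|_{\spn\{f_0\}^\perp})=\sigma(L_0|_V)$. The two subspaces are not each other's orthogonal complements, so one cannot simply invoke that the adjoint of $L_0|_V$ equals $L_0^*|_{\spn\{f_0\}^\perp}$; instead the identification proceeds through the dual direct-sum decomposition induced by $P^*$ and the block-diagonal structure noted above. Everything else is a routine repetition of the Neumann-series argument already carried out for $L_0$ on $V$ in the proof of Theorem \ref{th:linearresponse copy(1)}.
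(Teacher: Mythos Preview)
Your proof is correct and takes a genuinely different route from the paper's argument. The paper proceeds via functional-analytic structure theorems: it uses the Fredholm alternative (compactness of $L_0|_V$) to identify $\text{ran}(\text{Id}-L_0)=V$, then the Closed Range Theorem to obtain $\text{ran}(\text{Id}-L_0^*)=\spn\{f_0\}^\perp$ and $\ker(\text{Id}-L_0^*)=\spn\{\mathbf{1}\}$, and finally the Inverse Mapping Theorem after checking that $\text{Id}-L_0^*$ is a bijection on $\spn\{f_0\}^\perp$. Your approach instead transports the spectral gap from $L_0|_V$ to $L_0^*|_{\spn\{f_0\}^\perp}$ via the dual projection $P^*$ and then sums the Neumann series, exactly mirroring Claim~(III)(b) of Theorem~\ref{th:linearresponse copy(1)}. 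The cleanest way to close your Step~2 is to note that $(I-P)L_0(I-P)$ and its adjoint $(I-P^*)L_0^*(I-P^*)$ have the same spectral radius; the former equals $L_0|_V$ on $V$ and $0$ on $\spn\{f_0\}$, the latter equals $L_0^*|_{\spn\{f_0\}^\perp}$ on $\spn\{f_0\}^\perp$ and $0$ on $\spn\{\mathbf{1}\}$, giving $r(L_0^*|_{\spn\{f_0\}^\perp})=r(L_0|_V)<1$ directly. Your argument has the advantage of yielding an explicit operator-norm bound $1+\sum_{n\ge 1}Ce^{\mu n}$ on the resolvent, parallel to what is already available for $(\text{Id}-L_0)^{-1}$; the paper's route is slightly more abstract but avoids the spectral-identification step altogether.
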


\begin{proof}
We begin by finding the kernel and range of the operator $\text{Id}-L_0^*$.
Recall that $L_0(V)\subset V$ and that $L_0$ preserves a one-dimensional eigenspace span$\{f_0\}$,
with eigenvalue $1$. Thus, we have $\ker(\text{Id}-L_0)=$ span$\{f_0\}$ and
ran$(\text{Id}-L_0)\subset V$. Recalling that $L_0:V\rightarrow V$ is compact and $f_0\not\in V$, we have
by the Fredholm alternative (see \cite{DS}, VII.11) that
for any $g\in V$, there exists a unique $h\in V$ such that $g=(\text{Id}-L_0)h$. Hence, 
ran($\text{Id}-L_0)=V$. Since $V$ is closed, the range of $\text{Id}-L_0$
is closed and so, by the Closed Range Theorem (Theorem 5.13, IV-\S 5.2,\cite{K}), 
we have $\text{ran}((\text{Id}-L_0)^*) = \ker(\text{Id}-L_0)^\perp=$ 
span$\{f_0\}^\perp$, which is a
co-dimension $1$ space, and $\ker((\text{Id}-L_0)^*)=$ ran($\text{Id}-L_0)^\perp = V^\perp=$ span$\{\mathbf{1}\}^{\perp\perp}=$ span$\{\mathbf{1}\}$, where the last equality follows from Corollary 1.41 in 
III-\S 1.8 \cite{K}
and the fact that span$\{\mathbf{1}\}$ is a finite-dimensional closed subspace of $L^2$.

To prove that $(\text{Id}-L_0^*)^{-1}:$ span$%
\{f_0\}^\perp\rightarrow$ span$\{f_0\}^\perp$ is bounded, we will use the 
Inverse Mapping Theorem (Theorem III.11, \cite{RS}). 
Since the integral operator $L_0^*$ has an $L^2$ kernel, by \eqref{KF} and the triangle inequality
it follows that $\text{Id}-L_0^*$ is bounded. Also, from the Fredholm alternative argument above, 
$\text{Id}-L_0^*:\spn\{f_0\}^\perp\to\spn\{f_0\}^\perp$ is surjective. Thus, to apply the Inverse Mapping Theorem, we just need 
to show that $\text{Id}-L_0^*$ is injective on span$\{f_0\}^\perp$. Let $f_1,f_2\in$ span$%
\{f_0\}^\perp$ be such that $(\text{Id}-L_0^*)f_1=(\text{Id}-L_0^*)f_2$. Thus, 
$f_1-f_2\in\ker(\text{Id}-L_0^*) =$ span$\{\mathbf{1}\}$ and so $f_1-f_2 = \gamma
\mathbf{1}$ for some $\gamma\in{\mathbb{R}}$. Since $f_1-f_2\in$ span$\{f_0\}^\perp$, we
have that $0 = \int (f_1(x)-f_2(x))f_0(x)dx=\gamma\int f_0(x)dx$ and so $%
\gamma=0$ (since $\int f_0(x)dx =1$), i.e.\ $f_1=f_2$; thus, $(\text{Id}-L_0^*)$ is injective and the
result follows.
\end{proof}

\begin{proof}[Proof of Theorem \protect\ref{thm:explicit-formula}]
We will use the method of Lagrange multipliers to derive the expression %
\eqref{opt-soln} from the first-order necessary conditions for optimality and then show
that such a $\dot{k}$ satisfies the second-order sufficient conditions. To
this end, we consider the following Lagrangian function
\begin{equation*}
\mathcal{L}(\dot{k},\mu ):=f(\dot{k})+\mu g(\dot{k}),
\end{equation*}%
where $f(\dot{k}):=-\big\langle c,R(\dot{k})\big\rangle_{L^{2}([0,1],\bR)},$ $g(%
\dot{k}):=\Vert \dot{k}\Vert _{L^{2}([0,1]^{2})}^{2}-1$ and $\dot{k}\in
V_{\ker }\cap S_{k_0,l}$.

\textit{Necessary conditions:} We verify the conditions in Theorem 2, \S 7.7, \cite{DL}. We want to find $\dot{k}$ and $\mu$ that
satisfy the first-order necessary conditions:

\begin{align*}
& g(\dot{k})=0 \\
D_{\dot{k}}\mathcal{L}(\dot{k},\mu )& \tilde{k}=0\text{ for all }\tilde{k%
}\in V_{\ker }\cap S_{k_0,l},
\end{align*}%
where $D_{\dot{k}}\mathcal{L}(\dot{k},\mu )\in \mathcal{B}%
(L^{2}([0,1]^{2}),{\mathbb{R}})$ is the Frechet derivative with respect to
the variable $\dot{k}$. 
Since $f$ is
linear, we have $(D_{\dot{k}}f)\tilde{k}=f(\tilde{k})$. Also, $%
(D_{\dot{k}}g)\tilde{k}=2\langle \dot{k},\tilde{k}\rangle
_{L^{2}([0,1]^{2})} $ since

\begin{equation*}
\begin{aligned} \frac{|g(\k+\tilde{k})-g(\k)-2\langle
\k,\tilde{k}\rangle_{L^2([0,1]^2)} |}{\|\tilde{k}\|_{L^2([0,1]^2)}} &=
\frac{|\|\k+\tilde{k}\|_{L^2([0,1]^2)}^2-\|\k\|_{L^2([0,1]^2)}^2-2\langle
\k,\tilde{k}\rangle_{L^2([0,1]^2)}|}{\|\tilde{k}\|_{L^2([0,1]^2)}}\\ &=
\frac{|\langle\k+\tilde{k},\k+\tilde{k}\rangle_{L^2([0,1]^2)}-\langle
\k,\k\rangle_{L^2([0,1]^2)}-2\langle
\k,\tilde{k}\rangle_{L^2([0,1]^2)}|}{\|\tilde{k}\|_{L^2([0,1]^2)}}\\ &=
\frac{|\langle
\tilde{k},\tilde{k}\rangle_{L^2([0,1]^2)}|}{\|\tilde{k}\|_{L^2([0,1]^2)}}
=\|\tilde{k}\|_{L^2([0,1]^2)}. \end{aligned}
\end{equation*}%
Thus, for the necessary conditions of the Lagrange multiplier method to be
satisfied, we need that
\begin{equation}
D_{\dot{k}}\mathcal{L}(\dot{k},\mu )\tilde{k}=(D_{\dot{k}}f)\tilde{k}%
+\mu (D_{\dot{k}}g)\tilde{k}=f(\tilde{k})+2\mu \langle \dot{k},%
\tilde{k}\rangle _{L^{2}([0,1]^{2})}=0  \label{eq:Lag-nece-cond}
\end{equation}%
for all $\tilde{k}\in V_{\ker }\cap S_{k_0,l}$ and
\begin{equation}
g(\dot{k})=0.  \label{eq:Lag-nece-cond-2}
\end{equation}

Noting Lemma \ref{sublem:adj-res-bnd} and the fact that $c\in$ span$\{f_0\}^\perp$, 
we have
\begin{equation}
\begin{aligned} f(\tilde{k})&+2\mu\langle
\dot{k},\tilde{k}\rangle_{L^2([0,1]^2)}\\
& = -\langle c,
R(\tilde{k})\rangle_{L^2([0,1],\bR)} +
2\mu\langle\dot{k},\tilde{k}\rangle_{L^2([0,1]^2)}\\ 
&= -\bigg\langle c,
(\text{Id}-L_0)^{-1}\int \tilde{k}(x,y)f_0(y)dy\bigg\rangle_{L^2([0,1],\bR)} +
2\mu\langle \dot{k},\tilde{k}\rangle_{L^2([0,1]^2)}\\ 
& = \int \int
-((\text{Id}-L_0^*)^{-1}c)(x)\tilde{k}(x,y)f_0(y)dydx+\int \int 2\mu
\dot{k}(x,y)\tilde{k}(x,y)dydx\\ 
&= \int\int
\left[-((\text{Id}-L_0^*)^{-1}c)(x) f_0(y) + 2\mu\dot{k}(x,y)
\right]\tilde{k}(x,y) dydx. \end{aligned}
\label{eq:reform-to-2-arg-inner-prod}
\end{equation}%

We claim that
\begin{equation*}
\dot{k}(x,y)=\frac{1}{2\mu}\mathbf{1}_{F_l}(x,y)f_0(y)\left(((%
\text{Id}-L_{0}^{\ast })^{-1}c)(x)-\frac{1}{m(F_l^y)}\int_{F_l^y}((\text{Id}%
-L_{0}^{\ast })^{-1}c)(z)dz \right)
\end{equation*}
satisfies the necessary condition \eqref{eq:Lag-nece-cond} and lies in $V_{\ker}\cap S_{k_0,l}$. Before we verify this, we show that 
\begin{equation*}
M(x,y):= \mathbf{1}_{F_l}(x,y)f_0(y)\left(((\text{Id}-L_{0}^{\ast
})^{-1}c)(x)-\hat{g}(y) \right),
\end{equation*}
where $\hat{g}(y):=\frac{1}{m(F_l^y)}\int_{F_l^y}((\text{Id}-L_{0}^{\ast })^{-1}c)(z)dz$,
is in $L^2([0,1]^2)$. Since $f_0,(\text{Id}-L_{0}^{\ast })^{-1}c\in L^2$, we just need to show that $\mathbf{1}_{F_l}(x,y)f_0(y)\hat{g}(y)$ is in $L^2([0,1]^2)$. First, we note that 
\begin{align*}
    |\hat{g}(y)|&\le \frac{1}{m(F_l^y)}\int\big|\mathbf{1}_{F_l^y}(z)((\text{Id}-L_{0}^{\ast })^{-1}c)(z)\big|dz\\
    &\le \frac{1}{m(F_l^y)}\|(\text{Id}-L_{0}^{\ast })^{-1}c\|_2\|\mathbf{1}_{F_l^y}\|_2\\
    &= \frac{1}{m(F_l^y)}\|(\text{Id}-L_{0}^{\ast })^{-1}c\|_2 \sqrt{m(F_l^y)}\\
    &=\frac{\|(\text{Id}-L_{0}^{\ast })^{-1}c\|_2}{\sqrt{m(F_l^y)}}
\end{align*}
and therefore
\begin{align*}
    \hat{g}(y)^2\le \frac{\|(\text{Id}-L_{0}^{\ast })^{-1}c\|^2_2}{m(F_l^y)}.
\end{align*}
We then have
\begin{align*}
    \int\int\mathbf{1}_{F_l}(x,y)\hat{g}(y)^2f_0(y)^2dxdy &= \int_{\Xi(F_l)}\int_{F_l^y}\hat{g}(y)^2f_0(y)^2dxdy\\
    &=\int_{\Xi(F_l)}m(F_l^y)\hat{g}(y)^2f_0(y)^2dy\\
    &\le \int_{\Xi(F_l)}m(F_l^y) \frac{\|(\text{Id}-L_{0}^{\ast })^{-1}c\|^2_2}{m(F_l^y)}f_0(y)^2dy\\
    &\le \|(\text{Id}-L_{0}^{\ast })^{-1}c\|^2_2\|f_0\|_2^2.
\end{align*}
Thus, $\mathbf{1}_{F_l}(x,y)f_0(y)\hat{g}(y)$ is in $L^2([0,1]^2)$ and therefore $M\in L^2([0,1]^2)$. 

Now, to verify $\k$ satisfies \eqref{eq:Lag-nece-cond}, we compute, for $\tilde{k}\in V_{\ker}\cap S_{k_0,l}$,
\begin{align*}
f&(\tilde{k})+2\mu\langle\dot{k},\tilde{k}\rangle_{L^2([0,1]^2)}\\
&= \int_{F_l}\left[-((\text{Id}-L_0^*)^{-1}c)(x) f_0(y) + 2\mu\dot{k}(x,y)\right]\tilde{k}(x,y) dxdy\\ 
&=  \int_{F_l} \bigg[-((\text{Id}-L_0^*)^{-1}c)(x) f_0(y) \\
&\qquad\quad + f_0(y)\left(((%
\text{Id}-L_{0}^{\ast })^{-1}c)(x)-\frac{1}{m(F_l^y)}\int_{F_l^y}((\text{Id}%
-L_{0}^{\ast })^{-1}c)(z)dz \right)\bigg]\tilde{k}(x,y) dxdy\\
&= -\int_{\Xi(F_l)}\left[\int_{F^y_l}\left(
f_0(y)\frac{1}{m(F_l^y)}\int_{F_l^y}((\text{Id}-L_{0}^{\ast
})^{-1}c)(z)dz\right)\tilde{k}(x,y)dx\right]dy\\ 
&= - \int_{\Xi(F_l)}\left(
f_0(y)\frac{1}{m(F_l^y)}\int_{F_l^y}((\text{Id}-L_{0}^{\ast
})^{-1}c)(z)dz\right)\left[\int_{F^y_l}\tilde{k}(x,y)dx\right]dy\\
&=0,    
\end{align*}
where 
the last equality follows from $\tilde{k}\in V_{\ker}\cap S_{k_0,l}$. 
To conclude checking that $\k$ satisfies the necessary condition
\eqref{eq:Lag-nece-cond}, we need to check that
$\mu\neq0$. Since $M\in L^2([0,1]^2)$, note that the necessary condition \eqref{eq:Lag-nece-cond-2} yields $\mu =\pm
\frac{1}{2}\Vert M\Vert_{L^2([0,1]^2)}$; thus, to finish the proof that
$\k$ satisfies both necessary conditions \eqref{eq:Lag-nece-cond}-\eqref{eq:Lag-nece-cond-2}, we will show that
$\Vert M\Vert_{L^2([0,1]^2)}\neq 0$. 
From the hypotheses on $f_0$ and $(\text{Id}-L_{0}^{\ast })^{-1}c$
we conclude that 
$$\Vert M\Vert_{L^2([0,1]^2)}^2 = \int_{F_l}f_0(y)^2\left(((\text{Id}-L_{0}^{\ast
})^{-1}c)(x)-\frac{1}{m(F_l^y)}\int_{F_l^y}((\text{Id}-L_{0}^{\ast
})^{-1}c)(z)dz\right)^2dxdy\neq 0.$$
Hence, $\mu=\pm
\frac{1}{2}\Vert M\Vert_{L^2([0,1]^2)}\neq 0$. The sign of $\mu $ is determined by
checking the sufficient conditions. 

We can now verify that $\dot{k}\in V_{\ker}\cap S_{k_0,l}$. We note from $M\in L^2([0,1]^2)$ and $\mu\neq0$ that $\k\in L^2([0,1]^2)$. By construction
supp$(\dot{k})\subseteq F_l$. Finally, 
we have
\begin{equation*}
\begin{aligned}
\int \dot{k}(x,y) dx& =\frac{1}{2\mu}f_0(y)\left( \int_{F_l^y}((\text{Id}%
-L_{0}^{\ast })^{-1}c)(x)dx - \int_{F_l^y}((\text{Id}%
-L_{0}^{\ast })^{-1}c)(z)dz\frac{\int_{F_l^y}\mathbf{1}_{F_l}(x,y)
dx}{m(F_l^y)} \right)\\
&=0.    
\end{aligned}
\end{equation*}

\textit{Sufficient conditions}: We want to show that $\dot{k}$ in \eqref{opt-soln} is a 
solution 
to the optimization problem \eqref{obj_lin_fun-min}-%
\eqref{constraint-bound} by checking that it satisfies the second-order
sufficient conditions. We first demonstrate the set of Lagrange
multipliers $\Lambda (\dot{k})$ (in Definition 3.8, \S 3.1 \cite{BS}) is not
empty in our setting; this will enable us to use the second-order sufficient
conditions of Lemma 3.65 \cite{BS}. Note that in terms of the notation used
in \cite{BS} versus our notation, $Q=X=V_{\ker }\cap S_{k_0,l}$, $x_{0}=\dot{k}$, $%
Y^{\ast }={\mathbb{R}}$, $G(x_{0})=g(\dot{k})$, $K=\{0\}$, $N_{K}(G(x_{0}))={%
\mathbb{R}} $, $T_{K}(G(x_{0}))=\{0\}$ and $N_{Q}(x_{0})=\{0\}$ (since $Q=X$,
see discussion in \S 3.1 following Definition 3.8). Thus, to show that $%
\Lambda (\dot{k})$ is not empty, we need to show that $\dot{k}$ and $\mu
$ satisfy
\begin{equation}
D_{\dot{k}}\mathcal{L}(\dot{k},\mu )\dot{k}=0,\;g(\dot{k})=0,\;\mu
\in \{0\}^{-},\;\mu g(\dot{k})=0,  \label{cond-exist-lag-mult}
\end{equation}%
where $\{0\}^{-}:=\{a \in \mathbb{R}:a x\leq 0\ \forall x\in \{0\}\}=\mathbb{%
R}$ (this simplification of conditions (3.16) in \cite{BS} follows from the
discussion following Definition 3.8 in \S 3.1 and the fact that $\{0\}$ is a
convex cone). Since the second condition in \eqref{cond-exist-lag-mult} implies
the fourth, and since $\mu \in {\mathbb{R}}$, we only need to check the
first two equalities in \eqref{cond-exist-lag-mult}. However, these two
conditions are implied from the first-order necessary conditions. Hence, $%
\Lambda (\dot{k})$ is not empty and thus, to show that $\dot{k}$ is a
solution to \eqref{obj_lin_fun-min}-\eqref{constraint-bound}, we need to
show that it satisfies the following second-order conditions (see Lemma 3.65): there exists
constants $\nu >0$, $\eta >0$ and $\beta >0$ such that
\begin{equation}
\sup_{|\mu |\leq \nu ,\ \mu \in \Lambda (\dot{k})}D_{\dot{k}\dot{k}%
}^{2}\mathcal{L}(\dot{k},\mu )(\tilde{k},\tilde{k})\geq \beta \Vert
\tilde{k}\Vert _{L^{2}([0,1]^{2})}^{2},\ \forall \tilde{k}\in C_{\eta }(\dot{%
k}),  \label{eq:sec-ord-suff}
\end{equation}%
where $C_{\eta }(\dot{k}):=\big\{v\in V_{\ker }\cap S_{k_0,l}:|2\langle \dot{%
k},v\rangle _{V_{\ker\cap S_{k_0,l}}}|\leq \eta \Vert v\Vert _{V_{\ker }\cap
S_{k_0,l}}\text{ and }f(v)\leq \eta \Vert v\Vert _{V_{\ker }\cap S_{k_0,l}}%
\big\}$ is the approximate critical cone (see equation (3.131) in \S 3.3
\cite{BS}). Since $D_{\dot{k}}\mathcal{L}(\dot{k},\mu )\tilde{k}=f(\tilde{k}%
)+2\mu \langle \dot{k},\tilde{k}\rangle _{L^{2}([0,1]^{2})}$ and $%
\langle \dot{k},\tilde{k}\rangle _{L^{2}([0,1]^{2})}$ is linear in $\dot{k}$%
, we have that $D_{\dot{k}\dot{k}}^{2}\mathcal{L}(\dot{k},\mu )(\tilde{k}%
,\tilde{k})=2\mu \langle \tilde{k},\tilde{k}\rangle _{L^{2}([0,1]^{2})}$%
. Thus, we conclude that the second-order condition \eqref{eq:sec-ord-suff}
holds with $\mu >0$, $\nu =|\mu |=\frac{1}{2}\Vert M\Vert _{V_{\ker
}\cap S_{k_0,l}}$, $\beta =2\mu $ and $\eta =\max \big\{2\Vert \dot{k}%
\Vert _{V_{\ker }\cap S_{k_0,l}},\|c\|_2\|f_0\|_2\|(\text{Id}-L_0)^{-1}\|_{V\rightarrow V}%
\big\}$. Since $\dot{k}$ satisfies the necessary conditions %
\eqref{eq:Lag-nece-cond} and \eqref{eq:Lag-nece-cond-2} with $\mu >0$, we conclude that
$\dot{k}$ is a solution to the optimization problem \eqref{obj_lin_fun-min}-%
\eqref{constraint-bound}. 

\emph{Uniqueness of the solution:} 
The set $P_l=V_{\ker}\cap S_{k_0,l}\cap B_1$ is a closed (Lemma \ref{Sk0lem}), bounded, strictly convex set, containing $\dot{k}=0$. The objective $\mathcal{J}(\dot{k})=\langle c,R(\dot{k})\rangle$ is continuous (since $\mathcal{J}$ is linear and $R$ is continuous  (see comment following \eqref{R}))
and not uniformly vanishing (Lemma \ref{lem:nonzeroobj}). Therefore by Propositions \ref{prop:exist} and \ref{prop:uniqe}, $\dot{k}$ is the unique optimum.



\textit{$L^\infty$ boundedness of the solution}: Suppose that $c\in W$ and $k_0\in L^\infty([0,1]^2)$. From $L_0f_0=f_0$ and $k_0\in
L^\infty([0,1]^2)$, we have by \eqref{KF2} that $f_0\in L^\infty$.
Let $V_1 := \{f\in L^1:\int f\ dm = 0\}$. We would like to show
that $(\text{Id}-L_0)^{-1}:V_1\rightarrow V_1$ is bounded. To obtain this, we first need the
exponential contraction of $L_0$ on $V_1$.
Since $L_0$ is integral preserving and compact on $L^1$, 
from the argument in the proof of Theorem \ref{th:linearresponse copy(1)}
we only need to verify the $L^1$ version of assumption $(A1)$ on $V_1$. To verify this, we note that
for $h\in V_1$, we have $\|L_0h\|_2\le \|L_0 h\|_\infty\le \|k_0\|_{L^\infty([0,1]^2)}\|h\|_1$
and therefore, $L_0h\in V$ since $L_0$ preserves the integral. Thus, for any $h\in V_1$, $\lim_{n\rightarrow\infty}\|L_0^nh\|_1\le \lim_{n\rightarrow\infty}\|L_0^{n-1}(L_0h)\|_2=0$ since
$L_0$ satisfies $(A1)$ on $V$. Hence, the $L^1$ version of $(A1)$ holds and
$L_0$ has exponential contraction on $V_1$.
We then have
\begin{equation}
\begin{aligned}
\|(\text{Id}-L_0)^{-1}\|_{V_1\rightarrow V_1} &\le \|\text{Id}\|_{V_1\rightarrow V_1} + \bigg\|\sum_{n=1}^\infty L_0^n\bigg\|_{V_1\rightarrow V_1}\\
&= 1 + \sup_{\substack{f\in V_1\\ \|f\|_1=1}} \bigg\|\sum_{n=1}^\infty L_0^n f\bigg\|_1\\
&\le 1 + \sup_{\substack{f\in V_1\\ \|f\|_1=1}} \sum_{n=1}^\infty Ce^{\lambda n}\|f\|_1\\
&=1+ \sum_{n=1}^\infty Ce^{\lambda n}<\infty,
\end{aligned}
\end{equation}
where the last inequality follows from $\lambda<0$; thus, ($\text{Id}-L_0)^{-1}:V_1\rightarrow V_1$ is bounded.

Next we would like to find the subspace where the operator ($\text{Id}-L_0^*)^{-1}$ is bounded. We will 
replicate the result of Lemma \ref{sublem:adj-res-bnd}, however,
($\text{Id}-L_0)^{-1}$ is now acting on $L^1$, so we note that
for a subspace $\mathcal{S}$ of $L^1$, we have that
\begin{equation}  \label{eq:perp-L1}
\mathcal{S}^{\perp} := \bigg\{h\in L^\infty: \int h(x)w(x)dx = 0\ \forall w\in \mathcal{S}\bigg\},
\end{equation}
where we are using the fact that $(L^1)^*=L^\infty$. Also, 
$\mathcal{S}^{\perp}$ is a closed subspace of $L^\infty$ (see III-\S 1.4, \cite{K}).


Now, as in the proof of Lemma \ref{sublem:adj-res-bnd}, we have $\ker(\text{Id}-L_0)=$
span$\{f_0\}$ and ran($\text{Id}-L_0)=V_1$. We also have $\text{%
ran((Id}-L_0)^*)=$ span$\{f_0\}^{\perp} =\{h\in L^\infty:\int
h(x)f_0(x)dx = 0\}=:W$ and $\ker((\text{Id}%
-L_0)^*)=V_1^{\perp}=\{h\in L^\infty:\int h(x)w(x)dx = 0\ \forall\
w\in V_1\}$. Next, for $h\in W$, we have
\begin{equation*}
\int (L_0^*h)(x)f_0(x)dx = \int h(x)(L_0f_0)(x)dx = \int h(x)f_0(x)dx = 0;
\end{equation*}
thus, ($\text{Id}-L_0^*)(W)\subset W$. 
We again, as in Lemma \ref{sublem:adj-res-bnd}, apply the Inverse Mapping Theorem
to prove that ($\text{Id}-L_0^*)^{-1}: W\rightarrow W$ is 
bounded. From \eqref{KF2}, and the triangle inequality, the operator $\text{Id}-L_0^*: W\rightarrow W$
is bounded. Noting that $V_1$ is a closed co-dimension 1
subspace of $L^1$, we have codim$(V_1)=$ dim$(V_1^\perp)$ (see Lemma 1.40 III-\S 1.8 \cite{K}); hence, dim$(\ker(\text{Id}-L_0^*))= $ dim$(V_1^\perp)=$ 
codim$(V_1)= 1$ 
and therefore, $1$ is a geometrically simple eigenvalue of $L_0^*$. 
Thus, $\ker(\text{Id}-L_0^*)=$ span$\{\mathbf{1}\}$ because $L_0^*\mathbf{1}=\mathbf{1}$. 
Since $\int f_0\ dm =1$, $\mathbf{1}\not\in$ span$\{f_0\}^\perp$ and so,
by the Fredholm alternative, $\text{Id}-L_0^*$ is a bijection on $W$.
Hence, by the Inverse Mapping Theorem, ($\text{Id}-L_0^*)^{-1}$ is bounded on $W$.  
Since $c\in W$, we have $\|(\text{Id}-L_0^*)^{-1}c\|_\infty<\infty $.

To conclude the proof, we now show that $ \hat{g}(y) 
:=\frac{1}{m(F_l^y)}\int_{F_l^y}((\text{Id}-L_{0}^{\ast })^{-1}c)(z)dz$
is in $L^\infty$. We compute
\begin{equation*}
|\hat g(y)| = \bigg| \frac{1}{m(F_l^y)}\int_{F_l^y}((\text{Id}%
-L_{0}^{\ast})^{-1}c)(z)dz\bigg|\le \|(\text{Id}-L_0^*)^{-1}c\|_{\infty}.
\end{equation*}
Since $(\text{Id}-L_0^*)^{-1}c\in L^{\infty}$, we conclude that $\hat g\in
L^\infty$; thus, $\dot{k}\in L^\infty([0,1]^2)$.
\end{proof}

\section{Proof of Theorem \ref{thm:explicit-formula-eval}}\label{apx:thm:explicit-formula-eval}

\begin{proof}
The optimization problem is very similar to that
considered in Theorem \ref{thm:explicit-formula}; thus, we will refer to the
proof of that theorem with the following modifications.

Consider the Lagrangian function
\begin{equation*}
\mathcal{L}(\dot{k},\mu ):=f(\dot{k})+\mu g(\dot{k}),
\end{equation*}%
where, in this setting, we have $f(\dot{k})=\langle \dot{k},E\rangle
_{L^{2}([0,1]^{2},\bR)}$ and $g(\dot{k})=\Vert \dot{k}\Vert
_{L^{2}([0,1]^{2},\bR)}^{2}-1$. Thus, for the necessary conditions of the
Lagrange multiplier method to be satisfied, we need that
\begin{equation}
f(\tilde{k})+2\mu \langle \dot{k},\tilde{k}\rangle
_{L^{2}([0,1]^{2})}=\langle \tilde{k},E\rangle _{L^{2}([0,1]^{2},\bR)}+2\mu
\langle \dot{k},\tilde{k}\rangle _{L^{2}([0,1]^{2},\bR)}=0
\label{eq:necc-cond-lag-2nd-eval-1}
\end{equation}%
for all $\tilde{k}\in V_{\ker }\cap S_{k_0,l}$ and
\begin{equation}
g(\dot{k})=0.  \label{eq:necc-cond-lag-2nd-eval-2}
\end{equation}%
We claim that
\begin{equation}
\dot{k}(x,y)=-\mathbf{1}_{F_l}(x,y)\frac{1}{2\mu }\left( E(x,y)-%
\frac{1}{m(F_l^y)}\int_{F_l^y}E(x,y)dx\right)  \label{eq:non-scaled-sol-2nd-eval}
\end{equation}%
satisfies the necessary condition \eqref{eq:necc-cond-lag-2nd-eval-1}, and lies in $V_{\ker}\cap S_{k_0,l}$. Before we verify this, we will show that
\begin{align*}
    M(x,y):=\mathbf{1}_{F_l}(x,y)(E(x,y)-h(y)),
\end{align*}
where $h(y):=\frac{1}{m(F^y_l)}\int_{F^y_l}E(x,y)dx$, is in $L^2([0,1]^2)$. Since $E\in L^2([0,1]^2)$, we just need to show that $\mathbf{1}_{F_l}(x,y)h(y)$ is in $L^2([0,1]^2)$. We have 
\begin{align*}
    \int\int\mathbf{1}_{F_l}(x,y)h(y)^2dxdy = \int_{\Xi(F_l)}\int_{F_l^y}h(y)^2dxdy=\int_{\Xi(F_l)}m(F_l^y)h(y)^2dy.
\end{align*}
Substituting \eqref{eq:almost-soln-2nd-evalue} into $h$, the terms in $h(y)^2$ are a linear combination of functions of the form $\tilde{g}_{i_1}(y)\tilde{g}_{i_2}(y)f_{i_3}(y)f_{i_4}(y)$, $i_1,\ldots,i_4\in\{1,\ldots,4\}$ where 
$f_j = \Re(\hat{e}),\Re(e),\Im(\hat{e})$ or $\Im(e)$, $j=1,\ldots,4$, respectively, and $\tilde{g}_j(y) = \frac{1}{m(F_l^y)}\int_{F_l^y}f_j(x)dx$, $j=1,\ldots,4$. 
Thus, to show $\mathbf{1}_{F_l}(x,y)h(y)$ is in $L^2([0,1]^2)$ (and therefore $M\in L^2([0,1]^2)$), we need to bound
\begin{align*}
    I:= \int_{\Xi(F_l)}m(F_l^y)|\tilde{g}_{i_1}(y)||\tilde{g}_{i_2}(y)|f_{i_3}(y)||f_{i_4}(y)|dy.
\end{align*}
We note that
\begin{align*}
    |\tilde{g}_j(y)|\le \frac{1}{m(F_l^y)}\int|\mathbf{1}_{F_l^y}(x)f_j(x)|dx\le \frac{1}{m(F_l^y)}\|\mathbf{1}_{F_l^y}\|_2\|f_j\|_2 = \frac{\|f_j\|_2}{\sqrt{m(F_l^y)}}.
\end{align*}
Thus, we have
\begin{align*}
    I&\le \int_{\Xi(F_l)}m(F_l^y)\frac{\|f_{i_1}\|_2}{\sqrt{m(F_l^y)}}\frac{\|f_{i_2}\|_2}{\sqrt{m(F_l^y)}}|f_{i_3}(y)||f_{i_4}(y)|dy\\
    &= \|f_{i_1}\|_2\|f_{i_2}\|_2\int_{\Xi(F_l)}|f_{i_3}(y)f_{i_4}(y)|dy\le \|f_{i_1}\|_2\|f_{i_2}\|_2\|f_{i_3}\|_2\|f_{i_4}\|_2.
\end{align*}
Since $f_j\in L^2$, $j=1,\ldots,4$, we conclude that $M\in L^2([0,1]^2)$.

Now, to verify $\k$ satisfies the first necessary condition, we compute, for $\tilde{k}\in V_{\ker }\cap S_{k_0,l}$, the central term in \eqref{eq:necc-cond-lag-2nd-eval-1}
\begin{equation*}
\begin{aligned} 
\langle\tilde{k}, E+2\mu \dot{k} \rangle_{L^2([0,1]^2,\bR)}
&= \int_{F_l}\tilde{k}(x,y)\left(E(x,y)+2\mu\dot{k}(x,y)\right)dxdy\\
&= \int_{\Xi(F_l)}\int_{F_l^y}\tilde{k}(x,y)\left(E(x,y)-E(x,y)+h(y)\right)dxdy\\
&= \int_{\Xi(F_l)}\left[\int_{F_l^y}\tilde{k}(x,y)dx\right]h(y)dy = 0,
\end{aligned}
\end{equation*}%
where the 
last equality is from $\tilde{k}\in V_{\ker }\cap S_{k_0,l}$. 
To conclude the check that $\k$ satisfies the necessary condition \eqref{eq:necc-cond-lag-2nd-eval-1}, we need to check that $\mu\neq  0$. Since $M\in L^2([0,1]^2)$, note that the necessary condition
\eqref{eq:necc-cond-lag-2nd-eval-2} yields $\mu =\pm\frac{1}{2}\Vert M\Vert _{L^{2}([0,1]^{2},\bR)}$; thus, to finish the proof that $\k$ satisfies both
necessary conditions \eqref{eq:necc-cond-lag-2nd-eval-1}-\eqref{eq:necc-cond-lag-2nd-eval-2}, we will
show that $\Vert M\Vert _{L^{2}([0,1]^{2},\bR)}\neq 0$.
From the hypotheses on $E$ we conclude 
$$\Vert M\Vert _{L^{2}([0,1]^{2},\bR)}^2 = \int_{F_l}\left(E(x,y) - \frac{1}{m(F_l^y)}\int_{F_l^y}E(z,y)dz\right)^2dxdy\neq 0.$$
Hence $\mu =\pm\frac{1}{2}\Vert M\Vert _{L^{2}([0,1]^{2},\bR)}\neq 0$.
The sign of $\mu$ is determined by checking the sufficient conditions.  

We can now verify that $\k\in V_{\ker }\cap S_{k_0,l}$. We note 
from $M\in L^2([0,1]^2)$ and $\mu\neq 0$, $\k\in L^2([0,1]^2)$. By construction, supp$(\k)\subseteq F_l$. Finally, we have 
\begin{equation*}
\begin{aligned} \int\dot{k}(x,y)dx &=
-\frac{1}{2\mu}\left(\int_{F_l^y}E(x,y)dx -
\frac{1}{m(F_l^y)}\int_{F_l^y}E(z,y)dz\int\mathbf{1}_{F_l}(x,y)dx\right)\\ &= -\frac{1}{2\mu}\left(\int_{F_l^y}E(x,y)dx -
\frac{1}{m(F_l^y)}\int_{F_l^y}E(z,y)dz\ m(F_l^y)\right)\\ &= 0.
\end{aligned}
\end{equation*}


For the sufficient conditions, we note that in this setting $D_{\dot{k}\dot{k%
}}^2\mathcal{L}(\dot{k},\lambda)(\tilde{k},\tilde{k})$ is the same as in the
proof of Theorem \ref{thm:explicit-formula} (since the objectives considered
in both this and the other optimization problem are linear). Hence, the
second order sufficient conditions are satisfied with $\mu>0$. Thus,
with $2\mu = \|M\|_{L^2([0,1]^2,\bR)}$, %
\eqref{eq:soln-2nd-eval} satisfies the necessary and sufficient conditions.
Next, we note that the set $P_l=V_{\ker}\cap S_{k_0,l}\cap B_1$ is a closed (Lemma \ref{Sk0lem}), bounded, strictly convex set, containing $\dot{k}=0$. The objective $\mathcal{J}(\dot{k})=\langle \dot{k},E\rangle_{L^{2}([0,1]^{2},\bR)}$ is continuous and not uniformly vanishing (Lemma \ref{lem:nonzeroobj}). 
Therefore by Propositions \ref{prop:exist} and \ref{prop:uniqe}, \eqref{eq:soln-2nd-eval} is the unique solution to the optimization
problem \eqref{obj_lin_fun-min-2nd-eval}-\eqref{constraint-bound-2nd-eval}.

We finally show that $E\in L^\infty([0,1]^2,\bR)$ by supposing $k_0\in L^2([0,1]^\infty,\bR)$. Recall that
$$E(x,y) = \big(\Re(\hat{e})(x)\Re(e)(y)+ \Im(\hat{e})%
(x)\Im(e)(y)\big)\Re(\lambda_{0})+ \big(\Im(\hat{e})(x)\Re(e)(y) -\Re(\hat{e})%
(x)\Im(e)(y)\big)\Im(\lambda_{0}).$$ Since $L_0e = \lambda_{0}e$ and $L_0^*\hat{e} = \lambda_{0}\hat{e}$, we have from inequality \eqref{KF2} that $e,\hat{e}\in L^\infty([0,1],\bC)$
since $k_0\in L^\infty([0,1]^2,\bR)$. Hence, we have that $\Re(e),\Re(\hat{e}),\Im(e),\Im(\hat{e})\in L^\infty([0,1],\bR)$ and thus $E\in L^\infty([0,1]^2,\bR)$.
\end{proof}

\section{Upper bound for the norm of the reflection operator}
\begin{lemma}\label{lem:upbndP}
Let $P_\pi$ be as in \eqref{eq:PF-pi} and assume that the support of $f\in L^2(\bR)$ is contained in $N$ intervals of lengths $a_j, j=1,\ldots,N$.
Then, $\|P_\pi f\|_{L^2([0,1])}\le \left(\sum_{j=1}^N\lceil a_j+1\rceil\right)\|f\|_{L^2(\bR)}$, where $\lceil x\rceil$ denotes the smallest integer greater than or equal to $x$. 
\end{lemma}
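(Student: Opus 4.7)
The plan is to apply Minkowski's (triangle) inequality to the defining series of $P_\pi$ in \eqref{eq:PF-pi}, and then count the nonzero summands via an elementary tiling argument.

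I would first write $P_\pi f(x)=\sum_{i\in 2\mathbb{Z}}\bigl(f(i+x)+f(i-x)\bigr)$, and note that for every $x\in[0,1]$ only finitely many terms are nonzero since $f$ has bounded support. Minkowski's inequality in $L^2([0,1])$ then yields
$$\|P_\pi f\|_{L^2([0,1])}\le \sum_{i\in 2\mathbb{Z}}\bigl(\|f(i+\cdot)\|_{L^2([0,1])}+\|f(i-\cdot)\|_{L^2([0,1])}\bigr).$$
A direct change of variables gives $\|f(i+\cdot)\|_{L^2([0,1])}=\|f\|_{L^2([i,i+1])}$ and $\|f(i-\cdot)\|_{L^2([0,1])}=\|f\|_{L^2([i-1,i])}$. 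The key observation is that, as $i$ ranges over $2\mathbb{Z}$, the families $\{[i,i+1]\}$ and $\{[i-1,i]\}$ together form exactly the tiling $\{[k,k+1]\}_{k\in\mathbb{Z}}$ of $\mathbb{R}$ by unit intervals, so the upper bound reorganises as $\sum_{k\in\mathbb{Z}}\|f\|_{L^2([k,k+1])}$.

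The next step is to count the number of indices $k\in\mathbb{Z}$ for which $\|f\|_{L^2([k,k+1])}\neq 0$. Since $\mathrm{supp}(f)$ is contained in $N$ intervals of lengths $a_j$, it suffices to show that a single interval $I_j=[c,c+a_j]$ meets at most $\lceil a_j+1\rceil$ of the unit tiles with positive measure. The tile $[k,k+1]$ has positive-measure intersection with $I_j$ precisely when $k\in(c-1,c+a_j)$, and this open interval of length $a_j+1$ contains at most $\lceil a_j+1\rceil$ integers. Summing over $j=1,\dots,N$, at most $\sum_j\lceil a_j+1\rceil$ summands are nonzero. Bounding each $\|f\|_{L^2([k,k+1])}$ crudely by $\|f\|_{L^2(\mathbb{R})}$ then yields the stated inequality.

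There is no serious obstacle: the main non-trivial ingredient is the elementary lattice count above, and one must take a moment to verify that after the change of variables the two collections of half-intervals do indeed recombine into the full unit tiling of $\mathbb{R}$ (this is what produces the sharp constant $\lceil a_j+1\rceil$ rather than a larger multiple). I note in passing that a tighter bound $\|P_\pi f\|_2\le\sqrt{\sum_j\lceil a_j+1\rceil}\,\|f\|_2$ is available via Cauchy--Schwarz together with the disjointness $\sum_k\|f\|_{L^2([k,k+1])}^2=\|f\|_{L^2(\mathbb{R})}^2$, but the coarser bound stated in the lemma is enough for its uses in Proposition~\ref{kdottt}.
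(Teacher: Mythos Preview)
Your proof is correct and close in spirit to the paper's. The paper argues pointwise rather than via Minkowski: for each fixed $x\in[0,1]$ it observes that at most $M=\sum_j\lceil a_j+1\rceil$ of the evaluations $f(i\pm x)$ are nonzero, bounds the square of the sum crudely, and then integrates using that the shifted/reflected copies of $[0,1]$ tile $\mathbb{R}$. Your route through the unit-tiling sum $\sum_{k\in\mathbb{Z}}\|f\|_{L^2([k,k+1])}$ is essentially the same counting expressed in norms, with your integer count in $(c-1,c+a_j)$ made a bit more explicit; as you note, this organisation also makes the sharper constant $\sqrt{M}$ immediately visible.
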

\begin{proof}
Using translation invariance of Lebesgue measure, and the fact that for each fixed $x$ there are at most $\sum_{j=1}^N\lceil a_j+1\rceil$ nonzero evaluations of $f$ in the infinite sum  below, 
\begin{eqnarray*}
\int_0^1 (P_\pi f)(x)^2\ dx &=&
     \int_0^1\left(\sum_{i\in 2\mathbb{Z}}( f(i+x)+f(i-x))\right)^2\ dx\\
     &\le&\int_{-\infty}^\infty \left(\sum_{j=1}^N\lceil a_j+1\rceil\right)^2 f(x)^2\ dx\\
&=&\left(\sum_{j=1}^N\lceil a_j+1\rceil\right)^2\|f\|_{L^2}^2.
\end{eqnarray*}
\end{proof}

\section{Proof of Theorem \ref{thm:explicit-formula2}}\label{apx:thm:explicit-formula2}
\begin{proof}
The proof will follow the structure of the proof of Theorem \ref{thm:explicit-formula}%
. To this end, we consider the following Lagrangian function
\begin{equation*}
\mathcal{L}(\dot{T},\mu):= f(\dot{T}) + \mu g(\dot{T}),
\end{equation*}
where $f(\dot{T}) := -\big\langle c, \widehat{R}(\dot{T})\big\rangle_{L^2([0,1],\bR)}, $ $g(\dot{T}) := \|%
\dot{T}\|^2_2-1$ and $\dot{T}\in S_{T_0,\ell}$.

\textit{Necessary conditions}: We want to find $\dot{T}$ and $\mu$ that
satisfy the first-order necessary conditions:

\begin{align*}
&g(\dot{T}) = 0 \\
D_{\dot{T}} \mathcal{L}(\dot{T},\mu)&\tilde{T}= 0\text{ for all }\tilde{T%
}\in S_{T_0,\ell},
\end{align*}
where $D_{\dot{T}} \mathcal{L}(\dot{T},\mu)\in \mathcal{B}(L^2,{\mathbb{R%
}})$ is the Frechet derivative with respect to the variable $\dot{T} $. Since $%
f $ is linear, we have $(D_{\dot{T}}f) \tilde{T} = f(\tilde{T})$. Also, we
have that $(D_{\dot{T}}g) \tilde{T} = 2\langle \dot{T},\tilde{T}%
\rangle_{L^2([0,1],\bR)} $ (following the computation in the proof of Theorem \ref%
{thm:explicit-formula}). Thus, for the necessary conditions of the Lagrange
multiplier method to be satisfied, we need that
\begin{equation}  \label{eq:Lag-nece-cond2}
D_{\dot{T}}\mathcal{L}(\dot{T},\mu)\tilde{T} = (D_{\dot{T}}f)\tilde{T} +
\mu (D_{\dot{T}}g)\tilde{T} = f(\tilde{T})+2\mu\langle \dot{T},%
\tilde{T}\rangle_{L^2([0,1],\bR)} = 0
\end{equation}
for all $\tilde{T}\in S_{T_0,\ell}$ and
\begin{equation}  \label{eq:Lag-nece-cond-2-2}
g(\dot{T}) = 0.
\end{equation}

Following the proof of Theorem \ref{thm:explicit-formula}, we will solve for $%
\dot{T}$ by rewriting $f(\tilde{T})+2\mu\langle \dot{T},\tilde{T}%
\rangle_{L^2([0,1],\bR)}$ as an inner product on $L^2$. To this end, we have
that
\begin{equation}  \label{eq:reform-to-2-arg-inner-prod2}
\begin{aligned} f(\tilde{T}&)+2\mu\langle \dot{T},\tilde{T}\rangle_{L^2([0,1],\bR)}\\
&= \bigg\langle c, (\text{Id}-L_0)^{-1}\int_0^1 \left(P_\pi\left(\tau_{-T_0(y)}\frac{d\rho}{dx}\right) \right)(x)\tilde{T}(y)f_0(y)dy\bigg\rangle_{L^2([0,1],\bR)} +
2\mu\langle \dot{T},\tilde{T}\rangle_{L^2([0,1],\bR)}\\ &= \bigg\langle
(\text{Id}-L_0^*)^{-1}c, \int_0^1 \left(P_\pi\left(\tau_{-T_0(y)}\frac{d\rho}{dx}\right) \right)(x)\tilde{T}(y)f_0(y)dy\bigg\rangle_{L^2([0,1],\bR)} + \langle
2\mu \dot{T},\tilde{T}\rangle_{L^2([0,1],\bR)}\\ & = \int_0^1 \int_0^1
((\text{Id}-L_0^*)^{-1}c)(x)\left(P_\pi\left(\tau_{-T_0(y)}\frac{d\rho}{dx}\right) \right)(x)\tilde{T}(y)f_0(y)dydx+\langle
2\mu \dot{T},\tilde{T}\rangle_{L^2([0,1],\bR)}\\ &= \int_0^1
\left[\int_0^1((\text{Id}-L_0^*)^{-1}c)(x)\left(P_\pi\left(\tau_{-T_0(y)}\frac{d\rho}{dx}\right) \right)(x)dxf_0(y) + 2\mu\dot{T}(y) \right]\tilde{T}(y) dy\\
&=\int_0^1
\left[f_0(y)\mathcal{G}((\text{Id}-L_0^*)^{-1}c)(y)+ 2\mu\dot{T}(y) \right]\tilde{T}(y) dy.
\end{aligned}
\end{equation}
We note that since $c\in$ span$\{f_0\}^\perp$, we have from Lemma \ref{sublem:adj-res-bnd} that
$(\text{Id}-L_0^*)^{-1}c\in L^2$ and the above expression is well defined.  
Now, from
\eqref{eq:reform-to-2-arg-inner-prod2}, we have that $f(\tilde{T})+2\mu\langle \dot{T},%
\tilde{T}\rangle_{L^2([0,1],\bR)} = \langle f_0\ \mathcal{G}((\text{Id}-L_0^*)^{-1}c)+2\mu \dot{T}, \tilde{T}\rangle_{L^2([0,1],\bR)}$.
From this we can conclude that finding $\dot{T}$ and $\mu$ that
satisfy \eqref{eq:Lag-nece-cond2} and \eqref{eq:Lag-nece-cond-2-2} reduces
to finding $\dot{T}\in S_{T_0,\ell}$ and $\mu\in\bR$ that satisfy
$\langle f_0\ \mathcal{G}((\text{Id}-L_0^*)^{-1}c)+2\mu \dot{T},
\tilde{T}\rangle_{L^2([0,1],\bR)}=0$ for all $\tilde{T}\in S_{T_0,\ell}$ and %
\eqref{eq:Lag-nece-cond-2-2}. Using the non-degeneracy of the inner product,
we find that
\begin{equation*}
\dot{T}= -\frac{M}{2\mu},
\end{equation*}
where 
\begin{equation}
    \label{MM}
    M = \mathbf{1}_{\widetilde{F}_\ell}f_0\ \mathcal{G}((\text{Id}-L_0^*)^{-1}c).
\end{equation} To conclude that the above $\dot{T}$ 
satisfies the necessary condition \eqref{eq:Lag-nece-cond2}, we need to check 
that $\mu\neq 0$. 
Since $M\in L^\infty$ (see the \emph{Boundedness of the solution} paragraph below), the necessary condition \eqref{eq:Lag-nece-cond-2-2} yields $\mu = \pm
\frac{1}{2}\|M\|_2$; thus, to finish the proof that $\dot{T}$ satisfies both
necessary conditions \eqref{eq:Lag-nece-cond2}-\eqref{eq:Lag-nece-cond-2-2},
we will show that $\|M\|_2\neq 0$. 
From the hypotheses on $f_0$ and $(\text{Id}-L_0^*)^{-1}c$, and recalling that
$\mathcal{P}(x,y)=P_\pi \left(\tau_{-T_0(y)}\frac{d\rho}{dx}\right)(x)$, we conclude that
$$\|M\|_2^2 = \int_{\widetilde{F}_\ell}f_0(y)^2\left(\int \mathcal{P}(x,y)((\text{Id}-L_0^*)^{-1}c)(x)dx\right)^2dy\neq 0.$$
Hence $\mu = \pm\frac{1}{2}\|M\|_2\neq 0$; the sign of $\mu$ is determined by checking the sufficient conditions.
We thus have verified that $\dot{T}\in S_{T_0,\ell}$ because $\dot{T}\in L^2$ and the term $\mathbf{1}_{\widetilde{F}_\ell}$ in (\ref{MM}) guarantees supp$(\dot{T})\subseteq \widetilde{F}_\ell$. 

\textit{Sufficient conditions}: As in the proof of Theorem \ref{thm:explicit-formula}, we
will show that $\dot{T}$ in \eqref{opt-soln2} is the solution to the optimization problem %
\eqref{obj_lin_fun-min2}-\eqref{constraint-bound2} by checking that it
satisfies the second-order sufficient conditions. We first note that in this
setting we have $Q=X=S_{T_0,\ell}$, $x_0 = \dot{T}$, $Y^* = {\mathbb{R}}$, $G(x_0)=g(%
\dot{T})$, $K = \{0\} $, $N_K(G(x_0)) = {\mathbb{R}}$, $T_{K}(G(x_0))=\{0\}$
and $N_Q(x_0) = \{0\}$.
Thus, to show that $\Lambda(\dot{T})$ is not empty, we need to show that $%
\dot{T}$ and $\mu$ satisfy
\begin{equation}  \label{cond-exist-lag-mult2}
D_{\dot{T}}\mathcal{L}(\dot{T},\mu)\dot{T} = 0,\; g(\dot{T})=0,\;
\mu\in \{0\}^-,\; \mu g(\dot{T}) =0,
\end{equation}
where $\{0\}^-:=\{\alpha\in \mathbb{R}: \alpha x\le 0\ \forall x\in \{0\}\}=%
\mathbb{R}$. Following the argument in the proof of Theorem \ref{thm:explicit-formula}, it is easily verifiable 
that $\Lambda(\dot{T})$ is not empty. Thus, to show that $\dot{T}$ is a
solution to \eqref{obj_lin_fun-min2}-\eqref{constraint-bound2}, we need to
show that it satisfies the following second-order conditions: there exists
constants $\nu>0$, $\eta>0$ and $\beta>0$ such that
\begin{equation}  \label{eq:sec-ord-suff2}
\sup_{|\mu|\le \nu,\ \mu\in\Lambda(\dot{T})} D^2_{\dot{T}\dot{T}}%
\mathcal{L}(\dot{T},\mu)(\tilde{T},\tilde{T})\ge \beta \|\tilde{T}%
\|^2_2,\ \forall\ \tilde{T}\in C_\eta(\dot{T}),
\end{equation}
where $C_\eta(\dot{T}):= \big\{v\in S_{T_0,\ell}: |2\langle\dot{T},v\rangle_{S_{T_0,\ell}}|\le
\eta\|v\|_{S_{T_0,\ell}}\text{ and } f(v)\le \eta\|v\|_{S_{T_0,\ell}} \big\}$ is the
approximate critical cone. Since $D_{\dot{T}}\mathcal{L}(\dot{T},\mu)\tilde{%
T} = f(\tilde{T})+2\mu\langle \dot{T},\tilde{T}\rangle_{L^2([0,1],\bR)} $ and $%
\langle \dot{T},\tilde{T}\rangle_{L^2([0,1],\bR)}$ is linear in $\dot{T}$, we have that
$D^2_{\dot{T}\dot{T}}\mathcal{L}(\dot{T},\mu)(\tilde{T},\tilde{T}) =
2\mu \langle \tilde{T},\tilde{T}\rangle_{L^2([0,1],\bR)}$. Thus, we conclude that
the second-order condition \eqref{eq:sec-ord-suff2} holds with $\mu>0$, $%
\nu=|\mu| = \frac{1}{2}\|M\|_{S_{T_0,\ell}} $, $\beta = 2\mu$ and $\eta = \max%
\big\{2\|\dot{T}\|_{S_{T_0,\ell}},\|M\|_{S_{T_0,\ell}}\big\}$. Since $\dot{T}$ satisfies
the necessary conditions \eqref{eq:Lag-nece-cond2} and %
\eqref{eq:Lag-nece-cond-2-2}, with $\mu>0$, $\dot{T}$ is a solution to
the optimization problem \eqref{obj_lin_fun-min2}-\eqref{constraint-bound2}.
Using Lemma \ref{lem:nonzeroobjmap} and Proposition \ref{solutionT}, we conclude that this solution is unique.

\textit{Boundedness of the solution:} 
We have that $\left(P_\pi\left(\tau_{-T_0(y)}\frac{d\rho}{dx}\right) \right)(x)\in L^\infty([0,1]^2)$
(see proof of Lemma \ref{lem:cont-resp-ref-bnd}). From inequality \eqref{KF2}, with the kernel
$\left(P_\pi\left(\tau_{-T_0(y)}\frac{d\rho}{dx}\right) \right)(x)$,
we have that $\mathcal{G}h\in L^\infty$ for any $h\in L^2$.
Since $f_0\in L^\infty$,
we have that $f_0\ \mathcal{G}((\text{Id}-L_0^*)^{-1}c)\in L^\infty$. 
Thus, $M=\mathbf{1}_{\widetilde{F}_\ell} f_0\ \mathcal{G}((\text{Id}-L_0^*)^{-1}c)\in L^\infty$ and therefore $\dot{T}\in L^\infty$. 
\end{proof}



\section{Proof of Theorem \ref{extremiser2}}\label{apx:extremiser2}
\begin{proof}
We use arguments similar to those in the proofs of Theorems \ref{thm:explicit-formula2} and \ref{thm:explicit-formula-eval}.
Let
$\widehat{E}$ be as in \eqref{eq:pre-opt-sol-eval}. For
the necessary conditions, we will need that
\begin{equation}  \label{eq:Lag-nec-2-eval-add-1}
\langle\tilde{T},\widehat{E} + 2\mu \dot{T}\rangle_{L^2([0,1],\bR)} = 0
\end{equation}
for all $\tilde{T}\in S_{T_0,\ell}$ and
\begin{equation}  \label{eq:Lag-nec-2-eval-add-2}
\|\dot{T}\|_2^2 = 1.
\end{equation}
Thus, from \eqref{eq:Lag-nec-2-eval-add-1} and the nondegeneracy of the inner product
we have that
$\dot{T} = -\mathbf{1}_{\widetilde{F}_\ell}\frac{\widehat{E}}{2\mu}$.
To conclude that $\dot{T}$ 
satisfies the necessary condition \eqref{eq:Lag-nec-2-eval-add-1}, we need to check 
that $\mu\neq 0$. 
Since $\widehat{E}\in L^2$ (as it is essentially bounded, see Proposition \ref{prop:add-noise-eval-obj-mix}),
 the necessary condition \eqref{eq:Lag-nec-2-eval-add-2} yields $\mu= \pm
\frac{1}{2}\|\widehat{E}\|_2$.
Thus, to finish the proof that $\dot{T}$ satisfies both
necessary conditions \eqref{eq:Lag-nec-2-eval-add-1}-\eqref{eq:Lag-nec-2-eval-add-2}, we will show that $\|\widehat{E}\|_2\neq 0$. 
From the hypotheses on $E$, and recalling that
$\mathcal{P}(x,y)=P_\pi \left(\tau_{-T_0(y)}\frac{d\rho}{dx}\right)(x)$, we conclude that
$$\|\widehat{E}\|_2^2=\int_{\widetilde{F}_\ell}\left(\int_0^1\mathcal{P}(x,y)E(x,y)dx\right)^2dy\neq 0.$$
Hence $\mu= \pm\frac{1}{2}\|\widehat{E}\|_2\neq 0$ and 
$\dot{T} = \mp\mathbf{1}_{\widetilde{F}_\ell}\frac{\widehat{E}}{\big\|\widehat{E}\big\|_2}$; the sign of $\mu$ is determined by checking the sufficient conditions. 
Clearly $\dot{T}\in L^2$ and has support contained in $\tilde{F}_l$, thus $\dot{T}\in S_{T_0,l}$.
For the sufficient conditions, as in the proof of Theorem \ref{thm:explicit-formula2},
since the objective is linear, we require that  $\mu>0$. Using Lemma \ref{lem:nonzeroobjmap} and Proposition \ref{solutionT2} we 
conclude that \eqref{eq:soln-2nd-eval-add} is the unique solution. The essential boundedness of
$\dot{T}$ follows from the essential boundedness of $\widehat{E}$ (see Proposition \ref{prop:add-noise-eval-obj-mix}).
\end{proof}

\bibliographystyle{abbrv}

\bibliography{opt_lin_resp}

\end{document}

\section{Material from section 4}
\hl{------------New Proposition-----------}

\begin{proposition}
Suppose there is $l>0$ such that $k_{0}(x,y)\geq l$ on $\forall (x,y)\in
\lbrack 0,1]^{2}$ and suppose \ $(Id-L_{0}^{\ast })^{-1}c$ is not constant
on $[0,1]$, then there is $\dot{k}(x,y)\in V_{Ker}\cap B_{1}=P_{l}$ where $%
B_{1}$ is the unit ball in $L^{2}([0,1]^{2})$ such that 
\begin{equation*}
<c,(Id-L_{0})^{-1}\int \dot{k}(x,y)f_{0}(y)dy>\neq 0.
\end{equation*}
\end{proposition}

\begin{proof}
We will prove that there is $\int \dot{k}(x,y)\in L^{1}([0,1]^{2})$ such that%
\begin{equation*}
<(Id-L_{0}^{\ast })^{-1}c,\int \dot{k}(x,y)f_{0}(y)dy>\neq 0.
\end{equation*}

If $(Id-L_{0}^{\ast })^{-1}c$ is not constant, then there are two sets $%
A,B\subseteq \lbrack 0,1]$ \ with $m(A)=m(B)>0$ and $q\in \mathbb{R}$ such
that 
\begin{equation*}
\lbrack (Id-L_{0}^{\ast })^{-1}c](A)>q>[(Id-L_{0}^{\ast })^{-1}c](B).
\end{equation*}

Now let $\delta \geq 0$\ we consider  $\dot{k}(x,y)=\delta (1_{A\times
\lbrack 0,1]}-1_{B\times \lbrack 0,1]}).$ Clearly $\dot{k}(x,y)\in V_{Ker}$
and for $\delta $ small enough we have $||\dot{k}||_{2}\leq 1.$ Furthermore
for each $x\in \lbrack 0,1]$ 
\begin{equation*}
\int \dot{k}(x,y)f_{0}(y)dy=[\delta (1_{A}-1_{B})](x),
\end{equation*}%
and%
\begin{equation*}
\int (Id-L_{0}^{\ast })^{-1}c~\delta (1_{A}-1_{B})~dx>0.
\end{equation*}
\end{proof}

\hl{End------------New Proposition-----------}

\textbf{Some notes by GF regarding nonvanishingness of (\ref{P1}):  It is not 100\% optimally presented but the ideas are there I think.}

Given $c\in L^2([0,1])$ satisfying
\begin{enumerate}
    \item 
$c\in \spn\{f_0\}^\perp$ (this is our WLOG observation from earlier)
\item $c\notin \spn\{\mathbf{1}\}$ ($c$ is not constant),
\end{enumerate}
we wish to show that 
\begin{quote}
    there is a $\dot{k}\in P_l$ so that
 $\langle c,R(\dot{k})\rangle\neq 0.\qquad\maltese$
 \end{quote}
In fact, we will show something stronger, namely that 
\begin{quote}
    if $\dot{k}\in P_l$ is not the zero perturbation, then
 $\langle c,R(\dot{k})\rangle\neq 0.\qquad\maltese'$.
 \end{quote}
As a side remark, note that \maltese, combined with the fact that $\langle c, R(\dot{k})\rangle$ is linear in $\dot{k}$, still does not imply $\maltese'$.

Write $$(\ref{P1})=\langle (I-L_0^*)^{-1}c(x), \int \dot{k}(x,y)f_0(y)\ dy\rangle.$$
Recall that $(I-L_0^*)^{-1}c\in \spn\{f_0\}^\perp$, and note that $(I-L_0^*)^{-1}c$ is not constant, since if $d=(I-L_0^*)^{-1}c$ were constant, then $(I-L_0^*)d=c$ would also be constant, contradicting our assumption on $c$.
Thus $(I-L_0^*)^{-1}c$ also satisfies properties 1.\ and 2.\ above.
For $l\ge 0$ define the subspace $Y_l\subset L^2([0,1])$ by $$Y_l=\left\{\int\dot{k}(x,y)f_0(y)\ dy: \dot{k}\in V_{\ker}\cap S_{k_0,l}\right\}.$$
I think it is true that $Y_0=V$.
The rough argument goes like this.
First of all, by Lemma \ref{lem:charac-mean-0} it is clear that $Y_l\subset V$ for each $l\ge 0$.
To show $V\subset Y_0$ choose an arbitrary $g\in V$.
We need to design a $\dot{k}\in V_{\ker}$ so that $\int \dot{k}(x,y)f_0(y)\ dy=g(x)$ for a.e.\ $x\in [0,1]$.
Since there is great flexibility in the choice of $\dot{k}(x,\cdot)$, this is easily arranged (though we should provide some details).

In the case where $l>0$ set $F_l^x=\{y\in [0,1]: (x,y)\in F_l\}$, the $x$-fibre of $F_l$ ($F_l$ is going to be the new notation for $F$ defined just before the definition of $S_{k_0,l}$, which lacked a subscript $l$).
We require $$\ell(F_l^x\cap \supp f_0)> 0\mbox{ for a.e.\ }x\in [0,1]\qquad \ddag$$
I believe this will give us the necessary flexibility to ensure that $Y_l=V$, similar to the $l=0$ case.
For example, selecting an $l\ge 0$ so that the conditions:
\begin{enumerate}
    \item $\max\{f_0-l,0\}$ has full support in $[0,1]$, (in other words, $f_0$ is uniformly bounded below by $l$),
    \item $\ell(F_l^x)>0$ for a.e.\ $x\in[0,1]$, (in other words, on each $x$-fibre of $[0,1]^2$, the set of $y$ such that  $k_0(x,y)\ge l$ has positive measure...note this is not very strong, it is satisfied by our examples),
\end{enumerate}
hold will imply $\ddag$.

Finally, to obtain property $\maltese'$ we additionally need
$(I-L_0^*)^{-1}c\notin Y_l^\perp$.
Because $Y_l^\perp
= V^\perp=\spn\{\mathbf{1}\}$, we see that this condition is true by non-constantness of $c$ (property 2.\ above).



\textbf{end of notes.}
\vspace*{1cm}

\textbf{By stefano.}
Here I present a kind of counterexample to the uniqueness in a case where $f_0$ has full support but it is not bounded away from 0.
Similar examples apply to other cases in which the kernel and $f_0$ goes to 0 continuously.
Sorry if I wrote this in a hurry, I hope there are not mistakes.

Consider as $k_{0}$ \ a kernel having full support but not uniformly bounded
away from zero of the following form: let us consider $q>0$ and a Lipschitz, piecewise
linear kernel of the form  
\begin{equation*}
k_{0}(x,y)=\left\{ 
\begin{array}{c}
Const~if~q\leq x\leq 1-q \\ 
g_{1}(x)~if~0\geq x>q \\ 
g_{2}(x)~if~1-q\leq x\leq 1.%
\end{array}%
\right. 
\end{equation*}

Where $Const$ is a suitable constant giving us integral=1 for the whole kernel, $g_{1}$ and $%
g_{2}$ are affine functions with $g_{1}(0)=0,$ $g_{1}(q)=Const$ and $%
g_{2}(1)=0,$ $g_{2}(1-q)=Const.$

This Kernel is of full support, but not bounded away from 0 \ and has an
invariant density $f_{0}$ of full support, but still not bounded away from $%
0.$ It will be a probability measure with density $f_{0}(x)=k_{0}(x,0)$. 

Now consider $l>0$ and consider the sets $A_{l}=\{x|f_{0}(x)<l\}.$ 

Consider a function $c_{1}$ such that $c_{1}\ $ is constant on $[0,1]-A_{l}$
,not constant on $A_{l}$ in a way that  $<c_{1},f_{0}>=0$ \ ($c_{1}$ will be
for example positive on  $[0,1]-A_{l}$ and very negative on the small set $%
A_{l}$). 

Consider the observable $c=(Id-L_{0}^{\ast })c.$ By this we get $\
(Id-L_{0}^{\ast })^{-1}c=c_{1}$ and 
\begin{eqnarray*}
&<&c,(Id-L_{0})^{-1}\int \dot{k}(x,y)f_{0}(y)dy>= \\
&<&(Id-L_{0}^{\ast })^{-1}c,\int \dot{k}(x,y)f_{0}(y)dy>= \\
&=&<c_{1},\int \dot{k}(x,y)f_{0}(y)dy>.
\end{eqnarray*}%
if $\dot{k}(x,y)\in P_{l}$ then this can be different from $0$ only $%
([0,1]-A_{l})\times \lbrack 0.1]$ where $c_{1}$ is constant. \ Hence the
function $x\rightarrow \int \dot{k}(x,y)f_{0}(y)$ is different from 0  only
on  $[0,1]-A_{l}$ and has zero average on it. By this%
\begin{equation*}
<c_{1},\int \dot{k}(x,y)f_{0}(y)dy>=0.
\end{equation*}

P.S. 

This argument shows how some times the set $P_{l}$ can be restrictive,
because it does not allow you to change the kernel outside a set which is not
completely negligible. Then if you have the interesting part of the observable in this set, the allowed perturbations cannot change the observable average.

The argument of course does not work in the case where $f_{0}$ \ is bounded
away from 0 and $l$ is small enough. 

Maybe it is true that this assumption is sufficient to get the uniqueness. I
remark that one way one can verify in practice that $f_{0}$ has full support
or bounded away from 0 is to show that the kernel of some iterate of the
Transfer Operator has a kernel bounded away from zero. In this case of course the
uniqueness hold (applying the old proof to this iterate).
This criteria: "bounded away from 0 is to show that the kernel of some iterate of the Transfer Operator is bounded away from zero" is probably not far from being an if and only if. And covers a lot of cases of maps+ noise.

\textbf{end of notes By Stefano.}

Let $F := \{(x,y)\in[0,1]^2:k_0(x,y)\ge l\}$ and $F_y:=\{x\in[0,1]:(x,y)\in F\}$.
For $F\subset [0,1]^2$, define $\Xi:F\to[0,1]$ by $\Xi(F)=\{y\in [0,1]: \ell(F_y)>0\}$.
Let $\hat{e}^+$ and $\hat{e}^-$ denote the positive and negative parts of $\hat{e}$.
\begin{lemma}
\label{lem:nonzeroobjE}
Assume that $\int_{\Xi(F)}h_2(y)^2\ dy>0$ and that $\int_{F_y}\hat{e}(x)^+\ dx>0$ and $\int_{F_y}\hat{e}(x)^-\ dx>0$ for a.e.\ $y\in\Xi(F)$. Then there is a $\dot{k}\in P_l$ such that $\langle \dot{k},E\rangle>0$.
\end{lemma}
\begin{proof}
For $y\in \Xi(F)$ denote $A(y)=\int_{F_y}\hat{e}(x)^+\ dx$ and $a(y)=\int_{F_y}\hat{e}(x)^-\ dx$;  both are positive functions a.e.\ on $\Xi(F)$ by hypothesis.
For $y\in\Xi(F)$, set $\dot{k}(x,y)=h_2(y)\mathbf{1}_{F_y}(x)(a(y)\hat{e}(x)^+-A(y)\hat{e}(x)^-)$.
To show $\dot{k}\in P_l$ we need to check that (i) the support of $\dot{k}$ is contained in $F$ and (ii) $\int_{F_y} \dot{k}(x,y)\ dx=0$ for a.e. $y\in \Xi(F)$.
Item (i) is obvious from the definition of $\dot{k}$.
For item (ii) we compute
\begin{equation*}
\int_{F_y} \dot{k}(x,y)\ dx=h_2(y)\int_{F_y}(a(y)\hat{e}(x)^+-A(y)\hat{e}(x)^-)\ dx=h_2(y)(a(y)A(y)-A(y)h_2(y))=0.
\end{equation*}
Finally, we check that $\langle \dot{k},E\rangle>0$.
One has
\begin{eqnarray*}
\lefteqn{\int_{F} \dot{k}(x,y)E(x,y)\ dx\ dy}\\
&=&\int_F h_2(y)\left(a(y)\hat{e}(x)^+-A(y)\hat{e}(x)^-\right)\cdot \hat{e}(x)h_2(y)\ dx\ dy\\
&=&\int_F h_2(y)^2\left(a(y)(\hat{e}(x)^+)^2+A(y)(\hat{e}(x)^-)^2\right)\ dx\ dy\\
&=&\int_{\Xi(F)}h_2(y)^2a(y)\left(\int_{F_y}(\hat{e}(x)^+)^2dx\right)\ dy+\int_{\Xi(F)}h_2(y)^2A(y)\left(\int_{F_y}(\hat{e}(x)^-)^2dx\right)\ dy.
\end{eqnarray*}
This final expression is positive due to the hypotheses and the fact that $a(y),A(y)>0$ for a.e. $y\in\Xi(F)$.
\end{proof}
\begin{remark}
Some remarks (just for us at the moment).
Note that if $F=[0,1]^2$ then all hypotheses of Lemma \ref{lem:nonzeroobjE} are automatically satisfied. 
In our examples in Section \ref{sec:numerics}, $F$ is such that $\Xi(F)=[0,1]$, but the various $F_y$ are strict subsets of $[0,1]$.  This means that only the hypotheses on $\hat{e}$ are relevant.  Note in proof that \emph{both} hypotheses on $\hat{e}$ are only needed for guaranteeing $\dot{k}\in V_{\ker}$;   for the fact that $\langle\dot{k},E\rangle>0$ only one of these hypotheses is required.
\end{remark}

\begin{corollary}
\label{cor:ext&unq-ker-obs} For $c\in W\setminus \{0\}$, the optimization problem \eqref{P1} has a unique solution.
\end{corollary}
\begin{proof}
We apply the general existence and uniqueness results of Propositions %
\ref{prop:exist} and \ref{prop:uniqe} to problem $($%
\ref{P1}$)$. Let $\mathcal{H}=L^2([0,1]^2)$, $P=P_l$ and $S(\k)=\langle
c,R(\k)\rangle_{L^{2}([0,1],\bR)}$; we note that $P_l$ is closed, bounded, strictly convex and contains the
zero function of $\mathcal{H}$. By Lemma %
\ref{LemR}, $R$ is continuous; thus, $\langle
c,R(\k)\rangle_{L^{2}([0,1],\bR)}$ is continuous. 
Noting that $c\in W$, and recalling the definition of $R$ in \eqref{R}, we can write $\langle
c,R(\k)\rangle_{L^{2}([0,1],\bR)} = \langle
(\text{Id}-L_0^*)^{-1}c,K_{\k} f_0\rangle_{L^{2}([0,1],\bR)}$ 
where $(K_k g)(x) = \int k(x,y) g(y) dy$. 
Since $c\in W\setminus \{0\}$,  $(\text{Id}-L_0^*)^{-1}c\neq 0$. 
Also, since $c \in W\setminus \{0\}$ and $f_0$ is a probability density (see Lemma \ref{lem:uniq-prob-dens}), 
we have that $c\not\in$ span$\{\mathbf{1}\}$. Noting that $0\in P_l$, to show that $S$ is not constant on $P_l$ 
we just need to find a $\k\in P_l$ such that $S(\k)\neq 0$. \hl{....}

We can therefore apply Proposition \ref{prop:uniqe} to obtain the uniqueness of the solution.
\todo{S: Do we still need to verify that $S$ is not constant on $P$? \hl{Fadi}: changed, let me know what you think.}
\end{proof}